\documentclass{amsart}
\usepackage[utf8]{inputenc}
\usepackage{svg}
\usepackage[T1]{fontenc}
\usepackage[utf8]{inputenc}
\usepackage{subcaption}

\newcommand{\Fpbar}{\overline{\mathbb{F}}_p}

\usepackage{amsmath, amsfonts, amsthm, amssymb, amscd, multirow, float, mathtools,tikz-cd,comment,enumerate,bbm,stmaryrd,dsfont}

\usepackage{float}
\usepackage{bookmark}
\usepackage{url}

\DeclareMathOperator{\rank}{rank}
\usepackage{xcolor}
\usepackage{hyperref}
\hypersetup{colorlinks=true, linkcolor=blue}
\usepackage[normalem]{ulem}

\usetikzlibrary{matrix,positioning,arrows.meta}
\usepackage{mathtools}

\DeclareMathOperator{\Cls}{Cls}

\usepackage[nameinlink,capitalise]{cleveref}
\usepackage{mathrsfs}

\renewcommand{\O}{\mathcal{O}}

\DeclareMathOperator{\End}{End}
\DeclareMathOperator{\Nrd}{nrd}

\DeclareMathOperator{\discrd}{disc}

\DeclareMathOperator{\Span}{Span}
\DeclareMathOperator{\val}{val}

\newcommand{\1}{\mathds{1}}

\usepackage{commath, graphicx}

\newcommand{\mc}{\mathcal}
\newcommand{\bb}{\mathbb}

\usepackage{comment}
\theoremstyle{plain}
\newtheorem{theorem}{Theorem}[section] %

\usepackage[all]{xy} %
\xyoption{all} \xyoption{rotate}\xyoption{pdf}

\newtheorem{proposition}[theorem]{Proposition} %
\newtheorem{lemma}[theorem]{Lemma}
\newtheorem{corollary}[theorem]{Corollary}

\newtheorem{experiment}[theorem]{Experimental Observation}

\crefname{experiment}{Experimental Observation}{Experimental Observations}
\Crefname{experiment}{Experimental Observation}{Experimental Observations}

\theoremstyle{definition}
\newtheorem{definition}[theorem]{Definition}

\theoremstyle{remark}
\newtheorem{remark}[theorem]{Remark}

\AddToHook{env/proposition/begin}{\crefalias{theorem}{proposition}}
\AddToHook{env/lemma/begin}{\crefalias{theorem}{lemma}}
\AddToHook{env/corollary/begin}{\crefalias{theorem}{corollary}}
\AddToHook{env/conjecture/begin}{\crefalias{theorem}{conjecture}}
\AddToHook{env/experiment/begin}{\crefalias{theorem}{experiment}}
\AddToHook{env/definition/begin}{\crefalias{theorem}{definition}}
\AddToHook{env/remark/begin}{\crefalias{theorem}{remark}}
\AddToHook{env/question/begin}{\crefalias{theorem}{question}}
\AddToHook{env/example/begin}{\crefalias{theorem}{example}}

\newcommand{\mf}{\mathfrak}

\newcommand{\QQ}{\mathbb{Q}}
\newcommand{\NN}{\mathbb{N}}
\newcommand{\ZZ}{\mathbb{Z}}
\newcommand{\FF}{\mathbb{F}}
\newcommand{\CC}{\mathbb{C}}
\newcommand{\RR}{\mathbb{R}}
\newcommand{\sus}{\subset}
\newcommand{\repnum}[2]{a_{#1}(#2)}
\newcommand{\qprime}{r}
\newcommand{\T}{\bb{T}}
\newcommand{\Tan}{\bb{T}^{\mathrm{an}}}

\usepackage{booktabs}

\newcommand{\Tr}{\mathrm{trd}}

\newcommand{\nrd}{\mathrm{nrd}}
\newcommand{\trd}{\mathrm{trd}}
\DeclareMathOperator{\disc}{disc}
\DeclareMathOperator{\SL}{SL}
\DeclareMathOperator{\GL}{GL}
\DeclareMathOperator{\A}{\mathcal{A}}

\let\dotlessi\i
\renewcommand{\i}{\mathfrak{i}}
\renewcommand{\j}{\mathfrak{j}}
\renewcommand{\setminus}{\smallsetminus}

\title[Generating sets for maximal quaternion orders]{Generating sets for maximal orders in rational quaternion algebras} %
\date{October 7, 2026}

\author[Eisentr\"ager]{Kirsten Eisentr\"ager}
\address{Kirsten Eisentr\"ager, Department of Mathematics, The Pennsylvania State University, 109 McAllister Building, University Park, PA 16802, USA. {\tt{eisentra@math.psu.edu}}}

\author[Goren]{Eyal Z.~Goren}
\address{Eyal Z.~Goren, Department of Mathematics and Statistics, McGill University, 805 Sherbrooke St.\ W, Montréal, QC H3A0B9, Canada. \tt{eyal.goren@mcgill.ca}}

\author[Iezzi]{Annamaria Iezzi}
\address{Annamaria Iezzi, Univ. Grenoble Alpes, CNRS, Grenoble INP, LJK, 38000 Grenoble, France. \tt{annamaria.iezzi@univ-grenoble-alpes.fr}}

\author[Kir]{Harun Kir}
\address{Harun Kir, UMPA, ENS de Lyon, CNRS, UMR 5669, 46 all\'ee d'Italie, 69364 Lyon Cedex 07, France. \tt{harun.kir@ens-lyon.fr}}

\author[K{\dotlessi}r{\dotlessi}ml{\dotlessi}]{Eda K{\dotlessi}r{\dotlessi}ml{\dotlessi}}
\address{Eda Kirimli, University of Birmingham, University Road W, Edgbaston, Birmingham, B15 2TT, UK. \tt{e.kirimli@bham.ac.uk}}

\author[Love]{Jonathan R.~Love}
\address{Jonathan R.~Love, Department of Mathematics, University of Michigan, 530 Church Street, Ann Arbor, MI 48109, USA. \tt{jonlove@umich.edu}}

\author[Mahaney]{William E.~Mahaney}
\address{William E.~Mahaney, Department of Mathematics, Virginia Tech, 225 Stanger Street, Blacksburg, Virginia 24060, USA. \tt{mahaney1@vt.edu}}

\author[Park]{Jennifer Park}
\address{Jennifer Park, Department of Mathematics, The Ohio State University, 231 West 18th Avenue, Columbus, OH 43210, USA. \tt{park.2720@osu.edu}}

\author[Sabitova]{Maria Sabitova}
\address{Maria Sabitova, Department of Mathematics, Queens College, CUNY, 65-30 Kissena Blvd, Flushing, NY 11367, USA. \tt{maria.sabitova@qc.cuny.edu}}
 
\begin{document}

\begin{abstract}
    Given a maximal order $\mf{O}$ in a rational definite quaternion algebra, and a prime $\ell$ coprime to the discriminant of $\mf{O}$, this paper considers subsets of $\mf{O}$ consisting of elements with $\ell$-power norms that together generate $\mf{O}$ as a $\ZZ$-algebra. We prove two theorems about the existence of such sets: the first states that $\mf{O}$ is generated by elements of norm $\ell^k$ for any $k$ larger than an explicit bound, and the second states that there is a generating set for $\mf{O}$ consisting of at most three elements, each with norm a power of $\ell$. We discuss implications for the study of supersingular isogeny graphs. As steps towards these theorems, for quaternion orders $\O$ that are not necessarily maximal, we also prove structural results about the algebra of Brandt matrices for $\O$ and explicit bounds on the coefficients of the theta function of $\O$. Computational experiments are also discussed.
\end{abstract}

\maketitle
\markboth{\MakeUppercase{Eisentr\"ager, Goren, Iezzi, Kir, K{\dotlessi}r{\dotlessi}ml{\dotlessi}, Love, Mahaney, Park, Sabitova}}{\shorttitle}

\small{
\tableofcontents
}

\section{Introduction}
The principal results of this paper concern generating sets for certain orders in definite quaternion algebras, where essential use is made of the theta functions of these orders. %
As we shall explain, the project is closely related to questions of current interest in mathematical cryptography, but it equally has its roots in several other areas of mathematics: lattices and their generating sets, counting representations of integers by quadratic forms, quaternion algebras, and the theory of Brandt modules, which allows us to relate the ideal theory of quaternion orders to modular forms. This paper contributes to all of these areas. 

For ease of exposition, we will focus in the introduction on the case that $\mc{A}_p$ is a definite quaternion algebra over $\mathbb{Q}$ ramified only at $p$ and $\infty$, and $\mf{O}$ is a maximal order in $\mc{A}_p$; such orders arise as endomorphism rings of supersingular elliptic curves over an algebraic closure $\overline{\FF}_p$ of a finite field $\FF_p$.
Fix a prime $\ell\neq p$. In \cite[Theorem 1.1]{Goren_Love_Elts_Prescribed_Norm} it is proven that there exists a subset $C\subseteq \mf{O}$ such that the norm of each $x\in C$ is a power of $\ell$ and~$C$ generates $\mf{O}$ as a $\ZZ$-algebra:\footnote{In fact the cited result proves that $C$ generates $\mf{O}$ as a $\ZZ$-module, but in this paper we focus primarily on generating sets of $\ZZ$-algebras.} that is, every element of $\mf{O}$ can be written as a $\ZZ$-linear combination of products of elements in $C$, including the empty product $1\in\mf{O}$.
In this paper we prove effective bounds on these generating sets. Our main result is a bound on the norms of the generators.
\begin{theorem}\label{thm: explicit p and k}
    Let $p$ and $\ell$ be distinct primes, and $k$ a positive integer with $\ell^{k/2-1}>3pk$.
    For any maximal order $\mf{O}\sus\mc{A}_p$, the elements of norm $\ell^k$ in $\mf{O}$ generate $\mf{O}$ as a $\ZZ$-algebra. 
\end{theorem}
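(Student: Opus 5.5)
Let $S_k\subseteq\mf{O}$ be the set of elements of norm $\ell^k$, and let $R_k\subseteq\mf{O}$ be the $\ZZ$-subalgebra it generates. The plan is to show $R_k=\mf{O}$ by a local argument, controlled by counting representations through the theta series of $\mf{O}$.

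\textbf{Step 1: reduce to a counting problem.} Since $1\in R_k$, $R_k$ is an order once it has rank $4$. Suppose $R_k\subsetneq\mf{O}$. Either $R_k$ has rank $<4$, or it is a proper suborder of finite index. We will show each case forces $S_k$ into a set that is too small. In the rank $<4$ case, $S_k$ lies in a commutative quadratic suborder $\ZZ[\alpha]$. This is because any two non-commuting elements already generate a rank-$4$ algebra. In the finite-index case, there is a prime $q$ with $R_k\subseteq\O'$ for some maximal proper suborder $\O'\subsetneq\mf{O}$ of $q$-power index. Then $S_k\subseteq\O'$, and reduction mod $q$ sends $S_k$ into a proper subalgebra of $\mf{O}/q\mf{O}$ containing $1$. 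When $q\neq p$ this is a proper subalgebra of $M_2(\FF_q)$; when $q=p$ it is a proper subalgebra of the local ramified algebra quotient.

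\textbf{Step 2: lower bound for $\#S_k$.} Write $\#S_k=r_{\mf{O}}(\ell^k)$, the $\ell^k$-th coefficient of the theta series $\theta_{\mf{O}}$. Decompose $\theta_{\mf{O}}$ as an Eisenstein part plus a cusp form of weight $2$ and level $p$. The Eisenstein coefficient at $\ell^k$ is essentially $\frac{24}{p-1}\sigma(\ell^k)$ up to the $p$-factor, so it is at least about $\ell^k/p$. The cuspidal part is bounded using Deligne's bound, giving roughly $(k+1)\ell^{k/2}$ times a constant. That constant is controlled by the Brandt matrix / Petersson-norm estimates from the paper, and it should be about $O(1)$ per Hecke eigenform, summed over $\dim S_2(p)\approx p/12$ forms. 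So one expects
\[
\#S_k\ \ge\ \tfrac{c\,\ell^k}{p}-C\,p\,k\,\ell^{k/2}.
\]

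\textbf{Step 3: upper bounds for the obstructed sets.} In the rank-$<4$ case, elements of norm $\ell^k$ in an imaginary quadratic order number at most $6(k+1)$, since the unit group has order at most $6$. In the finite-index case, apply the same theta count to the suborder $\O'$. Its theta series has Eisenstein part scaled by the index ratio, giving count at most $\ell^k/(pq)$-ish. Alternatively, count directly: the fraction of $S_k$ landing in a fixed proper subalgebra mod $q$ is about $1/q$ by equidistribution of norm-$\ell^k$ elements in $(\mf{O}/q)^\times$, with an error term of the same cusp-form shape. Comparing with Step 2 gives a contradiction when $\ell^{k/2-1}>3pk$.

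\textbf{Main obstacle.} The hard step is the finite-index case. There we need an explicit, uniform bound on the cuspidal coefficients of $\theta_{\O'}$ for non-maximal orders $\O'$, with constants independent of $q$, together with handling all primes $q$. For large $q$ the suborder count should be trivially small. To get this, I would first use the structural results on the Brandt-matrix algebra for general $\O$ to express $\theta_{\O'}-\theta_{\mf{O}}\cdot[\text{index factor}]$ in terms of Hecke operators. I would then bound the coefficients via Deligne's bound with explicit constants, aiming for exactly the inequality $\ell^{k/2-1}>3pk$.
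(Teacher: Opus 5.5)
Your overall reduction matches the paper's: show that every proper suborder (without loss of generality a submaximal one, of prime-power index) has strictly fewer elements of norm $\ell^k$ than $\mf{O}$, by splitting theta series into a genus part and a cuspidal part. The gap is the size of the cuspidal error, and it breaks the stated inequality.

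In Step 2 you bound each eigenform's weight by $O(1)$ and sum over the $\approx p/12$ eigenforms. That gives an error of order $pk\ell^{k/2}$. Against a main term of order $\ell^k/p$, this yields a contradiction only once $\ell^{k/2}\gg p^2k$, roughly $k>4\log_\ell p$. So your claim that the comparison works when $\ell^{k/2-1}>3pk$ (roughly $k>2\log_\ell p$) does not follow from your own estimates.

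The missing idea is \cref{thm:eigenform_coeffs}. Take an orthonormal basis $u_1,\dots,u_h$ of $\mf{M}(\O)$ made of simultaneous eigenvectors of \emph{all} Brandt matrices, and set $\Theta(v,w)=\sum_n\langle v,B_nw\rangle q^n$. Then
\[
\theta_\O=2\Theta(v_1,v_1)=\sum_i c_i\,\Theta(u_i,u_i),\qquad c_i=2\langle v_1,u_i\rangle^2\geq 0,\qquad \sum_i c_i=2\langle v_1,v_1\rangle=|\O^\times|,
\]
with each $\Theta(u_i,u_i)$ normalized. Each $c_i$ is individually $O(1)$, but their \emph{total} is $|\O^\times|\le 6$, apart from the three orders of discriminant $2,3,4$, whose theta series are explicit. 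So the cuspidal error is at most $6$ times a single-form bound, independent of $p$. Bounding the weights one at a time and summing loses exactly the factor of $p$ you cannot afford.

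This decomposition needs the cross terms $\Theta(u_i,u_j)$ to vanish. That requires a simultaneous eigenbasis for every $B_n$, including $n$ sharing primes with the level. This is why the paper proves commutativity of the full Brandt algebra for arbitrary orders (\cref{thm:Brandt algebra commutative}); it is not automatic for non-maximal orders.

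Your Step 3 leaves open exactly the part that needs the most work, and the heuristic that the genus part is ``scaled by the index ratio'' is not accurate. The paper classifies submaximal orders (\cref{lem:max_suborders}) and computes their genus contributions case by case:
\begin{itemize}
\item For the index-$r$ Eichler order with $\ell=r$, the coefficient is $\frac{24}{p-1}\cdot\frac{1}{r+1}\bigl(2\frac{r^{k+1}-1}{r-1}-1\bigr)$. When $r=2$ this is close to $\frac23$ of that of $\mf{O}$, which is the source of the constant $16$ versus $24$.
\item For index $p$ with $p$ odd, a second genus vector $e'$ contributes an extra factor $1+\left(\frac{\ell^k}{p}\right)$.
\end{itemize}
The paper then bounds the cuspidal eigenvalues by $2k\ell^{(k+2)/2}$ even when $\ell=r$ divides the level, where Deligne's bound does not apply directly. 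It does this with the recurrences of \cref{eq:recurrence} and \cref{prop:brandt lift}, which tie $\Theta(u,u)$ to genuine newforms.

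Only with a $p$-independent error $12k\ell^{(k+2)/2}$ on both sides and main-term constants $24$ versus $16$ does one get $a_{\ell^k}(\mf{O})-a_{\ell^k}(\O)\geq\frac{8}{p-1}\ell^k-24k\ell^{(k+2)/2}$. This is positive as soon as $\ell^{k/2-1}>3(p-1)k$, which the hypothesis guarantees. Your equidistribution alternative would need an error term of the same quality, and you do not supply one.
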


For example, these assumptions hold if $p>\ell^{17}$ and $k>3 \log_{\ell}(p)$.
A more general version of this result --- applicable to maximal orders in any definite quaternion algebra over $\QQ$ --- is proven in \cref{thm:generation bound}.

Along with bounding the norms of elements of the generating set $C$, we also prove a sharp bound on the number of elements in $C$. 
\begin{theorem}\label{prop:intro number of generators}
    For any maximal order $\mf{O}$ in $\mc{A}_p$, there exists a set of at most three elements, each with norm in $\{\ell^k:k\geq 1\}$, that generates $\mf{O}$ as a $\ZZ$-algebra.
\end{theorem}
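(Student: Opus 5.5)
Fix $\mf{O}$ maximal in $\mc{A}_p$, so $\mf{O}$ has rank $4$, reduced discriminant $p$, and $\ell\neq p$. The strategy is to build the generating set one element at a time: pick $\alpha$ so that $\ZZ[\alpha]$ is large, then pick $\beta$ so that $\ZZ\langle\alpha,\beta\rangle$ is an order of small index in $\mf{O}$, and use a third element $\gamma$ to remove the remaining index. The tool for each step is \cref{thm: explicit p and k}, which says that for $k$ large the elements of norm $\ell^k$ generate $\mf{O}$. Hence they are plentiful and are not confined to any proper subring or sublattice. We also need the finer statement underlying that theorem, coming from the theta series/Brandt matrix bounds: the number of elements of norm $\ell^k$ grows like $\ell^{k}$, while the number lying in a proper suborder, or in a proper sublattice of index $N$, is a smaller-order term, or is smaller by a factor of roughly $N$.

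\emph{Step 1.} Choose $\alpha\in\mf{O}$ with $\nrd(\alpha)=\ell^{k_1}$ and $\alpha\notin\ZZ$. Then $\ZZ[\alpha]$ is a quadratic order, and we may take $\alpha$ primitive in $\mf{O}$, meaning $\alpha\notin \ZZ+m\mf{O}$ for every $m>1$. Primitivity holds for most elements of norm $\ell^{k_1}$: only $m=\ell^j$ can cause failure, and non-primitive elements come from norm $\ell^{k_1-2j}$, which form a lower-order count.

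\emph{Step 2.} Choose $\beta$ of norm $\ell^{k_2}$ so that $\Lambda=\ZZ\langle\alpha,\beta\rangle$ has rank $4$. Since $\alpha$ and $\beta$ do not commute, $1,\alpha,\beta,\alpha\beta$ span a rank-$4$ order. Its discriminant is determined by the Gram determinant of the ternary form attached to $(\trd\alpha,\nrd\alpha,\trd\beta,\nrd\beta,\trd(\alpha\beta))$. For $\Lambda$ to be all of $\mf{O}$ we need this discriminant to equal $p$, i.e.\ that $\beta$ avoids every maximal proper suborder of $\mf{O}$ containing $\alpha$. Each such suborder has prime index $q$. For $q\neq\ell$ the count of norm-$\ell^k$ elements in it is a proportion about $1/q$ (or about $1/q^2$) of the total.

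\emph{Step 3.} Two generators may fail, for instance when $\ell=2$ and local obstructions at $2$ occur. A third element $\gamma$ handles this: pick $\gamma$ of norm $\ell^{k_3}$ that avoids the finitely many maximal suborders of $\mf{O}$ containing $\Lambda$. These suborders are indexed by primes $q$ dividing $[\mf{O}:\Lambda]$. Using an equidistribution statement (the theta coefficient bounds applied to the suborder $\Lambda+q\mf{O}$, together with Strong Approximation or a norm-form count at $q$), elements of norm $\ell^{k_3}$ with $k_3$ large hit every residue class in $\mf{O}/q\mf{O}$ compatible with the norm. Then $\ZZ\langle\alpha,\beta,\gamma\rangle=\mf{O}$. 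The case $q=\ell$ is handled differently, through the explicit local structure $\mf{O}\otimes\ZZ_\ell\cong M_2(\ZZ_\ell)$: any two elements of determinant an $\ell$-power can generate $M_2(\ZZ_\ell)$ only if their reductions generate $M_2(\FF_\ell)$. Choosing unit-like elements, such as those with $\ell\nmid\trd$ and irreducible characteristic polynomial mod $\ell$, gives this.

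\emph{Main obstacle.} The hardest point is controlling primes $q$ uniformly. For the local congruence conditions we need the counts of norm-$\ell^k$ elements in $\mf{O}$ and in the suborders $\Lambda+q\mf{O}$ to compare with explicit error terms, which is where the theta-function coefficient bounds enter. Since $[\mf{O}:\Lambda]$ is bounded in terms of $\ell^{k_1+k_2}$, only finitely many $q$ occur. I would handle these by taking $k_3$ large relative to $k_1+k_2$, so that the cusp-form error term, of size $\ell^{k_3/2}$ times a divisor bound, is beaten by the Eisenstein main term divided by the index. The sharpness claim (three cannot be improved) is not part of the existence statement and needs no proof here.
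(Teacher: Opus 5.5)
\textbf{Gaps in the local repair step.} Your architecture matches the paper's: an element $\alpha$, then a $\beta$ making $\Lambda=\ZZ\langle\alpha,\beta\rangle$ of finite index, then a $\gamma$ repairing $\Lambda$ at each prime $q\mid[\mf{O}:\Lambda]$. But the repair step fails as you describe it.

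At $q=\ell$, the elements you propose do not exist. If $\nrd(\gamma)=\ell^{k}$ with $k\geq 1$, then $\widetilde{\gamma}\in M_2(\FF_\ell)$ has determinant $0$, so its characteristic polynomial mod $\ell$ is $x(x-\trd(\gamma))$, which is never irreducible. You have to use a singular residue class instead. The paper's \cref{lem:extend basis} (case $n\in\ell\ZZ_\ell$) takes $\widetilde{\gamma}$ nilpotent, with kernel line not an eigenline of a nonscalar $\widetilde{\alpha}$; then no proper subalgebra of $M_2(\FF_\ell)$ contains both. The same lemma also covers $q=p$, where $\mf{O}/p\mf{O}$ is not a matrix algebra, a case you do not treat.

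More seriously, any repair at $q$ needs $\widetilde{\Lambda}\neq\FF_q$; otherwise $\widetilde{\Lambda}$ and one more element generate only a commutative algebra. Your only source for this is the primitivity of $\alpha$, and its justification is wrong. The condition $\alpha\in\ZZ+m\mf{O}$ does not force $m^2\mid\nrd(\alpha)$; that is the condition for $\alpha\in m\mf{O}$. For example, if $i\in\mf{O}$ with $i^2=-1$, then $3+4i$ has norm $5^2$ yet lies in $\ZZ+2\mf{O}$. Non-primitivity must therefore be excluded at every prime $q$ up to roughly $\ell^{k_1/2}$. The error terms $12k\ell^{(k+2)/2}$ of \cref{prop: max submax upper lower}, summed over that many suborders, swamp the main term.

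\textbf{What the paper does instead.} It never needs primitivity everywhere. Its key idea is to choose $\beta$ with $\disc(\beta)$ coprime to $\disc(\alpha)$. This imposes congruence conditions only at the finitely many primes dividing $\disc(\alpha)$. Since $\alpha\in\ZZ+q\mf{O}$ forces $q^2\mid\disc(\alpha)$, at every prime one of $\widetilde{\alpha},\widetilde{\beta}$ is nonscalar; the condition also makes $\Lambda$ of finite index.

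What remains is to produce elements with norm in $S$ lying in a prescribed coset $t+m'\mf{O}$, simultaneously at all $q\mid m'$; this is needed for both $\beta$ and $\gamma$. Counting norm-$\ell^k$ elements in suborders such as $\Lambda+q\mf{O}$ does not give this. A union bound over the submaximal orders containing $\Lambda$ can exceed the total even before error terms. For instance (say with $\ell,p\geq 5$), if $\widetilde{\Lambda}$ is split semisimple at both $q=2$ and $q=3$, the two Eichler suborders at each prime carry main-term shares $\tfrac{2}{3}$ and $\tfrac{1}{2}$. The paper uses the local--global principle for inhomogeneous theta functions, \cref{thm:local global inhomogeneous} via \cref{cor:local global}, to produce each of $\alpha$, $\beta$, $\gamma$. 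Your ``equidistribution'' step would have to be exactly such a coset statement.

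Separately, two generators fail for a global reason, not a local one at $2$: $\ZZ\langle a_1,a_2\rangle=\mf{O}$ forces $\mf{O}$ to contain an element of trace $0$ and norm $p$.
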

This is a special case of \cref{prop:number of generators}, which generalizes this result to arbitrary definite quaternion algebras $\mc{A}$ over $\QQ$ and replaces the set of allowable norms $\{\ell^k:k\geq 1\}$ with any infinite set of positive integers coprime to the discriminant of $\mc{A}$.

\subsection{Motivation: cryptography and otherwise}\label{subsec:motivation}
A primary motivation for these results arises from the perspective of cryptographic protocols based on supersingular isogenies and the associated isogeny graphs. These directed graphs are constructed as follows: one fixes two distinct primes $\ell \neq p$. The vertices of the graphs are supersingular $j$-invariants in characteristic $p$, of which there are about $p/12$, all of which are in $\FF_{p^2}$. Outgoing edges from a vertex $j_0$ correspond to isogenies of degree $\ell$ from a preselected elliptic curve with $j$-invariant $j_0$, considering two isogenies as the same if they have the same kernel.
This is thus a directed graph, which is $(\ell+1)$-regular for outgoing edges. %
This defines the supersingular $\ell$-isogeny graph $G(p, \ell)$. %
These graphs and their various generalizations have been studied intensively in recent years, largely motivated by their cryptographic applications (\cite{Goren_Love_SSEC_Survey} contains references to the relevant literature).
However, supersingular graphs were studied prior to their use in cryptography, notably by Mestre \cite{Mestre} in the context of calculating spaces of modular forms and Pizer \cite{PizerBAMS} in the context of Ramanujan graphs.

\medskip

In \cite{CGL}, Charles, Goren, and Lauter introduced a cryptographic hash function (henceforth the ``CGL hash function'') based on supersingular isogeny graphs, which was later suggested for use in post-quantum cryptography. %
While the jury is still out on the general applicability of this construction, %
there is no doubt that a host of fruitful mathematical questions arose, and many interesting developments concerning elliptic curves, complex multiplication and quaternion algebras have been made in that context. This paper contributes further to this growing field.

The basic construction of the CGL hash function %
is as follows: the input message, a string $M$ of 0's and 1's of large length, is to be converted
into a shorter string, the hash value. One chooses a large prime $p$ and a small prime $\ell$; for instance $\ell = 2$ and (currently) $p\approx 2^{500}$ are good choices. The original string $M$ is converted in some deterministic fashion into directions for walking the supersingular isogeny graph $G(p, \ell)$ starting at some publicly known initial vertex $j_0$ that is part of the protocol. The hash value is the final vertex $j$ the walk reaches, which in turn can be encoded in some fashion as a string of $0$'s and $1$'s (of length about $1000$, for the parameters mentioned above). 

\medskip

The key underlying hardness assumption for the above hash function and other cryptographic constructions based on supersingular isogenies is the computational difficulty of finding isogenies between supersingular  elliptic curves, often referred to as the {\em Isogeny Problem.} 
This problem was shown to be equivalent, under polynomial time reductions, to the \textit{Supersingular Endomorphism Ring Problem} \cite{HLMW2026}, building on earlier work \cite{KLPT,Wesolowski}. Given a prime $p$ and a supersingular elliptic curve $E$ defined over
$\mathbb{F}_{p^2}$, the Supersingular Endomorphism Ring Problem asks to find four endomorphisms of $E$ in an efficient representation that generate the endomorphism ring of $E$ as a lattice. 
Each cycle in a supersingular $\ell$-isogeny graph through a vertex $j_0$ produces an endomorphism of the associated elliptic curve $E_0$ by composing the isogenies associated to each edge in the cycle. Modulo certain details concerning curves with extra automorphisms, one can build a dictionary between cycles in $G(p,\ell)$ and endomorphisms of $\ell$-power degree. 
 In particular, one can ask whether such cycles generate the endomorphism ring of $E_0$, and if so, how many of them are needed. 

The Supersingular Endomorphism Ring Problem is also equivalent to that of determining, given $E$ and $p$ as above, the maximal order $\mf{O}$ %
in the rational quaternion algebra $\mc{A}_p$ ramified only at $p$ and $\infty$ which is isomorphic to the endomorphism ring of $E$. %
These different problems related to isogenies and the relevant literature on them are discussed in~\cite{CDGKRW}.
The computation of endomorphism rings was first studied by Kohel in his thesis \cite{KohelThesis}, where he proposed an approach based on finding cycles in $G(p,\ell)$. Later work determined necessary and sufficient conditions for two such cycles to generate a finite-index subring of $\mf{O}$ \cite{BCNEMP}. There are algorithms that use such a finite-index subring to compute the entirety of $\mf{O}$ (see e.g.~\cite{EHLMP20}), but the output of these algorithms is typically not a description of $\mf{O}$ purely in terms of cycles of $G(p,\ell)$.
In \cite{Goren_Love_Elts_Prescribed_Norm} it was shown that such cycles do generate the entirety of $\mf{O}$, but the results were not effective: no bound on the number of cycles required or the lengths of such cycles was given. \Cref{thm: explicit p and k} and \cref{prop:intro number of generators} give explicit bounds.

More broadly, one is led to a natural question in the theory of lattices: given an integral lattice $\Lambda$ 
and a set $S$ of positive integers, is $\Lambda$ generated (as a $\ZZ$-module) by vectors whose norms lie in $S$? General results concerning this can be found in \cite{Goren_Love_Elts_Prescribed_Norm, Goren_Love_SSEC_Survey}. To approach this problem one may consider the sublattice of $\Lambda$ spanned by elements with norm in $S$, and ask whether it equals $\Lambda$. This in turn leads to a natural follow-up question. Given a pair of integral lattices $\Lambda_1 \supsetneq \Lambda_2$ 
of the same rank with associated theta functions $\theta_{\Lambda_1}, \theta_{\Lambda_2}$, their difference $\theta_{\Lambda_1}-\theta_{\Lambda_2}$ is a nonzero modular form of a certain level and weight determined effectively by properties of the lattices with non-negative Fourier coefficients. One can then ask, %
how large are the coefficients of the terms of this form with exponent in $S$? %
While this question may be posed for arbitrary lattices, our results in this direction rely on the multiplicative structure of an order in a quaternion algebra, a property general lattices lack.
In the case of a singleton $S=\{\ell^k\}$ for an appropriate value of $k$, we prove that the associated coefficient is positive when $\Lambda_1$ is a maximal order in a rational definite quaternion algebra and $\Lambda_2$ is any suborder (\cref{prop: intro max submax upper lower} below); this is the key ingredient in the proof of \cref{thm: explicit p and k}.

\subsection{Discussion of results} 

We now turn to a more in depth discussion of the results of this paper. Let $\mc{A}$ be any definite quaternion algebra over $\QQ$, and $\mf{O}$ a maximal order in $\mc{A}$. Our aim is to prove effective results on generating sets of $\mf{O}$ consisting of elements with norm a power of $\ell$.

\subsubsection{Bounding norms}

Our main result is the existence of a generating set $C$ for $\mf{O}$ with elements whose norm is an explicitly bounded power of a prime $\ell$ (\cref{thm:generation bound}). To obtain this, we prove explicit bounds on $\repnum{n}{\O}$, the number of elements of norm $n$ in an order
$\O\subseteq\mc{A}$. Let $\phi$ denote Euler's totient function. %
\begin{proposition}[cf.~\cref{prop: max submax upper lower}]\label{prop: intro max submax upper lower}
    Let $\O\sus\mc{A}$ be an order,  $\ell\nmid\disc(\mc{A})$ a prime, and $k\geq 1$ be an integer. If $\O$ is maximal then
    \[\repnum{\ell^k}{\O}\geq \frac{24}{\phi(\disc(\mc{A}))}\frac{\ell^{k+1}-1}{\ell-1}-12k\ell^{(k+2)/2},\]
    and if $\O$ is not maximal then 
    \[\repnum{\ell^k}{\O}\leq \frac{16}{\phi(\disc(\mc{A}))}\frac{\ell^{k+1}-1}{\ell-1}+12k\ell^{(k+2)/2}.\]
\end{proposition}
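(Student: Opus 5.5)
The plan is to write the theta series of $\O$ as an Eisenstein part plus a cusp form, and to treat the two separately. The theta series is $\theta_\O(q)=\sum_{n\ge0}\repnum{n}{\O}q^n$, a modular form of weight $2$ on $\Gamma_0(N)$, where $N=\disc(\O)$ is the reduced discriminant. By the theory of Brandt matrices (Eichler, Pizer), the average of $\theta_{I}$ over the right ideal classes $I$ of $\O$, weighted by $1/|\O_I^\times|$ (where $\O_I$ is the left order of $I$), is an Eisenstein series. The differences $\theta_I-\theta_J$ are cusp forms. So we will write
\[\theta_\O = E_\O + f_\O,\]
where $E_\O$ is the Eisenstein projection and $f_\O$ is a cusp form.

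\textbf{Eisenstein part.} Using the mass formula $\sum_I 1/|\O_I^\times| = \frac{1}{24}\prod_{p\mid \disc\mc{A}}(p-1)\cdot(\text{local index factors})$, the coefficient of $E_\O$ at $n=\ell^k$ with $\ell\nmid N$ is a constant times $\sigma(\ell^k)=\frac{\ell^{k+1}-1}{\ell-1}$.

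For $\O$ maximal, the constant is $|\O^\times|\cdot\frac{1}{\text{mass}}\cdot\frac{1}{|\O^\times|}$ times the local factors, which equals $24/\phi(\disc\mc{A})$. Here the factors at primes dividing $\disc\mc{A}$ restricted to $\ell$-power coefficients contribute trivially, since $\ell\nmid\disc\mc{A}$.

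For non-maximal $\O$, the mass is multiplied by the index factor $[\mf{O}^\times_p:\O^\times_p]$-type local terms, each $\ge \frac{3}{2}$. The minimum occurs at an index-$2$ Eichler-type suborder at $p=2$, or similar. This gives the constant $\le \frac{24}{\phi}\cdot\frac{2}{3}=\frac{16}{\phi(\disc\mc{A})}$. Making this uniform lower bound on the local index factor precise, over all non-maximal local orders including non-Gorenstein and non-Eichler ones, is one delicate point. I would cite the local volume computations (e.g. via Brzezinski's classification, or just the inclusion $\O\subseteq$ some maximal suborder-minimal overorder, with index bounded below).

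\textbf{Cusp part.} Coefficients of the cuspidal part at $\ell^k$ are bounded via Brandt matrices. The $\ell^k$-th coefficient of $\theta_\O$ equals $|\O^\times|$ times the diagonal entry $B(\ell^k)_{\O,\O}$. The cusp part is the diagonal entry of $B(\ell^k)$ restricted to the orthogonal complement of the Eisenstein vector.

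By the structural results on the Brandt matrix algebra (commutative and self-adjoint for the weighted inner product when $\ell\nmid N$), $B(\ell^k)$ is diagonalizable with real eigenvalues. Its eigenvalues on the cuspidal part are Hecke eigenvalues of weight-$2$ newforms (or oldforms) of level dividing $N$. These satisfy Deligne's bound $|\lambda_f(\ell^k)|\le (k+1)\ell^{k/2}$.

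The diagonal entry is a convex combination of the eigenvalues, with weights equal to the squared norms of the projections of the basis vector $e_\O$. The total cuspidal weight is at most $1$ in the normalized inner product. Multiplying by $|\O^\times|\le 24$ gives the error term $\le 24(k+1)\ell^{k/2}$, which is at most $12k\ell^{(k+2)/2}$ since $2(k+1)\le k\ell$ for $\ell\ge2$ and $k\ge1$ (check $k=1$, $\ell=2$: $4\le4$).

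\textbf{Main obstacle.} I expect the hardest step to be making the spectral bound rigorous for non-maximal, possibly non-Eichler orders, where the Brandt module need not decompose via newforms cleanly. For that step I would rely on the earlier structural results on the Brandt algebra and Jacquet–Langlands, so that every cuspidal eigenvalue is a Hecke eigenvalue of a weight-$2$ cusp form, to which Deligne applies.
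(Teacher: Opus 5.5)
Your proposal has a genuine gap. The hypothesis is only $\ell\nmid\disc(\mc{A})$, so for non-maximal $\O$ the prime $\ell$ may divide $[\mf{O}:\O]$, and hence the level $N$. Examples are $\O$ Eichler of index $\ell$, or $\O=R+\ell\mf{O}$ with $R/\ell R\simeq\FF_{\ell^2}$. Such suborders always exist and cannot be avoided in the application to \cref{thm:generation bound}. Your Eisenstein computation is done only for $\ell\nmid N$, and your cuspidal bound rests on Deligne's bound; both fail here.

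Write $r=\ell$ and $\sigma(r^k)=\frac{r^{k+1}-1}{r-1}$. By \cref{lem:eisenstein coeffs} and the mass formula, the genus coefficient at $r^k$ is $\frac{24}{\phi(\disc(\mc{A}))}\cdot\frac{2\sigma(r^k)-1}{r+1}$ for an Eichler order of index $r$, and $\frac{24}{\phi(\disc(\mc{A}))}\cdot\frac{r^k-r}{r(r-1)}$ for index $r^2$. This is not the maximal-order coefficient divided by a mass ratio, because the local ideal count at $r$ itself changes. The constant $16$ comes from the first of these at $r=2$, where it equals $\frac{16}{\phi(\disc(\mc{A}))}\sigma(2^k)-\frac{8}{\phi(\disc(\mc{A}))}$. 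It does not come from a ``local index factor $\geq 3/2$'': the smallest mass ratio of a submaximal order is $2$, and for submaximal $\O$ with $\ell\nmid N$ the constant is at most $12$.

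On the cuspidal side, Brandt eigenvalues at $r$ need not be Hecke eigenvalues, and $\Theta(u,u)$ need not even be a $T_r$-eigenform.
\begin{itemize}
\item For index $r$ one has $\Theta(u,u)=f(q)+r\delta f(q^r)$, with $f$ a newform and $\delta=\pm1$ the $W_{\mf{r}}$-eigenvalue, so $\lambda_{r^k}(u)=b_{r^k}+r\delta b_{r^{k-1}}$ (\cref{lem:index r eigenvalues}).
\item For index $r^2$, a cuspidal eigenvector with $Eu\neq 0$ ($E$ the extension map to $\mf{M}(\mf{O})$) has $\lambda_{r^k}(u)=(r^2-r)b_{r^{k-2}}$ (\cref{prop:Eu eigenvector}, \cref{lem:index r2 eigenvalues}). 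So already $\lambda_{r^2}(u)=r^2-r$, which exceeds the Deligne size $3r$ once $r\geq 5$.
\end{itemize}
This is why the error term is $12k\ell^{(k+2)/2}$ rather than of order $(k+1)\ell^{k/2}$. Establishing these formulas requires three ingredients:
\begin{itemize}
\item commutativity of the \emph{full} Brandt algebra, including the $B_{r^k}$ (\cref{thm:Brandt algebra commutative});
\item the recurrences $B_{r^{n+1}}=B_{r^n}(B_r-rW_{\mf{r}})-rB_{r^{n-1}}$ and $B_{r^n}=S\widetilde{B}_{r^{n-2}}E$;
\item a comparison with the Atkin--Lehner decomposition of $\Theta(u,u)$.
\end{itemize}
An appeal to Jacquet--Langlands does not supply these.

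There are three further problems.

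First, you miss the reduction that makes the non-maximal case tractable. Every non-maximal order lies in a submaximal order $\O'$, of one of the four explicit types in \cref{lem:max_suborders}, and $\repnum{\ell^k}{\O}\leq\repnum{\ell^k}{\O'}$. So the ``delicate point'' you flag about arbitrary non-Eichler or non-Gorenstein orders never has to be confronted.

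Second, even for submaximal orders the orthogonal complement of the genus vector $e$ need not be cuspidal. Also, $\theta_I-\theta_J$ is a cusp form only when $I$ and $J$ lie in the same genus. For index an odd ramified prime $p$ there is a second genus vector $e'$ (\cref{lem:2dim genus}) with $\lambda_n(e')=\left(\frac{n}{p}\right)\lambda_n(e)$. Its eigenvalues are of Eisenstein size, so it must go into the non-cuspidal part, and the non-cuspidal coefficient at $\ell^k$ then depends on $\left(\frac{\ell^k}{p}\right)$.

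Third, for maximal $\O$ (where $\ell\nmid N$ automatically) your outline is essentially the paper's argument, but the closing arithmetic is wrong. The inequality $2(k+1)\leq k\ell$ fails for $\ell=2$ and every $k$; at $k=1$ it reads $4\leq 2$. So a cuspidal weight of $24$ cannot be absorbed. Use instead that the cuspidal weight is $|\O^\times|$ minus the genus weight. This is $0$ for the three orders with $|\O^\times|>6$ (discriminants $2,3,4$, all of class number one, which the paper handles through their explicit theta series). It is at most $6$ otherwise, and $6(k+1)\ell^{k/2}\leq 12k\ell^{(k+2)/2}$.
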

\cref{thm:generation bound} (and \cref{thm: explicit p and k}) follows fairly directly from these bounds. Namely, let $\ZZ[C]$ denote the $\ZZ$-algebra generated by the set $C$ consisting of all the elements of $\mf{O}$ of norm $\ell^k$. %
The assumptions on $k$ ensure that $\mf{O}$ contains strictly more elements of norm $\ell^k$ than any non-maximal order, so the order $\ZZ[C]$ must be maximal, that is, $\ZZ[C]=\mf{O}$.

We have thus reduced to a question about bounding $a_n(\O)$, the number of elements of norm $n$ in an order $\O\sus\mc{A}$. To motivate our approach, we first describe a classical method for counting the number of elements of each norm in an integral lattice, and then explain where the approach breaks down and the modifications we make. We begin by considering the \emph{theta function} of $\O$,
\[\theta_\O(q):=\sum_{n=0}^\infty a_n(\O)q^n.\]
If we evaluate this expression at $q=e^{2\pi i z}$ for $z$ in the upper half plane, the result is a modular form of weight $2$ and level $\Gamma_0(N)$ %
for some positive integer $N$ which depends only on the structure of $\O$ as a lattice. Classical results about modular forms tell us that there exists a decomposition
\[\theta_\O(q)=E(q)+\sum_{i=1}^m c_if_i(q^{d_i})\]
where $E$ is an \emph{Eisenstein series}, and for each $i=1,\ldots,m$, $f_i$ is a \emph{newform} of some level $M_i$, $M_i\mid N$, $d_i\mid (N/M_i)$, and $c_i$ is a constant. We can therefore compute $a_n(\O)$ by computing the coefficients of each modular form on the right. Eisenstein series have explicitly computable coefficients, and while the coefficients of newforms are more mysterious, their growth is precisely controlled by the \emph{Deligne bound}. This approach allows us to write $a_n(\O)$ as the sum of a main term, namely the coefficient of $q^n$ in $E(q)$, and an error term coming from the newforms. When we restrict to a set of positive integers $n$ satisfying certain mild constraints (for example, being coprime to $N$ 
is sufficient), we can then prove that the main term dominates the error term for sufficiently large $n$, giving us asymptotic formulas for $a_n(\O)$.

The downside to this approach is that a priori, we have no control over the coefficients $c_i$, which means that it is impossible to obtain effective bounds using this approach; this problem arises because we only used the fact that $\theta_\O$ lies \emph{somewhere} in the vector space of modular forms with given weight and level. Instead we consider a slightly modified approach, which takes advantage of the algebraic structure of quaternion orders in order to show that $\theta_\O$ lies fairly close to the origin in this space of modular forms.

To do this we define the \emph{Brandt module} $\mf{M}(\O)$ of $\O$, which is a vector space with basis vectors $v_1,\ldots,v_h$ corresponding to invertible right-ideal classes $[I_1],\ldots,[I_h]$ of $\O$; we typically order the basis so that $[I_1]$ is the principal right-ideal class. The Brandt module comes equipped with \emph{Brandt matrices} $B_n$ for all $n\geq 0$, which are linear maps on $\mf{M}(\O)$ that measure relative containment of ideal classes (\cref{subsec:brandt}). The structure of the Brandt module together with its Brandt matrices may be thought of as a noncommutative analogue of the ideal class group of an order in a number field. In the case that $\O$ is a maximal order in $\mc{A}_p$, the Brandt matrix $B_\ell$ for prime $\ell\neq p$ is an adjacency matrix for $G(p,\ell)$. For more information see for example \cite{GrossCMS}. 
An important result we need about the Brandt algebra is the following.
\begin{theorem}[\cref{thm:Brandt algebra commutative}]\label{thm:intro Brandt algebra commutative}
    Let $\O$ be an order in a quaternion algebra over $\QQ$. The Brandt matrices $B_n$ for all $n\geq 0$ pairwise commute.
\end{theorem}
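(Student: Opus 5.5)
The plan is to realize the Brandt matrices as Hecke operators on a space of functions on a finite double coset space, and to derive commutativity from a Gelfand-type anti-involution together with symmetry of the Brandt matrices with respect to a suitable inner product.

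\emph{Setup.} Let $\hat\O=\O\otimes\hat{\ZZ}$ and $\hat{\mc{A}}=\mc{A}\otimes\hat{\QQ}$. The invertible right ideal classes of $\O$ are in bijection with $\mc{A}^\times\backslash\hat{\mc{A}}^\times/\hat\O^\times$. The Brandt module $\mf{M}(\O)$ is then identified with the space of functions on this finite set. Under this identification, $B_n$ corresponds (up to transpose, depending on the convention of \cref{subsec:brandt}) to the Hecke operator attached to the double coset union
\[T_n=\{x\in\hat\O : \nrd(x)\hat{\ZZ}=n\hat{\ZZ}\},\]
which is a union of $\hat\O^\times$-double cosets. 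One has to check that the entries $(B_n)_{ij}$, namely counts of elements $\alpha\in I_i^{-1}I_j$ (or the appropriate variant) of reduced norm $n\,\nrd(I_j)/\nrd(I_i)$ divided by $|\O_j^\times|$, agree with the number of cosets $x\hat\O^\times\subseteq T_n$ mapping class $j$ to class $i$. This is a standard local–global count. It suffices to show that the characteristic functions of the sets $T_n$ commute in the Hecke algebra $\mathcal{H}(\hat{\mc{A}}^\times,\hat\O^\times)$, since the map to endomorphisms of $\mf{M}(\O)$ is an algebra homomorphism.

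\emph{Reduction to local statements.} The Hecke algebra is the restricted tensor product of the local algebras $\mathcal{H}_q=\mathcal{H}(\mc{A}_q^\times,\O_q^\times)$, and $\1_{T_n}$ factors as $\prod_q \1_{T_{q^{v_q(n)}},q}$. Factors at distinct primes commute automatically, so it suffices to show that for each prime $q$ the elements $\1_{T_{q^a},q}$, for $a\geq 0$, pairwise commute in $\mathcal{H}_q$.

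\emph{Local commutativity.} Use Gelfand's trick with the canonical involution $x\mapsto\bar x$, which is an anti-automorphism of $\mc{A}_q^\times$. If $\O_q$ is stable under conjugation, which holds for every order since $\bar x=\trd(x)-x\in\O$, then $f\mapsto f^\iota$ with $f^\iota(x)=f(\bar x)$ is an anti-automorphism of $\mathcal{H}_q$. Because $\nrd(\bar x)=\nrd(x)$ and $\bar{\O}_q=\O_q$, each $T_{q^a}$ is stable under the involution, so $\1_{T_{q^a}}^\iota=\1_{T_{q^a}}$. It remains to see that the product $\1_{T_{q^a}}*\1_{T_{q^b}}$ is also $\iota$-invariant. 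Once that is established,
\[\1_{T_{q^a}}*\1_{T_{q^b}}=(\1_{T_{q^a}}*\1_{T_{q^b}})^\iota=\1_{T_{q^b}}^\iota*\1_{T_{q^a}}^\iota=\1_{T_{q^b}}*\1_{T_{q^a}}.\]

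\emph{Main obstacle.} The invariance of the product is where I expect the difficulty, because for non-maximal orders a single double coset need not be $\iota$-stable. Gelfand's argument would require every double coset to be stable, and that can fail for general orders.

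\emph{Fallback.} If this step cannot be carried out, I would argue directly on the global matrices. Define the inner product on $\mf{M}(\O)$ by $\langle v_i,v_j\rangle=\delta_{ij}|\O_i^\times|$. With respect to it, $B_n$ is self-adjoint: conjugation $\alpha\mapsto\bar\alpha$ gives a bijection between the relevant elements of $I_i^{-1}I_j$ and of $I_j^{-1}I_i$, since $\bar\alpha$ carries the norm factor appropriately. Next, establish the multiplicativity relation
\[B_mB_n=\sum_{d} c_d\, B_{mn/d^2}\]
over suitable divisors $d$, or at least show that $B_mB_n$ is a linear combination of the matrices $B_k$ with symmetric coefficients. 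This should follow by counting factorizations of elements of norm $mn$ through intermediate ideals. Then self-adjointness gives
\[B_mB_n=(B_mB_n)^*=B_n^*B_m^*=B_nB_m,\]
which is commutativity. The symmetric expansion is the step I would expect to take the most care when $q$ divides the level.
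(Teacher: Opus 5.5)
There is a genuine gap. The step you single out as the ``main obstacle'' is the whole content of the theorem, and neither of your two routes supplies it.

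In the Gelfand approach, $\iota$ is an anti-automorphism fixing each $\1_{T_{q^a}}$, so $(\1_{T_{q^a}}*\1_{T_{q^b}})^\iota=\1_{T_{q^b}}*\1_{T_{q^a}}$. Thus $\iota$-invariance of the product is literally equivalent to the commutativity you are trying to prove, and the involution has reduced nothing. This is the point of \cref{prop:brandt commutative is tricky}: for non-maximal orders, stability of the generators under the standard involution cannot by itself force commutativity.

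The fallback is circular in the same way, since $B_mB_n$ is self-adjoint if and only if $B_mB_n=B_nB_m$. Moreover, the mechanism you propose is false at primes dividing the index, because $B_mB_n$ is in general not a linear combination of Brandt matrices. For an Eichler order of index $r$, \cref{lem:countideals} shows that $\1_{T_r}*\1_{T_r}$ takes the values $1$, $r+1$, $2r+1$ at elements of norm valuation $2$ lying in $\O_r\setminus\mf{r}_r$, $\mf{r}_r\setminus r\O_r$, $r\O_r$ respectively. So it is not constant on $T_{r^2}$ and is not in the span of the $\1_{T_{r^k}}$. Globally this is the identity $B_r^2=B_{r^2}+rW_{\mf r}B_r+r$ (\cref{eq:recurrence} with $n=1$), which involves the Atkin--Lehner-type operator $W_{\mf r}$.

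What you need is a pointwise local identity. Take $x\in\O_\ell$ with $\val(\nrd(x))=n$, and use the usual normalization of convolution. The value of $\1_{T_{\ell^m}}*\1_{T_{\ell^{n-m}}}$ at $x$ is $N_{\ell^m}(x)$, the number of right $\O_\ell$-ideals of norm $\ell^m$ containing $x$; globally this is \cref{lem:count by fibre}. The value of $\1_{T_{\ell^{n-m}}}*\1_{T_{\ell^m}}$ at $x$ is $N_{\ell^{n-m}}(x)=N_{\ell^m}(\bar x)$, via the bijection $(y,z)\mapsto(\bar z,\bar y)$ on factorizations $x=yz$. So the theorem is equivalent to
\[N_{\ell^m}(x)=N_{\ell^m}(\bar x)\qquad\text{for all } x \text{ with } \nrd(x)\neq 0,\]
which is \cref{prop:Nmx eq Nmbarx}. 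This holds even though individual double cosets are not stable under the involution.

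The paper proves it with the explicit map $y\O_\ell\mapsto\bar x x^{-1}y\O_\ell$. This lands in $\O_\ell$ whenever $\val(\nrd(x))\le m+\val(\trd(x))$, because
\[\bar x x^{-1}y=\frac{\trd(x)\nrd(y)}{\nrd(x)}\,\overline{y^{-1}x}-y.\]
The general case reduces to this one in two ways. When $\val(\trd(x))\neq m$, replace $x$ by $x+\ell^m$; this is allowed because membership in an ideal of norm $\ell^m$ depends only on $x$ modulo $\ell^m\O_\ell$. Otherwise, exchange the roles of $m$ and $n-m$.

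Finally, $B_0$ does not arise from any double coset in $\hat{\mc{A}}^\times$, so your Hecke-algebra framework does not cover it. It needs the separate argument from self-adjointness and constant column sums given in \cref{lem: B0}.
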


\begin{remark}
    We could not find this result in the literature; most sources only prove this for maximal orders $\O$, or for operators $B_n$ with $n$ coprime to the discriminant of $\O$. We note that the behavior of Brandt matrices $B_\ell$ for primes $\ell$ dividing the discriminant can be quite subtle. Many existing approaches to proving results of this form do not apply in the cases we need: see \cref{ex:brandt commutative is tricky}. 
\end{remark}

We then define a symmetric bilinear pairing $\Theta$ on $\mf{M}(\O)$ valued in the space $\mc{M}_2(\Gamma_0(N))$ of modular forms of weight $2$ and level $\Gamma_0(N)$. %
This map generalizes the notion of theta function to the entire Brandt module: $2\Theta(v_i,v_j)$ for each $i,j$ is the theta function of some lattice in $\mc{A}$, with $2\Theta(v_1,v_1)=\theta_\O$ (\cref{lem:Theta pairing}). 
Under this map, simultaneous eigenvectors for all Brandt operators are sent to modular forms that are eigenvectors for all Hecke operators $T_n$ with $n$ coprime to $N$ (cf. \cref{lem:Brandt_Hecke_equal_spectra}). We can therefore prove the following decomposition, which should be thought of as a modification of the classical decomposition into Eisenstein series and newforms.
\begin{theorem}[\cref{thm:eigenform_coeffs}]\label{thm:intro eigenform_coeffs}
    Let $\O$ be an order in a definite quaternion algebra over~$\QQ$, and let $h$ be the class number of $\O$. For some positive integer $N$, %
    there exist modular forms $f_1,\ldots,f_h\in \mathcal{M}_2(\Gamma_0(N))$ such that
    \begin{itemize}
        \item each $f_i$ is an eigenvector for the Hecke operator $T_n$ for all $n$ coprime to $N$,
        \item the coefficient of $q$ in each $f_i$ is equal to $1$, and
        \item there exist $c_1,\ldots,c_h\geq 0$ with $c_1+\cdots+c_h=|\mathcal{O}^\times|$ and
        \[
        \theta_\O(q)=c_1f_1(q)+\cdots+c_hf_h(q).
        \]
    \end{itemize}
\end{theorem}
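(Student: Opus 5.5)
We will work inside the Brandt module $\mf{M}(\O)$, equipped with the inner product making the Brandt matrices self-adjoint, and transport the problem to modular forms via the pairing $\Theta$.

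\textbf{Setup.} Let $w_i=|\O_i^\times|/2$, where $\O_i$ is the left order of $I_i$. The standard entries of $B_n$ count elements of norm $n\nrd(I_j)/\nrd(I_i)$ in $I_j^{-1}I_i$, modulo $\O_j^\times$. They satisfy $w_jB_n(i,j)=w_iB_n(j,i)$. So $B_n$ is self-adjoint for the weighted inner product $\langle v_i,v_j\rangle=w_i\delta_{ij}$ on $\mf{M}(\O)\otimes\RR$; this holds for every $n$, including $n$ sharing factors with the discriminant, since it only uses $\nrd(\bar x)=\nrd(x)$.

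\textbf{Simultaneous diagonalization.} By \cref{thm:intro Brandt algebra commutative} the $B_n$ commute. Being self-adjoint, they are simultaneously diagonalizable by an orthogonal basis $e_1,\ldots,e_h$ of real simultaneous eigenvectors. Write $B_ne_k=\lambda_k(n)e_k$.

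\textbf{The modular forms.} Using \cref{lem:Theta pairing}, we express $\Theta(v_1,\cdot)$ in terms of the Brandt matrices: $\Theta(v_i,v_j)=\tfrac12\delta_{ij}w_j^{-1}\cdot(\text{const})+\sum_{n\ge1}\langle B_nv_i,v_j\rangle' q^n$, with the normalization chosen so that $2\Theta(v_1,v_1)=\theta_\O$. We then expand $v_1=\sum_k \alpha_k e_k$ and set
\[
c_k = 2\alpha_k^2\langle e_k,e_k\rangle\cdot(\text{normalizing constant}), \qquad f_k=\sum_n \lambda_k(n)q^n+(\text{constant term}).
\]
Then $\theta_\O=2\Theta(v_1,v_1)=\sum_k c_k f_k$. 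Each $c_k$ is a square times a positive norm, so $c_k\ge0$. If $\alpha_k=0$, the term drops and we may take any eigenform with leading coefficient $1$, or just discard it, since the statement allows $c_k=0$.

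The coefficient of $q$ in $f_k$ is $\lambda_k(1)=1$ because $B_1=\mathrm{Id}$. Comparing $q^1$-coefficients then gives $\sum c_k=a_1(\O)=|\O^\times|$, which is exactly the claimed normalization $c_1+\cdots+c_h=|\O^\times|$.

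\textbf{Modularity and the Hecke property.} That each $f_k$ lies in $\mathcal{M}_2(\Gamma_0(N))$ follows because $\Theta(e_k,e_k)$ is a linear combination of theta series of lattices, and $f_k$ is proportional to it. That $f_k$ is a $T_n$-eigenform for $(n,N)=1$ comes from \cref{lem:Brandt_Hecke_equal_spectra}: for such $n$, $\Theta(B_nv,w)=T_n\Theta(v,w)$, and the $q^m$-coefficients of $\Theta(e_k,e_k)$ are $\lambda_k(m)\|e_k\|^2$.

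\textbf{Main obstacle.} The hardest step is pinning down the constant terms and normalization so that the $q$-coefficient of $f_k$ is exactly $1$ and $2\Theta(v_1,v_1)$ matches $\theta_\O$ exactly. The difficulty is that Brandt entries count elements modulo units, introducing factors of $w_j$. I would handle this by defining the inner product with weights $w_i$ and checking the identity $a_n(\O)=2w_1B_n(1,1)$ directly.
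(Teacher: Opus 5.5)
Your proposal is correct and is essentially the paper's proof: take an orthonormal simultaneous eigenbasis for all $B_n$ with $n\ge 0$, which is where the full commutativity theorem is used. Then set $f_k=\Theta(e_k,e_k)/\langle e_k,e_k\rangle$, expand $v_1$ to get $\theta_\O=2\Theta(v_1,v_1)=\sum_k 2\alpha_k^2\langle e_k,e_k\rangle f_k$, and compare coefficients of $q$.

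Two points need tidying.
\begin{itemize}
\item You should state the step you leave implicit: $\Theta(e_k,e_l)=\sum_{n\ge 0}\lambda_l(n)\langle e_k,e_l\rangle q^n=0$ for $k\ne l$. This vanishing needs the $e_k$ to be eigenvectors of $B_0$ and of the $B_n$ with $\gcd(n,N)>1$, not just of the $B_n$ with $n$ coprime to $N$.
\item Use the paper's normalization $B_n(i,j)=\frac{1}{2w_i}|\cdots|$ and $\Theta(v,w)=\sum_{n\ge0}\langle v,B_nw\rangle q^n$. With it, your separate ``constant term'' bookkeeping becomes unnecessary, because the constant term is just $\lambda_k(0)$. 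It also fixes an inconsistency in your setup: with entries counted modulo $\O_j^\times$ (the transposed convention), the relation $w_jB_n(i,j)=w_iB_n(j,i)$ makes $B_n$ self-adjoint for weights $w_i^{-1}$, not $w_i$.
\end{itemize}
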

The new key input here is the non-negativity of the coefficients $c_i$, from which we can conclude that the cuspidal contribution to this sum is effectively bounded. 

Unfortunately, because the forms $f_i$ occurring in this decomposition are not necessarily Eisenstein series or newforms, we do not have as precise control over their $q$-expansions. 
To handle this, we restrict our attention to maximal orders as well as \emph{submaximal orders}, those orders $\O$ that are contained in a maximal order $\mf{O}$ with no other order strictly between the two. We classify the possible structures of submaximal orders (\cref{lem:max_suborders}) and then we consider the possible $q$-expansions that the forms $f_i$ from \cref{thm:intro eigenform_coeffs} may take in each case. This part of the proof is by far the most technical, but the end result is the recovery of a similar decomposition as the one found in the classical case: some forms are ``Eisenstein-like'' in that their coefficients are explicitly computable (\cref{sec:eisenstein part}), and others are ``newform-like'' in that their coefficients can be effectively bounded (\cref{sec:cusp part}). Putting these results together yields explicit lower bounds on $a_n(\mf{O})$ when $\mf{O}$ is maximal, and explicit upper bounds on $a_n(\O)$ when $\O$ is submaximal; because any non-maximal order is contained in a submaximal order, this upper bound then applies to arbitrary non-maximal orders, which gives the desired bounds as in \cref{prop: intro max submax upper lower}.

\subsubsection{Bounding number of generators}
\;
Finally, we turn to bounding the number of elements in a generating set, namely \cref{prop:number of generators}. For this proof we largely follow the methods developed in \cite{Goren_Love_SSEC_Survey}, namely the use of a local-global principle for lattice cosets \cite[Theorem 6.6.1]{Goren_Love_SSEC_Survey}. A direct application of this local-global principle can easily produce a $\ZZ$-algebra generating set $C$ for $\mf{O}$ with $|C|\leq 6$. To get the bound down to three (the best possible, see \cref{rmk:number gens optimal}) requires one key new idea, which is to produce two elements $\alpha,\beta\in\mf{O}$ which generate quadratic orders with coprime discriminants; after this, the third element can be selected so that locally, prime by prime, it fills out the rest of~$\mf{O}$. The main tool that allows this process to work is \cref{lem:extend basis}, which says that given a local order $\mf{O}_\ell$ and an element $x\in\mf{O}_\ell$ such that $1,x$ can be completed to a basis of $\mf{O}_\ell$, it is in fact possible to complete it to a basis of the form $1,x,y,xy$ where $y$ has (almost) any desired norm. We note also that the proof shows that $\mf{O}$ is generated as a $\ZZ$-module by at most seven elements; see the discussion following \cref{prop:number of generators}.

\subsection{Outline}
The proof of \cref{thm:generation bound} (of which \cref{thm: explicit p and k} is a special case) is the focus of  \cref{subsec:quaternion orders}--\cref{sec:final counts}. In \cref{subsec:quaternion orders} we introduce the tools needed from the theory of quadratic forms, lattices, and quaternion orders, building up to the definition of Brandt matrices. \cref{app:commutativity} proves commutativity of the Brandt algebra. \cref{subsec:submaximal} classifies submaximal orders, and then proves special properties about the Brandt algebras of specific types of submaximal orders.

In \cref{sec:modular forms} we turn to the theory of modular forms, including an introduction of the theta operator and its properties. \cref{sec:map to M2} proves \cref{thm:intro eigenform_coeffs} and studies the Fourier coefficients of modular forms coming from simultaneous eigenvectors of Brandt matrices of maximal or submaximal orders. The above work culminates in \cref{sec:final counts}, which proves bounds on the number of elements of norm $\ell^k$ in a quaternion order and uses this to prove \cref{thm:generation bound}.

We note that \cref{sec:brandt algebra relations} and \cref{sec:cusp part}  are devoted to careful analysis of special cases: these sections investigate Brandt algebras of specific submaximal orders and the $q$-expansions of associated theta functions. The reader aiming to understand the big picture can come back to these sections as needed. %

In \cref{sec: generating set size} we prove \cref{prop:number of generators}. In \cref{sec:experiment} we report on some computational experiments.

\subsection{Acknowledgments}

This project began at the Banff International Research Station (BIRS) Workshop 25w5500, \emph{Isogeny Graphs in Cryptography}. The authors are very grateful to the organizers of this workshop: Travis Morrison, Victoria de Quehen, Jason LeGrow, and Chloe Martindale. The project was led by Jonathan Love, who was supported by ERC Starting Grant 101076941 (`\textsc{GAGARIN}'). Eisentr\"ager was partially supported by National Science Foundation award CNS-2521084. Goren was partially supported by NSERC grant 223148. Iezzi was partially supported by the French National Research Agency (ANR) under the "France 2030” program (ANR-15-IDEX-02), within the LabEx PERSYVAL, and under the grant ANR-20-CE40-0013. Kir was supported by the Agence Nationale de la Recherche under grants ANR-22-PETQ-0008 (PQ-TLS). Kirimli was partially supported by EPSRC through grant
number EP/V011324/1. Mahaney was supported by the Julian Chin Fellowship from the National Security Institute at Virginia Tech. Park was partially supported by DMS-2152182. Sabitova was supported in part by PSC-CUNY Award No. TRADB-56-58, jointly funded by The Professional Staff Congress and The City University of New York.   %

\subsection{AI disclosure}
An initial complete draft of the paper was written without AI usage, with the exception of the use of VSCode autocomplete for writing code to run computational experiments. Before releasing the paper, we uploaded the manuscript to Claude and ChatGPT for a final round of proofreading. In addition to some spelling and typesetting corrections, this process identified a logical gap in the proof of \cref{prop:Eu eigenvector}; we corrected this gap ourselves.

\subsection{Notation}

Here we collect any notation that is used in more than one section, together with either our running definition or convention, or a reference to the place where it is defined.
\begin{itemize}
    \item $\ell$ is a prime number
    \item $\FF_{\ell}$ is a finite field with $\ell$ elements
    \item $\overline{\FF}_\ell$ is an algebraic closure  of $\FF_{\ell}$
    \item $\QQ_\ell$ is the field of $\ell$-adic numbers, with valuation ring $\ZZ_\ell$
    \item $\QQ_{\ell^2}$ is the (unique up to isomorphism) quadratic unramified extension of $\QQ_\ell$, with valuation ring $\ZZ_{\ell^2}$
    \item $\mc{A}$ is a definite quaternion algebra over $\QQ$
    \item $\Lambda$ is a lattice in $\mc{A}$; we write $\O$ for orders, $I,J$ for invertible right $\O$-ideals, and $\mf{O}$ for maximal orders.
    \item $p$ is a prime number dividing $\disc(\mc{A})$ (\cref{subsec:quaternion lattices}) 
    \item $r$ is a prime number dividing $\disc(\O)$ (\cref{subsec:quaternion lattices}) but not dividing $\disc(\mc{A})$
    \item $\mc{A}_\ell:=\mc{A}\otimes_\QQ \QQ_\ell$ and $\Lambda_\ell:=\Lambda\otimes_{\ZZ}\ZZ_\ell$ are completions at $\ell$. One mild abuse of notation is that we also use subscripts $I_1,I_2,\ldots$ for lists of invertible right $\O$-ideals, in which case we write $(I_i)_\ell$ for the completion. We avoid using both unindexed and indexed ideals in the same section, so the meaning of $I_2$, for instance, should be clear from context.
    \item $h$ is the class number of $\O$ (\cref{subsec:ideals})
    \item For $A\in M_n(R)$ (the ring of $n\times n$ matrices with entries in the commutative ring $R$), $A(i,j)$ is the entry of $A$ in row $i$ and column $j$
    \item $q=e^{2\pi iz}$ for $z$ in the upper half plane of $\CC$ (\cref{eq:q expansion})
    \item $\langle\cdot\,,\cdot\rangle$: \cref{eq:trace form on A} if the inputs are in $\mc{A}$, \cref{eq:brandt inner product} if the inputs are in $\mf{M}(\O)$
    \item $\lambda_n(u)$: the eigenvalue of an eigenvector $u\in \mf{M}(\O)$ under the action of $B_n$
    \item $u_1,\ldots,u_h\in \mf{M}(\O)$: a normalized Brandt eigenbasis (start of \cref{sec:map to M2})
    \item $S$-eigenvector: an eigenvector for every linear operator in $S$
    \item $N_{K/F}$ is the field norm of a finite field extension $K/F$ 
    \item $\operatorname{disc}(x)=\operatorname{trd}(x)^2-4\operatorname{nrd}(x)$, $x\in\mc{A}$
    \item \cref{subsec:motivation}: $G(p,\ell)$
    \item \cref{subsec:lattices}: $\Lambda^\sharp$, $\Span_R(C)$
    \item \cref{subsec:quaternion lattices}: $\overline{\alpha}, \; \nrd(\alpha), \; \trd(\alpha), \; \nrd(\Lambda), \; \langle\alpha,\beta\rangle_\Lambda$
    \item \cref{subsec:orders}: $R[C]$
    \item \cref{subsec:ideals}: $\O_L(I), \; \O_R(I), \; [I], \; \mathrm{Gen}(I),\; J\subseteq_n I$
    \item \cref{subsec:brandt}: $I_1=\O,I_2,\ldots,I_h, \; \mf{M}(\O), \; v_1,\ldots,v_h, \; B_n, \; \mf{B}(\O)$
    \item \cref{ss: red_loc}: $\mf{b}_n(i,j), \; \mf{N}_n(J,I)$
    \item \cref{ss:local}: $\mf{N}_{\ell^n}(x,\O_\ell), \;  N_{\ell^n}(x)$
    \item \cref{sec:genus subspace}: $\mf{M}_{\mathrm{G}}(\O), \; e, \; e'$
    \item \cref{sec:brandt algebra relations}: $W_{\mf{r}}$, $E$
    \item \cref{subsec:modular forms}: $\mc{M}_2(\Gamma_0(N)), \; \mc{S}_2(\Gamma_0(N)), \; \mc{S}_2^{\mathrm{new}}(\Gamma_0(N)), \; T_n, \; \T, \; \Tan$
    \item \cref{subsec:theta functions}: $\theta_\O, \; a_n(\O), \; \Theta$
    \item \cref{cor:max submax eigenform_coeffs}: $g$ 
    \item \cref{cor: submaximal eisenstein bound}: $g_n(\O)$
\end{itemize}

\section{Background: quaternions}\label{subsec:quaternion orders}

For more information on the topics in this section, the authors refer the reader to \cite{Voight}. Throughout we use $F$ to refer to a field of characteristic $0$, and $R\subseteq F$ a principal ideal domain with field of fractions $F$. We will specifically consider $F=\QQ$ and $R=\ZZ$, or $F=\QQ_\ell$ and $R=\ZZ_\ell$ for some prime $\ell$, or $F=R=\bb{R}$.

\subsection{Lattices}\label{subsec:lattices}

Let $V\simeq F^n$ be a finite-dimensional $F$-vector space. An $R$-submodule $\Lambda \subseteq V$ of rank~$n$ is called a \textbf{lattice over $R$} (or an ``$R$-lattice''). In this case we have $\Lambda \otimes_R F  = V$. In the case $R=\ZZ$ we simply call $\Lambda$ a ``lattice.'' Given a subset $C\subseteq \Lambda$, we write $\Span_R(C)$ for the $R$-lattice generated by $C$; that is, the minimal $R$-sublattice of $\Lambda$ that contains $C$.

Let $\langle\cdot\,,\cdot\rangle\colon \Lambda\times\Lambda\to F$ be a nondegenerate symmetric bilinear form on $\Lambda$ (which extends uniquely to a bilinear form on $V$ by tensoring with $F$).
 We say that $\Lambda$ is \textbf{integral} with respect to $\langle\cdot\,,\cdot\rangle$ if $\langle x,y\rangle\in R$ for all $x,y\in\Lambda$, and that $\Lambda$ is \textbf{even integral} with respect to $\langle\cdot\,,\cdot\rangle$ if $\langle x,x\rangle\in 2R$ for all $x\in\Lambda$. Note that any even integral lattice is integral because $2\langle x,y\rangle=\langle x+y,x+y\rangle-\langle x,x\rangle-\langle y,y\rangle$. If $\Lambda$ and $\Lambda'$ are $R$-lattices with respective bilinear forms $\langle \cdot\,,\cdot\rangle$ and $\langle \cdot\,,\cdot\rangle'$, we say a map $\phi:\Lambda\to\Lambda'$ is an \textbf{isometry} if it is a bijection with $\langle \phi(x),\phi(y)\rangle'=\langle x,y\rangle$ for all $x,y\in\Lambda$.

\begin{definition} \label{def:dual_lattice} 
Given an $R$-lattice $\Lambda \sus V$ equipped with a nondegenerate symmetric %
bilinear form $\langle\cdot\,,\cdot\rangle$, the \textbf{dual lattice} of $\Lambda$  (with respect to $\langle\cdot\,,\cdot\rangle$) is 
\[\Lambda^\sharp = \{ \alpha \in V \; : \; \langle \alpha,x\rangle\in R \;\text{for all }x\in\Lambda \}.\]
If $R=\ZZ$ or $R=\ZZ_\ell$ for a prime $\ell$, we define the \textbf{level} of $\Lambda$ (also called the ``Stufe'' by some authors) to be the minimal positive integer $N$ such that 
$\Lambda^\sharp$ is an even integral lattice with respect to the bilinear form $(x,y)\mapsto N\langle x,y\rangle$. 
\end{definition}

Let $V$ be a vector space over $\QQ$. For a lattice $\Lambda\sus V$ and a prime $\ell \in \ZZ$, we define the \textbf{completion} of $\Lambda$ at $\ell$ to be the $\ZZ_\ell$-lattice $\Lambda_\ell \coloneqq \Lambda \otimes_{\ZZ} \ZZ_\ell$. A bilinear form $\Lambda\times\Lambda\to\QQ$ induces a bilinear form $\Lambda_\ell\times\Lambda_\ell\to\QQ_\ell$ for each prime $\ell$. 
The \textbf{local-global dictionary} for lattices in $V$ states that given any fixed lattice $\Lambda^0\sus V$, there is a bijection between lattices $\Lambda \sus V$ and tuples of lattices $(\Lambda_{\ell})$ over primes $\ell$ that satisfy $\Lambda_\ell =\Lambda^0_\ell$ for all but finitely many $\ell$ \cite[Theorem 9.1.1, Lemma 9.5.3]{Voight}.\footnote{All lattices in $V$ have the same completion at the infinite place, $\Lambda\otimes \RR= V\otimes\RR$, so it suffices to consider the finite places.} %
A property of a lattice $\Lambda$ is \textbf{local} if %
it holds for $\Lambda$ if and only if it holds for $\Lambda_\ell$ for all primes $\ell$. 
For instance, being an integral or an even integral lattice is a local property. The dual and level of a lattice can be computed locally, in the sense that they are uniquely determined by the duals and levels of the completions at all primes.

We finally note one elementary but useful property about lattices that will be used to check whether a certain set generates the lattice.
\begin{lemma}\label{lem:lattice quotient l}
    Let $\Lambda$ be a lattice, $\Lambda'\subseteq \Lambda$ a sublattice, and $\ell$ a prime. If $\Lambda'$ surjects onto $\Lambda/\ell\Lambda$ under reduction mod $\ell\Lambda$ then $\Lambda'_\ell=\Lambda_\ell$.
\end{lemma}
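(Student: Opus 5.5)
The plan is to deduce the statement from Nakayama's lemma over the local ring $\ZZ_\ell$, after passing to completions.

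First, complete everything at $\ell$. Since $\Lambda'\subseteq\Lambda$ are lattices (free $\ZZ$-modules of the same rank $n$), tensoring with the flat $\ZZ$-module $\ZZ_\ell$ gives an inclusion $\Lambda'_\ell\subseteq\Lambda_\ell$ of free $\ZZ_\ell$-modules of rank $n$. Moreover, $\Lambda_\ell/\ell\Lambda_\ell\cong(\Lambda/\ell\Lambda)\otimes_{\ZZ}\ZZ_\ell\cong\Lambda/\ell\Lambda$, because $\Lambda/\ell\Lambda$ is $\ell$-torsion and $\ZZ_\ell/\ell\ZZ_\ell=\FF_\ell$. Under this identification, reduction $\Lambda\to\Lambda/\ell\Lambda$ factors through $\Lambda_\ell\to\Lambda_\ell/\ell\Lambda_\ell$. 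The hypothesis that $\Lambda'$ surjects onto $\Lambda/\ell\Lambda$ therefore gives
\[\Lambda'_\ell+\ell\Lambda_\ell=\Lambda_\ell.\]

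Next, apply Nakayama's lemma. Here $\Lambda_\ell$ is a finitely generated module over the local ring $\ZZ_\ell$ with maximal ideal $\ell\ZZ_\ell$. Set $M=\Lambda_\ell/\Lambda'_\ell$. The displayed equality says $M=\ell M$, so $M=0$, that is, $\Lambda'_\ell=\Lambda_\ell$.

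If one prefers to avoid Nakayama, there is a concrete alternative. Choose $x_1,\dots,x_n\in\Lambda'$ whose images form an $\FF_\ell$-basis of $\Lambda/\ell\Lambda$. Write them in a $\ZZ$-basis of $\Lambda$ to get a matrix $A\in M_n(\ZZ)$ whose reduction mod $\ell$ is invertible. Then $\det A\in\ZZ_\ell^\times$, so the $x_i$ already span $\Lambda_\ell$ over $\ZZ_\ell$, and hence $\Lambda'_\ell=\Lambda_\ell$.

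No step here is a real obstacle. The only point needing care is the identification $\Lambda/\ell\Lambda\cong\Lambda_\ell/\ell\Lambda_\ell$, which is compatible with the maps from $\Lambda'$. This is what lets the global surjectivity hypothesis translate into the local equality $\Lambda'_\ell+\ell\Lambda_\ell=\Lambda_\ell$.
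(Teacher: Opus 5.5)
Your proposal is correct and matches the paper's proof, which says only that the result follows from Nakayama's Lemma or a matrix determinant computation. You carry out both arguments correctly. The one step that needs checking, the identification $\Lambda/\ell\Lambda\cong\Lambda_\ell/\ell\Lambda_\ell$ compatible with the maps from $\Lambda'$, is handled properly.
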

\begin{proof}
    Follows by Nakayama's Lemma or by a matrix determinant computation.
\end{proof}

\subsection{Lattices in quaternion algebras}\label{subsec:quaternion lattices}

A \textbf{quaternion algebra} $\mc{A}$ over $F$ is a (non-commutative) central simple %
 algebra of rank~$4$ over $F$. All quaternion algebras over $F$ can be given an $F$-basis $\{1, \i, \j, \i \j \}$ such that $\i^2 = a, \j^2 = b, \i\j = -\j\i$ for some nonzero $a, b \in F$. With respect to such a basis we can define the \textbf{standard involution} on $\mc{A}$ by $\overline{t+x\i+y\j+z\i\j} = t-x\i-y\j-z\i\j$; we then define the 
\textbf{(reduced) trace} of a quaternion element $\alpha\in \mc{A}$ as $\trd(\alpha) = \alpha + \overline{\alpha}$ and the \textbf{(reduced) norm} as $\nrd(\alpha) = \alpha \overline{\alpha}$. Since no other notion of trace or norm appears in this paper, we will usually drop the word ``reduced'' for convenience.

Using the trace, we have a nondegenerate symmetric bilinear form on $\mathcal{A}$,
\begin{equation}\label{eq:trace form on A}
    \langle\alpha, \beta\rangle := \trd(\alpha \overline{\beta})=\nrd(\alpha+\beta)-\nrd(\alpha)-\nrd(\beta),
\end{equation}
which we call the \textbf{trace form} %
on $\mc{A}$. This bilinear form induces a quadratic form  $Q\colon \mc{A}\to F$ given by the reduced norm, $Q(\alpha) = \nrd(\alpha) = \frac{1}{2}\langle \alpha, \alpha \rangle$. 

Now suppose $\mc{A}$ is a quaternion algebra over $\QQ$. For any field extension $F/\QQ$, $\A\otimes_{\bb{Q}} F$ is a quaternion algebra over $F$. We say $\mc{A}$ is \textbf{definite} if $\mc{A} \otimes_{\QQ} \bb{R}$ is isomorphic to a division algebra over $\RR$ (the classical Hamilton quaternions). Otherwise $\A$ is \textbf{indefinite}, in which case $\A\otimes_{\bb{Q}} \bb{R} \cong M_2(\bb{R})$.
In a similar way, we say a prime $p \in \ZZ$ \textbf{ramifies} in $\mc{A}$ if the algebra $\mc{A}\otimes_{\QQ} \bb{Q}_p$ is a division algebra, %
otherwise $p$ \textbf{splits} in $\mc{A}$ and $\mc{A}\otimes_{\QQ} \QQ_p \cong M_2(\QQ_p)$ is a matrix algebra.  %
The \textbf{discriminant} $\disc(\A)$ of $\mc{A}$ is the product  of primes that ramify in $\mc{A}$.

If $\Lambda\subseteq\A$ is a lattice, the set $\{\nrd(x):x\in\Lambda\}\subseteq\QQ$ generates a finitely generated $\ZZ$-module, which is cyclic because $\ZZ$ is a principal ideal domain. We define the \textbf{norm} of $\Lambda$, $\nrd(\Lambda)$, to be the positive generator of this $\ZZ$-module. 
\begin{definition}\label{def:normalized trace form}
    Given a lattice $\Lambda$ in a quaternion algebra $\mc{A}$ over $\bb{Q}$, we define the \textbf{normalized trace form} on $\Lambda$ by
    \[\phantom{\alpha,\beta\in\Lambda}\qquad \langle \alpha,\beta\rangle_{\Lambda}:=\frac{\trd(\alpha\overline{\beta})}{\nrd(\Lambda)},\qquad \alpha,\beta\in\Lambda.\]
\end{definition}
For any $\alpha\in\Lambda$, $\trd(\alpha\bar{\alpha})=2\nrd(\alpha)$ is an even integer multiple of $\nrd(\Lambda)$, so the normalized trace form equips $\Lambda$ with the structure of an even integral lattice. Given a prime $\ell$, we may equip $\Lambda_\ell=\Lambda\otimes_\ZZ \ZZ_\ell$ with the bilinear form induced from the normalized trace form on $\Lambda$.

\begin{lemma}\label{lem:disc square}
    If $(b_1,b_2,b_3,b_4)$ is a basis for a lattice $\Lambda$ in a quaternion algebra $\mc{A}$ over $\QQ$, then 
    \begin{equation}
        \det (\langle b_i,b_j\rangle_{\Lambda})_{1\leq i,j\leq 4}\in\ZZ \label{quant:disc_squared} 
    \end{equation}
    is a perfect square that does not depend on the choice of basis.
\end{lemma}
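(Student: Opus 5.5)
Write $d=\det(\langle b_i,b_j\rangle_\Lambda)_{1\le i,j\le 4}$ for the determinant in the statement. The plan has three parts: integrality, basis independence, and squareness. Integrality is immediate. The normalized trace form makes $\Lambda$ an even integral lattice, as observed just before the lemma, so every entry $\langle b_i,b_j\rangle_\Lambda$ is an integer, and hence so is $d$.

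Basis independence is the standard change-of-basis computation. Any other basis has the form $b'=Pb$ with $P\in\GL_4(\ZZ)$. The Gram matrix then changes to $PGP^T$, so the determinant is multiplied by $\det(P)^2=1$.

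The substantive claim is that $d$ is a perfect square. Since $d$ does not depend on the basis, I would compute it after extending scalars. First pull out the normalization: $d=\nrd(\Lambda)^{-4}\det(\trd(b_i\overline{b_j}))$. So it suffices to show that $\det(\trd(b_i\overline{b_j}))$ is the square of a rational number. Then $d$ is a rational square and an integer, hence a perfect square.

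To see this, compare with a standard basis $\{1,\i,\j,\i\j\}$ with $\i^2=a$ and $\j^2=b$. In that basis the trace form is diagonal. Using $\overline{\i}=-\i$, $\overline{\j}=-\j$ and $\overline{\i\j}=-\i\j$, we get
\[
\trd(1)=2,\qquad \trd(\i\overline{\i})=-2a,\qquad \trd(\j\overline{\j})=-2b,\qquad \trd(\i\j\,\overline{\i\j})=2ab.
\]
The determinant of this diagonal matrix is $2\cdot(-2a)\cdot(-2b)\cdot 2ab=16a^2b^2$, which is a square in $\QQ$. The basis $(b_i)$ of $\Lambda$ is a $\QQ$-basis of $\mc{A}$, so it is related to $\{1,\i,\j,\i\j\}$ by some $M\in\GL_4(\QQ)$. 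The Gram determinant therefore changes by $\det(M)^2$, a rational square. Hence $\det(\trd(b_i\overline{b_j}))=16a^2b^2\det(M)^2$ is a rational square, and so $d$ is a perfect square.

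There is no real obstacle in this argument. The only step needing care is the sign bookkeeping in $\trd(x\overline{x})=2\nrd(x)$: the norms are $\nrd(\i)=-a$, $\nrd(\j)=-b$ and $\nrd(\i\j)=ab$, and the product of these three norms is $a^2b^2$.
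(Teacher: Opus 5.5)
Your proof is correct and is essentially the paper's argument. Both compare the Gram determinant with that of the standard basis $1,\i,\j,\i\j$, which is $(4ab)^2$, and observe that the change of basis contributes only a rational square. The paper phrases this through $m\Lambda\subseteq\Lambda_0$ and the index $[\Lambda_0:m\Lambda]$, whereas you use $\det(M)^2$ for $M\in\GL_4(\QQ)$; your explicit $\GL_4(\ZZ)$ argument for basis-independence also fills in a point the paper leaves implicit.
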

\begin{proof}
    This value is an integer because $\Lambda$ is an integral lattice with respect to $\langle\cdot\,,\cdot\rangle_\Lambda$, so it suffices to prove that it is a square in $\QQ$. 
    Let $\Lambda_0\sus\mc{A}$ be the lattice with basis $(b_1',b_2',b_3',b_4')=(1,\i,\j,\i\j)$. There exists a positive integer $m$ such that $m\Lambda\subseteq \Lambda_0$, and 
    \begin{align*}
        \det (\langle b_i,b_j\rangle_{\Lambda})_{1\leq i,j\leq 4}&=\frac{1}{m^8\nrd(\Lambda)^4}\det (\trd(mb_i\overline{mb}_j))_{1\leq i,j\leq 4}\\
        &=\frac{[\Lambda_0:m\Lambda]^2}{m^8\nrd(\Lambda)^4}\det (\trd(b_i'\overline{b}_j'))_{1\leq i,j\leq 4}\\
        &=\frac{[\Lambda_0:m\Lambda]^2}{m^8\nrd(\Lambda)^4}(4ab)^2,
    \end{align*}
    with the second equality holding by the proof of \cite[Lemma 15.2.5]{Voight}. 
\end{proof}

The \textbf{(reduced) discriminant} of $\Lambda$, denoted $\discrd(\Lambda)\in\ZZ$, 
is the positive square root of the quantity \eqref{quant:disc_squared} in \cref{lem:disc square}.

\subsection{Quaternion orders}\label{subsec:orders}

Let $\A$ be a quaternion algebra over $F$. An \textbf{order} in $\A$ is an $R$-lattice $\O \sus \A$ (\cref{subsec:lattices}) that contains $1$ and is closed under multiplication. For any $x\in\O$ we have $\nrd(x),\trd(x)\in R$. Given a subset $C\subseteq \O$, we write $R[C]$ for the $R$-algebra generated by $C$; that is, the minimal suborder of $\O$ that contains $C$.

Let $\O$ be an order in a quaternion algebra $\A$ over $\QQ$. Since $\nrd(\O)=1$, the normalized trace form on $\O$ agrees with the trace form on $\mc{A}$. In particular, any order $\O$ is an even integral lattice with respect to the trace form.
We call the dual of $\O$ with respect to the trace form the \textbf{codifferent} $\O^\sharp$ of $\O$. Note that $\O^\sharp \supseteq \O$. %

We note here an equivalent definition of level, specific for orders in quaternion algebras over $\QQ$ (this is the definition given in  \cite{Brzezinski83} and \cite{Eichler55}, for example).
\begin{lemma} \label{lem:level_alternative}
    For an order $\O$ in a quaternion algebra $\mc{A}$ over $\QQ$, the level of $\O$ as a lattice $($with respect to the trace form$)$ is equal to $\nrd(\O^\sharp)^{-1}$.
\end{lemma}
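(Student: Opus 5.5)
We need to show that the minimal positive integer $N$ for which $N\langle x,y\rangle$ makes $\O^\sharp$ even integral equals $\nrd(\O^\sharp)^{-1}$. Here $\langle x,y\rangle=\trd(x\overline{y})$ is the trace form, and $\langle x,x\rangle=2\nrd(x)$. So the condition that $\O^\sharp$ be even integral with respect to $N\langle\cdot,\cdot\rangle$ has two parts: $N\nrd(x)\in\ZZ$ for all $x\in\O^\sharp$, and $N\trd(x\overline{y})\in\ZZ$ for all $x,y\in\O^\sharp$. As noted after \cref{def:dual_lattice}, even integrality implies integrality via polarization, so the first condition alone suffices.

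The first condition says exactly that $N$ times every norm of an element of $\O^\sharp$ is an integer. By definition, $\nrd(\O^\sharp)$ is the positive generator of the $\ZZ$-module spanned by $\{\nrd(x):x\in\O^\sharp\}$. So the condition is equivalent to $N\nrd(\O^\sharp)\ZZ\subseteq\ZZ$, i.e.\ $N\nrd(\O^\sharp)\in\ZZ$. Hence the level is the smallest positive integer $N$ with $N\nrd(\O^\sharp)\in\ZZ$, which is the denominator of the rational number $\nrd(\O^\sharp)$.

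It remains to show that $\nrd(\O^\sharp)=1/M$ for some positive integer $M$; then the denominator is $M=\nrd(\O^\sharp)^{-1}$. Since $1\in\O\subseteq\O^\sharp$, we have $1=\nrd(1)\in\nrd(\O^\sharp)\ZZ$, so $1/\nrd(\O^\sharp)\in\ZZ$.

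The argument is short and has no real obstacle. The only point requiring care is the reduction from even integrality of the scaled form to the norm condition alone, namely that the bilinear values are handled by the polarization identity.
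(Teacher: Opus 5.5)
Your proof is correct and follows the same route as the paper's: you reduce even integrality of $N\trd(x\overline{y})$ on $\O^\sharp$ to $N\nrd(x)\in\ZZ$ for all $x\in\O^\sharp$, rewrite this as $N\nrd(\O^\sharp)\in\ZZ$, and use $1\in\O^\sharp$ to see that $\nrd(\O^\sharp)$ is the reciprocal of a positive integer. Your polarization remark is harmless but not needed, since the paper defines even integrality by the diagonal condition $\langle x,x\rangle\in 2R$ alone.
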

\begin{proof}
    For any integer $n$, the bilinear form $(x,y)\mapsto n\Tr(x\overline{y})$ gives $\O^\sharp$ the structure of an even integral lattice if and only if $n\Nrd(x)\in\ZZ$ for all $x\in\O^\sharp$. This holds if and only if 
    \[n\Nrd(\O^\sharp)\ZZ=n \{ \nrd(x):x\in\O^\sharp \} \subseteq\ZZ,\] 
    which in turn holds if and only if $n\in\Nrd(\O^\sharp)^{-1}\ZZ$. 
    Since $\O^\sharp$ contains $1$, $\nrd(1)=1$ is an integer multiple of $\Nrd(\O^\sharp)$ by definition, so $\Nrd(\O^\sharp)$ must be the reciprocal of a positive integer. Thus $N=\Nrd(\O^\sharp)^{-1}$ is the smallest positive integer in $\Nrd(\O^\sharp)^{-1}\ZZ$.
\end{proof}

\begin{definition} \label{def:families_of_orders}
Let $\O$ be an order in a quaternion algebra $\A$ over $F$. We say $\O$ is \textbf{maximal} if it is not strictly contained in another order of $\A$. We say $\O$ is \textbf{Eichler} if $\O = \O_1 \cap \O_2$ is an intersection of two (not necessarily distinct) maximal orders $\O_1$ and $\O_2$.
\end{definition}

For a lattice $\Lambda$ in a quaternion algebra $\mc{A}$ over $\QQ$, the properties of being an order, a maximal order, or an Eichler order are all local properties. The discriminant and norm of $\Lambda$ can be computed locally. 
For an order $\O\subseteq \mc{A}$, the (reduced) discriminant of $\O$ is equal to  $[\mf{O}:\O]\disc(\mc{A})$, where $\mf{O}$ is any maximal order containing $\O$. %

\subsection{Ideals and ideal classes}\label{subsec:ideals}

In this paper we  work primarily with right orders and right ideals, but all definitions given in this section have analogous left-sided versions. 

Let $\mc{A}$ be a quaternion algebra over $F$. The \textbf{right order} of an $R$-lattice $I$ in $\A$ is the order $\O_{\text{R}}(I) = \{ \alpha \in \mc{A}\; | \; I \alpha  \subseteq I \}$. If $\O$ is an order, we say that a lattice $I$ is a \textbf{right $\O$-ideal} if $\O_{\text{R}}(I) \supseteq \O$. 

Now suppose $\O$ is an order in a quaternion algebra $\A$ over $\QQ$.
A right $\O$-ideal~$I$ is \textbf{invertible} if $I$ is a locally principal right $\O$-ideal, i.e. for all primes $\ell$ there is an $\alpha_{\ell} \in \mc{A}_{\ell}^\times$ such that $I_{\ell} = \alpha_{\ell} \O_\ell$. This is equivalent to the existence of a lattice $J$ such that $JI  = \O_{\text R}(I)=\O$ and $IJ = \O_{\text L}(I)$, where $\O_{\text L}(I)$ is the left order of~$I$.

\begin{definition}\label{def:subseteqn}
    Let $\O$ be an order in a quaternion algebra $\A$ over $\QQ$, and suppose that~$I$ is an invertible right $\O$-ideal. We say that $J$ is \emph{contained in $I$ with relative norm $n$}, or $J \subseteq_n I$, if $J$ is an invertible right $\O$-ideal with $J\subseteq I$ and $\nrd(J)=n\Nrd(I)$.
\end{definition}

Two right $\O$-ideals $I, J$ 
are in the same \textbf{right ideal class}, written $[I] =[J]$, if there exists  $\alpha \in \A^\times$ such that $I = \alpha J$. %
Ideals in the same right ideal class are isometric as lattices equipped with their associated normalized trace forms (\cref{def:normalized trace form}): since $\trd(ab)=\trd(ba)$ for $a,b\in\mc{A}$, for any $\alpha \in\mc{A}^\times$ and $x,y\in I$ we have:
\[\langle \alpha x,\alpha y\rangle_{\alpha I}=\frac{\Tr((\alpha x)(\bar{y}\bar{\alpha}))}{\nrd(\alpha I)}=\frac{\Tr(\bar{\alpha}\alpha x\bar{y})}{\nrd(\alpha)\nrd( I)}=\frac{\Tr(x\bar{y})}{\nrd(I)}=\langle x,y\rangle_{I}.\]
In particular $x\mapsto \alpha x$ is an isometry from $I$ to $\alpha I$.

We say that two invertible right $\O$-ideals $I,J$ are in the same \textbf{genus}, and write $J\in\mathrm{Gen}(I)$, if the completions $I_\ell$ and $J_\ell$, with bilinear forms induced by the normalized trace forms on $I$ and $J$ respectively, are isometric as $\ZZ_\ell$-lattices for every prime $\ell$. Ideals in the same class are automatically in the same genus.

The \textbf{right ideal class set} of $\O$ is the set $\Cls (\O)$ consisting of equivalence classes of invertible right $\O$-ideals. It is well known (see e.g. \cite[Theorem 17.1.1]{Voight}) that the right ideal class set of an order in a quaternion algebra over $\QQ$ (more generally over any number field) is finite, and we call its cardinality the \textbf{class number} of $\O$.

\subsection{Brandt modules and algebras}\label{subsec:brandt}

Let $\O$ be an order in a definite quaternion algebra over~$\QQ$.
Let $I_1,\ldots,I_h$ be a list of representatives of the right ideal classes of $\mathcal{O}$, where $h$ is the class number of $\O$ and each $I_j$ is an \textit{invertible} right $\O$-ideal. Without loss of generality we may take $I_1=\O$. We define the (real) \textbf{Brandt module} 
\begin{align*}%
    \mf{M}(\mathcal{O}):=\{f:\{I_1,\ldots,I_h\}\to\RR\}.
\end{align*}
Thus $\mf{M}(\O)\simeq \RR^h$, and we have a fixed standard basis $v_1,\ldots,v_h$ given by Kronecker delta functions, $v_i(I_j)=\delta_{ij}$. We usually think of $\mf{M}(\O)$ as column vectors of length $h$ with entries in $\bb{R}$. %

The Brandt module comes equipped with an inner product for which the standard basis is orthogonal.
For each $i$, let $\O_\text{L}(I_i)$ denote the left order of $I_i$, and let $w_i:=|\O_\text{L}(I_i)^\times/\{\pm1\}|$. Then we define
\begin{equation}\label{eq:brandt inner product}
    \langle v_i,v_j\rangle =\left\{\begin{array}{cc}
    w_i & \text{if }i=j, \\
    0 & \text{otherwise.}
\end{array}\right.
\end{equation}
The Brandt module also comes equipped with matrices $B_n$ for all $n\geq 0$ acting on the left on $\mf{M}(\O)$, 
defined entry-wise with respect to the standard basis $\{v_1, \ldots, v_h\}$ by
\begin{align}
    B_n(i,j)&:=\frac{1}{2w_i}\left|\left\{\alpha\in I_jI_i^{-1}:\nrd(\alpha)\frac{\nrd(I_i)}{\nrd(I_j)}=n\right\}\right| & \label{eq:Bnij def} \\
    &\phantom{:}=\frac{1}{2w_i}\left|\left\{\alpha\in I_jI_i^{-1}:\tfrac12\langle\alpha,\alpha\rangle_{I_jI_i^{-1}}=n\right\}\right|, \nonumber
\end{align}
where in the second line we use the normalized trace form on $I_jI_i^{-1}$ (\cref{def:normalized trace form}). For $n\geq 1$ we have the equivalent form
\begin{align} \label{eq:Bnij_def_2}
    B_n(i,j)&=\left|\{J\subseteq_n I_j: [J]=[I_i] \} \right|
\end{align}
by \cite[Lemma 41.2.7(a)]{Voight}, where $J\subseteq_n I_j$ means that $J$ is an invertible right $\O$-ideal with $\nrd(J)=n\Nrd(I_j)$ (\cref{def:subseteqn}).
Note that $B_1$ is the identity matrix, and $B_0$ is the matrix where every entry in row $i$ is $\frac{1}{2w_i}$. 
\begin{definition} \label{def:Brandt algebra}
The \textbf{Brandt algebra} $\mf{B}(\O)$ is the $\mathbb{Z}$-subalgebra of $\End( \mf{M}(\O) ) \cong M_{h \times h} ( \bb{R})$ generated by the Brandt matrices $\{ B_n \; : \; n \ge 0 \}$. 
\end{definition}

\begin{lemma}\label{lem:Bn self-adjoint}
    For all $n\geq 0$, $B_n$ is self-adjoint with respect to the inner product. 
\end{lemma}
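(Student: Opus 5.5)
Self-adjointness with respect to the inner product \eqref{eq:brandt inner product} means $\langle B_n v_j, v_i\rangle = \langle v_j, B_n v_i\rangle$ for all $i,j$. Since the standard basis is orthogonal with $\langle v_i,v_i\rangle=w_i$, and $B_n v_j=\sum_k B_n(k,j)v_k$, this amounts to the symmetry relation
\[
w_i\,B_n(i,j)=w_j\,B_n(j,i)\qquad\text{for all } i,j.
\]
By \eqref{eq:Bnij def}, it is therefore enough to show that the sets
\[
S_{ij}:=\left\{\alpha\in I_jI_i^{-1}:\nrd(\alpha)\tfrac{\nrd(I_i)}{\nrd(I_j)}=n\right\}
\quad\text{and}\quad
S_{ji}:=\left\{\beta\in I_iI_j^{-1}:\nrd(\beta)\tfrac{\nrd(I_j)}{\nrd(I_i)}=n\right\}
\]
have the same cardinality. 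Once this is known, $2w_iB_n(i,j)=|S_{ij}|=|S_{ji}|=2w_jB_n(j,i)$.

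The bijection I would use is the standard involution $\alpha\mapsto\overline{\alpha}$, suitably rescaled. For invertible ideals one has $\overline{I}=\nrd(I)I^{-1}$. Hence
\[
\overline{I_jI_i^{-1}}=\overline{I_i^{-1}}\;\overline{I_j}=\nrd(I_i)^{-1}I_i\cdot\nrd(I_j)I_j^{-1}=\frac{\nrd(I_j)}{\nrd(I_i)}\,I_iI_j^{-1}.
\]
It follows that the map $\alpha\mapsto\frac{\nrd(I_i)}{\nrd(I_j)}\overline{\alpha}$ is a bijection from $I_jI_i^{-1}$ onto $I_iI_j^{-1}$, with inverse given by the same formula with $i$ and $j$ swapped. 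It remains to check norms. Since $\nrd(\overline\alpha)=\nrd(\alpha)$, setting $\beta=\frac{\nrd(I_i)}{\nrd(I_j)}\overline\alpha$ gives
\[
\nrd(\beta)\frac{\nrd(I_j)}{\nrd(I_i)}=\nrd(\alpha)\frac{\nrd(I_i)}{\nrd(I_j)}.
\]
So the bijection carries $S_{ij}$ onto $S_{ji}$. This argument also covers $n=0$, where both sets are $\{0\}$ and the relation reduces to $w_i\cdot\frac{1}{2w_i}=w_j\cdot\frac1{2w_j}$.

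The main point needing care is the identity $\overline{I}=\nrd(I)I^{-1}$ for invertible but not necessarily maximal orders, together with the fact that $I_jI_i^{-1}$ is a lattice whose conjugate behaves as written. I would verify the identity locally. At each prime, $I_\ell=x_\ell\O_\ell$, so $\overline{I_\ell}=\O_\ell\overline{x_\ell}$, $I_\ell^{-1}=\O_\ell x_\ell^{-1}$, and $\overline{x_\ell}=\nrd(x_\ell)x_\ell^{-1}$. Moreover $\nrd(I)$ agrees locally with $\nrd(x_\ell)$ up to a unit, because $\nrd(\O_\ell)=\ZZ_\ell$. The local-global dictionary then gives the global statement. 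An alternative route, avoiding inverses, is to use \eqref{eq:Bnij_def_2} and compare with the corresponding count for left ideals, but the involution argument is cleaner.
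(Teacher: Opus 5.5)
Your proposal is correct and follows essentially the same route as the paper. Both arguments reduce self-adjointness to the symmetry $w_iB_n(i,j)=w_jB_n(j,i)$ (the paper phrases this as symmetry of $DB_n$). Both then match the two counting sets via the rescaled conjugation $\alpha\mapsto\frac{\nrd(I_i)}{\nrd(I_j)}\overline{\alpha}$, using the identity $\overline{I}I=\nrd(I)\O$, which is equivalent to your $\overline{I}=\nrd(I)I^{-1}$, for invertible ideals. Your local verification of that identity is a fine substitute for the paper's citation of Voight.
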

\begin{proof}
    Let $D$ be the diagonal matrix with $D(i,i)=w_i$, so the inner product is given by $\langle v,w\rangle=v^tDw$. The self-adjointness of $B_n$ with respect to 
    $\langle v,w\rangle$ is equivalent to checking that $DB_n=B_n^tD$. Since $B_n^tD=(DB_n)^t$, it suffices to check that $DB_n$ is a symmetric matrix. We have
    \begin{align*}
        (DB_n)(i,j) &= w_iB_n(i,j)= \frac{1}{2}\left|\left\{\alpha\in I_jI_i^{-1}:\tfrac12\langle\alpha,\alpha\rangle_{I_jI_i^{-1}}=n\right\}\right|,\\
        (DB_n)(j,i) &= w_jB_n(j,i)= \frac{1}{2}\left|\left\{\alpha\in I_iI_j^{-1}:\tfrac12\langle\alpha,\alpha\rangle_{I_iI_j^{-1}}=n\right\}\right|.
    \end{align*}
    Since $I_i$ and $I_j$ are invertible right $\O$-ideals, we have $\overline{I_i}I_i=\nrd(I_i)\O$ and $\overline{I_j}I_j=\nrd(I_j)\O$ by \cite[16.6.14]{Voight}, so $\alpha\mapsto \overline{\alpha}\frac{\nrd(I_i)}{\nrd(I_j)}$ defines an isometry from $I_jI_i^{-1}$ to $I_iI_j^{-1}$, each equipped with their normalized trace forms. This proves that the two sets above have the same size.
\end{proof}

\section{Commutativity of the Brandt algebra}\label{app:commutativity}

Let $\O$ be an order in a definite rational quaternion algebra $\mc{A}$.
Recall from  \cref{def:Brandt algebra} that $\mf{B}(\O)$ is the $\ZZ$-algebra generated by Brandt matrices $B_n$ for all $n\geq 0$. The main result of this section is the following.

\begin{theorem}\label{thm:Brandt algebra commutative}
    For an order $\O$ in a rational definite quaternion algebra, $\mf{B}(\O)$ is commutative. 
\end{theorem}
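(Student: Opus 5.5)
We need $B_mB_n=B_nB_m$ for all $m,n\ge 0$. The case where one of $m,n$ is $0$ is handled directly. $B_0$ has rank one: $B_0=\tfrac12 D^{-1}\mathbf{1}\mathbf{1}^t$, where $D$ is the diagonal matrix of the $w_i$. So $B_0B_n=B_nB_0$ amounts to $\mathbf{1}^tB_n=c_n\mathbf{1}^t$ and $B_nD^{-1}\mathbf{1}=c_nD^{-1}\mathbf{1}$. By self-adjointness (\cref{lem:Bn self-adjoint}) these two conditions are equivalent. The first says that the column sums $\sum_i B_n(i,j)=|\{J\subseteq_n I_j\}|$ do not depend on $j$. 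This is a local count: since $I_j$ is locally principal, $(I_j)_\ell=\alpha_\ell\O_\ell$, and left multiplication by $\alpha_\ell$ identifies the sub-ideals of $(I_j)_\ell$ of relative norm $\ell^{v_\ell(n)}$ with those of $\O_\ell$. The count is therefore $\prod_\ell N_\ell(n)$, which is independent of $j$. (Finiteness and the local–global dictionary for lattices make this rigorous.)

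For $m,n\ge1$, I would interpret both sides via \eqref{eq:Bnij_def_2} as counting chains of invertible ideals. We have $(B_mB_n)(i,j)=\sum_k B_m(i,k)B_n(k,j)$. Choose, for each sub-ideal $J\subseteq_n I_j$ with $[J]=[I_k]$, an element with $J=\beta I_k$; the $J'\subseteq_m I_k$ of class $[I_i]$ then correspond bijectively to $\beta J'\subseteq_m J$. This gives
\[(B_mB_n)(i,j)=\bigl|\{(J,K): K\subseteq_m J\subseteq_n I_j,\ [K]=[I_i]\}\bigr|.\]
Symmetrically, $(B_nB_m)(i,j)$ counts chains $K\subseteq_n J\subseteq_m I_j$. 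It therefore suffices, for each invertible $K\subseteq_{mn}I_j$, to show
\[\#\{J \text{ invertible}: K\subseteq_m J\subseteq_n I_j\}=\#\{J \text{ invertible}: K\subseteq_n J\subseteq_m I_j\}.\]
Both sides are products over primes $\ell\mid mn$ of local counts, since invertible intermediate ideals are determined by their completions. After translating by $\alpha_\ell^{-1}$, the local problem becomes: for $x\in\O_\ell$ with $\nrd(x)$ of valuation $a+b$, count principal right ideals $y\O_\ell$ with $x\O_\ell\subseteq y\O_\ell\subseteq\O_\ell$ and $v(\nrd y)=a$, and show this number equals the count with $a$ and $b$ swapped.

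The key step is a duality. The map $y\mapsto \bar y x$, or a conjugation trick based on $x\bar x=\nrd(x)$, should give the bijection between the two sets. Concretely, $x\O\subseteq y\O$ means $x=yz$ with $z\in\O_\ell$, and we have $v(\nrd z)=b$. Sending the factorization $x=yz$ to $\bar x=\bar z\bar y$ exchanges the roles of $a$ and $b$. This converts right-ideal chains for $x$ into left-ideal chains for $\bar x$, i.e.\ it relates "$y$ divides $x$ on the left" to "$z$ divides $x$ on the right". To return to right ideals I would use the anti-involution together with the isometry argument of \cref{lem:Bn self-adjoint}. Equivalently, $B_mB_n=B_nB_m$ follows from symmetry of $DB_mB_n$, since $(DB_mB_n)^t=B_n^tB_m^tD=DB_nB_m$. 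Symmetry of $DB_mB_n$ in turn means the number of chains from $I_j$ down to class $[I_i]$, weighted by $w$, is symmetric in $i,j$. Conjugation $K\subseteq J\subseteq I_j\mapsto \bar I_j\,\cdots$ reverses chains, so I would make this precise via the left/right-dual chain $\overline{I_j}K\subseteq \overline{J}K\subseteq \nrd(\cdot)\O_L$ type construction.

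The main obstacle is exactly that, at primes $\ell\mid\disc(\O)$, $\O_\ell$ is not Eichler or Gorenstein in general. In that case the naive bijection $y\O\leftrightarrow \O z$ between right-divisors and left-divisors may fail to preserve invertibility, and the left and right orders of intermediate ideals can differ. I would first try to verify that for an invertible $K=x\O_\ell$, the intermediate invertible $J=y\O_\ell$ correspond to the invertible left ideals $\O_\ell z$ with $x\in\O_\ell z$, where all of these have left or right order $\O_\ell$ because $x,y,z$ are units times elements of $\O_\ell$. If needed, I would then reduce via the symmetric-matrix formulation to an isometry of the lattices $I_jI_i^{-1}$ with product structure.
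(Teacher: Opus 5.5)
Your reductions are correct and essentially match the paper's: the rank-one argument for $B_0$ (self-adjointness plus locally constant column sums), the reading of $(B_mB_n)(i,j)$ as counting chains $K\subseteq_m J\subseteq_n I_j$, and the localization to the claim that for $x\in\O_\ell$ with $\val(\nrd x)=a+b$ one has $N_{\ell^a}(x)=N_{\ell^b}(x)$. However, this local identity is the whole content of the theorem, and the proposal does not prove it.

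The factorization swap $x=yz\mapsto\bar x=\bar z\bar y$ only gives $N_{\ell^a}(x)=N_{\ell^b}(\bar x)$; this is the paper's \cref{l:reductions}(2), and your correspondence $y\O_\ell\leftrightarrow\O_\ell z$ is the same statement. To finish you would still need $N_{\ell^b}(\bar x)=N_{\ell^b}(x)$. That is, $x$ must have as many left divisors as right divisors of a given norm, and nothing in your plan addresses this. The symmetric-matrix reformulation is circular. Reversing chains by conjugation yields $(DB_mB_n)(i,j)=(DB_nB_m)(j,i)$, which is just $(DB_mB_n)^t=DB_nB_m$ and already follows from self-adjointness. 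Symmetry of $DB_mB_n$ itself is equivalent to the commutativity you want. This matches \cref{prop:brandt commutative is tricky}: stability under the standard involution cannot by itself force commutativity, so an argument using only the anti-involution and isometries cannot close. The obstacle you name, invertibility of intermediate ideals, is not the real one: locally every ideal in question is principal.

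The paper's missing ingredient is an explicit bijection that is not induced by the involution. Call $x$ $m$-norm-reduced if $\val(\nrd x)\le m+\val(\trd x)$. For such $x$, the map $y\O_\ell\mapsto\bar x x^{-1}y\O_\ell$ sends $\mf{N}_{\ell^m}(x)$ bijectively to $\mf{N}_{\ell^m}(\bar x)$. This works because $\bar x x^{-1}y=\trd(x)\,x^{-1}y-y$ and $\trd(x)\,x^{-1}y=\frac{\trd(x)}{\nrd(y^{-1}x)}\overline{y^{-1}x}\in\O_\ell$, and the same construction applied to $\bar x$ gives the inverse.

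The general case reduces to this one. First, $N_{\ell^m}(x)$ depends only on $x\bmod\ell^m\O_\ell$, and $x+\ell^m$ is $m$-norm-reduced whenever $\val(\trd x)\neq m$. If $\val(\trd x)=m$, then either $x$ is already $m$-norm-reduced (when $\val(\nrd x)\le 2m$), or you apply the argument with $n-m$ in place of $m$ and return via the swap identity. Without an argument of this kind, your proof is incomplete.
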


We begin with a few notes on the subtleties underlying this result, and why it does not directly follow from certain well-known results in the literature on establishing commutativity of Hecke algebras. The proof begins in \cref{ss: red_loc}.

\subsection{Failure of commutativity in Hecke algebras}\label{ex:brandt commutative is tricky}
     We first recall that commutativity of $\mf{B}(\O)$ 
     is well known for maximal orders. More generally, it follows immediately from \cite[Chapter II Theorem 2]{Eichler73} that for an arbitrary order $\O$ contained in a maximal order $\mf{O}$, the Brandt matrices $B_n$ with $n$ coprime to $[\mf{O}:\O]$ generate a commutative subalgebra. However, for this paper we will need to use a stronger result, namely the full Brandt algebra is commutative for arbitrary \textit{submaximal} orders (\cref{def:submaximal}).

    For an arbitrary rational quaternion order $\O$, the Brandt algebra $\mf{B}(\O)$ may be naturally interpreted as a subalgebra of a \emph{quaternionic Hecke algebra} $\mf{H}(\O)$. One way to define this Hecke algebra is to consider the adelic counterparts $\widehat{\mc{A}}$ and $\widehat{\mc{O}}$ of the quaternion algebra~$\mc{A}$ and order $\mc{O}$, and define $\mf{H}(\O)$ to be the convolution algebra of functions on $\widehat{\mc{A}}^\times$ that are bi-invariant under $\widehat{\mc{O}}^\times$. This structure arises, for instance, in the Jacquet-Langlands correspondence, which compares $\mf{H}(\O)$ to the algebra of Hecke operators for $GL_2$. One may identify $\mf{B}(\O)$ with a subalgebra of $\mf{H}(\O)$ by associating each Brandt operator $B_n$ with the characteristic function of elements of norm $n$ in $\widehat{\mc{A}}^\times$.
    
    If $\O$ is maximal then $\mf{H}(\O)=\mf{B}(\O)$ is commutative, but for a non-maximal order, the full Hecke algebra is \emph{not} commutative in general. \Cref{thm:Brandt algebra commutative} shows nonetheless that $\mf{B}(\O)$ is a commutative subalgebra of $\mf{H}(\O)$ for an arbitrary $\O$.

    There are many well-established techniques for proving commutativity of subalgebras of Hecke algebras. One of these methods, often called ``Gelfand's trick,'' can be used to prove commutativity of any subalgebra generated by double cosets of $\widehat{\mc{O}}^\times$ that are preserved by an involutive anti-automorphism (such as the standard involution on $\widehat{\mc{A}}$). See \cite[Proposition 3.8]{Shimura_book} for a general version of this method. Unfortunately, while the set of elements of norm $n$ is stable under the standard involution, the individual $\widehat{\mc{O}}^\times$-double cosets comprising this set are not, in general. 
    
    The construction in the following proposition shows that there is no direct analogue of Gelfand's trick in this scenario. That is, even if a subalgebra of $\mf{H}(\O)$ is generated by operators that are each stable under the standard involution, the subalgebra may be non-commutative. Thus any proof of commutativity of $\mf{B}(\O)$ must rely on properties of Brandt matrices beyond just stability under the standard involution.

\begin{proposition}\label{prop:brandt commutative is tricky}
    Let $\epsilon\geq 2$, $\ell$ a split prime of $\mc{A}$, and $\O\subseteq \mc{A}$ an Eichler order of index $\ell^\epsilon$ in a maximal order. Then there exist two operators $A,B\in\mf{H}(\O)$ satisfying $\overline{A}=A$ and $\overline{B}=B$, but with $AB\neq BA$. %
\end{proposition}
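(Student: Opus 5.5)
The plan is to reduce everything to the local Hecke algebra at $\ell$ and then write down two explicit double cosets. The adelic Hecke algebra $\mf{H}(\O)$ is the restricted tensor product of the local convolution algebras $\mf{H}(\O_q)$ of $\O_q^\times$-bi-invariant functions on $\mc{A}_q^\times$. The standard involution acts factor by factor. So it suffices to find $A_\ell,B_\ell\in\mf{H}(\O_\ell)$ that are fixed by the involution and do not commute. We then set $A=A_\ell\otimes\bigotimes_{q\neq\ell}\1_{\O_q^\times}$, and define $B$ similarly. These are involution-stable because each unit group $\O_q^\times$ is stable under $x\mapsto\overline{x}$, and $AB-BA$ is nonzero because its $\ell$-component $A_\ell B_\ell-B_\ell A_\ell$ is nonzero.

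Locally, since $\ell$ splits and $\O$ is Eichler of index $\ell^\epsilon$, we fix an identification $\mc{A}_\ell\cong M_2(\QQ_\ell)$ under which $\O_\ell$ is the order of integral matrices that are upper triangular modulo $\ell^\epsilon$. Then $K:=\O_\ell^\times$ is the group $\Gamma_0(\ell^\epsilon)$ inside $\GL_2(\ZZ_\ell)$, and the standard involution is the adjugate, $\overline{g}=\mathrm{adj}(g)=\det(g)g^{-1}$. We take the following two elements:
\[
w=\begin{pmatrix}0&1\\-\ell^\epsilon&0\end{pmatrix},\qquad u=\begin{pmatrix}1&0\\ \ell^{\epsilon-1}&1\end{pmatrix}.
\]
Define $A_\ell=\1_{KwK}$ and $B_\ell=\1_{KuK}$.

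The first check is involution-stability.
\begin{itemize}
\item $\overline{w}=-w$ and $-1\in K$, so $KwK$ is stable under the involution. Moreover $w$ normalizes $\O_\ell$ (it is the Atkin--Lehner element), so $KwK=wK$.
\item $\overline{u}=u^{-1}=d\,u\,d$ with $d=\mathrm{diag}(1,-1)\in K$, so $K\overline{u}K=KuK$.
\end{itemize}

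The second check is non-commutativity. Since $KwK=wK=Kw$ is a single coset, convolution gives $A_\ell B_\ell=\1_{wKuK}$ and $B_\ell A_\ell=\1_{KuKw}$, up to the same normalizing Haar-measure constant. Writing $wKuK=K(wuw^{-1})Kw$, we see that these two functions agree if and only if $wuw^{-1}\in KuK$. A direct computation gives
\[
wuw^{-1}=\begin{pmatrix}1&-\ell^{-1}\\0&1\end{pmatrix}.
\]
This matrix is not in $M_2(\ZZ_\ell)$, whereas $KuK\subseteq\GL_2(\ZZ_\ell)$. Hence $A_\ell B_\ell\neq B_\ell A_\ell$.

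The hypothesis $\epsilon\geq2$ enters in the choice of $u$. We need $\ell^{\epsilon-1}$ to be a genuine multiple of $\ell$ lying in $\ell\ZZ_\ell\setminus\ell^\epsilon\ZZ_\ell$. This makes $u$ a non-trivial element of $\GL_2(\ZZ_\ell)\setminus K$, and it is this conjugate by $w$ that leaves $\GL_2(\ZZ_\ell)$. For $\epsilon=1$ the local Iwahori--Hecke algebra would need separate treatment, which is not claimed here.

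The main obstacle is making the convolution identities and the coset bookkeeping fully rigorous, in particular the claim $KwK=wK$. This requires checking that $w\O_\ell w^{-1}=\O_\ell$: conjugating by $w$ swaps the diagonal entries and exchanges the lower-left entry $c$ with $-c/\ell^\epsilon$, which preserves the condition $c\equiv0\pmod{\ell^\epsilon}$. Once that is settled, the rest consists of the explicit matrix computations above.
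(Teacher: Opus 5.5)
Your argument is correct and proves the statement as written, but it takes a different route from the paper. You use the Atkin--Lehner element $w$. Since $KwK=wK$ is a single coset, you get $A_\ell B_\ell=\1_{K(wuw^{-1})Kw}$ and $B_\ell A_\ell=\1_{KuKw}$. Non-commutativity then reduces to $wuw^{-1}=\begin{psmallmatrix}1&-\ell^{-1}\\0&1\end{psmallmatrix}\notin KuK\subseteq\GL_2(\ZZ_\ell)$, which is immediate. Your involution checks $\overline{w}=-w$ and $\overline{u}=dud$ are right, and so is the reduction from the adelic algebra to the local one.

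The paper instead takes integral elements $\alpha=\begin{psmallmatrix}\ell^2&1\\0&1\end{psmallmatrix}$ and $\beta=\begin{psmallmatrix}\ell&0\\0&1\end{psmallmatrix}$ of $\O_\ell$. It symmetrizes the double cosets, $A=\1_{K\alpha K\cup K\overline{\alpha}K}$ and $B=\1_{K\beta K\cup K\overline{\beta}K}$ with $K=\O_\ell^\times$. A computation modulo $\ell^2M_2(\ZZ_\ell)$ then shows that $\overline{\beta}\alpha$ lies in the support of $BA$ but not in $(K\alpha K\cup K\overline{\alpha}K)(K\beta K\cup K\overline{\beta}K)$, the support of $AB$. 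That congruence computation is where the paper uses $\epsilon\geq 2$.

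Your route is shorter and avoids that case analysis, but your example is weaker for the paper's purpose. Your $B_\ell$ is supported on $\GL_2(\ZZ_\ell)\setminus\O_\ell$, i.e.\ on non-integral elements. The Brandt operators $B_n$, by contrast, correspond to functions supported on integral elements. The paper's example shows that a Gelfand-type argument fails even for involution-stable operators with integral support, which is the setting relevant to $\mf{B}(\O)$. Yours shows the failure only in the full algebra $\mf{H}(\O)$.

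A symptom of this is that your proof never uses $\epsilon\geq 2$, contrary to your closing remark. With $u=\begin{psmallmatrix}1&0\\1&1\end{psmallmatrix}$, or any lower-left entry in $\ZZ_\ell\setminus\ell^\epsilon\ZZ_\ell$, the same computation goes through verbatim for $\epsilon=1$. There it recovers the well-known non-commutativity of the Iwahori--Hecke algebra. The hypothesis $\epsilon\geq 2$ is what the paper needs to build an integral example, not what your construction needs.
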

\begin{proof}
    By standard arguments we may reduce to considering the local Hecke algebra at $\ell$. Without loss of generality we may take $\O_\ell$ to be the standard Eichler $\ZZ_\ell$-order of level $\ell^{\epsilon}$:
    \[\O_{\ell}=\left\{\begin{pmatrix}a&b \\ \ell^{\epsilon}c & d\end{pmatrix}: a,b,c,d\in\ZZ_\ell\right\},\]
    with $\O_\ell^\times$ consisting of those matrices in $\O_\ell$ with $a,d\in\ZZ_\ell^\times$.
    Let
    \[\alpha=\begin{pmatrix}
        \ell^2&1\\0&1
    \end{pmatrix}\qquad\text{and}\qquad \beta=\begin{pmatrix}
        \ell&0\\0&1
    \end{pmatrix}.\]
    We will check by direct computation that 
    \begin{equation}\label{eq:not in double coset product}
        \overline{\beta}\alpha=\begin{pmatrix}
        \ell^2&1\\0&\ell
    \end{pmatrix}\notin \left(\O_{\ell}^\times \alpha\O_{\ell}^\times \cup \O_{\ell}^\times \overline{\alpha}\O_{\ell}^\times\right)\cdot\left(\O_{\ell}^\times \beta\O_{\ell}^\times\cup\O_{\ell}^\times \overline{\beta}\O_{\ell}^\times\right).
    \end{equation}
    We first compute the double coset $\O_\ell^\times \overline{\beta}\alpha\O_\ell^\times$. For $b,c,f,g\in \ZZ_\ell$ and $a,d,e,h\in\ZZ_\ell^\times$, we perform the following computation, using the assumption $\epsilon\geq 2$:
    \[\begin{pmatrix}
        a&b\\\ell^{\epsilon}c&d
    \end{pmatrix}\begin{pmatrix}
        \ell^2&1\\0&\ell
    \end{pmatrix}\begin{pmatrix}
        e&f\\ \ell^{\epsilon}g&h
    \end{pmatrix}\equiv \begin{pmatrix}
        0 & ah+bh\ell \\
        0 & dh\ell
    \end{pmatrix} \pmod{\ell^2M_2(\ZZ_\ell)}.\]
    In particular, since $a,d,e,h\in\ZZ_\ell^\times$, the top-right entry has valuation $0$ and the bottom-right entry has valuation $1$. (A similar computation can be used to verify that $\overline{\alpha}\notin \O_\ell^\times\alpha\O_\ell^\times$ and $\overline{\beta}\notin \O_\ell^\times\beta\O_\ell^\times$.)
    On the other hand, setting $u=\begin{pmatrix}
        w&x\\ \ell^{\epsilon}y&z
    \end{pmatrix}$ with $x,y\in\ZZ_\ell$ and $w,z\in\ZZ_\ell^\times$, we have the following congruences modulo $\ell^2M_2(\ZZ_\ell)$:
    \begin{align*}
        \alpha u\beta&\equiv\begin{pmatrix}
        0&z\\0&z
    \end{pmatrix}, & \alpha u\overline{\beta}&\equiv\begin{pmatrix}
        0&z \ell\\0&z\ell
    \end{pmatrix},\\
    \overline{\alpha} u \beta&\equiv\begin{pmatrix}
        w\ell&x-z\\0&0
    \end{pmatrix},& \overline{\alpha}u \overline{\beta}&\equiv\begin{pmatrix}
        w&(x-z)\ell\\0&0
    \end{pmatrix}.
    \end{align*}
    No matter what $u\in \O_\ell^\times$ we take, none of these elements have a top-right entry of valuation~$0$ and a bottom-right entry of valuation $1$, so they do not lie in $\O_\ell^\times \overline{\beta}\alpha\O_\ell^\times$. 
    We can conclude that
    \[\overline{\beta}\alpha\notin \O_\ell^\times\alpha\O_\ell^\times\beta\O_\ell^\times\cup
    \O_\ell^\times\alpha\O_\ell^\times\overline{\beta}\O_\ell^\times\cup
    \O_\ell^\times\overline{\alpha}\O_\ell^\times\beta\O_\ell^\times\cup
    \O_\ell^\times\overline{\alpha}\O_\ell^\times\overline{\beta}\O_\ell^\times,\]
    establishing \cref{eq:not in double coset product}.

    Now take $A$ and $B$ to be the characteristic functions of the subsets $\O_{\ell}^\times \alpha\O_{\ell}^\times \cup \O_{\ell}^\times \overline{\alpha}\O_{\ell}^\times$ and $\O_{\ell}^\times \beta\O_{\ell}^\times\cup\O_{\ell}^\times \overline{\beta}\O_{\ell}^\times$ of $\mc{A}_\ell^\times$, respectively. Observe that these sets are stable under the standard involution, so $\overline{A}=A$ and $\overline{B}=B$. The convolution product $BA$ evaluates to a nonzero value at $\overline{\beta}\alpha$, but by \cref{eq:not in double coset product}, $AB$ evaluates to $0$ at $\overline{\beta}\alpha$. This proves that $AB\neq BA$.
\end{proof}

\subsection{Reducing to a local computation}\label{ss: red_loc}
We return to the proof of \cref{thm:Brandt algebra commutative}.
\begin{lemma}\label{lem: Eisenstein coefficient ideal count'}
    For $n\geq 1$ and $1\leq j\leq h$, we have 
    \[\sum_{k=1}^hB_n(k,j)=\left|\{J\subseteq_n I_j\}\right|,\]
    and this value is independent of $j$.
\end{lemma}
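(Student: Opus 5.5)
The equality is immediate from \cref{eq:Bnij_def_2}. For fixed $j$, each invertible right $\O$-ideal $J\subseteq_n I_j$ lies in exactly one class $[I_k]$, $1\leq k\leq h$. Partitioning $\{J\subseteq_n I_j\}$ by the class of $J$ therefore gives
\[\sum_{k=1}^h B_n(k,j)=\sum_{k=1}^h\left|\{J\subseteq_n I_j:[J]=[I_k]\}\right|=\left|\{J\subseteq_n I_j\}\right|.\]
Note that finiteness of this set is already implicit in the finiteness of the entries $B_n(k,j)$ from \cref{eq:Bnij def}.

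For independence of $j$, I would reduce to a local count and compare $I_j$ with $I_1=\O$. The local-global dictionary for lattices gives a bijection between lattices $J\subseteq I_j$ and tuples $(J_\ell)_\ell$ with $J_\ell\subseteq (I_j)_\ell$ and $J_\ell=(I_j)_\ell$ for almost all $\ell$. Being an invertible right $\O$-ideal is a local property, namely local principality with right order containing $\O$. The condition $\nrd(J)=n\nrd(I_j)$ also splits as a product over primes: we need $\nrd(J_\ell)=\ell^{v_\ell(n)}\nrd((I_j)_\ell)$ up to $\ZZ_\ell^\times$, and $J_\ell=(I_j)_\ell$ for $\ell\nmid n$, since a sublattice of the same norm that is locally principal must be equal. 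Hence
\[\left|\{J\subseteq_n I_j\}\right|=\prod_{\ell\mid n}\left|\{J_\ell\subseteq (I_j)_\ell \text{ invertible right }\O_\ell\text{-ideal}:\nrd(J_\ell)=\ell^{v_\ell(n)}\nrd((I_j)_\ell)\}\right|.\]

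Next, since $I_j$ is invertible, we can write $(I_j)_\ell=\alpha_\ell\O_\ell$ with $\alpha_\ell\in\mc{A}_\ell^\times$. Left multiplication by $\alpha_\ell^{-1}$ is a bijection from right $\O_\ell$-submodules of $(I_j)_\ell$ onto right $\O_\ell$-submodules of $\O_\ell$. It preserves local principality, and it scales norms by $\nrd(\alpha_\ell)^{-1}$, so relative norms are unchanged. Each local factor therefore equals the corresponding factor for $I_1=\O$, namely the number of principal right ideals $x\O_\ell\subseteq\O_\ell$ with $\nrd(x)\in \ell^{v_\ell(n)}\ZZ_\ell^\times$. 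This count depends only on $\O_\ell$ and $n$, not on $j$.

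The only step needing care is checking that the local-global correspondence respects every condition involved. For sublattices $J\subseteq I_j$ the relevant set of primes is automatically finite, because $J_\ell=(I_j)_\ell$ for $\ell\nmid n$. One also has to confirm that locally principal right ideals with right order containing $\O$ correspond exactly to global invertible right $\O$-ideals. Both follow from the local-global dictionary together with the definition of invertibility given in the paper.
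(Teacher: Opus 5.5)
Your proof is correct and follows the same route as the paper: the equality comes from partitioning the ideals $J\subseteq_n I_j$ by ideal class, and independence of $j$ comes from writing $(I_j)_\ell=\alpha_\ell\O_\ell$ and applying the local-global dictionary. You spell out details the paper leaves implicit, namely the product decomposition over $\ell\mid n$ and the local translation $J_\ell\mapsto\alpha_\ell^{-1}J_\ell$, which preserves relative norms.
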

\begin{proof} 
    The equality follows from the definition of $B_n(k,j)$, since every invertible right $\O$-ideal belongs to a unique right ideal class $[I_k]$ for some $1\leq k\leq h$. %
    Since each invertible right $\O$-ideal $I_j$ is locally principal, the independence from $j$ follows from the local-global dictionary (cf.~\cref{subsec:lattices}). %
\end{proof}
\begin{lemma}\label{lem: B0}
    $B_0$ commutes with $B_n$ for every $n\geq 1$.
\end{lemma}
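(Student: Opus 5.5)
We compute both products $B_0B_n$ and $B_nB_0$ entrywise and show they agree, using \cref{lem: Eisenstein coefficient ideal count'} for one side and \cref{lem:Bn self-adjoint} for the other. Recall that $B_0(i,k)=\frac{1}{2w_i}$ for every $k$. Write $D$ for the diagonal matrix with $D(i,i)=w_i$ and let $\mathbf{1}$ be the all-ones column vector. Then $B_0=\frac12 D^{-1}\mathbf{1}\mathbf{1}^t$, so both products can be expressed in terms of row and column sums of $B_n$.

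First consider $B_0B_n$. Its $(i,j)$ entry is
\[(B_0B_n)(i,j)=\sum_{k=1}^h \frac{1}{2w_i}B_n(k,j)=\frac{1}{2w_i}\,c_n,\]
where $c_n:=\sum_k B_n(k,j)=|\{J\subseteq_n I_j\}|$. By \cref{lem: Eisenstein coefficient ideal count'}, $c_n$ does not depend on $j$. Thus $B_0B_n=c_nB_0$, or equivalently $\mathbf{1}^tB_n=c_n\mathbf{1}^t$.

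Next consider $B_nB_0$. Its $(i,j)$ entry is $\sum_k B_n(i,k)\frac{1}{2w_k}$, which again does not depend on $j$, so we need to evaluate this weighted row sum. Here we use \cref{lem:Bn self-adjoint} in the form $DB_n=B_n^tD$, which gives $B_nD^{-1}=D^{-1}B_n^t$. Then
\[B_nB_0=\tfrac12 B_nD^{-1}\mathbf{1}\mathbf{1}^t=\tfrac12 D^{-1}B_n^t\mathbf{1}\mathbf{1}^t=\tfrac12 D^{-1}(\mathbf{1}^tB_n)^t\mathbf{1}^t=\tfrac12 c_nD^{-1}\mathbf{1}\mathbf{1}^t=c_nB_0.\]
Hence $B_nB_0=c_nB_0=B_0B_n$, which proves the lemma.

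The only nontrivial input is that the column sums of $B_n$ are constant, and this is exactly \cref{lem: Eisenstein coefficient ideal count'}. Once that is available, the remaining steps are linear algebra, with self-adjointness used to convert the column-sum statement into the weighted row-sum statement needed for $B_nB_0$.
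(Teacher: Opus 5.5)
Your proof is correct and is essentially the paper's argument. Both proofs use self-adjointness ($w_iB_n(i,k)=w_kB_n(k,i)$) to turn the weighted row sum in $B_nB_0$ into a column sum, then apply the constant column sums from \cref{lem: Eisenstein coefficient ideal count'}; the paper works entrywise, while your matrix formulation also shows that both products equal $c_nB_0$.
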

\begin{proof}
Recall that $B_0(i,j) = \frac{1}{2w_i}$, for any $i,j$.    Since $B_n$ is self-adjoint by  \cref{lem:Bn self-adjoint}, we have
\begin{align*}
    (B_nB_0)(i,j)&=\sum_{k=1}^h \frac{1}{2w_k}B_n(i,k)=\sum_{k=1}^h \frac{1}{2w_i}B_n(k,i),\\
    (B_0B_n)(i,j)&=\sum_{k=1}^h \frac{1}{2w_i}B_n(k,j).
\end{align*}
Thus
$(B_nB_0)(i,j)=(B_0B_n)(i,j)$ by \cref{lem: Eisenstein coefficient ideal count'}.
\end{proof}
Given \cref{lem: B0}, we can restrict our attention to the Brandt matrices $B_n$ with $n\geq 1$. For $n\geq 1$, define
\[
\mf{b}_n(i,j) = \bigl{ \{ }   J \subseteq_n I_j \; : \;  [J] = [I_i] \bigr{ \} }.
\]
Then $B_n(i,j)=|\mf{b}_n(i,j)|$ by \cref{eq:Bnij_def_2}. 

We also define
\[\mf{N}_n(J,I):=\left\{J':J\subseteq J'\subseteq_n I\right\},\]
as the set of invertible right $\O$-ideals $J'$ containing $J$ which are contained in $I$ with relative norm $n$ (\cref{def:subseteqn}).
\begin{proposition}\label{lem:count by fibre}
    For any $i,j\in\{1,\ldots,h\}$ and $m,n\geq 1$,
    \[(B_mB_n)(i,j)=\sum_{J\in \mf{b}_{mn}(i,j)}\left|\mf{N}_n(J,I_j)\right|.
    \]
\end{proposition}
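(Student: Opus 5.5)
We prove the formula by a double-counting argument that identifies both sides with the size of one set of chains of ideals. Expanding the matrix product gives
\[(B_mB_n)(i,j)=\sum_{k=1}^h B_m(i,k)B_n(k,j)=\sum_{k=1}^h |\mf{b}_m(i,k)|\cdot|\mf{b}_n(k,j)|.\]
We interpret the right-hand side as the number of pairs $(J',J)$ with $J'\subseteq_n I_j$, $[J']=[I_k]$ for some $k$, and $J\subseteq_m J'$ with $[J]=[I_i]$. The point is that $B_m(i,k)$ counts sub-ideals of relative norm $m$ of the fixed representative $I_k$. To count such sub-ideals inside $J'$ instead, we use that $J'=\beta I_k$ for some $\beta\in\mc{A}^\times$.

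\textbf{Step 1: transport along the class representative.} Left multiplication by $\beta$ gives a bijection $J_0\mapsto \beta J_0$ from $\{J_0\subseteq_m I_k:[J_0]=[I_i]\}$ to $\{J\subseteq_m J':[J]=[I_i]\}$. We check three things. First, left multiplication preserves invertibility and the right-ideal structure. Second, it preserves containment. Third, it multiplies norms by $\nrd(\beta)$, so relative norms are unchanged, and it does not change the class. Hence
\[(B_mB_n)(i,j)=\left|\{(J',J): J'\subseteq_n I_j,\ J\subseteq_m J',\ [J]=[I_i]\}\right|.\]
This bijection is the only step that uses the class structure.

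\textbf{Step 2: regroup by $J$.} For each pair in this set we have $J\subseteq J'\subseteq I_j$. By multiplicativity of the norm on invertible ideals, $\nrd(J)=mn\,\nrd(I_j)$, so $J\subseteq_{mn}I_j$ and $J\in\mf{b}_{mn}(i,j)$. Conversely, fix $J\in\mf{b}_{mn}(i,j)$ and any $J'\in\mf{N}_n(J,I_j)$. Then $J\subseteq J'$, and $\nrd(J)=m\,\nrd(J')$, so $J\subseteq_m J'$. Thus the pairs are exactly the $(J',J)$ with $J\in\mf{b}_{mn}(i,j)$ and $J'\in\mf{N}_n(J,I_j)$. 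Summing over $J$ gives the stated formula.

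\textbf{Main obstacle.} The care is needed in Step 1 and in the norm bookkeeping: $J\subseteq_m J'$ requires $J$ to be an invertible right $\O$-ideal with $\nrd(J)=m\,\nrd(J')$. Both conditions are preserved in each direction of the argument, because every ideal involved is assumed invertible; this invertibility is built into the definition of $\mf{N}_n$. We do not need local computations for this proposition. They enter only later, when we show that $|\mf{N}_n(J,I_j)|$ is symmetric enough in $m,n$ to give commutativity.
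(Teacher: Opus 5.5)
Your proposal is correct and follows the paper's proof essentially verbatim. You expand the matrix product, transport $\mf{b}_m(i,k)$ into sub-ideals of $J'$ via $J'=\beta I_k$, take the union over $k$ to get the set of chains $J\subseteq_m J'\subseteq_n I_j$ with $[J]=[I_i]$, and then regroup by $J\in\mf{b}_{mn}(i,j)$. One small simplification: $\nrd(J)=mn\,\nrd(I_j)$ follows directly by substituting $\nrd(J)=m\,\nrd(J')$ and $\nrd(J')=n\,\nrd(I_j)$, so no separate multiplicativity statement is needed.
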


\begin{proof}
The number $(B_mB_n)(i,j)=\sum_{k=1}^hB_m(i,k)B_n(k,j)$ equals the sum over all $k$ of the sizes of the product sets $\mf{b}_m(i,k)\times \mf{b}_n(k,j)$. First we fix $1\leq k\leq h$ as well as $J'\in \mf{b}_n(k,j)$. Pick $\alpha_{J'}\in\mathcal{A}^\times$ with $J'=\alpha_{J'} I_k$. The map $J''\mapsto \alpha_{J'} J''$ then induces a bijection
\begin{align*}
    \mf{b}_m(i,k)=&\left\{J''\subseteq_m I_k: [J'']=[I_i]\right\}\quad\to\quad\left\{J\subseteq_m J': [J]=[I_i]\right\}.
\end{align*}
Now we still keep $k$ fixed but allow $J'$ to vary: the map $(J'',J')\mapsto (\alpha_{J'} J'',J')$ defines a bijection from $\mf{b}_m(i,k)\times \mf{b}_n(k,j)$ to
\[
\left\{(J,J'):J\subseteq_m J'\subseteq_n I_j,\; [J]=[I_i],\;[J']=[I_k]\right\},
\]
where $J$ and $J'$ both vary over invertible right $\O$-ideals.
If we now take the union over all~$k$, we find that
\[
\left\{(J,J'):J\subseteq_m J'\subseteq_n I_j, \; [J]=[I_i]\right\}
\]
has size $(B_mB_n)(i,j)$. Writing this final set as a union over $J\in \mf{b}_{mn}(i,j)$ gives us the desired result.
\end{proof}

If $i,j$, and $J\in\mf{b}_{mn}(i,j)$ are fixed, the set $\mf{N}_n(J,I_j)$ can be determined locally. In particular we can use this to prove the following.
\begin{lemma}[{\cite[Proposition 41.3.1]{Voight}}]\label{lem:coprime commute}
    If $\gcd(m,n)=1$ then $B_mB_n=B_nB_m=B_{mn}$. %
\end{lemma}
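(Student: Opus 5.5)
We prove $B_mB_n=B_{mn}$ entrywise using \cref{lem:count by fibre}, which gives
\[(B_mB_n)(i,j)=\sum_{J\in \mf{b}_{mn}(i,j)}\left|\mf{N}_n(J,I_j)\right|.\]
Since $B_{mn}(i,j)=|\mf{b}_{mn}(i,j)|$, it suffices to show that $|\mf{N}_n(J,I_j)|=1$ for every invertible right $\O$-ideal $J\subseteq_{mn} I_j$ when $\gcd(m,n)=1$. By symmetry the same argument gives $B_nB_m=B_{mn}$, and the two together give commutativity.

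The counting is done locally, using the local-global dictionary of \cref{subsec:lattices}. An invertible right $\O$-ideal $J'$ with $J\subseteq J'\subseteq I_j$ is determined by its completions $J'_\ell$. These must satisfy $J_\ell\subseteq J'_\ell\subseteq (I_j)_\ell$, and each $J'_\ell$ must be principal, $J'_\ell=\beta_\ell\O_\ell$. The relative norm condition $\nrd(J')=n\nrd(I_j)$ splits into the local conditions that $\nrd(J'_\ell)$ and $n\nrd((I_j)_\ell)$ generate the same ideal of $\ZZ_\ell$, for each $\ell$.

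We treat the primes according to how they divide $m$ and $n$.
\begin{itemize}
\item If $\ell\nmid mn$, then $J_\ell$ and $(I_j)_\ell$ have the same norm, so $J_\ell=(I_j)_\ell$: a containment of principal lattices $\alpha\O_\ell\subseteq\beta\O_\ell$ with $\nrd(\alpha)/\nrd(\beta)$ a unit means $\beta^{-1}\alpha\in\O_\ell$ has unit norm, hence is a unit of $\O_\ell$. Therefore $J'_\ell=J_\ell=(I_j)_\ell$.
\item If $\ell\mid n$, then $\ell\nmid m$. Here $J'_\ell$ and $J_\ell$ are forced to have the same norm, so the same unit argument gives $J'_\ell=J_\ell$.
\item If $\ell\mid m$, then $\ell\nmid n$, and similarly $J'_\ell=(I_j)_\ell$.
\end{itemize}

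So the choice of $J'_\ell$ at every prime is forced. We must also check that the resulting lattice $J'$ (equal to $J_\ell$ for $\ell\mid n$ and to $(I_j)_\ell$ otherwise) is a valid element of $\mf{N}_n(J,I_j)$.
\begin{itemize}
\item It is a lattice, because it agrees with $I_j$ at almost all primes.
\item It is locally principal, hence an invertible right $\O$-ideal.
\item It lies between $J$ and $I_j$, since that holds at every prime.
\item Its norm is $n\nrd(I_j)$, which can be checked one prime at a time.
\end{itemize}
Hence $|\mf{N}_n(J,I_j)|=1$, as required.

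The only delicate point is the unit argument. Concretely, write $(I_j)_\ell=\beta\O_\ell$ and $J_\ell=\alpha\O_\ell$, so that $\gamma:=\beta^{-1}\alpha\in\O_\ell$. Every ideal between them has the form $\beta\delta\O_\ell$ with $\delta\in\O_\ell$ and $\delta^{-1}\gamma\in\O_\ell$. The norm of $\gamma$ factors as $\nrd(\gamma)=\nrd(\delta)\nrd(\delta^{-1}\gamma)$ with both factors in $\ZZ_\ell$, and the valuation conditions force one factor to be a unit. An element of $\O_\ell$ whose norm is a unit is itself a unit, because $\overline{x}\in\O_\ell$ and $x^{-1}=\overline{x}/\nrd(x)$. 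This works for an arbitrary order, without maximality, since only $\overline{\O_\ell}=\O_\ell$ is used.
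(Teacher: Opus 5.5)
Your proposal is correct and follows the same route as the paper: apply \cref{lem:count by fibre} and show that each fibre $\mf{N}_n(J,I_j)$ is a singleton. The paper also takes as its unique element the ideal $J'$ with $J'_\ell=J_\ell$ for $\ell\mid n$ and $J'_\ell=(I_j)_\ell$ otherwise. You have additionally written out the local uniqueness and existence checks that the paper leaves implicit, in particular the fact that an element of $\O_\ell$ with unit norm is a unit.
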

\begin{proof}
    For each $J\in\mf{b}_{mn}(i,j)$, there is a unique $J\subseteq J'\subseteq_n I_j$, defined by setting $J'_\ell=J_\ell$ for $\ell\mid n$ and $J'_\ell=(I_j)_\ell$ for $\ell\nmid n$. So $B_nB_m=B_{mn}$ by \cref{lem:count by fibre}.
\end{proof}
Since we have also checked that $B_0$ commutes with every Brandt matrix, this allows us to make the following reduction.

\begin{lemma}\label{lem: BlO}
    $\mf{B}(\O)$ is commutative if and only if the $\ZZ$-subalgebra of $\mf{B}(\O)$ generated by $\{B_{\ell^k}:k\geq 0\}$ is commutative for every prime $\ell$.
\end{lemma}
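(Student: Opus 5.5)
The forward direction is immediate, since a subalgebra of a commutative algebra is commutative. For the converse, I will show that every $B_n$ lies in a subalgebra that visibly commutes, using \cref{lem:coprime commute} and \cref{lem: B0}.

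Suppose that for each prime $\ell$ the subalgebra $\mf{B}_\ell$ generated by $\{B_{\ell^k}:k\geq 0\}$ is commutative. First, $\mf{B}(\O)$ is generated by $B_0$ together with $\{B_n:n\geq 1\}$. For $n\geq 1$, factor $n=\prod_{i}\ell_i^{k_i}$ into prime powers. Iterating \cref{lem:coprime commute}, I get $B_n=\prod_i B_{\ell_i^{k_i}}$, with the factors in any order. So $\mf{B}(\O)$ is generated as a $\ZZ$-algebra by $B_0$ and the union of the sets $\{B_{\ell^k}:k\geq 1\}$ over all primes $\ell$.

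It therefore suffices to check that these generators pairwise commute, since an algebra generated by pairwise commuting elements is commutative. There are three cases.
\begin{enumerate}
\item $B_0$ commutes with every $B_n$, $n\geq 1$, by \cref{lem: B0}, and trivially with itself.
\item For the same prime $\ell$, $B_{\ell^a}$ and $B_{\ell^b}$ commute by the hypothesis that $\mf{B}_\ell$ is commutative.
\item For distinct primes $\ell\neq\ell'$, \cref{lem:coprime commute} gives $B_{\ell^a}B_{\ell'^b}=B_{\ell^a\ell'^b}=B_{\ell'^b}B_{\ell^a}$.
\end{enumerate}
This gives commutativity of $\mf{B}(\O)$.

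There is no real obstacle here. The only point needing care is that the reduction to prime powers uses \cref{lem:coprime commute} in the form $B_mB_n=B_nB_m=B_{mn}$, including the equality $B_mB_n=B_{mn}$ itself; its proof establishes this via \cref{lem:count by fibre} for coprime $m,n$ in either order. The substantive work, namely commutativity within each $\mf{B}_\ell$, is deferred to the local analysis.
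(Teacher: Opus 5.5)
Your proof is correct and is essentially the paper's argument: the paper gives no separate proof of this lemma and treats it as an immediate consequence of \cref{lem: B0} ($B_0$ commutes with everything) and \cref{lem:coprime commute} (each $B_n$ with $n\geq 1$ factors into pairwise commuting prime-power pieces), which is exactly your reduction. You were right to check that the proof of \cref{lem:coprime commute}, being symmetric in $m$ and $n$, gives $B_mB_n=B_{mn}$ in both orders.
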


\subsection{Local analysis}\label{ss:local}

From now on we fix a prime $\ell$ and we denote $\O_\ell = \O\otimes_{\ZZ} \bb{Z}_\ell$. For $a\in\ZZ_\ell$ we let $\val(a)$ denote the $\ell$-adic valuation, so that $a\in\ell^{\val(a)}\ZZ_\ell^\times$. We declare $\val(0)=\infty$.

For $x\in\O_\ell$ and $m\in\NN$, let
$\mf{N}_{\ell^m}(x)=\mf{N}_{\ell^m}(x,\O_\ell)$ denote the set of principal right ideals $I\sus\O_\ell$ of norm $\ell^m$ with $x\in I$. 
Let $N_{\ell^m}(x)=N_{\ell^m}(x,\O_\ell)=|\mf{N}_{\ell^m}(x,\O_\ell)|$ be the cardinality of $\mf{N}_{\ell^m}(x,\O_\ell)$. 

The idea of the proof of \cref{thm:Brandt algebra commutative} %
is to relate the sets $\mf{N}_{\ell^m}(x,\O_\ell)$ of local ideals to the sets $\mf{N}_{\ell^m}(J,I_j)$ of global ideals that appear in \cref{lem:count by fibre}, justifying the abuse of notation. We will prove the identity $N_{\ell^m}(x)=N_{\ell^m}(\bar{x})$ for all $x\in\O_\ell$ with $\nrd(x)\neq 0$ (\cref{prop:Nmx eq Nmbarx}), and use this to prove commutativity of Brandt matrices. 
We begin with some simple relations satisfied by $N_{\ell^m}(x)$.

\begin{lemma}\label{l:reductions} Let $x,x'\in\O_\ell$ and $m\in\NN$. %
\begin{enumerate}[$(1)$]
    \item If $x-x'\in\ell^m\O_\ell$, then $N_{\ell^m}(x)=N_{\ell^m}(x')$.
    \item If $n=\val(\nrd(x))$ and $m\leq n<\infty$, then $N_{\ell^m}(x)=N_{\ell^{n-m}}(\bar{x})$.
\end{enumerate}
\end{lemma}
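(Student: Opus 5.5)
For part (1), I will compare membership directly. Take a principal right ideal $I=\gamma\O_\ell\subseteq\O_\ell$ with $\nrd(I)=\ell^m$, so that $\val(\nrd(\gamma))=m$. The goal is to show $\ell^m\O_\ell\subseteq I$. The natural witness is the standard involution: $\gamma\overline{\gamma}=\nrd(\gamma)\in\ell^m\ZZ_\ell^\times$. Since $\overline{\gamma}\in\O_\ell$ (orders are closed under the involution, as $\overline{\gamma}=\trd(\gamma)-\gamma$), we get $\ell^m\in\gamma\O_\ell$. Therefore $\ell^m\O_\ell\subseteq\gamma\O_\ell=I$. It follows that $x\in I$ if and only if $x'\in I$, so $\mf{N}_{\ell^m}(x)=\mf{N}_{\ell^m}(x')$.

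For part (2), I will construct a bijection $\mf{N}_{\ell^m}(x)\to\mf{N}_{\ell^{n-m}}(\bar x)$. Take $I=\gamma\O_\ell\ni x$ with $\val\nrd(\gamma)=m$, and write $x=\gamma\delta$ with $\delta\in\O_\ell$. Then $\val\nrd(\delta)=n-m$, and $\bar x=\bar\delta\,\bar\gamma\in\bar\delta\O_\ell$. So I set $\Phi(I)=\bar\delta\O_\ell$, which is a principal ideal of norm $\ell^{n-m}$ containing $\bar x$.

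The steps, in order, are as follows.
\begin{enumerate}[(a)]
\item Check that $\Phi$ is well defined. The generator $\gamma$ is only determined up to $\gamma\mapsto\gamma u$ with $u\in\O_\ell^\times$. Under this change $\delta\mapsto u^{-1}\delta$, hence $\bar\delta\mapsto\bar\delta\,\overline{u^{-1}}$. Since $\overline{u^{-1}}\in\O_\ell^\times$, the ideal $\bar\delta\O_\ell$ is unchanged. Here $\delta$ is determined by $\gamma$ because $\nrd(x)\neq0$ forces $x,\gamma$ to be invertible in $\mc{A}_\ell$.
\item Apply the same construction with roles swapped. Starting from $\bar x=\bar\delta\cdot\bar\gamma$, it gives a map $\Psi\colon\mf{N}_{\ell^{n-m}}(\bar x)\to\mf{N}_{\ell^{m}}(x)$ (since $\overline{\bar x}=x$).
\item Verify that $\Psi\circ\Phi$ and $\Phi\circ\Psi$ are identities. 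For $\Psi\circ\Phi$: writing $\bar x=\bar\delta\cdot\bar\gamma$, the cofactor of $\bar\delta$ is $\bar\gamma$, and $\overline{\bar\gamma}=\gamma$ recovers $I$. The other composition is symmetric.
\end{enumerate}
This yields $N_{\ell^m}(x)=N_{\ell^{n-m}}(\bar x)$.

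The only delicate point is step (a): the cofactor $\delta$ must lie in $\O_\ell$, and the construction must not depend on the choice of generator. The first follows from $x\in\gamma\O_\ell$ by definition. The second needs care, because $\O_\ell$ is a possibly non-maximal order and $\delta$ is only defined up to left multiplication by units; conjugation by the involution converts this into right multiplication, which is exactly what the right ideal $\bar\delta\O_\ell$ absorbs. The edge cases $m=0$ and $m=n$ need no separate treatment: there $I=\O_\ell$, respectively $\delta$ is a unit, and the maps still apply.
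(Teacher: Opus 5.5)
Your proposal is correct and follows the paper's proof. In part (1) you prove the same containment $\ell^m\O_\ell\subseteq I$ that the paper asserts, and you justify it explicitly via $\gamma\bar\gamma=\nrd(\gamma)$. In part (2) your map $\gamma\O_\ell\mapsto\bar\delta\O_\ell$ is exactly the paper's bijection induced by $(y,z)\mapsto(\bar z,\bar y)$ on factorizations $x=yz$ modulo $(y,z)\sim(yu,u^{-1}z)$, with your well-definedness check spelled out in more detail.
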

\begin{proof}
For (1), any right $\O_\ell$-ideal $I$ of norm $\ell^m$ contains $\ell^m\O_\ell$, 
so $x\in I$ if and only if $x'\in I$. That is, $\mf{N}_{\ell^m}(x)=\mf{N}_{\ell^m}(x')$ and consequently $N_{\ell^m}(x)=N_{\ell^m}(x')$.

For (2), observe that 
    \[N_{\ell^m}(x)=\left|\left\{(y,z)\in\O_\ell\times\O_\ell:yz=x,\;\nrd(y)\in\ell^m\ZZ_\ell^\times\right\}/\!\sim\right|\]
    with the equivalence $(y,z)\sim (yu,u^{-1}z)$ for any $u\in\O_\ell^\times$. Indeed, to each $(y,z)$ we can associate the ideal $y\O_\ell\in\mf{N}_{\ell^m}(x,\O_\ell)$ 
    with two pairs determining the same ideal if and only if they are equivalent, and every ideal in $\mf{N}_{\ell^m}(x,\O_\ell)$ can be obtained in this way. From this we see that $(y,z)\mapsto (\bar{z},\bar{y})$ induces a bijection $\mf{N}_{\ell^m}(x)\to\mf{N}_{\ell^{n-m}}(\bar{x})$.
\end{proof}

Say $x\in\O_\ell$ is \textbf{$m$-norm-reduced} %
if $\nrd(x)\neq 0$ and
\begin{align}\label{eq:nrd div trd}
    \val(\nrd(x))\leq m+\val(\trd(x)).
\end{align}
Note that every $x$ with nonzero norm is $m$-norm-reduced for sufficiently large $m$, so this is only a nontrivial constraint when $m$ is fixed.
We will show in the proof of \cref{prop:Nmx eq Nmbarx} that whenever $x$ is $m$-norm-reduced, we can define an explicit bijection $\mf{N}_{\ell^m}(x)\to\mf{N}_{\ell^m}(\bar{x})$. In order to prove $N_{\ell^m}(x)=N_{\ell^m}(\bar{x})$ more generally, we first need a way to reduce to the $m$-norm-reduced case.

\begin{lemma}\label{lem:nrd div trd}
    Let $\O_\ell$ be a quaternion order over $\ZZ_\ell$. Let $x\in\O_\ell$ with $\nrd(x)\ne 0$. Let $m\in\NN$ and assume that $\val(\trd(x))\neq m$. Then there exists an $m$-norm-reduced $x'\in\O_\ell$ with  $x-x'\in\ell^m\O_\ell$.
\end{lemma}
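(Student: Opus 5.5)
The plan is to perturb $x$ by a scalar: take $x'=x+\ell^m t$ with $t\in\ZZ_\ell$. Then $x-x'\in\ell^m\ZZ_\ell\subseteq\ell^m\O_\ell$ automatically, and since scalars commute with everything and have $\overline{t}=t$,
\[
\trd(x')=\trd(x)+2\ell^m t,\qquad \nrd(x')=\nrd(x)+\ell^m t\,\trd(x)+\ell^{2m}t^2 .
\]
The proof then reduces to valuation bookkeeping. If $x$ is already $m$-norm-reduced, take $x'=x$. So assume $\val(\nrd(x))>m+s$, where $s:=\val(\trd(x))\neq m$ (possibly $s=\infty$). There are two cases according to whether $s<m$ or $s>m$, and in both we take $t=1$.

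\emph{Case $s<m$.} Since $\val(2\ell^m)\geq m>s$, we get $\val(\trd(x'))=s$. In the expression for $\nrd(x')$ the three terms have valuations $>m+s$, exactly $m+s$, and $2m>m+s$ respectively. Hence $\val(\nrd(x'))=m+s$. This gives $\nrd(x')\neq 0$ and $\val(\nrd(x'))\leq m+\val(\trd(x'))$.

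\emph{Case $s>m$.} Here $\val(\nrd(x))>m+s\geq 2m+1$ and $\val(\ell^m\trd(x))\geq 2m+1$. So $\nrd(x')\equiv\ell^{2m}\pmod{\ell^{2m+1}}$, hence $\val(\nrd(x'))=2m$ and in particular $\nrd(x')\neq0$. Also $\val(\trd(x'))\geq\min(s,\val(2\ell^m))\geq m$; this holds for $\ell=2$ as well, since then $\val(2\ell^m)=m+1$. Therefore $\val(\nrd(x'))=2m\leq m+\val(\trd(x'))$, so $x'$ is $m$-norm-reduced.

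The only point needing care is the hypothesis $s\neq m$, which is exactly what rules out cancellation. When $s=m$, the terms $\ell^m\trd(x)$ and $\ell^{2m}t^2$ have the same valuation and may cancel against each other and against $\nrd(x)$, so the valuation of $\nrd(x')$ cannot be controlled by this argument. The prime $\ell=2$ causes no trouble, since in both cases only a lower bound on $\val(2\ell^m)$ is used.
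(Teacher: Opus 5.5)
Your proposal is correct and follows essentially the same route as the paper: replace $x$ by $x+\ell^m$ when $x$ is not already $m$-norm-reduced, and use $s\neq m$ to ensure that exactly one term of $\nrd(x)+\ell^m\trd(x)+\ell^{2m}$ attains the minimal valuation $m+\min(m,s)$. You split this into the cases $s<m$ and $s>m$, whereas the paper treats both at once via $\min(m,s)$, and it bounds $\val(\trd(x+\ell^m))\geq\min(m,s)$ exactly as you do.
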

\begin{proof}
    Let $t:=\val(\trd(x))$ and $n:=\val(\nrd(x))$. If $n\leq m+t$ then $x'=x$ is already $m$-norm-reduced, so suppose instead that $n>m+t$. Since we are assuming $m\neq t$, the valuation of
    \[\nrd(x+\ell^m)=\nrd(x)+\ell^m\trd(x)+\ell^{2m}\]
    is equal to $m+\min(m,t)$, because exactly one of the three terms attains this minimal valuation. In particular, this implies $\nrd(x+\ell^m)\ne 0$. 
    On the other hand,
    \[\trd(x+\ell^m)=\trd(x)+2\ell^m\]
    has valuation greater than or equal to $\min(m,t)$, so 
    \[\val(\nrd(x+\ell^m))=m+\min(m,t)\leq \val(\ell^m\trd(x+\ell^m)).\]
    Thus $x+\ell^m$ is $m$-norm-reduced.
\end{proof}

\begin{proposition}\label{prop:Nmx eq Nmbarx}
    For any quaternion order $\O_\ell$ over $\ZZ_\ell$, any $m\in\NN$ and any $x\in\O_\ell$ with $\nrd(x)\ne 0$, we have $N_{\ell^m}(x)=N_{\ell^m}(\bar{x})$.
\end{proposition}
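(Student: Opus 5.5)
The plan is to reduce to the case where $x$ is $m$-norm-reduced, prove the identity there by a short direct argument, and then handle the one case that \cref{lem:nrd div trd} does not cover ($\val(\trd(x))=m$) by applying \cref{l:reductions}(2) to swap $m$ with $n-m$. Throughout, set $n=\val(\nrd(x))$ and $t=\val(\trd(x))$.

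\emph{Trivial range.} If $m>n$, both sides vanish. Indeed, if $x\in I$ with $I$ a principal right ideal of norm $\ell^m$, then $\ell^m\mid\nrd(x)$. The same holds for $\bar x$, which has the same norm. So assume $m\leq n$.

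\emph{The $m$-norm-reduced case.} Suppose $n\leq m+t$, i.e.\ $t\geq n-m$. By \cref{l:reductions}(2),
\[N_{\ell^m}(x)=N_{\ell^{n-m}}(\bar x).\]
Every right ideal $J$ of norm $\ell^{n-m}$ contains $\ell^{n-m}\O_\ell$, hence contains $\trd(x)\in\ell^{t}\ZZ_\ell$. Since $\bar x=\trd(x)-x$, we get $\bar x\in J$ if and only if $x\in J$. Thus $N_{\ell^{n-m}}(\bar x)=N_{\ell^{n-m}}(x)$ (this is essentially \cref{l:reductions}(1) applied to $\bar x$ and $-x$, together with $x\in J\iff -x\in J$). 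Applying \cref{l:reductions}(2) once more, now with $n-m$ in place of $m$, gives $N_{\ell^{n-m}}(x)=N_{\ell^{m}}(\bar x)$. Chaining these equalities proves the claim for $m$-norm-reduced $x$.

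\emph{General case with $t\neq m$.} By \cref{lem:nrd div trd} there is an $m$-norm-reduced $x'$ with $x-x'\in\ell^m\O_\ell$. Then also $\bar x-\bar x'\in\ell^m\O_\ell$, since $\ell^m\O_\ell$ is stable under the standard involution. Using \cref{l:reductions}(1) twice and the reduced case,
\[N_{\ell^m}(x)=N_{\ell^m}(x')=N_{\ell^m}(\bar x')=N_{\ell^m}(\bar x).\]
Note that the reduced case for $x'$ uses $x'$'s own norm and trace valuations. This is fine because that case was proven for an arbitrary $m$-norm-reduced element; if $\val(\nrd(x'))<m$, both sides are $0$.

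\emph{The case $t=m$.} This is the expected obstacle, since \cref{lem:nrd div trd} does not apply; I plan to avoid it by a symmetry trick. If $n\leq 2m=m+t$, then $x$ is already $m$-norm-reduced. Otherwise $n-m>m=t$, so $t\neq n-m$. By \cref{l:reductions}(2), $N_{\ell^m}(x)=N_{\ell^{n-m}}(\bar x)$. The previous step, applied with $m$ replaced by $n-m$ and with $\bar x$ (whose trace has the same valuation $t$), gives $N_{\ell^{n-m}}(\bar x)=N_{\ell^{n-m}}(x)$. Finally, \cref{l:reductions}(2) with $n-m$ in place of $m$ gives $N_{\ell^{n-m}}(x)=N_{\ell^m}(\bar x)$. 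The only care needed is to check that the hypotheses $n-m\leq n$ and $\nrd\neq0$ hold at each invocation, which they do.
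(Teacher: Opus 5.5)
Your proof is correct, and its outer structure matches the paper's. Both reduce to an $m$-norm-reduced element with \cref{lem:nrd div trd} when $\val(\trd(x))\neq m$, and both handle $\val(\trd(x))=m$ by passing to level $n-m$ via \cref{l:reductions}(2).

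The difference is in the core $m$-norm-reduced case. The paper writes down the map $y\O_\ell\mapsto\bar x x^{-1}y\O_\ell$ and checks integrality directly: it writes $\bar x x^{-1}y=\trd(x)x^{-1}y-y$ and uses $\val(\nrd(x))\le m+\val(\trd(x))$ to get $\trd(x)x^{-1}y\in\O_\ell$. You instead read that inequality as $\trd(x)\in\ell^{n-m}\ZZ_\ell$, i.e.\ $\bar x\equiv -x\pmod{\ell^{n-m}\O_\ell}$. This makes the identity at level $n-m$ trivial, and you transport it to level $m$ by two applications of \cref{l:reductions}(2). Unwinding your chain $y\O_\ell\mapsto\bar z\O_\ell\mapsto\bar w\O_\ell$, where $x=yz$ and $x=\bar z w$, gives $\bar w=\bar x z^{-1}=\bar x x^{-1}y$. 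So you recover exactly the paper's map, with the integrality computation replaced by the remark that $\bar z\O_\ell\supseteq\ell^{n-m}\O_\ell\ni\trd(x)$.

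Your viewpoint also simplifies the end of the argument, which you did not exploit. The same congruence at level $m$ makes the claim immediate whenever $\val(\trd(x))\ge m$, since then $\bar x\equiv -x\pmod{\ell^m\O_\ell}$. So the case $\val(\trd(x))=m$, which you single out as the obstacle, needs no symmetry trick, and \cref{lem:nrd div trd} is only needed when $\val(\trd(x))<m$. The paper's version, for its part, verifies the bijection at level $m$ in a single step, without the detour through level $n-m$.
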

\begin{proof}
    We first prove the result assuming $\val(\trd(x))\neq m$.
    By \cref{l:reductions} we can replace~$x$ with any $x'\equiv x\bmod{\ell^m\O_\ell}$ without changing the values of $N_{\ell^m}(x)$ and $N_{\ell^m}(\bar{x})$. So by \cref{lem:nrd div trd}, we may assume without loss of generality that $x$ is $m$-norm-reduced. 
    
    Under this assumption we will prove that $y\O_\ell\mapsto \bar{x}x^{-1}y\O_\ell$ is a well-defined map $\mf{N}_{\ell^m}(x)\to\mf{N}_{\ell^m}(\bar{x})$.
    Given $y\O_\ell\in \mf{N}_{\ell^m}(x)$, we have
    \[x\in y\O_\ell \quad\Leftrightarrow \quad \bar{x}\in \bar{x}x^{-1}y\O_\ell\]
    and $\nrd(\bar{x}x^{-1}y)=\nrd(y)$, so it suffices to prove $\bar{x}x^{-1}y\in\O_\ell$. Observe that $\nrd(y)\trd(x)$ has valuation $m+\val(\trd(x))$, and so its quotient by $\nrd(x)$ is in $\ZZ_\ell$ by \cref{eq:nrd div trd}. Since $y^{-1}x\in\O_\ell$, we have
    \[\trd(x)(x^{-1}y)=\frac{\trd(x)}{\nrd(y^{-1}x)}\overline{y^{-1}x}\in\O_\ell,\]
    and therefore
    \[\bar{x}x^{-1}y=\trd(x)(x^{-1}y)-y\in\O_\ell.\]
    This proves that $\bar{x}x^{-1}y\O_\ell\in \mf{N}_{\ell^m}(\bar{x})$ as desired.
    Since $\bar{x}$ is also $m$-norm-reduced, the same construction defines an inverse map $\mf{N}_{\ell^m}(\bar{x})\to\mf{N}_{\ell^m}(x)$, proving that the two sets have the same cardinality.

    Finally we consider the case $\val(\trd(x))=m$. Set $n:=\val(\nrd(x))$. If $n\leq 2m$ then~$x$ is $m$-norm-reduced, and we may continue the argument as above. Otherwise $n>2m$ and  so we have $\val(\trd(x))\neq n-m$. By \cref{lem:nrd div trd}, there is an $(n-m)$-norm-reduced element $x'\in\O_\ell$ with $x-x'\in \ell^{n-m}\O_\ell$, and so we may use the above argument to prove that $N_{\ell^{n-m}}(x)=N_{\ell^{n-m}}(\bar{x})$.  This implies $N_{\ell^m}(x)=N_{\ell^m}(\bar{x})$ by \cref{l:reductions}(2).
\end{proof}
 We are now ready to prove the commutativity of the Brandt algebra $\mf{B}(\O)$.
\begin{proof}[Proof of \emph{\cref{thm:Brandt algebra commutative}}]
    By \cref{lem: BlO}, it suffices to prove that $B_{\ell^k}B_{\ell^m}=B_{\ell^m}B_{\ell^k}$, for all primes $\ell$ and all $m,k\geq 0$. Setting $n:=m+k$, this is equivalent to prove that $B_{\ell^{n-m}}B_{\ell^m}=B_{\ell^m}B_{\ell^{n-m}}$, for all primes $\ell$ and for all   $0\leq m\leq n$.

    Fix $1\leq i,j\leq h$, and let $J\in \mf{b}_{\ell^n}(i,j)$. Write $(I_j)_\ell=\gamma\O_\ell$ for some $\gamma\in\mc{A}_\ell^\times$, so that $\gamma^{-1}J_\ell=x\O_\ell$ for some $x\in\O_\ell$ with $\val(\nrd(x))=n$.
    We can define a map
    \[
        \mf{N}_{\ell^m}(J,I_j)\to \mf{N}_{\ell^m}(x,\O_\ell)
        \]
    by $J'\mapsto \gamma^{-1}J'_\ell$. The map is well-defined because the set on the left varies over locally principal right $\O$-ideals, so $\gamma^{-1}J'_\ell$ is a principal right $\O_\ell$-ideal of the correct norm, with $J\sus J'$ implying $x\in \gamma^{-1}J'_\ell$. The constraint $\frac{\Nrd(J')}{\Nrd(I_j)}=\ell^m$ forces $J'_{\ell'}=(I_j)_{\ell'}$ for all primes $\ell'\neq \ell$, so by the local-global dictionary, this map is in fact a bijection. We therefore have
    \begin{align}\label{eq:global ideal count is local}
        |\mf{N}_{\ell^m}(J,I_j)|=N_{\ell^m}(x).
    \end{align}
    Replacing $m$ with $n-m$, the same argument together with \cref{l:reductions}(2) and \cref{prop:Nmx eq Nmbarx} shows that
    \[|\mf{N}_{\ell^{n-m}}(J,I_j)|=N_{\ell^{n-m}}(x)=N_{\ell^m}(\bar{x})=N_{\ell^m}(x).\]
    This proves that $|\mf{N}_{\ell^m}(J,I_j)|=|\mf{N}_{\ell^{n-m}}(J,I_j)|$ for all $J\in\mf{b}_{\ell^n}(i,j)$, so that by \cref{lem:count by fibre} we have $(B_{\ell^m}B_{\ell^{n-m}})(i,j)=(B_{\ell^{n-m}}B_{\ell^m})(i,j)$. 
\end{proof}

\section{Submaximal orders}\label{subsec:submaximal}

Let $\mc{A}$ be a quaternion algebra over $\QQ$, and let $\mf{O}\sus\mc{A}$ be a maximal order. Our strategy to prove \cref{thm:generation bound} 
is to show that $\mf{O}$ has more elements of a given norm than any of its proper suborders. It will be useful to consider the maximal elements among the set of all proper suborders, and so we introduce the following convenient terminology.

\begin{definition}\label{def:submaximal}
    An order $\O\subsetneq\mf{O}$ is a \emph{submaximal order of $\mf{O}$} (or, simply, $\O$ is \emph{submaximal in $\mf{O}$}) if it is maximal with respect to inclusion among the proper suborders of~$\mf{O}$.
\end{definition}

\subsection{Classification}

\begin{lemma}\label{lem:max_suborders} \label{def:submaximal_orders} 
    Let $\mf{O}$ be a maximal order in $\A$, and let $\O\subseteq \mf{O}$ be a submaximal order. Then one of the following holds:
    \begin{enumerate}[$(a)$]
        \item $[\mf{O}:\O]=\qprime$ for a split prime $\qprime$ of $\mc{A}$, and $\O$ is Eichler.
        \item $[\mf{O}:\O]=p$ for a ramified prime $p$ of $\mc{A}$, and $\O=\ZZ+\mf{P}$, with $\mf{P}$ the unique maximal two-sided ideal of norm $p$ in $\mf{O}$.
        \item $[\mf{O}:\O]=\qprime^2$ for a split prime $\qprime$ of $\mc{A}$, and $\O=R+{\qprime}\mf{O}$ for some imaginary quadratic order $R\subseteq\mf{O}$ with $R/\qprime R\simeq \FF_{\qprime^2}$.
        \item $[\mf{O}:\O]=p^2$ for a ramified prime $p$ of $\mc{A}$, and $\O=R+p\mf{O}$ for some imaginary quadratic order $R\subseteq\mf{O}$ with $R/p R\simeq \FF_{p^2}$.
    \end{enumerate}
\end{lemma}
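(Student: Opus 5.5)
Since $\O$ is maximal among proper suborders of $\mf{O}$ and orders are determined locally, I first reduce to a single prime. If $\O_\ell\neq\mf{O}_\ell$ for two distinct primes $\ell,\ell'$, then the order $\O'$ defined by $\O'_\ell=\O_\ell$ and $\O'_q=\mf{O}_q$ for all $q\neq\ell$ (local-global dictionary) satisfies $\O\subsetneq\O'\subsetneq\mf{O}$, contradicting submaximality. So there is exactly one prime $\qprime$ at which $\O$ and $\mf{O}$ differ, and $\O_\qprime$ is maximal among proper $\ZZ_\qprime$-suborders of $\mf{O}_\qprime$. Moreover $\ZZ_\qprime+\qprime\mf{O}_\qprime$ is an order, and $\O_\qprime+\qprime\mf{O}_\qprime$ is an order containing $\O_\qprime$. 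If $\O_\qprime\not\supseteq\qprime\mf{O}_\qprime$, I would argue directly that $\O_\qprime+\qprime\mf{O}_\qprime$ is still proper: its image in $\mf{O}_\qprime/\qprime\mf{O}_\qprime$ is the image of $\O_\qprime$, which is proper by Nakayama (\cref{lem:lattice quotient l}). Submaximality then forces $\O_\qprime\supseteq\qprime\mf{O}_\qprime$. Hence $\O_\qprime$ is the preimage of a maximal proper $\FF_\qprime$-subalgebra $\bar S$ of $\bar{\mf{O}}:=\mf{O}_\qprime/\qprime\mf{O}_\qprime$, and $[\mf{O}:\O]=\qprime^{4-\dim\bar S}$.

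Next I classify maximal proper subalgebras $\bar S$ (containing $1$) of $\bar{\mf{O}}$ in the two cases. In the split case $\bar{\mf{O}}\cong M_2(\FF_\qprime)$. A subalgebra either fixes a line in $\FF_\qprime^2$, and is then contained in a Borel subalgebra of dimension $3$, or acts irreducibly. In the irreducible case, by Schur/Wedderburn it is either all of $M_2$ or a field $\FF_{\qprime^2}$ of dimension $2$, which is maximal since it is contained in no Borel. A Borel lifts to a conjugate of the standard Eichler order of level $\qprime$, giving case (a) with index $\qprime$. A field $\FF_{\qprime^2}$ gives index $\qprime^2$, and $\O_\qprime=\ZZ_\qprime[x]+\qprime\mf{O}_\qprime$ for a lift $x$. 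In the ramified case $\bar{\mf{O}}$ has radical $\bar{\mf{P}}$ with $\bar{\mf{O}}/\bar{\mf{P}}\cong\FF_{p^2}$, $\bar{\mf{P}}^2=0$, and $\dim\bar{\mf{P}}=2$. A maximal subalgebra either contains $\bar{\mf{P}}$, in which case its image is a maximal subalgebra $\FF_p$ of $\FF_{p^2}$, giving $\FF_p+\bar{\mf{P}}$ and case (b) with index $p$. Otherwise, I would show it maps isomorphically onto $\FF_{p^2}$: a subalgebra not containing $\bar{\mf{P}}$ that maps onto $\FF_p$ is contained in $\FF_p+\bar{\mf{P}}$. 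Such a complement is a $2$-dimensional field, giving index $p^2$ (case (d)). I must also check that $\FF_{p^2}$-complements are not contained in $\FF_p+\bar{\mf{P}}$, which is clear since they surject onto $\FF_{p^2}$.

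Finally, I globalize in cases (c) and (d). I choose $x\in\mf{O}$ approximating the local lift modulo $\qprime\mf{O}_\qprime$; this is possible since $\mf{O}$ is dense in $\mf{O}_\qprime$. Then $R:=\ZZ[x]$ is an imaginary quadratic order, because $\mc{A}$ is definite and $x\notin\QQ$. Here $R/\qprime R\cong\FF_{\qprime^2}$ since $x$ reduces to a generator of the field, and $R+\qprime\mf{O}$ agrees with $\O$ locally everywhere, so it equals $\O$. In case (b), I identify $\ZZ+\mf{P}$ locally: at $p$ it is $\ZZ_p+\mf{P}_p$, and elsewhere $\mf{P}_q=\mf{O}_q$. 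In case (a), being Eichler is local, and the Borel lift is locally an intersection of two maximal orders. The main obstacle I expect is the clean classification of maximal subalgebras of $\bar{\mf{O}}$ in the ramified case, in particular making sure that every non-radical-containing maximal subalgebra is a complement field of dimension $2$. The split case is handled by the standard reducible/irreducible dichotomy.
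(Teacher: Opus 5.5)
Your overall strategy is the paper's. You isolate the unique prime $\ell$ where $\O$ and $\mf{O}$ differ. You then use \cref{lem:lattice quotient l} and submaximality to get $\O_\ell\supseteq\ell\mf{O}_\ell$, so $\O$ is the preimage of a maximal proper subalgebra $\bar S$ of $\mf{O}_\ell/\ell\mf{O}_\ell$, and you classify $\bar S$. In the split case, your argument is a valid substitute for the paper's citation of Racine: either a line is fixed, giving a Borel, or the action is irreducible and Schur plus density gives $M_2(\FF_r)$ or $\FF_{r^2}$. Your globalizations for (a)--(d) are fine.

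There is a genuine gap in the ramified case, exactly at the step you flagged. When $\bar S\not\supseteq\bar{\mf P}$, your argument only shows that $\bar S\to\bar{\mf O}/\bar{\mf P}\cong\FF_{p^2}$ is surjective, by ruling out image $\FF_p$. You never show $\bar S\cap\bar{\mf P}=0$. A priori $\bar S$ could be $3$-dimensional, surjecting onto $\FF_{p^2}$ with $\bar S\cap\bar{\mf P}$ a line. That would produce a submaximal order of index $p$ not equal to $\ZZ+\mf P$, so this case must be excluded.

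The missing idea is the one the paper uses for cases (c) and (d). Since the image of $\bar S$ in $\FF_{p^2}$ contains an element outside $\FF_p$, $\bar S$ contains some $\bar x$ whose minimal polynomial over $\FF_p$ is irreducible. That polynomial has degree at most $2$, because every element satisfies its reduced characteristic polynomial. Hence $\FF_p[\bar x]\cong\FF_{p^2}\subseteq\bar S$. Then $\bar S$ is a left vector space over the field $\FF_p[\bar x]$, so $\dim_{\FF_p}\bar S$ is even, and properness forces $\bar S=\FF_p[\bar x]$.

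Equivalently, in the model $\FF_{p^2}[\epsilon]$: $\bar S\cap\FF_{p^2}\epsilon$ is stable under left multiplication by $\bar S$, and $(a+c\epsilon)\,b\epsilon=ab\epsilon$. So it is an $\FF_{p^2}$-subspace of $\FF_{p^2}\epsilon$, hence $0$ or everything. With this step added, your proof is complete.
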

\begin{remark}
    Cases $(c)$ and $(d)$ can be combined into a single case by ignoring the splitting behavior of the prime in $\mc{A}$, but the two cases behave differently enough in practice that it will be useful to separate them. The condition $R/\ell R\simeq \FF_{\ell^2}$ is equivalent to the condition that $\ell$ is inert in the integral closure of $R$ and does not divide the conductor of $R$; for odd~$\ell$, this is equivalent to the condition that the discriminant of $R$ is a quadratic non-residue modulo $\ell$. 
\end{remark}
\begin{proof}
    By the local-global principle for lattices (e.g., see \cite[Theorem 9.1.1]{Voight} together with \cite[Lemma 10.4.3]{Voight}), there exists a prime ${\ell}$ such that  $\O$ is maximal at every prime except ${\ell}$, so $\ell$ is the unique prime dividing $ [\mf{O}:\O]$. %
    Let $\pi:\mf{O}\to\mf{O}/{\ell}\mf{O}$ be reduction mod ${\ell}$. By \cref{lem:lattice quotient l}, $\O$ cannot surject onto $\mf{O}/{\ell}\mf{O}$ under $\pi$, so $\O$ lands in some proper subring $A\subseteq\mf{O}/{\ell}\mf{O}$. For any such $A$, $\pi^{-1}(A)$ will be a suborder of $\mf{O}$ containing $\O$, hence equal to it. Thus it suffices to classify maximal proper subrings $A$ of the $4$-dimensional $\FF_{\ell}$-algebra $Q:=\mf{O}/{\ell}\mf{O}$. Note that $A\neq \FF_{\ell}$, because we have proper subrings of the form $\FF_{\ell}[\alpha]$ for any $\alpha\in Q\setminus\FF_\ell$.

    First suppose that $A$ contains an element $\alpha$ with irreducible minimal polynomial over $\FF_{\ell}$, i.e. it contains a copy of $\FF_{{\ell}^2}$. Then $A$ is a vector space over $\FF_{{\ell}^2}$; since $A\neq Q$, $A$ must equal $\FF_{\ell}[\alpha]$ by dimension considerations. A preimage of $\alpha$ in $\mf{O}$ generates a quadratic order $R$ in which ${\ell}$ is inert, and $\pi^{-1}(A)=R+{\ell}\mf{O}$. This gives cases $(c)$ and $(d)$.

    Now we assume every element of $A\setminus\FF_\ell$ %
    has reducible minimal polynomial, and divide into cases depending on whether $\ell$ is split or ramified in $\mc{A}$. 
    \begin{itemize}
        \item If ${\qprime}:=\ell$ is a split prime of $\mc{A}$, then $Q\simeq M_2(\FF_{\qprime})$, and we can consider the natural action of $Q$ on $\FF_{\qprime}^2$. 
        Our working assumption implies that $A$ does not contain any nontrivial field extension of $\FF_\qprime$, so by \cite[Theorem 1]{Racine}, $A$ is the stabilizer of a $1$-dimensional subspace of $\FF_{\qprime}^2$. Picking a basis of $\FF_{\qprime}^2$ so that the first basis vector is in this stabilized subspace, we see that $A$ is isomorphic to the set of upper triangular matrices in $M_2(\FF_{\qprime})$. 
        Therefore $\pi^{-1}(A) \otimes \ZZ_{\qprime} \simeq \left( \begin{smallmatrix}             \ZZ_{\qprime} & \ZZ_{\qprime} \\ {\qprime}\ZZ_{\qprime} & \ZZ_{\qprime}
        \end{smallmatrix} \right)$, and so $\O$ is Eichler with index $\qprime$ in $\mf{O}$ (\cite[Proposition 23.4.3]{Voight}). %
        
        \item If $p:=\ell$ is a ramified prime of $\mc{A}$, then $Q\simeq \FF_{p^2}[\epsilon]$, with noncommutative multiplication 
        \[(a+b\epsilon)(c+d\epsilon)=ac+(b\bar{c}+ad)\epsilon.\]
        Here $\epsilon^2=0$ and $\bar{c}=c^p$ is the Frobenius involution on $\FF_{p^2}$. Note that $C:=\FF_p+\FF_{p^2}\epsilon$ is a $3$-dimensional subalgebra of $Q$. Any $\alpha\in Q\setminus C$ is of the form $a+b\epsilon$ with $a\in\FF_{p^2}\setminus\FF_p$, and hence it satisfies an irreducible quadratic minimal polynomial %
        over $\FF_p$ (as one can check that $a+b\epsilon$ and $a$ have the same minimal polynomial in this case). 
        Since $A$ contains no such elements, we have $A\subseteq C$ and hence $A=C$ by maximality. This corresponds under $\pi^{-1}$ to the order $\ZZ+\mf{P}$, where $\mf{P}=\pi^{-1}(\FF_{p^2}\epsilon)$ is the unique maximal two-sided ideal of $\mf{O}$ with norm~$p$. \qedhere
    \end{itemize}
\end{proof}

For the rest of this paper, following the notation in \cref{lem:max_suborders}, we use $p$ to refer to a prime at which $\mc{A}$ ramifies, and for a submaximal order $\O\subseteq\mf{O}$ we write~$\qprime$ for a prime dividing $[\mf{O}:\O]$ at which $\mc{A}$ splits. We continue to use $\ell$ for arbitrary primes, unless otherwise specified.

\begin{remark}
    We give a few indications of how submaximal orders fit within existing classifications of quaternion orders. Let $\O\subseteq \mf{O}$ be submaximal. Then $\O$ is Bass (\cite[Definition 24.5.1]{Voight}), which follows from the proof of \cref{lem:level equals disc} below. Since $\mc{A}$ is defined over $\QQ$, this implies by \cite{CSV} that $\O$ is basic (\cite[Definition 24.5.6]{Voight}). The order $\O$ is Eichler if and only if $[\mf{O}:\O]=r$ is a split prime in $\mc{A}$ (case $(a)$), in which case it is also hereditary (\cite[Definition 21.1.1]{Voight}).
    
    If $[\mf{O}:\O]$ is equal to $\qprime$, $p$, or $p^2$ (cases $(a)$, $(b)$, and $(d)$) then $\O$ is a ``special order'' in the terminology of Hijikata, Pizer, and Shemanske \cite[Definition 6.1]{HPS}. But since special orders are Eichler away from the ramified primes of $\mc{A}$ by definition, submaximal orders with $[\mf{O}:\O]=\qprime^2$ (case $(c)$) are not special. 
\end{remark}

One useful property shared by all submaximal orders is that their level as a lattice is equal to their reduced discriminant.

\begin{lemma}\label{lem:level equals disc}
    For any submaximal order $\O$ in a maximal order $\mf{O}$, the level of $\O$ as a lattice $($with respect to the trace form on $\mc{A})$ is equal to $\disc(\O)$.
\end{lemma}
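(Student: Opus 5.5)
The plan is to reduce to a local computation and then check each type of submaximal order from \cref{lem:max_suborders} directly.

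\textbf{Reduction to primes.} By \cref{lem:level_alternative}, the level of $\O$ equals $\nrd(\O^\sharp)^{-1}$. Both the codifferent and the reduced norm of a lattice are computed locally, and so is $\disc(\O)$. It therefore suffices to show, for every prime $\ell$, that
\[\val_\ell\bigl(\nrd(\O_\ell^\sharp)^{-1}\bigr)=\val_\ell(\disc(\O_\ell)).\]

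\textbf{Primes where $\O_\ell$ is maximal.} This covers every prime except the unique $\ell$ dividing $[\mf{O}:\O]$.
\begin{itemize}
\item At a split prime, $\O_\ell\simeq M_2(\ZZ_\ell)$, the trace form is unimodular, and $\O_\ell^\sharp=\O_\ell$. Both sides have valuation $0$.
\item At a ramified prime $p$, $\mf{O}_p=\ZZ_{p^2}\oplus\ZZ_{p^2}\Pi$ with $\Pi^2=p$. A short Gram-matrix computation gives $\mf{O}_p^\sharp=\Pi^{-1}\mf{O}_p$, which has norm $p^{-1}$. Both sides have valuation $1$.
\end{itemize}

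\textbf{The remaining prime.} Here I compute $\O_\ell^\sharp$ explicitly in each case of \cref{lem:max_suborders}, using an orthogonal (or nearly orthogonal) decomposition. The target in each case is that the largest denominator of $\nrd$ on $\O_\ell^\sharp$ equals $\val(\disc)=\val([\mf{O}:\O])+\val(\disc(\mc{A}))$.
\begin{itemize}
\item \emph{Case (a).} Take $\O_r=\left(\begin{smallmatrix}\ZZ_r&\ZZ_r\\ r\ZZ_r&\ZZ_r\end{smallmatrix}\right)$ with basis $e_{11},e_{12},re_{21},e_{22}$. Dualizing gives $\O_r^\sharp=\left(\begin{smallmatrix}\ZZ_r&r^{-1}\ZZ_r\\ \ZZ_r&\ZZ_r\end{smallmatrix}\right)$. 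Its elements have determinant with denominator at most $r$, so the level is $r=\disc$.
\item \emph{Case (c).} Write $M_2(\ZZ_r)=\ZZ_{r^2}\oplus\ZZ_{r^2}j$, where $j$ normalizes $\ZZ_{r^2}$ with $jx=\bar{x}j$ and $\nrd(j)$ is a unit. The summands are orthogonal for the trace form, and on each the form is unimodular up to a unit. Then $\O_r=\ZZ_{r^2}\oplus r\ZZ_{r^2}j$ and $\O_r^\sharp=\ZZ_{r^2}\oplus r^{-1}\ZZ_{r^2}j$. The norm is minimized by $r^{-1}j$, with valuation $-2$, so the level is $r^2=\disc$.
\item \emph{Case (d).} Here $\O_p=\ZZ_{p^2}\oplus p\ZZ_{p^2}\Pi$. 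The trace form on $\ZZ_{p^2}\Pi$ is $p$ times a unimodular form, so $\O_p^\sharp=\ZZ_{p^2}\oplus p^{-2}\ZZ_{p^2}\Pi$. Its minimal norm valuation is $\val(\nrd(p^{-2}\Pi))=-3$, which matches $\disc=p\cdot p^2$.
\item \emph{Case (b).} Here $\O_p=\ZZ_p+\Pi\mf{O}_p$, with basis $1$, $p\omega$, $\Pi$, $\omega\Pi$, where $\ZZ_{p^2}=\ZZ_p[\omega]$. I will compute the Gram matrix and dual directly. I expect $\O_p^\sharp$ to contain elements like $p^{-1}\omega'$ (where $\omega'$ has trace $0$ modulo the relevant part), together with $p^{-1}\Pi$-type elements. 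The norms should have denominator exactly $p^2$, matching $\disc=p\cdot p$.
\end{itemize}

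\textbf{Expected obstacle.} The main difficulty will be the prime $2$, in cases (b) and (d) and in case (c) with $r=2$. There the trace form on $\ZZ_{4}$ is not diagonalizable, the orthogonal decompositions must be handled with a non-diagonal unimodular block, and the distinction between "integral" and "even integral" in the definition of level matters.

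If direct computation becomes messy, the fallback is structural. Each submaximal $\O_\ell$ is Gorenstein, so $\O_\ell^\sharp$ is an invertible two-sided $\O_\ell$-ideal. For an invertible lattice $I$ we have $[I:\O_\ell]=\nrd(I)^{-2}$ up to units, and $[\O_\ell^\sharp:\O_\ell]=\disc(\O_\ell)^2$, since the Gram determinant equals the index of a lattice in its dual. Together these give $\nrd(\O_\ell^\sharp)^{-1}=\disc(\O_\ell)$ uniformly, and the case checks only need to verify that $\O_\ell^\sharp$ is locally principal.
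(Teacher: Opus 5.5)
Your proposal is correct, but it takes a different route from the paper. The paper never computes $\O^\sharp$. It first shows $\O$ is Gorenstein by an index argument. Writing $\O=\ZZ+f\O'$ with $\O'$ the Gorenstein saturation gives $[\O':\O]=f^3$. But the index of a submaximal order in any superorder is $1$, a prime, or the square of a prime, which forces $f=1$. It then quotes \cite[Lemma 24.2.5]{Voight} for $\nrd(\O^\sharp)^{-1}=\disc(\O)$, which is exactly the identity in your fallback paragraph. Your fallback asserts Gorenstein-ness and defers it to case checks; the saturation argument supplies it with no case analysis.

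Your main route instead verifies the identity prime by prime from explicit local models. Your computations at the maximal primes and in cases $(a)$, $(c)$, $(d)$ are right. Case $(b)$ is only announced, but it closes the way you expect:
\begin{itemize}
\item $\O_p=(\ZZ_p+p\ZZ_{p^2})\perp\ZZ_{p^2}\Pi$.
\item The dual of the conductor-$p$ quadratic order is $(p\delta)^{-1}(\ZZ_p+p\ZZ_{p^2})$, where $\delta=\omega-\sigma(\omega)$ is a unit. This uses Euler's lemma, and the fact that $\sigma$ preserves this order, so twisting the trace form by $\sigma$ does not change the dual.
\item The dual of the second summand is $p^{-1}\ZZ_{p^2}\Pi$.
\end{itemize}
Hence the norm ideal of $\O_p^\sharp$ is $p^{-2}\ZZ_p$.

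Two smaller points. First, you should justify that the given $\mf{O}_\ell$ really decomposes as $R_\ell\oplus R_\ell j$ (resp.\ $R_p\oplus R_p\Pi$). Skolem--Noether supplies $j$ normalizing $R_\ell$, and rescaling makes $j^2$ a unit (resp.\ of valuation $1$). In the split case, the maximal order containing an unramified $\ZZ_{\ell^2}$ is unique, so this is the given $\mf{O}_\ell$. Second, your worry about $\ell=2$ is unfounded. These splittings are orthogonal for every prime, and the trace form on $\ZZ_4$ is unimodular even though it is not diagonal. Also, you compute the norm ideal of the dual (values of $\nrd$, not of the bilinear form), so \cref{lem:level_alternative} already handles evenness.

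As for the trade-off: the paper's argument is uniform and short, and it establishes Gorenstein-ness, which the paper also uses to see that submaximal orders are Bass. However, it relies on saturation theory and a cited lemma. Yours is self-contained and produces explicit, locally principal generators for $\O_\ell^\sharp$. The cost is dependence on the classification in \cref{lem:max_suborders} and a four-case computation.
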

\begin{proof}
    To prove this we will first show that $\O$ is Gorenstein.
    An order $\O$ in a quaternion algebra is called \textbf{Gorenstein} if its dual lattice $\O^\sharp$ %
    is in fact an invertible right $\O$-ideal. This is a local property. One can show that being Gorenstein is equivalent to ${\rm Hom}_{\bb{Z}}(\O, \bb{Z})$ being a projective $\O$-bimodule, which connects the definition to a more general theory of Gorenstein modules. Eichler orders, and hence maximal orders, are Gorenstein, but a general order is not. For all this, and much more, see \cite[\S 24.2]{Voight}.
    
    In order to show a submaximal order $\O$ is Gorenstein, let $\O'$ denote the Gorenstein saturation of $\O$, so that $\O=\ZZ+f\O'$ for some $f\geq 1$ \cite[Proposition 24.1.4]{Voight}. Then $[\O':\O]=f^3$. But the index of a submaximal order $\O$ in any superorder is either $1$, a prime, or the square of a prime, as one can see by calculating discriminants. Therefore, we must have $f=1$ and hence $\O'=\O$.

    Since $\O$ is Gorenstein, we have $\nrd(\O^\sharp)^{-1}=\disc(\O)$ \cite[Lemma 24.2.5]{Voight}, and so the result follows by \cref{lem:level_alternative}.
\end{proof}

\subsection{Genus vectors in the Brandt module}\label{sec:genus subspace}

Let $I,J$ be invertible right $\O$-ideals. Recall that we can equip $I$ and $J$ with their normalized trace forms $\langle \cdot\, ,\cdot\rangle_I$ and $\langle \cdot\, ,\cdot\rangle_J$, (\cref{def:normalized trace form}) %
and write $J\in\mathrm{Gen}(I)$ if the completions of these lattices are isometric at every prime.

Let $I_1,\ldots,I_h$ be representatives of the invertible right $\O$-ideal classes, and $v_1,\ldots,v_h$ the corresponding Brandt module standard basis vectors. For each $i$, we define the \textbf{genus vector} $\gamma_i\in \mf{M}(\O)$ by
\[\gamma_i:=\sum_{\substack{1\leq j\leq h \\ I_j\in\mathrm{Gen}(I_i)}} \frac{1}{2w_j}v_j,\]
and let $\mf{M}_{\mathrm{G}}(\O)\subseteq \mf{M}(\O)$ denote the subspace spanned by the genus vectors $\gamma_i$ for $1\leq i\leq h$. 
By construction, $\gamma_i=\gamma_j$ if $I_j\in\mathrm{Gen}(I_i)$, so the number of distinct genus vectors is typically smaller than $h$; in fact we will see below that for many orders (including all maximal orders) there is a unique genus vector. 
Let
\begin{align}\label{eqn:e def'n}
    e:=\sum_{1\leq j\leq h} \frac{1}{2w_j}v_j
\end{align}
be the sum of the (distinct) genus vectors.
\begin{lemma}\label{lem:e eigen}
    Let $\O\subseteq\mc{A}$ be a quaternion order. The vector $e$ defined by 
    \emph{\cref{eqn:e def'n}} is an eigenvector of $B_n$ for all $n\geq 0$. 
\end{lemma}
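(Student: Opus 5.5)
The plan is to compute $B_n e$ entrywise and show that it is a scalar multiple of $e$. The scalar will be the quantity $c_n:=\left|\{J\subseteq_n I_j\}\right|$ for $n\geq 1$, which by \cref{lem: Eisenstein coefficient ideal count'} is independent of $j$. The case $n=0$ is handled separately.

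For $n\geq 1$, the $i$-th coordinate of $B_ne$ is
\[(B_ne)_i=\sum_{j=1}^h B_n(i,j)\frac{1}{2w_j}.\]
The key step is the symmetry of $DB_n$ established in the proof of \cref{lem:Bn self-adjoint}, where $D=\mathrm{diag}(w_1,\ldots,w_h)$. It gives $w_iB_n(i,j)=w_jB_n(j,i)$, so $B_n(i,j)/w_j=B_n(j,i)/w_i$. Substituting,
\[(B_ne)_i=\frac{1}{2w_i}\sum_{j=1}^h B_n(j,i)=\frac{1}{2w_i}\left|\{J\subseteq_n I_i\}\right|=\frac{c_n}{2w_i},\]
where the middle equality is \cref{lem: Eisenstein coefficient ideal count'} applied with column index $i$. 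Hence $B_ne=c_ne$.

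For $n=0$, every entry in row $i$ of $B_0$ equals $\frac{1}{2w_i}$. So
\[(B_0e)_i=\frac{1}{2w_i}\sum_j\frac{1}{2w_j},\]
that is, $B_0e=\bigl(\sum_j \frac{1}{2w_j}\bigr)e$. This is a direct computation.

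The only real content is the transposition trick via self-adjointness combined with the independence of the column sums. Both are already proved earlier in the paper, so I expect no serious obstacle. The one point to check is that the symmetry $w_iB_n(i,j)=w_jB_n(j,i)$ holds for all $n\geq 1$ and arbitrary (non-maximal) orders. It does, because \cref{lem:Bn self-adjoint} was proved for general $\O$ using only invertibility of the $I_i$.
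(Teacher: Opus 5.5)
Your proposal is correct and is essentially the paper's argument: for $n\geq 1$ it uses the same transposition $B_n(i,j)/w_j=B_n(j,i)/w_i$ from self-adjointness, followed by the $j$-independence of column sums in \cref{lem: Eisenstein coefficient ideal count'}. For $n=0$ your direct computation does the same job as the paper's remark that the image of $B_0$ is spanned by $e$, and it also gives the eigenvalue $\sum_j \frac{1}{2w_j}$ explicitly.
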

\begin{proof}
    The image of $B_0$ is the span of $e$, so $e$ is an eigenvector of $B_0$. For any $n\geq 1$ and $1\leq i\leq h$, the coefficient of $v_i$ in $B_n e$ is
    \begin{align*}
        \sum_{1\leq j\leq h} \frac{B_n(i,j)}{2w_j}=\sum_{1\leq j\leq h } \frac{B_n(j,i)}{2w_i}
    \end{align*}
    by self-adjointness of $B_n$ (see \cref{lem:Bn self-adjoint}). By \cref{lem: Eisenstein coefficient ideal count'} this equals $\frac{1}{2w_i}$ times a constant that does not depend on $i$, so $B_n e$ is a multiple of $e$. 
\end{proof}

We now focus on maximal and submaximal orders, and find that the genus subspace is either $1$- or $2$-dimensional.

\subsubsection{$1$-dimensional genus subspace}

\begin{lemma}\label{lem:1dim genus}
    Let $\O\subseteq\mc{A}$ be a maximal or submaximal order. Let $\mf{O}$ be a maximal order containing $\O$, and suppose that $[\mf{O}:\O]$ is \emph{not} equal to an odd ramified prime of $\mc{A}$. 
    Then $\mf{M}_{\mathrm{G}}(\O)$ is $1$-dimensional, spanned by $e$.
\end{lemma}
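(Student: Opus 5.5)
The plan is to show that every invertible right $\O$-ideal lies in the genus of $\O$ itself. Then every genus vector $\gamma_i$ equals $\gamma_1=e$, and $\mf{M}_{\mathrm{G}}(\O)$ is spanned by $e$, which is nonzero. Since the genus relation is defined prime by prime, it suffices to fix a prime $\ell$ and an invertible right $\O$-ideal $I$, and to show that $I_\ell$ with its normalized trace form is isometric to $\O_\ell$ with the trace form. Because $I$ is locally principal, $I_\ell=\alpha_\ell\O_\ell$ for some $\alpha_\ell\in\mc{A}_\ell^\times$. As in the computation in \cref{subsec:ideals}, left multiplication by $\alpha_\ell$ scales the trace form by $\nrd(\alpha_\ell)$. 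Since $\nrd(I)$ and $\nrd(\alpha_\ell)$ agree up to an $\ell$-adic unit $u$, the problem reduces to the following local statement: for every $u\in\ZZ_\ell^\times$ (and, for $\ell$ ramified in $\mc{A}$, also for $u=\ell^k u'$ via powers of a uniformizer), the scaled lattice $(\O_\ell, u\cdot\trd(x\bar y))$ is isometric to $(\O_\ell,\trd(x\bar y))$. Equivalently, $\nrd(N(\O_\ell))\cdot \QQ_\ell^{\times 2}$ should cover the relevant classes, where $N(\O_\ell)$ is the normalizer of $\O_\ell$. The goal is to exhibit, for each such $u$, an element $\beta\in\mc{A}_\ell^\times$ with $\beta\O_\ell\beta^{-1}=\O_\ell$ (or $\beta\O_\ell=\O_\ell\beta$) and $\nrd(\beta)\in u\,\QQ_\ell^{\times2}$. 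Then $x\mapsto \beta x$ gives the required isometry after rescaling by a square.

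The first step is to handle all primes at which $\O_\ell$ is maximal, using the shape of $\mf{O}_\ell$. At split primes, $\mf{O}_\ell\simeq M_2(\ZZ_\ell)$ has unit group containing elements of every unit norm (for instance $\mathrm{diag}(u,1)$), so every invertible ideal is in the genus. At ramified primes $p$, the maximal order in the division algebra has units of every unit norm, because the norm map from $\ZZ_{p^2}^\times$ onto $\ZZ_p^\times$ is surjective. It also contains a uniformizer $\pi$ normalizing it, with $\nrd(\pi)=p$, which covers the remaining class. This settles the maximal case.

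The second step treats the prime $\ell$ dividing $[\mf{O}:\O]$, going through the cases of \cref{lem:max_suborders}.
\begin{itemize}
\item In case $(a)$, the Eichler order contains $\mathrm{diag}(u,1)\in\O_\qprime^\times$.
\item In case $(c)$, $\O_\qprime=R_\qprime+\qprime\mf{O}_\qprime$. Here $R_\qprime^\times$ is the unit group of the unramified quadratic ring $\ZZ_{\qprime^2}$, and its norms cover $\ZZ_\qprime^\times$.
\item In case $(d)$, the same argument applies: $R_p\simeq\ZZ_{p^2}$ lies in $\O_p$, and norms from $\ZZ_{p^2}^\times$ cover all units.
\item In case $(b)$ with $p=2$, the order is $\O_2=\ZZ_2+\mf{P}_2$. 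Units of $\O_2$ are elements $a+y$ with $a\in\ZZ_2^\times$ and $y\in\mf{P}_2$. Their norms $a^2+a\,\trd(y)+\nrd(y)$ should cover all of $\ZZ_2^\times$ modulo squares, because $\nrd(\mf{P}_2)=2\ZZ_2$ and the trace on $\mf{P}_2$ takes values in $2\ZZ_2$. The plan is to check the three nontrivial square classes $3,5,7$ explicitly.
\end{itemize}
The normalizing uniformizer $\pi$ of $\mf{O}_p$ also normalizes $\O_p$ in cases $(b)$ and $(d)$, since $\mf{P}_p=\pi\mf{O}_p$ is two-sided and $\pi$ conjugation preserves $R_p+p\mf{O}_p$ up to the Frobenius on $\ZZ_{p^2}$, so nonunit norms are covered as well. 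The excluded case, $(b)$ with $p$ odd, fails precisely because there the unit norms of $\ZZ_p+\mf{P}_p$ are $a^2+O(p)$, which are only squares.

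I expect the main obstacle to be the case $(b)$, $p=2$ computation. The 2-adic square classes of units are not determined mod $2$, so I must verify that unit norms reach $-1$, $\pm5$ modulo squares. I would first try writing $\mf{O}_2=\ZZ_{4}+\pi\ZZ_{4}$ with $\pi^2=-2$. Then I would take $y=\pi c$ with $c\in\ZZ_4$, and also $y\in 2\ZZ_4$, so that $\nrd(a+y)$ ranges over values such as $1+2\nrd(c)$, $1+\trd(2w)+4\nrd(w)$, and so on, reaching $3$, $5$ and $7$ modulo $8$.
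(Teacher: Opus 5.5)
Your argument is correct and is essentially the paper's proof. At each prime you produce units of $\mathcal{O}_\ell$ whose norms reach every class of $\mathbb{Z}_\ell^\times$ (modulo squares, which suffices), using the same local inputs: $\mathrm{diag}(u,1)$, norms from $\mathbb{Z}_{\ell^2}^\times$, and for $p=2$ your mod-$8$ check with elements such as $1+\pi c$ and $1+2w$ (e.g.\ $\mathrm{nrd}(1+2\sqrt{-3})=13$ gives the class of $5$), which is a hands-on version of the paper's $\mathrm{nrd}(1+2\mathbb{Z}_{2^2})=1+2\mathbb{Z}_2$. You then twist $x\mapsto\alpha_\ell x$ by such a unit.

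However, you should delete the detour through normalizers and uniformizers. It is never needed, because $\mathrm{nrd}(I)/\mathrm{nrd}(\alpha_\ell)$ is always a unit. It is also false as stated: for $\beta$ in the normalizer but not in $\mathbb{Q}_\ell^\times\mathcal{O}_\ell^\times$, the map $x\mapsto\beta x$ carries $\mathcal{O}_\ell$ onto $\beta\mathcal{O}_\ell$, which is not a scalar multiple of $\mathcal{O}_\ell$. Concretely, $(\mathcal{O}_p,p\,\mathrm{trd}(x\bar y))$ only represents multiples of $p$, so it is not isometric to $(\mathcal{O}_p,\mathrm{trd}(x\bar y))$.
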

\begin{proof} 
    If $\O$ is submaximal then we use the classification result in \cref{lem:max_suborders}: we are in cases $(a)$, $(c)$ and $(d)$ of the lemma, and also in case $(b)$ if $p=2$.
    
    We first show that for every prime $\ell$, every element of $\ZZ_\ell^\times$ is in the image of $\nrd\colon \O_\ell\to \ZZ_\ell$. 
    If $\O_\ell$ is maximal, or if $[\mf{O}:\O]=\ell^2$, then $\O_\ell$ contains an embedding of $\ZZ_{\ell^2}$ (the ring of integers of the unramified quadratic extension of $\QQ_\ell$), and already $\nrd\colon \ZZ_{\ell^2}^\times\to\ZZ_\ell^\times$ is surjective.
    If instead $\ell=r=[\mf{O}:\O]$ is split in $\mc{A}$, then $\O_\ell$ contains an embedding of $\ZZ_\ell\times\ZZ_\ell$, and $\nrd:\ZZ_\ell^\times\times\ZZ_\ell^\times\to\ZZ_\ell^\times$ ($(x,y)\mapsto xy$) is surjective.
    Finally, if $\ell=p=[\mf{O}:\O]=2$ is ramified in $\mc{A}$, then $\mf{O}_2$ contains an embedding of $\ZZ_{4}$, so $\O_2$ contains an embedding of $2\ZZ_{4}$, and $\nrd:(1+2\ZZ_{4})\to (1+2\ZZ_2)=\ZZ_2^\times$ is surjective \cite[\S I.7 Proposition 3]{CasselsFrohlich}.

    Now, we prove that any two invertible right $\O$-ideals $I,I'$ with normalized trace forms are in the same genus. Fix a prime $\ell$. Since $I,I'$ are locally principal, there exists $\alpha\in\mc{A}_\ell^\times$ with $I'_\ell=\alpha I_\ell$. This implies $\nrd(I')$ and $\nrd(\alpha)\nrd(I)$ have the same $\ell$-adic valuation. Since we have proven that every element of $\ZZ_\ell^\times$ is in the image of the norm map, there exists $u\in\O_\ell$ with $\nrd(u)=\frac{\nrd(I')}{\nrd(\alpha)\nrd(I)}$.
    Then for any $x,y\in I_\ell$ we have
    \[\langle \alpha xu,\alpha y u\rangle_{I'}=\frac{\Tr((\alpha x u)(\bar{u}\bar{y}\bar{\alpha}))}{\nrd(I')}=\frac{\nrd(u)\nrd(\alpha)\Tr(x\bar{y})}{\nrd(I')}=\frac{\Tr(x\bar{y})}{\nrd(I)}=\langle x,y\rangle_I.\]
    So $x\mapsto \alpha x u$ is an isometry from $I_\ell$ to $I'_\ell$.
\end{proof}

\subsubsection{$2$-dimensional genus subspace}

Now suppose instead that $p=[\mf{O}:\O]$ is an odd ramified prime for $\mc{A}$. This case is covered in detail in \cite[Section 5]{Pizerp2M}. Pizer's setup assumes that the only ramified prime of $\mc{A}$ is $p$, %
but the summary below holds more generally for arbitrary quaternion algebras over $\QQ$. 
We also note that Pizer works with left $\O$-ideals while we work with right $\O$-ideals.

For each invertible right $\O$-ideal $I$, the normalized quadratic form $\frac12\langle x,x\rangle_I = \tfrac{\nrd(x)}{\nrd(I)}$ on $I$ either always outputs quadratic residues mod $p$ (and $I$ has \textbf{positive character}) or always outputs quadratic non-residues mod $p$ (and $I$ has \textbf{negative character}), %
where we ignore the outputs in $p \ZZ_p$ \cite[Proposition 5.1]{Pizerp2M}. Ideals have the same character if and only if they are in the same genus: this can be seen by a modification of the proof of \cref{lem:1dim genus} (see the proof of \cite[Lemma 5.20]{Pizerp2M} and \cite[Remark 5.21]{Pizerp2M} for more details). In particular this implies that the character is the same for all ideals in the same ideal class.

Now recall the representatives $I_1 = \O, \ldots, I_h$ of the right ideal classes of $\O$.
By \cite[Theorem 5.9]{Pizerp2M}, the class number $h$ of $\O$ is even, and we may label these representatives so that $I_1, \ldots, I_{h/2}$ have positive character, $I_{h/2+1}, 
\ldots, I_h$ have negative character, and for each $1\leq i\leq h/2$, $I_i$ and $I_{i+h/2}$ have isomorphic left orders (so that $w_i=w_{i+h/2}$).
Then we set
\begin{equation}\label{eqn:e' def'n}
e' = \sum_{i=1}^{h/2} \frac{1}{2w_i}v_i-\sum_{i=h/2+1}^{h} \frac{1}{2w_i}v_i.
\end{equation}
That is, $e$ is the sum of the two genus vectors, and $e'$ is the difference of the two genus vectors. The above observations allow us to conclude the following result.
\begin{lemma}\label{lem:2dim genus}
    Let $\O$ be submaximal in a maximal order $\mf{O}$. Assume that $[\mf{O}:\O]$ \emph{is} equal to an odd ramified prime of $\mc{A}$. Then $\mf{M}_{\mathrm{G}}(\O)$ is $2$-dimensional, spanned by $e$ and $e'$.
\end{lemma}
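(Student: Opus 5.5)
By definition $\mf{M}_{\mathrm{G}}(\O)$ is spanned by the genus vectors $\gamma_i$, and $\gamma_i=\gamma_j$ exactly when $I_j\in\mathrm{Gen}(I_i)$. The plan is therefore to count the genera among $I_1,\dots,I_h$, show there are exactly two, and then identify their span with $\Span(e,e')$.

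First I use the character dichotomy recalled above from \cite[Proposition 5.1]{Pizerp2M}. Each invertible right $\O$-ideal has either positive or negative character, and two ideals lie in the same genus if and only if they have the same character. Hence there are at most two distinct genus vectors, namely
\[\gamma_+:=\sum_{I_j\text{ positive}}\frac{1}{2w_j}v_j,\qquad \gamma_-:=\sum_{I_j\text{ negative}}\frac{1}{2w_j}v_j.\]
Next I invoke the labelling from \cite[Theorem 5.9]{Pizerp2M}: $h$ is even, $I_1,\dots,I_{h/2}$ are positive and $I_{h/2+1},\dots,I_h$ are negative. So both characters occur, both $\gamma_+$ and $\gamma_-$ are nonzero, and they have disjoint supports in the standard basis. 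They are therefore linearly independent, and $\mf{M}_{\mathrm{G}}(\O)$ is $2$-dimensional.

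Comparing with \cref{eqn:e def'n} and \cref{eqn:e' def'n}, we have $e=\gamma_++\gamma_-$ and $e'=\gamma_+-\gamma_-$. Since the change of basis $\begin{pmatrix}1&1\\1&-1\end{pmatrix}$ is invertible (its determinant is $-2$), $e$ and $e'$ span the same $2$-dimensional space.

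The main obstacle is the claim, imported from Pizer, that ideals with the same character are in the same genus. Pizer assumes $\mc{A}$ is ramified only at $p$, so this step must be checked in our generality. I would adapt the proof of \cref{lem:1dim genus}. At primes $\ell\neq p$ the norm map $\O_\ell^\times\to\ZZ_\ell^\times$ is surjective exactly as shown there, since $\O_\ell$ is maximal at those primes. At $\ell=p$ the image of $\nrd$ on $\O_p^\times=(\ZZ_p+\mf{P}_p)^\times$ is $(\ZZ_p^\times)^2(1+p\ZZ_p)$, which is the squares because $p$ is odd. Given $I'_p=\alpha I_p$, the ratio $\nrd(I')/(\nrd(\alpha)\nrd(I))$ is a unit, and equality of characters forces it to be a square mod $p$, hence a norm from $\O_p^\times$. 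The isometry $x\mapsto\alpha xu$ from the proof of \cref{lem:1dim genus} then applies verbatim. The converse direction, that different characters give different genera, is immediate because the character is an isometry invariant of the normalized form.
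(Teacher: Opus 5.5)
Your proposal is correct and follows the paper's argument. The paper derives this lemma from exactly the observations you use: Pizer's character dichotomy; ``same character if and only if same genus'' via a modification of the proof of the $1$-dimensional case; and Pizer's even labelling of $I_1,\dots,I_h$, with $e$ and $e'$ the sum and difference of the two genus vectors. Your explicit computation that $\nrd(\O_p^\times)=(\ZZ_p^\times)^2$ for odd $p$ shows that the isometry $x\mapsto\alpha xu$ is available exactly when the characters agree, and so fills in the modification that the paper only cites from Pizer.
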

Since $w_i=w_{i+h/2}$ for all $1\leq i\leq h/2$, $e'$ is orthogonal to $e$. In the next lemma we show that $e'$, like $e$, is also an eigenvector for any Brandt matrix $B_n$, $n\geq 0$. The computation of the eigenvalues of $e$ and $e'$ is the primary focus of \cref{sec:eisenstein part}. 
\begin{lemma}\label{lem:e' eigen}
Let $\O$ be submaximal in a maximal order $\mf{O}$, and assume that $[\mf{O}:\O]$ is equal to an odd ramified prime of $\mc{A}$. Then $e'$ as given by \emph{\cref{eqn:e' def'n}} is an eigenvector of $B_n$ for all $n\geq 0$.
\end{lemma}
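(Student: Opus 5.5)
Since $e'=\gamma_+-\gamma_-$, where $\gamma_\pm$ are the genus vectors of the positive and negative character ideals, I will show that $B_n$ maps each genus vector to a combination of $\gamma_+$ and $\gamma_-$ with coefficients depending on $n$, and that these coefficients pair up so that $e'$ is an eigenvector. Equivalently, I show that the subspace $\mf{M}_{\mathrm{G}}(\O)=\Span(e,e')$ is $B_n$-stable and $B_n$ acts on it diagonally in the basis $e,e'$. For $B_0$ this is immediate: the image of $B_0$ is the span of $e$. I will show $B_0e'=0$ from $\langle e,e'\rangle=0$. Indeed, $(B_0e')(i)=\frac{1}{2w_i}\sum_j e'(j)$, and $\sum_j e'(j)=\sum_{i\le h/2}\frac{1}{2w_i}-\sum_{i>h/2}\frac{1}{2w_i}=0$ because $w_i=w_{i+h/2}$. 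So assume $n\geq 1$.

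For $n\ge1$, the coefficient of $v_i$ in $B_ne'$ is
\[\sum_j \frac{\pm B_n(i,j)}{2w_j}=\frac{1}{2w_i}\sum_j \pm B_n(j,i)\]
by self-adjointness (\cref{lem:Bn self-adjoint}). Here the sign is $+$ or $-$ according to the character of $I_j$. By \eqref{eq:Bnij_def_2}, this equals $\frac{1}{2w_i}\bigl(|\{J\subseteq_n I_i: J \text{ positive}\}|-|\{J\subseteq_n I_i: J\text{ negative}\}|\bigr)$. The key step is to show this difference equals $\chi(I_i)\,c_n$, where $\chi(I_i)=\pm1$ is the character of $I_i$ and $c_n$ is independent of $i$. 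That gives $B_ne'=c_ne'$.

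To prove this, I will use the observation that the character of a sublattice $J\subseteq_n I$ is determined by the character of $I$ together with $n$, up to local data at $p$. If $\nrd(x)/\nrd(J)$ is a $p$-adic unit for some $x\in J$, then $\nrd(x)/\nrd(I)=n\cdot \nrd(x)/\nrd(J)$. Write $n=p^a n'$ with $p\nmid n'$. When $a=0$, this gives $\chi(J)=\left(\frac{n}{p}\right)\chi(I)$ for every $J$, provided $J$ contains an element with $\nrd(x)/\nrd(J)$ a unit, which holds since $J$ is locally principal at $p$. Then the count becomes $\chi(I_i)\left(\frac{n}{p}\right)\cdot|\{J\subseteq_nI_i\}|$, and the last factor is independent of $i$ by \cref{lem: Eisenstein coefficient ideal count'}. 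For general $n$, I will reduce via \cref{lem:coprime commute} to $n=p^a$, since $B_{n'}B_{p^a}=B_n$ and products of operators with $e'$ as an eigenvector still have it.

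The main obstacle is the case $n=p^a$, where the character of $J$ relative to $I$ is not determined by the Legendre symbol. In that case I will work locally: pick $\gamma$ with $(I_i)_p=\gamma\O_p$, identify $\{J\subseteq_{p^a}I_i\}$ with principal right ideals $y\O_p\subseteq\O_p$ of norm $p^a$, and note that $\chi(J)\chi(I_i)$ is the character of the ideal $y\O_p$ relative to $\O_p$. Explicitly, it is the quadratic residue class of the unit part of $\nrd(yz)/\nrd(y)=\nrd(z)$ for units $z\in\O_p$, which needs adjusting by the residue class of $\nrd(y)/p^a$. This local count is the same for every $i$, which gives $c_{p^a}$ independent of $i$. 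Verifying that the character of $y\O_p$ is well defined, namely that $\nrd(y)p^{-a}$ modulo squares is independent of the generator $y$, uses that $\nrd(\O_p^\times)$ consists of squares mod $p$. This is essentially Pizer's \cite[Proposition 5.1]{Pizerp2M}, and it is the point I will check most carefully.
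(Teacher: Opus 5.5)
Your proposal is correct, but for $n\ge 1$ it takes a different route from the paper. For $n=0$ the two arguments coincide: your identity $\sum_j e'(j)=0$ is just $\langle e,e'\rangle=\tfrac12\sum_j e'(j)=0$, which is what the paper uses.

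\textbf{The paper's route.} It imports Pizer's block decomposition $B_n=\begin{pmatrix}C_n&D_n\\ D_n&C_n\end{pmatrix}$ with respect to the pairing $I_i\leftrightarrow I_{i+h/2}$. It then checks that $C_n$ is self-adjoint with constant column sums, the number of positive-character $J\subseteq_n I_j$ for positive $I_j$ being locally determined. As in \cref{lem:e eigen}, this makes the half-vector $u$ an eigenvector of $C_n$. Since $e$ is a $B_n$-eigenvector, $D_nu\in\RR u$ follows, and hence $e'$ is an eigenvector.

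\textbf{Your route.} You use \cref{lem:Bn self-adjoint} to rewrite the $i$-th coordinate of $B_ne'$ as $\frac{1}{2w_i}\sum_{J\subseteq_n I_i}\chi(J)$. You then show directly that this character-weighted count equals $\chi(I_i)c_n$. The tool is the local identity $\chi(J)\chi(I)=\left(\frac{\nrd(y)/n}{p}\right)$, where $I_p=\gamma\O_p$ and $J_p=\gamma y\O_p$. This is well defined because unit norms from $\O_p$ are squares mod $p$, which is simply the positivity of $\O$ itself: $\nrd(a+x)\equiv a^2 \pmod p$ for $x\in\mf{P}_p$.

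\textbf{Comparison.} Your argument never needs the block form of $B_n$, and it needs the pairing only for $B_0$, through equality of the two genus masses. It also produces the eigenvalues explicitly. In particular it gives $\lambda_n(e')=\left(\frac{n}{p}\right)\lambda_n(e)$ for $p\nmid n$, which the paper later quotes from Pizer. The paper's argument is shorter once Pizer's structural results are granted.

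\textbf{Minor remarks.} Your description of the $n=p^a$ step is worded confusingly, but the statement you actually need, $\chi(J)\chi(I_i)=\left(\frac{\nrd(y)p^{-a}}{p}\right)$, is right. You could also skip the reduction to prime powers via \cref{lem:coprime commute}. For $n=p^an'$ the same identity directly contributes the factor $\left(\frac{n'}{p}\right)$, and the count factors into the local contributions away from $p$ times the $p$-adic character sum.
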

\begin{proof}
The image of $B_0$ is the span of $e$, but since $B_0$ is self-adjoint and $\langle e,e'\rangle=0$ we have $\langle e,B_0e'\rangle=0$. Therefore we must have $B_0e'=0$.

We now assume $n\geq 1$. 
Following the proof of \cite[Theorem 5.15]{Pizerp2M} for $s=0$, with respect to the standard basis $\{v_1,\ldots,v_h\}$, we can write
\[
B_n=
\begin{pmatrix}
C_n & D_n\\
D_n & C_n
\end{pmatrix}
\]
for some matrices $C_n$, $D_n$ of size $h/2\times h/2$. We first show that $C_n$ is self-adjoint and has constant column sums (cf.~\cite[Lemma 5.23]{Pizerp2M}, parts (i) and (ii)). For self-adjointness, letting $D'$ be the diagonal matrix with $w_1,\ldots,w_{h/2}$ along the main diagonal and $D=\begin{psmallmatrix}
D' & 0\\
0 & D'
\end{psmallmatrix}$,  we have $DB_n=B_n^tD$ by \cref{lem:Bn self-adjoint} and hence $D'C_n=C_n^tD'$. To show $\sum_{i=1}^{h/2}B_n(i,j)$ is a constant for all $1\leq j\leq h/2$, note that this quantity is simply the number of invertible right $\O$-ideals $J \subseteq_n I_j$ with positive character, and the conditions defining such $J$'s are local, so the proof follows as in \cref{lem: Eisenstein coefficient ideal count'}.

With respect to the basis $\{v_1,\ldots,v_h\}$, let 
$u=\left(\frac{1}{2w_1},\ldots, \frac{1}{2w_{h/2}}\right)^t$, so that 
$e=\begin{psmallmatrix}
    u \\ u
\end{psmallmatrix}$, $e'=\begin{psmallmatrix}
    u \\ -u
\end{psmallmatrix}$. Using the properties of $C_n$ shown above, the proof of \cref{lem:e eigen} shows that $u$ is an eigenvector for $C_n$. Now since $e$ is an eigenvector for $B_n$, we find that $(C_n+D_n)u$ is a scalar multiple of $u$, so $u$ is also an eigenvector of $D_n$. This implies $B_ne'=\begin{psmallmatrix}
    (C_n-D_n)u \\ (D_n-C_n)u
\end{psmallmatrix}$ is a scalar multiple of $e'$ as desired.
\end{proof}

\subsection{Brandt algebra relations}\label{sec:brandt algebra relations}

Let $\O$ be a maximal or a submaximal order in the quaternion algebra $\mc{A}$.
For the proof of \cref{thm:generation bound} we will need to consider the action of the Brandt matrices $B_{\ell^k}$ on $\mf{M}(\O)$ for various primes $\ell$ and arbitrary exponents $k\geq 1$. We divide into cases depending on the power of the prime that divides the index of $\O$ in a maximal order $\mf{O}$.

For our purposes we will not need to study ramified primes of $\mc{A}$ that divide the index, so by \cref{lem:max_suborders} we only need to consider three cases: $\ell\nmid [\mf{O}:\O]$ (\cref{lem:classical Brandt recurrence} and \cref{lem:classical ramified Brandt recurrence}), $\ell$ equals a split prime $r$ with $[\mf{O}:\O]=r$ (\cref{eq:recurrence}), or $\ell$ equals a split prime $r$ with $[\mf{O}:\O]=r^2$ (\cref{prop:Eu eigenvector}). One of the main results we provide for each of these orders is a recurrence relation satisfied by the Brandt matrices $B_{\ell^k}$ for $k\geq 0$: these are \cref{lem:classical Brandt recurrence}, \cref{eq:recurrence}, and \cref{cor:index r2 recurrence}, respectively. As this section is rather long and technical, we recommend readers skip ahead to \cref{sec:modular forms} on a first read-through and only refer back to this section as necessary.

\subsubsection{$\ell\nmid [\mf{O}:\O]$}
In this case $\O_\ell$ is maximal, and we have the following two classical results, depending on whether $\ell$ is split or ramified in $\mc{A}$. (We note that $\ell\mid\disc(\O) \Longleftrightarrow \ell\mid\disc(\mf{O})$.)

\begin{lemma}\label{lem:classical Brandt recurrence}
    Let $\O\subseteq \mc{A}$ be a quaternion order and $\ell\nmid\disc(\O)$ a prime. Then for all $k\geq 1$ we have
    \[B_{\ell^{k+1}}=B_\ell B_{\ell^k}-\ell B_{\ell^{k-1}}.\]
\end{lemma}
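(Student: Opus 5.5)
We prove the recurrence entrywise using \cref{lem:count by fibre}, which reduces it to a local count at $\ell$. Take $m=1$, $n=\ell^k$. For fixed $i,j$ the lemma gives
\[(B_\ell B_{\ell^k})(i,j)=\sum_{J\in\mf{b}_{\ell^{k+1}}(i,j)}\left|\mf{N}_{\ell^k}(J,I_j)\right|.\]
By the argument leading to \cref{eq:global ideal count is local}, $|\mf{N}_{\ell^k}(J,I_j)|=N_{\ell^k}(x)$, where $\gamma^{-1}J_\ell=x\O_\ell$ and $\val(\nrd(x))=k+1$. By \cref{l:reductions}(2) this equals $N_\ell(\bar{x})$, the number of principal right ideals of norm $\ell$ containing $\bar x$. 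Since $\ell\nmid\disc(\O)$, we have $\O_\ell\simeq M_2(\ZZ_\ell)$. There are exactly $\ell+1$ right ideals of norm $\ell$, corresponding to lines in $\FF_\ell^2$, and $\bar x$ lies in all of them iff $\bar x\in\ell\O_\ell$, iff $x\in\ell\O_\ell$. Otherwise $\bar x\bmod \ell$ is a nonzero matrix of rank $1$ (its determinant is divisible by $\ell$ since $k+1\ge 1$), and it lies in exactly one of them. So $N_{\ell^k}(x)=1$ if $x\notin\ell\O_\ell$, and $N_{\ell^k}(x)=\ell+1$ if $x\in\ell\O_\ell$.

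Globally, $x\in\ell\O_\ell$ means $J\subseteq \ell I_j$. These $J$ are in bijection with $J''\subseteq_{\ell^{k-1}} I_j$ via $J=\ell J''$, and $[\ell J'']=[J'']$. Hence
\[(B_\ell B_{\ell^k})(i,j)=B_{\ell^{k+1}}(i,j)+\ell\,\bigl|\{J\in\mf{b}_{\ell^{k+1}}(i,j): J\subseteq \ell I_j\}\bigr|=B_{\ell^{k+1}}(i,j)+\ell B_{\ell^{k-1}}(i,j),\]
which rearranges to the claim. When $k=1$ this reads $B_{\ell^{0}}=B_1=I$, which is consistent: $J=\ell I_j$ is the unique such ideal and it is in the class of $I_j$.

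The main point needing care is the local computation in $M_2(\ZZ_\ell)$: identifying the norm-$\ell$ right ideals containing $\bar x$ with the lines containing the column space of $\bar x \bmod \ell$. A right ideal of norm $\ell$ has the form $\{y: y\bmod\ell$ has image in a fixed line$\}$, so this is straightforward. We also need $\ell\nmid\disc(\O)$ to guarantee that $\O_\ell$ is $M_2(\ZZ_\ell)$ and not a non-maximal or ramified order.
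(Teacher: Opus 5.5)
Your argument is correct, but it takes a different route from the paper. The paper gives no argument for this lemma; it simply cites \cite[Chapter II Theorem 2]{Eichler73} and \cite[Proposition 41.3.6]{Voight}. You instead derive the recurrence inside the paper's own framework, from four ingredients:
\begin{enumerate}
\item the fibre count \cref{lem:count by fibre};
\item the local identification \cref{eq:global ideal count is local};
\item the conjugation symmetry \cref{l:reductions}(2), which turns $N_{\ell^k}(x)$ into $N_\ell(\bar x)$;
\item a direct count, in $M_2(\ZZ_\ell)$, of the norm-$\ell$ right ideals containing $\bar x$, which gives $1$ or $\ell+1$ according as $x\notin\ell\O_\ell$ or $x\in\ell\O_\ell$.
\end{enumerate}
These pieces combine exactly as you say. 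There is one slip: in applying \cref{lem:count by fibre} you mean $m=\ell$, $n=\ell^k$, not $m=1$. Your displayed formula is the correct one for $m=\ell$.

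Your proof is essentially the paper's own proof of the index-$r$ Eichler recurrence \cref{eq:recurrence}, with the local counts $\{1,\ell+1\}$ replacing the counts $\{1,r+1,2r+1\}$ of \cref{lem:countideals}. There is one small difference from that proof. The paper expands $B_{r^n}B_r$ and counts $N_r(x)$ directly. You expand $B_\ell B_{\ell^k}$ and use $N_{\ell^k}(x)=N_\ell(\bar x)$ to reduce to the same kind of count, so you never need to invoke \cref{thm:Brandt algebra commutative} to swap the factors.

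Your version is self-contained and makes explicit that the recurrence depends only on $\O_\ell\simeq M_2(\ZZ_\ell)$. It also puts the classical case and the index-$r$ case on the same footing. The paper's citation is shorter and ties the statement to the classical Hecke-operator literature.
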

\begin{proof}
    See \cite[Chapter II Theorem 2]{Eichler73} or \cite[Proposition 41.3.6]{Voight}.
\end{proof}

\begin{lemma}\label{lem:classical ramified Brandt recurrence}
    Let $\O\subseteq \mc{A}$ be a quaternion order with $\O_p$ maximal, where $p$ is a ramified prime for $\mc{A}$. Then for all $k\geq 1$ we have $B_{p^k}=B_p^k$, and $B_p^2=1$.
\end{lemma}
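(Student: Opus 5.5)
The plan is to reduce to a local computation at $p$ using the fibre-counting formula \cref{lem:count by fibre} and the identification \cref{eq:global ideal count is local}. Since $\O_p$ is maximal in the ramified algebra $\mc{A}_p$, it is the valuation ring of a local division algebra. Its unique maximal ideal $\mf{P}_p=\pi\O_p$ is two-sided, with $\nrd(\pi)$ of valuation $1$. Every right $\O_p$-ideal is two-sided and equal to $\mf{P}_p^k$. Consequently, for each $k$ there is exactly one principal right $\O_p$-ideal of norm $p^k$, namely $\pi^k\O_p$.

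First we would show that $B_{p^k}=B_p^k$. By \cref{lem:count by fibre} with $m=p^{k-1}$ and $n=p$, we have
\[(B_{p^{k-1}}B_p)(i,j)=\sum_{J\in\mf{b}_{p^k}(i,j)}|\mf{N}_p(J,I_j)|.\]
As in the proof of \cref{thm:Brandt algebra commutative}, $|\mf{N}_p(J,I_j)|=N_p(x,\O_p)$ for some $x$ with $\val(\nrd(x))=k$. This count is exactly $1$, because the only candidate ideal is $\pi\O_p$, and it contains $x\in\pi^k\O_p$. Hence $B_{p^{k-1}}B_p=B_{p^k}$, and induction (starting from $B_1=1$) gives $B_{p^k}=B_p^k$.

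Next we would prove $B_p^2=1$. By the first step it suffices to show $B_{p^2}=1$. For any invertible right $\O$-ideal $I$, the unique $J\subseteq_{p^2}I$ is determined locally: $J_\ell=I_\ell$ for $\ell\neq p$, and $J_p=I_p\pi^2\O_p=pI_p$, since $\pi^2\O_p=p\O_p$ because $\nrd(\pi^2)$ has valuation $2$. Thus $J=I\cap(\text{the lattice equal to }pI\text{ at }p)$. Since $p$ is central, $J=I\cdot\mf{Q}$, where $\mf{Q}$ is the two-sided ideal of $\O$ which equals $p\O_p$ at $p$ and is trivial elsewhere. I would check that the class of $J$ equals the class of $I$. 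The cleanest route is to compare $J$ with $pI$: they agree at $p$ but not elsewhere, so this direct comparison fails. Instead we use the column-sum count. By \cref{lem: Eisenstein coefficient ideal count'} each column of $B_{p^2}$ has exactly one nonzero entry, equal to $1$, so $B_{p^2}$ and $B_p$ are permutation matrices.

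To show the permutation $B_{p^2}$ is the identity, I would use the left-ideal viewpoint. We have $J=\mf{P}'I\mf{P}'$-type expressions, but more directly $J=\mf{P}_I^{2}I$, where $\mf{P}_I$ is the two-sided $p$-ideal of the left order $\O_L(I)$. This left ideal $\mf{P}_I^2$ equals $p\O_L(I)$ at $p$ and is trivial elsewhere, i.e. $\mf{P}_I^2=\O_L(I)\cap(\text{$p$-local }p\O_L(I))$. So the real question is whether $J=\beta I$ for some $\beta\in\mc{A}^\times$.

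This is the main obstacle. A cleaner alternative is to use self-adjointness: $B_p$ is a permutation matrix satisfying $DB_p=B_p^tD$ (\cref{lem:Bn self-adjoint}) with $D$ positive diagonal. For a permutation matrix $P$, $DP$ symmetric forces $P(i,j)=1\Rightarrow P(j,i)=1$ (and $w_i=w_j$). Hence $P=P^t=P^{-1}$, so $B_p^2=1$. This avoids the class computation entirely, and it is the argument I would commit to.
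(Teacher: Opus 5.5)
Your proof is correct. The first half is exactly the paper's argument: each $J\in\mf{b}_{p^k}(i,j)$ has $|\mf{N}_p(J,I_j)|=1$ because $\pi\O_p$ is the only norm-$p$ ideal and it contains every non-unit. Then \cref{lem:count by fibre} and induction finish.

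For $B_p^2=1$, the reason you gave for abandoning the direct route is wrong. For every prime $\ell\neq p$ we have $p\in\ZZ_\ell^\times$, so $(pI)_\ell=I_\ell$. Hence $pI$ agrees with $J$ at \emph{every} prime. Put differently, the two-sided ideal $\mf{Q}$ you describe is just $p\O$, the unique $J\subseteq_{p^2}I$ is $pI$, and $[pI]=[I]$. This one line is how the paper shows that $B_{p^2}$ is the identity.

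Your substitute argument via self-adjointness is still valid, but the steps should come in a different order. Column sums equal to $1$ (\cref{lem: Eisenstein coefficient ideal count'}) only show that $B_p$ is the $0$--$1$ matrix of some map $f$ on $\{1,\ldots,h\}$. That does not yet make it a permutation matrix, as you asserted. What gives bijectivity is the identity $w_iB_p(i,j)=w_jB_p(j,i)$ from \cref{lem:Bn self-adjoint}, with all $w_i>0$. It yields $B_p(j,f(j))=1$, hence $f\circ f=\mathrm{id}$ and $B_p^2=1$.

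The two routes buy different things. Yours never identifies which class the sub-ideal lies in. It also proves more generally that any Brandt matrix whose column sums are all $1$ is a symmetric involution, and incidentally forces $w_i=w_{f(i)}$. The paper's route is shorter and names the permutation concretely. It gives $B_{p^2}=1$ directly rather than via $B_{p^2}=B_p^2$.
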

\begin{proof}
    In this setting $\O_p$ is the valuation ring of the division algebra $\mc{A}_p$, so for each $k\geq 0$ there is a unique ideal of norm $p^k$ in $\O_p$, which is contained in all the ideals of norm $p^{k'}$ for $k'\leq k$. Using the local-global dictionary, this implies that if $I$ is an invertible right $\O$-ideal, then there is a unique ideal of relative norm $p^k$ in $I$ for every $k\geq 1$. 
    
    This implies that for all $J\in\mf{b}_{p^k}(i,j)$ we have $|\mf{N}_p(J,I_j)|=1$. By \cref{lem:count by fibre} this implies $B_{p^{k-1}}B_p=B_{p^k}$, which gives $B_p^k=B_{p^k}$ by induction. We compute $B_{p^2}=1$ by noting that the only $\O_p$-ideal of norm $p^2$ is $p\O_p$, so for any $1\leq j\leq h$ the unique ideal of relative norm $p^2$ in $I_j$ is $pI_j$, which is in the same ideal class as $I_j$. Thus $|\mf{b}_{p^2}(i,j)|$ is $1$ if $i=j$ and is $0$ otherwise.
\end{proof}

\subsubsection{$[\mf{O}:\O]=r$} \label{s: index_ell}

Let $\mf{O}$ be a maximal order containing $\O$ and suppose $[\mf{O}:\O]=r$ for a split prime $r$ of $\mc{A}$. The main result in this section is \cref{eq:recurrence}, a recurrence relation on the Brandt matrices $B_{r^k}$.

By the classification of submaximal orders \cref{lem:max_suborders}, $\O$ is an Eichler order of index~$r$ in $\mf{O}$.
Let $\mf{r}$ be the unique invertible two-sided ideal of $\O$ with norm $r$ \cite[23.4.19]{Voight}.
We can define an operator 
\begin{equation}\label{eq:Wr}
    W_{\mf{r}}:\mf{M}(\O)\to\mf{M}(\O)
\end{equation} 
on standard basis vectors induced by $[I_i]\mapsto [I_i\mf{r}]$.
\begin{lemma}\label{lem:W commutes}
    $W_{\mf{r}}$ is an involution that is self-adjoint with respect to the inner product on the Brandt module $\mf{M}(\O)$ and commutes with every Brandt matrix $B_n\in\mf{B}(\O)$.
\end{lemma}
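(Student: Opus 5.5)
We need to show three things about $W_{\mf{r}}$: it is an involution, it is self-adjoint, and it commutes with every $B_n$. First, $W_{\mf{r}}$ is well defined. If $I=\alpha J$ then $I\mf{r}=\alpha J\mf{r}$, so the class of $I\mf{r}$ depends only on $[I]$. Moreover $I\mf{r}$ is again an invertible right $\O$-ideal, because $\mf{r}$ is an invertible two-sided $\O$-ideal. So $[I_i]\mapsto[I_i\mf{r}]$ is a map on $\Cls(\O)$, and $W_{\mf{r}}$ is the corresponding permutation matrix.

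\textbf{Involution.} I will check locally that $\mf{r}^2=r\O$. At $\ell\ne r$ we have $\mf{r}_\ell=\O_\ell$. At $r$, take $\O_r$ to be the standard Eichler order of level $r$. Then $\mf{r}_r=\varpi\O_r$ with $\varpi=\begin{psmallmatrix}0&1\\ r&0\end{psmallmatrix}$, and $\varpi^2=r$. Hence $[I\mf{r}^2]=[rI]=[I]$, so $W_{\mf{r}}^2=1$.

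\textbf{Self-adjointness.} For a permutation matrix this amounts to checking that $W_{\mf{r}}$ preserves the weights, i.e.\ $w_i$ equals the weight of the class $[I_i\mf{r}]$. Since $\O_L(I\mf{r})=\O_L(I)$ (as $\mf{r}$ is two-sided for $\O$), the left orders coincide, so the unit groups and hence the weights agree. A weight-preserving involutive permutation $\sigma$ is self-adjoint for the diagonal inner product: with $D$ the diagonal matrix of weights, $DW_{\mf{r}}$ is the matrix with entries $w_i$ at positions $(i,\sigma(i))$, and this is symmetric because $\sigma=\sigma^{-1}$ and $w_i=w_{\sigma(i)}$.

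\textbf{Commutation with $B_n$.} For $n=0$, every row of $B_0$ is constant with value $1/(2w_i)$, and $W_{\mf{r}}$ permutes rows and columns in a weight-preserving way, so $W_{\mf{r}}$ commutes with $B_0$. For $n\ge1$ I will use \cref{eq:Bnij_def_2}. The map $J\mapsto J\mf{r}\cdot\frac{1}{r}\,\cdots$ is not needed; the right-multiplication map $J\mapsto J\mf{r}$ suffices. It is a bijection from $\{J\subseteq_n I_j\}$ onto $\{J'\subseteq_n I_j\mf{r}\}$, with inverse $J'\mapsto J'\mf{r}^{-1}$. It preserves inclusion and relative norm because $\nrd(J\mf{r})=r\nrd(J)$. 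It also sends class $[I_i]$ to $[I_i\mf{r}]$. Let $\sigma$ be the permutation with $[I_{\sigma(i)}]=[I_i\mf{r}]$, and write $I_j\mf{r}=\beta I_{\sigma(j)}$. Composing with left multiplication by $\beta^{-1}$ then gives
\[B_n(i,j)=B_n(\sigma(i),\sigma(j)),\]
which is exactly $W_{\mf{r}}B_n=B_nW_{\mf{r}}$.

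The main obstacle is the local computation for the involution. It requires identifying $\mf{r}_r$ as principal, generated by a normalizer element whose square is $r$ times a unit. This is standard for Eichler orders of level $r$ (see \cite[23.4.19]{Voight}). Everything else is bookkeeping with permutations.
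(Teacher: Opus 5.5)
Your proof is correct and follows essentially the same route as the paper: $\mf{r}^2=r\O$ gives the involution, isomorphic left orders give $w_i=w_{\sigma(i)}$ and hence self-adjointness, and for $n\geq 1$ you use the bijection $J\mapsto J\mf{r}$ on ideals of relative norm $n$ (the paper uses its inverse $J\mapsto J\mf{r}^{-1}$). The only cosmetic difference is for $B_0$: you check directly that $B_0W_{\mf{r}}=W_{\mf{r}}B_0=B_0$ using constant rows and weight preservation, whereas the paper reuses the self-adjointness plus constant-column-sum argument of \cref{lem: B0}.
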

\begin{proof}
    Since $\mf{r}^2=r\O$,  $W_{\mf{r}}$ is an involution. Thus, for $1\leq i,j\leq h$, we have  $W_{\mf{r}}v_i= v_j$ if and only if $v_i= W_{\mf{r}}v_j$, and so if these equalities do not hold we have
    \[\langle v_i,W_{\mf{r}}v_j\rangle=0=\langle W_{\mf{r}}v_i,v_j\rangle.\]
    If the equalities do hold, then $[I_j]=[I_i\mf{r}]$, which implies that $I_i$ and $I_j$ have isomorphic left orders and so
    \[\langle v_i,W_{\mf{r}}v_j\rangle=\langle v_i,v_i\rangle=w_i=w_j=\langle v_j,v_j\rangle=\langle W_{\mf{r}}v_i,v_j\rangle.\]
    This proves self-adjointness.
    
    Letting $\sigma$ be the permutation of $\{1,\ldots,h\}$ induced by $[I_{\sigma(i)}]:=[I_i\mf{r}]$, the following holds for any $n\geq 1$:
    \begin{align*}
        (B_nW_{\mf{r}})(i,j)&=B_n(i,\sigma(j))\\
        &=\{J\subseteq_n I_j\mf{r}: [J]=[I_i]\}\\
        &=\{J\subseteq_n I_j: [J]=[I_i\mf{r}]\}\\
        &=B_n(\sigma(i),j)=(W_{\mf{r}}B_n)(i,j),
    \end{align*}
    where in the third line we used the bijection $J\mapsto J\mf{r}^{-1}$.

    Finally, since $W_{\mf{r}}$ is self-adjoint and has constant column sum (equal to $1$), the argument from \cref{lem: B0} shows that it commutes with $B_0$.
\end{proof}

As a result, we may pick a basis for $\mf{M}(\O)$ consisting of simultaneous eigenvectors for all Brandt operators as well as for $W_{\mf{r}}$.
The main result we will prove in this section is a recurrence relation for the Brandt operators $B_{r^n}\in\mf{B}(\O)$.

\begin{proposition}
    \label{eq:recurrence}
    Let $\O\subseteq\mf{O}$ be submaximal of index $r$, where $r$ is a split prime of $\mc{A}$, %
    and let $W_{\mf{r}}$ be as in \emph{\cref{lem:W commutes}}. Then for all $n\geq 1$,
    \[B_{r^{n+1}}=B_{r^n}(B_r-rW_{\mf{r}})-rB_{r^{n-1}}.\]
\end{proposition}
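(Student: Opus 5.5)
The plan is to prove the recurrence entrywise, via \cref{lem:count by fibre}, which reduces it to a purely local count at $r$. Fix $1\leq i,j\leq h$. By \cref{lem:count by fibre} with $m=r^n$ and the second factor $B_r$,
\[(B_{r^n}B_r)(i,j)=\sum_{J\in\mf{b}_{r^{n+1}}(i,j)}|\mf{N}_r(J,I_j)|,\]
and by \cref{eq:global ideal count is local} each summand equals $N_r(x)$. Here $x\in\O_r$ is defined by writing $(I_j)_r=\gamma\O_r$ and $\gamma^{-1}J_r=x\O_r$, so that $\val(\nrd(x))=n+1$.

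Next I will reinterpret the other three terms as sums over the same index set $\mf{b}_{r^{n+1}}(i,j)$.
\begin{itemize}
\item $B_{r^{n+1}}(i,j)$ is the sum of $1$ over all $J$.
\item $(B_{r^n}W_{\mf{r}})(i,j)=B_{r^n}(i,\sigma(j))$ counts ideals $J\subseteq_{r^n}I_j\mf{r}$ with $[J]=[I_i]$. Since $I_j\mf{r}\subseteq_r I_j$, these are exactly the $J\in\mf{b}_{r^{n+1}}(i,j)$ with $J\subseteq I_j\mf{r}$. Locally this is the condition $x\in\mf{r}_r$, where $\mf{r}_r=\begin{psmallmatrix}0&1\\ r&0\end{psmallmatrix}\O_r$ in the standard model $\O_r=\begin{psmallmatrix}\ZZ_r&\ZZ_r\\ r\ZZ_r&\ZZ_r\end{psmallmatrix}$.
\item $B_{r^{n-1}}(i,j)$ counts $J''\subseteq_{r^{n-1}}I_j$ with $[J'']=[I_i]$. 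Via $J''\mapsto rJ''$, which preserves the class, these correspond to the $J\in\mf{b}_{r^{n+1}}(i,j)$ with $J\subseteq rI_j$, i.e.\ locally $x\in r\O_r$.
\end{itemize}

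So the proposition reduces to the local identity
\[N_r(x)=1+r\cdot\1[x\in\mf{r}_r]+r\cdot\1[x\in r\O_r]\qquad\text{for all }x\in\O_r\text{ with }\val(\nrd(x))\geq 1.\]
Strictly one needs this for all $x$ with $\val(\nrd(x))=n+1\geq 2$, and also the case $n=1$ with $B_1=1$.

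To verify the identity I will first enumerate the principal right ideals of norm $r$ in the Eichler order $\O_r$ of level $r$. I expect these to be:
\begin{itemize}
\item the $r$ ideals generated by $\begin{psmallmatrix}r&b\\0&1\end{psmallmatrix}$ with $b$ ranging over $\ZZ/r$;
\item the ideal generated by $\begin{psmallmatrix}1&0\\0&r\end{psmallmatrix}$;
\item the two-sided ideal $\mf{r}_r$ itself;
\end{itemize}
together with possibly $r-1$ further ideals of the form $\begin{psmallmatrix}1&0\\ rc&r\end{psmallmatrix}$. I will pin down the exact list by computing $\O_r^\times$-orbits of elements of determinant in $r\ZZ_r^\times$, and I will check the total against $N_r(r)$, which must equal $2r+1$ if the identity holds. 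Then, for $x=\begin{psmallmatrix}a&b\\ rc&d\end{psmallmatrix}$, I will decide membership of $x$ in each such ideal directly: $x\in y\O_r$ if and only if $y^{-1}x\in\O_r$. The analysis splits into cases:
\begin{itemize}
\item $x\notin\mf{r}_r$, i.e.\ $a$ or $d$ is a unit;
\item $x\in\mf{r}_r\setminus r\O_r$;
\item $x\in r\O_r$, where every ideal of norm $r$ contains $x$.
\end{itemize}
\cref{l:reductions}(1) allows me to reduce $x$ modulo $r\O_r$ first, so only the image of $x$ in $\O_r/r\O_r$ matters. This makes the case check a finite computation over $\FF_r$.

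The main obstacle is this local count: getting the exact number and shape of the norm-$r$ right ideals of a level-$r$ Eichler order, and confirming that an $x$ outside $\mf{r}_r$ (with $\val(\nrd(x))\geq 1$) lies in exactly one of them. If the naive identity fails in some case (for instance for $x\in\mf{r}_r$ in the other coset), I will instead compute $N_r(x)$ in each residue class mod $r\O_r$ and match the coefficients of $W_{\mf{r}}$ and $B_{r^{n-1}}$ accordingly. The global bookkeeping above is independent of the precise constants.
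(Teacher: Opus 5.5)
Your proposal is correct and is essentially the paper's proof: the fibre count of \cref{lem:count by fibre}, the reduction to $N_r(x)$, and the local count $N_r(x)\in\{2r+1,\,r+1,\,1\}$ obtained by testing $x$ against all $2r+1$ norm-$r$ generators. Your $r-1$ extra ideals $\begin{psmallmatrix}1&0\\ rc&r\end{psmallmatrix}$ do belong to that list. There is a small bonus in your version: reading the middle term directly as $B_{r^n}W_{\mf r}$ avoids the appeal to commutativity of $W_{\mf r}$ that the paper's $W_{\mf r}B_{r^n}$ implicitly needs.

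One correction: state the local identity only for $\val(\nrd(x))\geq 2$, as the paper does. It fails at valuation $1$ for $x\in\varpi\O_r^\times$ with $\varpi=\begin{psmallmatrix}0&1\\ r&0\end{psmallmatrix}$, since such $x$ lies only in $\mf{r}_r$ and so has $N_r(x)=1$ rather than $r+1$.
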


\begin{proof}
In light of \cref{lem:count by fibre}, we may express $(B_{r^n}B_{r})(i,j)$ as a sum over $J\in \mf{b}_{r^{n+1}}(i,j)$ of the values $|\mf{N}_r(J,I_j)|$, and by \cref{eq:global ideal count is local}, we can write $|\mf{N}_r(J,I_j)|=N_{r}(x)$, where $(I_j)_r=\gamma\O_r$ and $\gamma^{-1}J_r=x\O_r$; since $J\subseteq_{r^{n+1}} I_j$ we have $x\in\O_r$ and $\val(\nrd(x))=n+1$. 
So we will partition $\mf{b}_{r^{n+1}}(i,j)$ into subsets depending on the value of $N_{r}(x)$, and use this to find an alternate expression for $(B_{r^n}B_{r})(i,j)$.
\begin{lemma}\label{lem:countideals}
    Let $x\in\O_r$ and suppose $\val(\nrd(x))\geq 2$. %
    Then
    \begin{align*}
        N_r(x)=\begin{cases}
            2r+1 & \text{for }x\in r\O_r,\\
            r+1 & \text{for }x\in\mf{r}_r\setminus r\O_r,\\
            1 & \text{for }x\in\O_r\setminus\mf{r}_r.
        \end{cases}
    \end{align*}
\end{lemma}

\begin{proof}
Since $\O$ is Eichler of index $r$, it is isomorphic to the subring of $M_2(\ZZ_r)$ consisting of matrices with bottom-left entry divisible by $r$ \cite[Proposition 23.4.3]{Voight}. For ease of notation we assume equality. In this notation we have 
\[\O_r=\begin{pmatrix}
    \ZZ_r & \ZZ_r \\ r\ZZ_r & \ZZ_r
\end{pmatrix}\qquad\text{and}\qquad  \mf{r}_r=\begin{pmatrix}
    r\ZZ_r & \ZZ_r \\ r\ZZ_r & r\ZZ_r
\end{pmatrix}.\]
We define the set $S$ to consist of the following $2r+1$ elements of $\O_r$:
    \begin{align*}
\alpha_i&:=\begin{pmatrix}
r & i\\
0&1
\end{pmatrix},
\quad
\beta_i:=\begin{pmatrix}
1&0\\
ir&r
\end{pmatrix}
\quad \text{for } i=0,\ldots,r-1,
\qquad
\varpi:=\begin{pmatrix}
0&1\\
r&0
\end{pmatrix}.
\end{align*}
    These elements generate each integral invertible right $\O_r$-ideal of norm $r$ exactly once, with $\varpi$ generating the two-sided ideal $\mf{r}_r$. 
    
    Now let $x=\begin{psmallmatrix}
        a&b\\ r c&d
    \end{psmallmatrix}\in \O_r$. To determine the number of integral invertible right $\O_r$-ideals $J$ of norm $r$ containing $x$, it suffices to determine, for each $\mu\in S$, whether or not $\mu^{-1}x\in\O_r$.
    To this end, we compute $\mu^{-1}x$ for each $\mu\in S$:
    \[\alpha_i^{-1}x=\begin{pmatrix}
        \frac{a}{r}-ci & \frac{b-di}{r} \\ cr & d
    \end{pmatrix},\qquad \beta_i^{-1}x=\begin{pmatrix}
        a & b \\ c-ai & \frac{d}{r}-bi
    \end{pmatrix},\qquad \varpi^{-1}x= \begin{pmatrix}
        c & \frac{d}{r} \\ a & b
    \end{pmatrix}.\]
    If $a$ and $d$ are both units, then $x\in\O_r^\times$, which does not occur because of the assumption $\val(\nrd(x))\geq 2$. If $d$ is a unit (so $r\mid a$), then there is exactly one $\mu\in S$ for which $\mu^{-1} x\in\O_r$, namely $\mu=\alpha_i$ with $i\equiv bd^{-1}\bmod r$. Likewise, if $a$ is a unit (so $r\mid d$), then we can only take $\mu=\beta_i$ with $i\equiv ca^{-1}\bmod r$. 
    
    Now consider the case that neither $a$ nor $d$ is a unit; this is exactly the case that $x\in \mf{r}_r$. In this setting, if $b$ and $c$ are both units, then $x\in \varpi\O_r^\times$, which does not occur by $\val(\nrd(x))\geq 2$. If $b$ is a unit (so $r\mid c$), then there are $r+1$ options: we can take $\mu=\varpi$ or $\mu=\beta_i$ for any $i$. If $c$ is a unit (so $r\mid b$), then again there are $r+1$ options: we can take $\mu=\varpi$ or $\mu=\alpha_i$ for any $i$. If $a,b,c,d$ are all divisible by $r$ (so $x\in r\O_r$), then there are $2r+1$ options, because $\mu^{-1} x\in\O_r$ for all $\mu\in S$.
  \end{proof}

Now that we have these counts for each case, we need to count how many $J\in\mf{b}_{r^{n+1}}(i,j)$ land in each of these cases. We have
\[x\in r\O_r\quad\Leftrightarrow \quad J\sus r I_j\quad\Leftrightarrow \quad\frac{1}{r}J\sus I_j\quad\Leftrightarrow \quad\frac{1}{r}J\in \mf{b}_{r^{n-1}}(i,j),\] %
so the number of $J$ with $N_r(x)=2r+1$ is equal to $B_{r^{n-1}}(i,j)$. 
Likewise,
\[x\in\mf{r}_r\quad\Leftrightarrow \quad  J\mf{r}^{-1}\sus I_j\quad\Leftrightarrow \quad J\mf{r}^{-1} \in \mf{b}_{r^n}(\sigma(i),j),\]
where $\sigma$ is the involution on $\{1,\ldots,h\}$ induced by $[I_{\sigma(i)}]=[I_i\mf{r}]$. That is, there is a bijection between ideals $J\in \mf{b}_{r^{n+1}}(i,j)$ such that $x\in\mf{r}_r$ and ideals in $I_j$ of relative norm $r^n$ and with ideal class $[I\mf{r}]$, and these are counted by $(W_{\mf{r}}B_{r^n})(i,j)$. So
\begin{align*}
    (B_{r^n}B_r)(i,j)&=\sum_{\substack{J\in\mf{b}_{r^{n+1}}(i,j)\\ N_r(x)=2r+1}}N_r(x)+\sum_{\substack{J\in\mf{b}_{r^{n+1}}(i,j)\\ N_r(x)=r+1}}N_r(x)+\sum_{\substack{J\in\mf{b}_{r^{n+1}}(i,j)\\ N_r(x)=1}}N_r(x) \\
    &=(2r+1)\cdot B_{r^{n-1}}(i,j)\\
    &\qquad +(r+1)\cdot (W_{\mf{r}}B_{r^n}(i,j)-B_{r^{n-1}}(i,j))\\
    &\qquad +1\cdot (B_{r^{n+1}}(i,j)-W_{\mf{r}}B_{r^n}(i,j)) \\
    &= r B_{r^{n-1}}(i,j)+r W_{\mf{r}}B_{r^n}(i,j)+B_{r^{n+1}}(i,j).
\end{align*}
This implies the statement of \cref{eq:recurrence}.
\end{proof}

\subsubsection{$[\mf{O}:\O]=r^2$}

Let $\mf{O}$ be a maximal order containing $\O$ and suppose that $[\mf{O}:\O]=r^2$ for $r$ a split prime of $\mc{A}$. In particular,  $\mf{O}_{\ell}=\O_{\ell}$ for all primes $\ell\neq r$.
The main result of this section is \cref{prop:Eu eigenvector}, which computes the eigenvalues of eigenvectors for Brandt matrices in $\mf{B}(\O)$ by relating them to eigenvectors for Brandt matrices in $\mf{B}(\mf{O})$. %

We begin by setting some notation. 
Fix representative invertible $\O$-ideals $I_1=\O$, $I_2,\ldots,I_h$, and let $v_1,\ldots,v_h$ be the corresponding standard basis for $\mf{M}(\O)$. Also fix representative invertible $\mf{O}$-ideals $I_1'=\mf{O},I_2',\ldots,I_{h'}'$, and let $w_1,\ldots,w_{h'}$ be the corresponding standard basis for $\mf{M}(\mf{O})$. %
We use $B_n$ and $\widetilde{B}_n$ to denote Brandt matrices in $\mf{B}(\O)$ and $\mf{B}(\mf{O})$, respectively. Following the notation of \cref{ss: red_loc}, for $1\leq i,j\leq h$ let 
\[\mf{b}_n(i,j)=\{J\subseteq_n I_j:[J]=[I_i]\}\]
as $J$ varies over right $\O$-ideals, and for $1\leq i,j\leq h'$ let 
\[\widetilde{\mf{b}}_n(i,j)=\{J\subseteq_n I'_j:[J]=[I'_i]\}\]
as $J$ varies over right $\mf{O}$-ideals. 

We define a map $\rho$ from invertible right $\O$-ideals to invertible right $\mf{O}$-ideals, given by ideal extension: 
\[\rho(I):=I\mf{O}.\]
Note that this is well-defined because invertible ideals are locally principal.
This map will allow us to explicitly relate counts of $\O$-ideals to counts of $\mf{O}$-ideals. 
See \cite[26.6.1]{Voight} for a discussion of the map $\rho$. %

For $i\in\{1,\ldots,h\}$, we define $\widetilde{i}\in\{1,\ldots,h'\}$ by $[I'_{\widetilde{i}}]=[\rho(I_i)]$. %
Using the extension map~$\rho$ we may define an operator on Brandt modules:%
\begin{align*}
    E:\mf{M}(\O)&\to \mf{M}(\mf{O})\\
    v_i&\mapsto w_{\widetilde{i}}.
\end{align*}
The map $E$ will allow us to relate Brandt eigenvectors for %
$\mf{B}(\O)$ to Brandt eigenvectors for $\mf{B}(\mf{O})$. 
We begin by considering Brandt matrices $B_n$ for $n$ coprime to $r$.
\begin{lemma}\label{lem:superorder coprime case}
    For all $n$ coprime to $r$, %
    \[EB_n=\widetilde{B}_nE.\]
\end{lemma}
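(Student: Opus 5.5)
We compare matrix entries. By definition $(EB_n)(k,j)=\sum_{i:\widetilde{i}=k}B_n(i,j)$, and $(\widetilde{B}_nE)(k,j)=\widetilde{B}_n(k,\widetilde{j})$. So it suffices to show, for each $j\in\{1,\ldots,h\}$ and $k\in\{1,\ldots,h'\}$, that the map $J\mapsto\rho(J)=J\mf{O}$ gives a bijection
\[\bigl\{J\subseteq_n I_j : [\rho(J)]=[I'_k]\bigr\}\;\longrightarrow\;\bigl\{J'\subseteq_n \rho(I_j): [J']=[I'_k]\bigr\}.\]
The left side has size $\sum_{i:\widetilde{i}=k}B_n(i,j)$, because $[\rho(J)]$ depends only on $[J]$: indeed $\rho(\alpha J)=\alpha\rho(J)$. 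For the right side, choose $\beta\in\mc{A}^\times$ with $\rho(I_j)=\beta I'_{\widetilde{j}}$. Then $J'\mapsto\beta^{-1}J'$ identifies the right side with $\widetilde{\mf{b}}_n(k,\widetilde{j})$, so its size is $\widetilde{B}_n(k,\widetilde{j})$.

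We first check that the map is well defined. If $J\subseteq I_j$, then $J\mf{O}\subseteq I_j\mf{O}$. Norms are preserved because locally $J_\ell=\alpha_\ell\O_\ell$ gives $(J\mf{O})_\ell=\alpha_\ell\mf{O}_\ell$, so $\nrd(\rho(J))=\nrd(J)$ and $\nrd(\rho(I_j))=\nrd(I_j)$. Hence the relative norm is still $n$, and the class condition holds by construction.

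Bijectivity will be checked locally via the local-global dictionary. Every ideal $J\subseteq_n I_j$ satisfies $J_\ell=(I_j)_\ell$ at every prime $\ell\nmid n$, and in particular at $\ell=r$, since $\gcd(n,r)=1$. Similarly every $J'\subseteq_n\rho(I_j)$ satisfies $J'_r=\rho(I_j)_r=(I_j)_r\mf{O}_r$. At primes $\ell\neq r$ we have $\O_\ell=\mf{O}_\ell$, so there $\rho$ is the identity on completions. Given $J'$ on the right, we therefore define $J$ by $J_\ell=J'_\ell$ for $\ell\ne r$ and $J_r=(I_j)_r$. This is an invertible right $\O$-ideal, since it is locally principal everywhere, and it satisfies $J\subseteq_n I_j$ and $\rho(J)=J'$. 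Moreover any preimage must have these completions, which gives injectivity.

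The step needing the most care is exactly this use of coprimality: the completion at $r$ is forced to agree with that of $I_j$, and this rigidity is what makes $\rho$ bijective rather than many-to-one. Everything else is bookkeeping of ideal classes under $\rho$. The case $n=0$ should be handled separately, if it is intended to be covered at all, by a direct computation with $B_0$: its row $i$ is constant equal to $1/(2w_i)$, so $EB_0$ and $\widetilde{B}_0E$ can be compared entrywise, using the relation between unit group sizes of the left orders. I would expect the lemma to be stated and used only for $n\ge1$ coprime to $r$.
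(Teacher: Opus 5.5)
Your proof is correct and is essentially the paper's argument: compare entries, identify both sides with sets of ideals of relative norm $n$ in $I_j$ and in $I_j\mf{O}$ with prescribed class, and show $J\mapsto J\mf{O}$ is a bijection because $\gcd(n,r)=1$ forces $J_r=(I_j)_r$ while $\O_\ell=\mf{O}_\ell$ for $\ell\neq r$ (the paper chooses the representative $I'_{\widetilde{j}}=I_j\mf{O}$ directly rather than translating by $\beta$). Your $n=0$ aside is moot since $\gcd(0,r)=r$, and in fact the identity genuinely fails there: an orbit--stabilizer count on $\rho^{-1}(I'_k)$, together with $|\rho^{-1}(I'_k)|=r^2-r$, gives $EB_0=(r^2-r)\widetilde{B}_0E$ rather than $\widetilde{B}_0E$.
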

\begin{proof}
    Fix $1\leq i'\leq h'$ and $1\leq j\leq h$. The quantity $(EB_n)(i',j)$ counts the set
    \begin{align*}
        \bigcup_{1\leq i\leq h\,:\,\widetilde{i}=i'} \mf{b}_n(i,j)&=\left\{J\subseteq_n I_j:[J\mf{O}]=[I'_{i'}]\right\}\\
        \intertext{and the quantity $(\widetilde{B}_nE)(i',j)$ counts the set}
        \widetilde{\mf{b}}_n(i',\widetilde{j})&=\left\{J\subseteq_n I'_{\widetilde{j}}:[J]=[I'_{i'}]\right\}.
    \end{align*}
    So it suffices to construct a bijection between these two sets. %
    Recall that $[I'_{\widetilde{j}}]=[I_j\mf{O}]$. As the size of $\widetilde{\mf{b}}_n(i',\widetilde{j})$ does not depend on the choice of representative $I'_{\widetilde{j}}$ in its equivalence class, we may assume without loss of generality that $I'_{\widetilde{j}}=I_j\mf{O}$.
    
    We now consider the map $\rho$ with restricted domain and codomain:
    \begin{align}\label{eq:rho restricted}
        \rho:\bigcup_{i\,:\,\widetilde{i}=i'}\mf{b}_n(i,j)&\to \widetilde{\mf{b}}_n(i',\widetilde{j}),\\
        J&\mapsto J\mf{O}.\nonumber
    \end{align}
    For any $J$ in the domain, its image is an invertible $\mf{O}$-ideal contained in $\rho(I_j)$ with relative norm $n$ and with $[\rho(J)]=[\rho(I_i)]=[I'_{i'}]$, 
    showing that the map is well-defined. On the other hand, any $I'$ in the codomain uniquely determines an ideal $J$ in the domain: since~$n$ is coprime to $r$ we must have $J_r=(I_j)_r$, and since $\mf{O}_{\ell}=\O_{\ell}$ for all $\ell\neq r$ we must have $J_\ell=I'_\ell$. We can conclude by the local-global dictionary that \cref{eq:rho restricted} is a bijection, proving that $EB_n=\widetilde{B}_nE$.
\end{proof}

We now consider Brandt matrices $B_{r^k}$ for $k\geq 1$. In this case the map $\rho$ of \cref{eq:rho restricted} is neither injective nor surjective. In order to quantify the failure of injectivity, we define a ``splitting'' operator 
\begin{align*}
    S:\mf{M}(\mf{O})&\to \mf{M}(\O)\\
    w_{j'}&\mapsto\sum_{i=1}^h c_{i,j'} v_i,
\qquad
c_{i,j'}
=
\left|\left\{
J \in \rho^{-1}(I'_{j'})
:
[J]=[I_i]
\right\}\right|.
\end{align*}
Note that $c_{i,j'}=0$ unless $\tilde i=j'$.
\begin{lemma}\label{lem:ES}
    The operator $E\circ S$ acts on $\mf{M}(\mf{O})$ via scalar multiplication by $r^2-r$.
\end{lemma}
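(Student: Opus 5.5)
Compute $E\circ S$ on a basis vector $w_{j'}$. By definition $S w_{j'}=\sum_i c_{i,j'}v_i$, and $E v_i=w_{\widetilde{i}}$. Since $c_{i,j'}=0$ unless $\widetilde{i}=j'$, we get
\[
E S w_{j'}=\Bigl(\sum_{i=1}^h c_{i,j'}\Bigr) w_{j'}=\bigl|\rho^{-1}(I'_{j'})\bigr|\, w_{j'}.
\]
So the lemma reduces to showing that the fibre $\rho^{-1}(I')$ has exactly $r^2-r$ elements for every invertible right $\mf{O}$-ideal $I'$. Here the fibre consists of all invertible right $\O$-ideals $J$ (not classes) with $J\mf{O}=I'$.

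I will compute this fibre locally. Since $\O_\ell=\mf{O}_\ell$ for $\ell\neq r$, any $J$ in the fibre must satisfy $J_\ell=I'_\ell$ for $\ell\ne r$. By the local-global dictionary, the fibre is therefore in bijection with the set of principal right $\O_r$-ideals $J_r$ with $J_r\mf{O}_r=I'_r$. Writing $I'_r=\gamma\mf{O}_r$ and translating by $\gamma^{-1}$, we may take $I'_r=\mf{O}_r$. The count becomes the number of ideals $x\O_r$ with $x\mf{O}_r=\mf{O}_r$, that is, with $x\in\mf{O}_r^\times$. These ideals are in bijection with $\mf{O}_r^\times/\O_r^\times$. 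So it suffices to show $[\mf{O}_r^\times:\O_r^\times]=r^2-r$.

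For this index I will use \cref{lem:max_suborders}(c): $\O_r=R_r+r\mf{O}_r$, with $\mf{O}_r\cong M_2(\ZZ_r)$ and $R_r/rR_r\simeq\FF_{r^2}$. Both unit groups contain $1+r\mf{O}_r$; for $\O_r$ this holds because $1+r\mf{O}_r\subseteq\O_r$ and these elements have unit norm. Hence the index equals
\[
\bigl[(\mf{O}/r\mf{O})^\times:(\O_r/r\mf{O}_r)^\times\bigr]=\bigl[\GL_2(\FF_r):\FF_{r^2}^\times\bigr].
\]
To justify this, note that an element of $\O_r$ is a unit of $\O_r$ iff its norm is a unit, iff its image in $M_2(\FF_r)$ is invertible, so units of $\O_r$ reduce onto the units of the subalgebra $\FF_{r^2}$. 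The index is then $(r^2-1)(r^2-r)/(r^2-1)=r^2-r$, as desired.

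The main obstacle is making the bijection between the global fibre and $\mf{O}_r^\times/\O_r^\times$ airtight. Distinct cosets $x\O_r^\times$ give distinct ideals $x\O_r$, since $x\O_r=y\O_r$ with $x,y$ units forces $y^{-1}x\in\O_r^\times$. Every invertible $J_r$ with $J_r\mf{O}_r=\mf{O}_r$ has a generator $x$ with $x\mf{O}_r=\mf{O}_r$, so $x$ lies in $\mf{O}_r^\times$. Finally, the count is independent of the chosen representative $I'_{j'}$, since the fibre is determined by the ideal itself.
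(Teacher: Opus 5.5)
Your proof is correct and follows the paper's argument. Both reduce $E\circ S$ to multiplication by $|\rho^{-1}(I'_{j'})|$, localize at $r$, and identify the fibre with $\mf{O}_r^\times/\O_r^\times$. The only difference is the final index computation: the paper quotes Voight's Lemma 26.6.7 (Eichler symbol $-1$ at a split prime), while you verify $[\mf{O}_r^\times:\O_r^\times]=[\GL_2(\FF_r):\FF_{r^2}^\times]=r^2-r$ directly by reducing modulo $r\mf{O}_r$, which is a valid self-contained substitute.
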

\begin{proof}
    Let $I:=I'_{j'}$. 
The operator $(E\circ S)$ multiplies $w_{j'}$ 
by 
    $|\rho^{-1}(I)|$. 
     We can compute this quantity by the local-global correspondence: a given $J\in\rho^{-1}(I)$ is determined at each prime $\ell$ by an $\O_\ell$-ideal $J_\ell$ with $J_\ell\mf{O}_\ell=I_\ell$. For all $\ell\neq r$ we have $\O_\ell=\mf{O}_\ell$ and so $J_\ell=I_\ell$. 
     Now suppose we write $I_r=\alpha\mf{O}_r$ for some $\alpha\in\mathcal{A}_r^\times$. Then for every $u\in \mf{O}_r^\times$, $J_r:=\alpha u\O_r$ defines an $\O_r$-ideal with $J_r\mf{O}_r=I_r$. All such $\O_r$-ideals are obtained in this way, and we have $\alpha u\O_r=\alpha u'\O_r$ if and only if $u^{-1}u'\in \O_r^\times$. %
     Thus $|\rho^{-1}(I)|=[\mf{O}_r^\times:\O_r^\times]$. Using \cite[Lemma 26.6.7]{Voight} we find that this value equals $r^2-r$, because $r$ is split in $\mc{A}$ and $\O$ has Eichler symbol $-1$ at $r$. 
\end{proof}

In order to quantify the failure of surjectivity, we will show that for any $J\in\mf{b}_{r^k}(i,j)$, $J\mf{O}$ is contained not only in $I_j\mf{O}$, but in fact in $rI_j\mf{O}$.

\begin{lemma}\label{lem:contained in rad}
    If $J\subseteq I$ are invertible right $\O$-ideals 
    with $J_r\neq I_r$, then $J\subseteq rI\mf{O}$.
\end{lemma}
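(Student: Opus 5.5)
The containment $J\subseteq rI\mf{O}$ is a statement about lattices, so I will check it one prime at a time using the local-global dictionary. At every prime $\ell\neq r$ we have $\O_\ell=\mf{O}_\ell$, and $r$ is a unit in $\ZZ_\ell$. Hence $(rI\mf{O})_\ell=I_\ell\mf{O}_\ell=I_\ell\supseteq J_\ell$, and those primes need nothing further. It remains to show $J_r\subseteq rI_r\mf{O}_r$.

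At $r$ both ideals are principal: write $I_r=\gamma\O_r$ and $J_r=\gamma x\O_r$ with $x\in\O_r$. Left multiplication by $\gamma^{-1}$ reduces the claim to $x\O_r\subseteq r\mf{O}_r$. Since $\mf{O}_r$ is an order, this holds as soon as $x\in r\mf{O}_r$.

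The hypothesis $J_r\neq I_r$ says exactly that $x\notin\O_r^\times$. I will show that every non-unit of $\O_r$ lies in $r\mf{O}_r$. By \cref{lem:max_suborders}(c), $\O_r=R_r+r\mf{O}_r$, where $R$ is a quadratic order with $R/rR\simeq\FF_{r^2}$. Writing $\pi:\mf{O}_r\to\mf{O}_r/r\mf{O}_r\simeq M_2(\FF_r)$ for the reduction map, we get $\pi(\O_r)=A\simeq\FF_{r^2}$, a field, and $\O_r=\pi^{-1}(A)$.

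Now take $x\in\O_r$ and suppose $\pi(x)\neq 0$. Then $\pi(x)$ is a nonzero element of the field $A$, so its reduced norm is nonzero mod $r$ (the norm on $A\subseteq M_2(\FF_r)$ is the field norm, or equivalently the determinant). Thus $\nrd(x)\in\ZZ_r^\times$, so $x^{-1}=\overline{x}/\nrd(x)\in\O_r$ and $x\in\O_r^\times$.

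Contrapositively, every non-unit $x$ has $\pi(x)=0$, i.e.\ $x\in r\mf{O}_r$. This gives $J_r=\gamma x\O_r\subseteq \gamma r\mf{O}_r=rI_r\mf{O}_r$, and combined with the primes $\ell\neq r$ the local-global dictionary yields $J\subseteq rI\mf{O}$.

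The only step with real content is identifying the maximal ideal of $\O_r$ with $r\mf{O}_r$. This is exactly where the case-(c) structure is used, and it would fail for the index-$r$ Eichler case. Everything else is bookkeeping with localization.
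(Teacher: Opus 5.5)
Your proof is correct and follows the same route as the paper. Both reduce to the prime $r$, write $I_r=\gamma\O_r$, and use that $\O_r/r\mf{O}_r\simeq\FF_{r^2}$ is a field to conclude that every non-unit of $\O_r$ lies in $r\mf{O}_r$, so that $\gamma^{-1}J_r\subseteq r\mf{O}_r$; you only spell out the primes $\ell\neq r$ and the inverse $x^{-1}=\overline{x}/\nrd(x)$ in more detail.
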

\begin{proof}
    It suffices to work locally at $r$. Let $I_r=\gamma\O_r$ for some $\gamma\in\mathcal{A}_r^\times$, so that $\gamma^{-1}J_r$ is a right $\O_r$-ideal strictly contained in $\O_r$. Since $\O_r/r\mf{O}_r\simeq \FF_{r^2}$ is a field, and $x\in \O_r$ is a unit if and only if $\nrd(x)\not\equiv 0\bmod r$, we can conclude that $\O_r$ is the disjoint union of $\O_r^\times$ and $r\mf{O}_r$.
    Since $\gamma^{-1}J_r$ does not contain a unit, it must be contained in $r\mf{O}_r$. Hence $J_r\subseteq \gamma r\mf{O}_r=rI_r\mf{O}_r$.
\end{proof}

\begin{remark}
    This can also be seen more explicitly. For convenience suppose $r$ is odd. Up to isomorphism we may write $\mf{O}_r=M_2(\ZZ_r)$, and $\O_r$ is the ring of matrices of the form $\begin{psmallmatrix}
        a & ub \\ b+r c & a+r d
    \end{psmallmatrix}$ for $a,b,c,d\in\ZZ_r$ and some fixed non-quadratic residue $u\in\ZZ_r^\times$. Following the proof above, we can then write $\gamma^{-1}J_r$ as $\alpha\O_r$ for some $\alpha\in\O_r\setminus\O_r^\times$. The norm of $\alpha$ then has the form 
    \[(a^2-ub^2)+r(ad-ubc)\in r\ZZ_r\]
    for some $a,b,c,d\in\ZZ_r$,
    and since $u$ is not a square in $\ZZ_r$, this forces $a,b\in r\ZZ_r$. Thus $\alpha\in r \mf{O}_r$.
\end{remark}

\begin{proposition}\label{prop:brandt lift}
    We have $B_{r}=0$, and for $n\geq 2$ we have
    \begin{align*}
        B_{r^n} &= S \widetilde{B}_{r^{n-2}}E.
    \end{align*}
\end{proposition}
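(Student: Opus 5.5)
We work locally at $r$ and then assemble the count globally, as in the proofs of \cref{lem:superorder coprime case} and \cref{lem:ES}. For $B_r=0$ we show there is no invertible right $\O_r$-ideal of norm $r$ inside $\O_r$. Such an ideal would be $\alpha\O_r$ with $\alpha\in\O_r$ and $\val(\nrd(\alpha))=1$. By the argument in \cref{lem:contained in rad}, $\O_r=\O_r^\times\sqcup r\mf{O}_r$, so $\alpha\in r\mf{O}_r$. Then $r^2\mid\nrd(\alpha)$, a contradiction. Hence every $\mf{b}_r(i,j)$ is empty.

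For $n\geq 2$, fix $i,j$. The entry $(S\widetilde{B}_{r^{n-2}}E)(i,j)$ equals $\sum_{k'} c_{i,k'}\widetilde{B}_{r^{n-2}}(k',\widetilde{j})$. Since $c_{i,k'}=0$ unless $k'=\widetilde{i}$, this is $c_{i,\widetilde i}\,|\widetilde{\mf{b}}_{r^{n-2}}(\widetilde i,\widetilde j)|$. We reinterpret it as the number of pairs $(J,I')$ with $I'\subseteq_{r^{n-2}} rI_j\mf{O}$ an invertible $\mf{O}$-ideal in the class of $I'_{\widetilde i}$, and $J$ an invertible $\O$-ideal with $J\mf{O}=I'$ and $[J]=[I_i]$. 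Here we normalize $I'_{\widetilde j}=I_j\mf{O}$ and use that $\mf{b}$-counts depend only on the class. Multiplying by $r$ identifies $\mathfrak b$-sets for $I_j\mf{O}$ with those for $rI_j\mf{O}$ at relative norm $r^{n-2}$. For each such $I'$, the fibre count $|\{J\in\rho^{-1}(I'):[J]=[I_i]\}|$ does not depend on the choice of representative. One has to check that the class of $J$ is controlled by $\rho^{-1}$ compatibly with $c_{i,\widetilde i}$; I would do this by translating $I'=\beta I'_{\widetilde i}$ and using that left multiplication by $\beta$ bijects $\rho^{-1}(I'_{\widetilde i})$ onto $\rho^{-1}(I')$ while preserving $\O$-ideal classes.

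It then remains to show that $J\mapsto J\mf{O}$ is a bijection from $\mf{b}_{r^n}(i,j)$ onto $\{J: J\mf{O}\subseteq_{r^{n-2}} rI_j\mf{O},\ [J]=[I_i]\}$, where $J$ ranges over invertible $\O$-ideals with $J\subseteq I_j$. In the forward direction, let $J\in\mf{b}_{r^n}(i,j)$ with $n\ge1$. Then $J_r\neq (I_j)_r$, so \cref{lem:contained in rad} gives $J\subseteq rI_j\mf{O}$, and hence $J\mf{O}\subseteq rI_j\mf{O}$. Since $\nrd(J\mf{O})=\nrd(J)$ and $\nrd(rI_j\mf{O})=r^2\nrd(I_j)$, the relative norm is $r^{n-2}$. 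Conversely, suppose $J$ is an invertible $\O$-ideal with $J\mf{O}\subseteq rI_j\mf{O}$. Then $J\subseteq J\mf{O}\subseteq rI_j\mf{O}\subseteq I_j$, because locally $r\mf{O}_r\subseteq\O_r$ (indeed $\O=R+r\mf{O}$). So $J\subseteq_{r^n}I_j$. Away from $r$ all the orders agree, so the conditions match there as well.

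The main obstacle is this containment $rI_j\mf{O}\subseteq I_j$ together with the bookkeeping showing that the double sum over $I'$ and fibres of $\rho$ reproduces exactly $S\widetilde B_{r^{n-2}}E$ rather than a conjugate of it. Getting the class constraint $[J]=[I_i]$ to interact correctly with the coefficients $c_{i,k'}$ is the point that needs the most care.
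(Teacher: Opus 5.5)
Your proposal is correct and is essentially the paper's proof. You get $B_r=0$ from $\O_r=\O_r^\times\sqcup r\mf{O}_r$, and for $n\geq 2$ you fibre $\mf{b}_{r^n}(i,j)$ over $\rho$, using $rI_j\mf{O}\subseteq I_j$ to see that every full fibre lies in the domain. The paper rescales by $\tfrac1r$ so that the map lands in $\widetilde{\mf{b}}_{r^{n-2}}(\widetilde{i},\widetilde{j})$, where you instead rescale the target to $rI_j\mf{O}$; your left-multiplication-by-$\beta$ argument simply fills in the paper's remark that the fibre count depends only on the class of $I'$.
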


\begin{proof}   
    For $n\geq 1$, $1\leq i,j\leq h$, and any $J\in \mf{b}_{r^n}(i,j)$, we have $J_r\neq (I_j)_r$ and therefore $J$ is contained in $rI_j\mf{O}$ by \cref{lem:contained in rad}. Then $\nrd(J)$ is a  multiple of $r^2\nrd(I_j)$, so $B_{r}(i,j)=0$.

    Now suppose $n\geq 2$. 
    For any $1\leq i,j\leq h$ we define a variant of 
    \cref{eq:rho restricted}. As in the proof of \cref{lem:superorder coprime case}, by changing representatives for the $\mf{O}$-ideal classes, we may assume without loss of generality that $\rho(I_j)= I'_{\tilde{j}}$ so we have a modification of the $\rho$ map
    \begin{align}
        \tfrac1r\rho:\bigcup_{k\,:\,\widetilde{k}=\widetilde{i}}\mf{b}_{r^n}(k,j)&\to \widetilde{\mf{b}}_{r^{n-2}}(\widetilde{i},\widetilde{j}),\label{eq:b map}\\
        J&\mapsto \tfrac{1}{r}\rho(J).\nonumber
    \end{align}
    This is well-defined by \cref{lem:contained in rad}. Now let $I'$ be an element of the codomain, so that $I'\subseteq I_j\mf{O}$. %
    For any $J\in r\rho^{-1}(I')$ we have $\frac1rJ\subseteq \frac1rJ\mf{O}=I'$, so that
    \[J\subseteq rI'\subseteq I_j(r\mf{O})\subseteq I_j\O=I_j.\]
    Since $[\rho(J)]=[I']=[I'_{\widetilde{i}}]$, we can conclude that $J$ is in the domain of this map. Therefore every element of $r\rho^{-1}(I')$ is in the domain of \cref{eq:b map}.
    
    By splitting $\mf{b}_{r^n}(i,j)$ along fibres of the map \cref{eq:b map}, we have
    \begin{align*}
        B_{r^n}(i,j)=\sum_{I'\in \widetilde{\mf{b}}_{r^{n-2}}(\widetilde{i},\widetilde{j})} \left|r\rho^{-1}(I')\cap\mf{b}_{r^n}(i,j)\right|.
    \end{align*}%
    Now we compute the size of each term in the sum. Since we have checked that every $J\in r\rho^{-1}(I')$ is in $\mf{b}_{r^n}(k,j)$ for some $k$, the only thing we have to check is the ideal class condition: that is, we must count the number of $J\in \rho^{-1}(I')$ with $[J]=[I_i]$. But this value depends only on the ideal class of $I'$. So replacing each $I'$ with $I'_{\widetilde{i}}$, we have
    \[B_{r^n}(i,j)=c_{i,\tilde{i}}\left(\widetilde{B}_{r^{n-2}}(\widetilde{i},\widetilde{j})\right),\]
    where $c_{i,\tilde{i}}$ is the number of $J\in\rho^{-1}(I'_{\widetilde{i}})$ with $[J]=[I_i]$. 
    Equivalently, $c_{i,\tilde{i}}$ is the component of $v_i$ in $Sw_{\widetilde{i}}$, so %
    \[B_{r^n}(i,j) = (S\widetilde{B}_{r^{n-2}})(i,\widetilde{j})=(S\widetilde{B}_{r^{n-2}}E)(i,j).\qedhere
    \] 
\end{proof}

As a consequence of this formula, together with \cref{lem:ES} and \cref{lem:classical Brandt recurrence}, it is straightforward to derive the following description of the algebra generated by the Brandt matrices $B_{r^k}\in\mf{B}(\O)$.
\begin{corollary}\label{cor:index r2 recurrence}
    The matrices $B_{\qprime^k}$ for all $k\geq 0$ are contained in the subalgebra of $\mf{B}(\O)$ generated by commuting matrices $B_{\qprime^2}$ and $B_{\qprime^3}$: we have $B_{\qprime}=0$ and
    \[B_{\qprime^{k+1}}=\frac{1}{\qprime^2-\qprime}B_{\qprime^{k}}B_{\qprime^{3}}-\qprime B_{\qprime^{k-1}}\]
    for all $k\geq 2$.
\end{corollary}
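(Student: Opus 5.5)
The plan is to work entirely from \cref{prop:brandt lift}, using \cref{lem:ES} to collapse products and \cref{lem:classical Brandt recurrence} applied to the maximal order $\mf{O}$. The latter is legitimate because $r$ is split, so $r\nmid\disc(\mf{O})$; it gives $\widetilde{B}_{r^{m+1}}=\widetilde{B}_r\widetilde{B}_{r^m}-r\widetilde{B}_{r^{m-1}}$ for $m\geq 1$. The identity $B_r=0$ is already part of \cref{prop:brandt lift}. Commutativity of $B_{r^2}$ and $B_{r^3}$ follows from \cref{thm:Brandt algebra commutative}, so the only real content is the recurrence.

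The key computation is the product formula
\[B_{r^a}B_{r^b}=S\widetilde{B}_{r^{a-2}}E\,S\widetilde{B}_{r^{b-2}}E=(r^2-r)\,S\widetilde{B}_{r^{a-2}}\widetilde{B}_{r^{b-2}}E\]
for $a,b\geq 2$, which uses $ES=(r^2-r)\mathrm{id}$ from \cref{lem:ES}. Take $a=k\geq 2$ and $b=3$; then $\widetilde{B}_{r^{b-2}}=\widetilde{B}_r$. Since $\mf{B}(\mf{O})$ is commutative, we may write $\widetilde{B}_{r^{k-2}}\widetilde{B}_r=\widetilde{B}_r\widetilde{B}_{r^{k-2}}$ and split into two cases:
\begin{itemize}
\item If $k-2\geq 1$, the classical recurrence gives $\widetilde{B}_{r^{k-1}}+r\widetilde{B}_{r^{k-3}}$.
\item If $k=2$, the product is just $\widetilde{B}_r=\widetilde{B}_{r^{k-1}}$.
\end{itemize}

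Applying $S(\cdot)E$ and \cref{prop:brandt lift} again, for $k\geq 3$ we get
\[\frac{1}{r^2-r}B_{r^k}B_{r^3}=S\widetilde{B}_{r^{k-1}}E+rS\widetilde{B}_{r^{k-3}}E=B_{r^{k+1}}+rB_{r^{k-1}},\]
which is the claimed recurrence. For $k=2$ we get $\frac{1}{r^2-r}B_{r^2}B_{r^3}=B_{r^3}$. The recurrence at $k=2$ would predict $B_{r^3}+rB_r=B_{r^3}$, and since $B_r=0$ these agree.

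Finally, induction on $k$ shows every $B_{r^k}$ lies in the subalgebra generated by $B_{r^2}$ and $B_{r^3}$. The base cases are $B_1=\mathrm{id}$ and $B_r=0$. Strictly, the generated algebra should be taken over $\QQ$ (or $\RR$), because of the factor $\frac{1}{r^2-r}$; I would note that interpretation. The only delicate point is the boundary case $k=2$, where $\widetilde{B}_{r^0}$ appears. It is handled by the convention $\widetilde{B}_1=\mathrm{id}$ together with $B_r=0$, and I expect no further obstacle.
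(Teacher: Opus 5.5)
Your proposal is correct and is the derivation the paper intends. The paper says the corollary follows from \cref{prop:brandt lift}, \cref{lem:ES}, and \cref{lem:classical Brandt recurrence} applied to $\mf{O}$, which is exactly your computation $B_{r^k}B_{r^3}=(r^2-r)S\widetilde{B}_{r^{k-2}}\widetilde{B}_rE$ followed by the classical recurrence, with the $k=2$ case correctly reconciled via $B_r=0$. Your remark that the factor $\frac{1}{r^2-r}$ forces ``generated'' to be read over $\QQ$ (or $\RR$) rather than $\ZZ$ is a fair clarification of the statement.
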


\begin{remark}
    There is a natural generalization of \cref{prop:brandt lift}. Suppose $\O$ is an order and $\ell$ is a prime such that $\O_\ell$ is Bass but not Eichler. Let $\widetilde{\O}$ be the order such that $\widetilde{\O}_\ell$ is the radical idealizer of $\O_\ell$, and $\widetilde{\O}_{\ell'}=\O_{\ell'}$ for $\ell'\neq \ell$. Let $\mf{r}$ be the right $\O$-ideal where $\mf{r}_\ell=\mathrm{rad}(\O_\ell)$ is the Jacobson radical of $\O_\ell$, and $\mf{r}_{\ell'}=\O_{\ell'}$ for $\ell'\neq \ell$. Then $\mf{r}$ is a two-sided invertible $\widetilde{\O}$-ideal, and using a similar argument we can show $B_{\ell^n}=0$ for $1\leq n<e$ and $B_{\ell^n}=SW_{\mf{r}}\widetilde{B}_{\ell^{n-e}}E$ for $n\geq e$, where~$e$ is the valuation of $\nrd(\mf{r})$ and $W_{\mf{r}}$ is the operator on $\mf{M}(\widetilde{\O})$ induced by 
    $[I]\mapsto [I\mf{r}]$.
\end{remark}

We end this section by consolidating the above results into a statement that relates eigenvalues of Brandt matrices for $\O$ to eigenvalues of Brandt matrices for $\mf{O}$.
Given any order $\O$ in $\mc{A}$, if $u\in\mf{M}(\O)$ is an eigenvector for every Brandt matrix in $\mf{B}(\O)$, we call $u$ a \textbf{Brandt eigenvector} and %
let $\lambda_n(u)$ denote the eigenvalue of $u$ under $B_n\in\mf{B}(\O)$. 

\begin{proposition}\label{prop:Eu eigenvector}
    Let $\O$ be a submaximal order contained in a maximal order $\mf{O}$ with $[\mf{O}:\O]=r^2$ for a split prime $r$ of $\mc{A}$. If $u\in\mf{M}(\O)$ is a Brandt eigenvector, then $\lambda_{r}(u)=0$ and one of the following holds.
    \begin{enumerate}[$(a)$]
        \item $\lambda_{r^k}(u)=0$ for all $k\geq 2$.
        \item There exists $w\in\mf{M}(\mf{O})$, an eigenvector for $\widetilde{B}_n\in\mf{B}(\mf{O})$ for all $n\geq 1$, satisfying $\lambda_{r^k}(u)=(r^2-r)\lambda_{r^{k-2}}(w)$ for all $k\geq 2$ and $\lambda_n(u)=\lambda_n(w)$ for $n$ coprime to $r$.
    \end{enumerate}
\end{proposition}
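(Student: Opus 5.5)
The plan is to compare $u$ with its image $Eu\in\mf{M}(\mf{O})$, using \cref{prop:brandt lift}, \cref{lem:ES} and \cref{lem:superorder coprime case}.

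First, $\lambda_r(u)=0$ is immediate from $B_r=0$ in \cref{prop:brandt lift}. Set $w:=Eu$. For $n$ coprime to $r$, \cref{lem:superorder coprime case} gives
\[\widetilde{B}_n w=EB_nu=\lambda_n(u)w.\]
So if $w\neq0$, then $w$ is an eigenvector of every $\widetilde{B}_n$ with $\gcd(n,r)=1$, and $\lambda_n(w)=\lambda_n(u)$ for such $n$.

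We split into cases according to whether $Eu$ vanishes.

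\textbf{Case $Eu=0$.} For $k\geq2$, \cref{prop:brandt lift} gives $B_{r^k}u=S\widetilde{B}_{r^{k-2}}Eu=0$, so $\lambda_{r^k}(u)=0$. This is case $(a)$.

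\textbf{Case $Eu\neq0$.} Write $\lambda=\lambda_{r^2}(u)$. Applying $E$ to $B_{r^2}u=\lambda u$ and using $B_{r^2}=S\widetilde{B}_1E=SE$ together with \cref{lem:ES}, we get
\[(r^2-r)\,Eu=ESEu=E B_{r^2}u=\lambda\, Eu.\]
Hence $\lambda=r^2-r\neq0$, so $u=\lambda^{-1}SEu=\frac{1}{r^2-r}Sw$.

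For $k\geq2$ we then compute
\[E B_{r^k}u=ES\widetilde{B}_{r^{k-2}}Eu=(r^2-r)\widetilde{B}_{r^{k-2}}w.\]
The left side equals $\lambda_{r^k}(u)\,w$. Therefore $w$ is an eigenvector of every $\widetilde{B}_{r^{k-2}}$, with eigenvalue satisfying
\[(r^2-r)\,\lambda_{r^{k-2}}(w)=\lambda_{r^k}(u).\]
Combined with the coprime case, $w$ is an eigenvector of $\widetilde{B}_{r^j}$ for all $j$ and of $\widetilde{B}_n$ for all $n$ coprime to $r$. By multiplicativity (\cref{lem:coprime commute}), $w$ is then an eigenvector of every $\widetilde{B}_n$ with $n\geq1$. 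This gives case $(b)$, with eigenvalues as claimed; note that $k=2$ is consistent because $\widetilde{B}_1$ is the identity.

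The step I expect to need the most care is the dichotomy itself. One must make sure that $Eu\neq0$ really forces $u$ into the image of $S$ (via $\lambda_{r^2}(u)\neq0$) and that the eigen-relations transfer without any division by zero. The argument above handles this by applying $E$ to $B_{r^2}u$ and using $ES=(r^2-r)\,\mathrm{id}$. A possible gap is that $E$ need not be injective on the relevant eigenspaces, but this causes no problem: the case split is on $Eu$ itself, not on $u$.
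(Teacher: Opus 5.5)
Your proof is correct and follows essentially the same route as the paper's: split on whether $Eu=0$, apply $E$ to $B_{r^k}=S\widetilde{B}_{r^{k-2}}E$ from \cref{prop:brandt lift}, and use $ES=r^2-r$ (\cref{lem:ES}) for the powers of $r$ and \cref{lem:superorder coprime case} for $n$ coprime to $r$. Two things you do go beyond the paper's proof but are harmless: the computation $\lambda_{r^2}(u)=r^2-r$ (so that $u=\frac{1}{r^2-r}SEu$), and the explicit appeal to multiplicativity of the $\widetilde{B}_n$, which the paper uses only implicitly to conclude that $Eu$ is an eigenvector of every $\widetilde{B}_n$.
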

\begin{proof}
    If $Eu=0$ then $B_{r^k}u=0$ for all $k\geq 1$ by \cref{prop:brandt lift}, so $\lambda_{r^k}(u)=0$ for all $k\geq 1$. So from now on suppose $w:=Eu\neq 0$. Then from \cref{lem:superorder coprime case} we have $\widetilde{B}_nEu=\lambda_n(u)Eu$ for $n$ coprime to $r$, and by applying $E$ to both sides of \cref{prop:brandt lift} we have $B_{r}(u)=0$ and for $k\geq 2$ we have
    \[ES\widetilde{B}_{r^{k-2}}(Eu)=\lambda_{r^k}(u)(Eu).\]
    We have $ES=r^2-r$ by \cref{lem:ES}. This proves that $Eu$ is an eigenvector for $\widetilde{B}_{n}$ for any $n\geq 1$, with $\lambda_r(u)=0$ and $\lambda_{r^k}(u)=(r^2-r)\lambda_{r^{k-2}}(Eu)$ for $k\geq 2$. 
\end{proof}

Note that in fact the $w$ from \cref{prop:Eu eigenvector}(b) is an eigenvector for the full Brandt algebra $\mf{B}(\mf{O})$, a fact which can be derived from the proofs of \cref{lem:eisenstein coeffs}(c) (in the case $u$ is a multiple of the genus vector of $\O$) and \cref{lem:index r2 eigenvalues} (in the case $u$ is cuspidal).

\section{Background: modular forms}\label{sec:modular forms}

\subsection{Modular forms and Hecke operators}\label{subsec:modular forms} There are many excellent general references for modular forms; we mention \cite{DiamondShurman} and \cite{Ogg} that the reader wishing for more details regarding the fundamentals may consult.

The group $\SL_2(\ZZ)$ acts on the upper half plane $\bb{H} = \{ z \in \bb{C} \; : \; \Im(z) > 0 \}$ 
by fractional linear transformations as
$\bigl(\begin{smallmatrix} a & b \\ c & d \end{smallmatrix}\bigr) z := \frac{a z + b}{c z + d}$. Via $z\mapsto \bigl(\begin{smallmatrix} z\\1 \end{smallmatrix}\bigr)$, we can view this as derived from the action of $\SL_2(\bb{C})$ on $\bb{P}^1(\bb{C})$. This interpretation allows us to extend the action of $\SL_2(\bb{Z})$ to $\bb{H}^* = \mathbb{H} \sqcup \mathbb{P}^1(\QQ)$. One refers to $(1\;0)^t$ as $\infty$.

A \textbf{congruence subgroup} of $\SL_2(\ZZ)$ is a subgroup $\Gamma \le \SL_2(\ZZ)$ such that $\Gamma$ contains the kernel of the reduction-by-$N$ map $\SL_2(\ZZ) \to \SL_2(\ZZ/ N \ZZ)$ for some integer $N$. %
For $\Gamma \subseteq \SL_2(\bb{Z})$ a congruence subgroup, the \textbf{modular curve} $X(\Gamma)$ is the quotient 
$\Gamma \backslash \mathbb{H}^*$.  The \textbf{cusps} of a congruence subgroup $\Gamma$ are the $\Gamma$-orbits in $\bb{P}^1(\bb{Q})$. 

Fixing an integer $N \ge 1$, we will be particularly interested in the congruence subgroup of level $N$ which is the preimage of the Borel subgroup of $\SL_2(\ZZ/N\ZZ)$, denoted
\begin{align*}
    \Gamma_0(N) = \Biggl{ \{ } \begin{pmatrix} a & b \\ c & d \end{pmatrix} \equiv \begin{pmatrix} * & * \\ 0 & *  \end{pmatrix} \pmod{N} \Biggr{ \} } .
\end{align*}

A \textbf{modular form} of \textbf{weight} $k \in \bb{Z}_{\ge 0}$ for a congruence subgroup $\Gamma$ is a holomorphic function $f\colon \bb{H} \to \CC$, which is bounded in neighborhoods of the cusps of $\Gamma$ 
and satisfies the transformation law $f(\gamma z) = (c z + d)^k f(z)$ for all $\gamma = \begin{psmallmatrix}
a & b \\ c & d \end{psmallmatrix} \in \Gamma$. %
The set of modular forms of weight $k$ for $\Gamma_0(N)$ is a finite dimensional vector space over $\CC$ denoted $\mc{M}_k(\Gamma_0(N))$. Every modular form $f(z)$ for $\Gamma_0(N)$ has a Fourier expansion at $\infty$
\begin{equation}\label{eq:q expansion}
f(q) = \sum_{n=0}^\infty a_n q^n,  \; \, \quad \; q=q(z) = e^{2 i \pi z} \; \; , \; \; a_n \in \bb{C}. 
\end{equation}
This is often referred to as its \textbf{$q$-expansion}, and we will often write modular forms in terms of their $q$-expansions.
We will say $f\in\mc{M}_k(\Gamma_0(N))$ is \textbf{normalized} if $a_1=1$. Since there exist nonzero modular forms with $a_1=0$, it is not true in general that every element of $\mc{M}_k(\Gamma_0(N))$ is a scalar multiple of a normalized form.

A modular form $f \in \mc{M}_k(\Gamma_0(N))$ is a \textbf{cusp form} if the suitably defined $q$-series expansion of $f(\gamma z)$ has zero constant coefficient for all $\gamma \in \SL_2(\bb{Z})$ (equivalently, $f$ vanishes at the cusps of $\Gamma_0(N)$). We denote by $\mc{S}_k(\Gamma_0(N))$ the subspace of cusp forms of $\mc{M}_k(\Gamma_0(N))$. %

Fixing a positive integer $N$ and an integer $k \ge 0$, there is a special family of linear operators $\{ T_n \}_{n \ge 1}$ acting on $\mathcal{M}_k(\Gamma_0(N))$ known as the \textbf{Hecke operators}. For an integer~$n$ coprime to $N$, the $n$th Hecke operator acts on the $q$-expansion of a modular form $f(q) = \sum_{m=0}^\infty a_m q^m \in \mathcal{M}_k(\Gamma_0(N))$ as %
\begin{equation}\label{eq:Hecke action on q expansion}
    (T_nf)(q) = \sum_{m=0}^\infty \sum_{d \mid \gcd(m, n)} d^{k-1} a_{\frac{mn}{d^2}} q^m
\end{equation}
(cf.\ \cite[Proposition 5.3.1]{DiamondShurman}). 
\begin{definition}\label{def: anemic hecke algebra}
    Fix a positive integer $N$. We define the \textbf{anemic Hecke algebra} $\Tan_N$ to be the $\ZZ$-algebra generated by the Hecke operators $T_n$ on $\mathcal{M}_k(\Gamma_0(N))$ for all $n\ge 1$ with $\gcd(n,N)=1$. We say $f\in \mathcal{M}_k(\Gamma_0(N))$ is a \textbf{$\Tan_N$-eigenvector} if $f$ is an eigenvector for all $T\in\Tan_N$. 
\end{definition}
When the level $N$ is clear from context we may write $\Tan$ instead of $\Tan_N$. There is also a larger \textbf{Hecke algebra} $\T=\T_N$ of operators on $\mathcal{M}_k(\Gamma_0(N))$ generated as a $\ZZ$-algebra by suitably defined Hecke operators $T_n$ for all $n\ge 1$; see \cite[Section 5.3]{DiamondShurman} for a full definition. A $\T$-eigenvector, that is a modular form $f\in \mathcal{M}_k(\Gamma_0(N))$ that is an eigenvector for all $T\in\T$, is often called a \textbf{Hecke eigenform}. %
\begin{remark}\label{rmk:more general hecke}
    Hecke operators may also be defined on other spaces of modular forms, but we have restricted our presentation to the case relevant to this paper. For example, \cite{DiamondShurman} discusses Hecke operators for $\Gamma_1(N)$, and the results we need can be obtained by noting that $\mc{M}_2(\Gamma_0(N))$ is the set of modular forms for $\Gamma_1(N)$ with trivial character (see \cite[Exercise 4.3.3(a)]{DiamondShurman}). For instance, note that the diamond operators $\langle d\rangle$ for $(d,N)=1$ (see \cite[\S 5.2]{DiamondShurman}) act as the identity on $\mc{M}_2(\Gamma_0(N))$.
\end{remark}

The action of $\T$ on $\mathcal{M}_k(\Gamma_0(N))$ preserves the cuspidal subspace $\mc{S}_k(\Gamma_0(N))$. If $n_1, n_2$ are a pair of coprime integers, then the Hecke operators $T_{n_1}$ and $T_{n_2}$ satisfy $T_{n_1} T_{n_2} = T_{n_1 n_2}$. If $\ell$ is a prime coprime to $N$, then the Hecke operators $T_{\ell^n}$ satisfy the recursion relation $ T_{\ell^{n+1}} = T_{\ell}T_{\ell^n} - \ell^{k-1} T_{\ell^{n-1}}$, where $T_1$ is the identity operator. In particular, $\Tan$ consists of pairwise commuting linear operators (cf.\ \cite[\S5.3]{DiamondShurman}). 
For a congruence subgroup $\Gamma$, we can make $\mc{S}_k(\Gamma)$ into an inner product space with the \textbf{Petersson inner product} $\langle \cdot , \cdot \rangle: \mc{S}_k(\Gamma) \times \mc{S}_k(\Gamma) \to \bb{C}$, defined by 
$$
\langle f, g \rangle = \frac{1}{V_{\Gamma}} \int_{X(\Gamma)} f(z) \overline{g(z)} \left( \Im(z) \right)^k d \mu(z),
$$ 
where $d \mu(z) = \frac{dx dy}{y^2}$ is the hyperbolic measure on the upper half plane and $V_{\Gamma}$ denotes the volume of the modular
curve $X(\Gamma)$ with respect to this measure.

If $n$ is coprime to $N$, then the Hecke operator $T_n$, viewed as a linear operator on $\mc{S}_k(\Gamma_0(N))$, is self-adjoint with respect to the Petersson inner product (cf. \cite[Theorem 5.5.3]{DiamondShurman} and \cref{rmk:more general hecke}).
In particular, this implies that $\mc{S}_k(\Gamma_0(N))$ has a basis of $\Tan$-eigenvectors with real eigenvalues.

For positive integers $M \mid N$ there is an inclusion $\Gamma_0(N) \hookrightarrow \Gamma_0(M)$ giving an inclusion of modular forms $\mc{M}_k(\Gamma_0(M)) \hookrightarrow \mc{M}_k(\Gamma_0(N))$. Moreover, if $f \in \mc{M}_k(\Gamma_0(M))$ and $N = Mm$, then $f(z) \mapsto f(mz)$ is also an inclusion $\mc{M}_k(\Gamma_0(M)) \to \mc{M}_k(\Gamma_0(N))$; in terms of the $q$-expansion, we write this as $f(q)\mapsto f(q^m)$. For a fixed integer $N$, the \textbf{old subspace} $\mc{S}_k^{\text{old}}(\Gamma_0(N))$ of $\mc{S}_k(\Gamma_0(N))$ is the subspace generated by
\begin{equation*}%
    \bigcup_{\substack{M \mid N, \\ M\neq N}} \bigcup_{m \mid \frac{N}{M}} \{ f(q^m) \; : \; f \in \mc{S}_k(\Gamma_0(M))\},
\end{equation*}
and the \textbf{new subspace} $\mc{S}_k^{\text{new}}(\Gamma_0(N))$ of $\mc{S}_k(\Gamma_0(N))$ is, per definition, the orthogonal complement of $\mc{S}_k^{\text{old}}(\Gamma_0(N))$ in $\mc{S}_k(\Gamma_0(N))$ with respect to the Petersson inner product. Both $\mc{S}_k^{\text{new}}(\Gamma_0(N))$ and $\mc{S}_k^{\text{old}}(\Gamma_0(N))$ are stable under the action of the full Hecke algebra~$\T$ %
(cf.~\cite[Proposition 5.6.2]{DiamondShurman}). 

A modular form $f \in \mc{S}_k(\Gamma_0(N))$ is called a \textbf{newform} if it is a normalized $\Tan$-eigenvector 
in the new subspace $\mc{S}_k^{\text{new}}(\Gamma_0(N))$. %
In fact $\mc{S}_k^{\text{new}}(\Gamma_0(N))$ has a basis of newforms, which are all eigenvectors for the full Hecke algebra $\T$ with all eigenvalues in $\RR$ \cite[Theorem 5]{AtkinLehner}. %

If $f(q)=\sum a_nq^n$ is a newform in $\mc{S}_2(\Gamma_0(N))$, the \emph{Deligne bound} 
(credited in this case to Eichler and Shimura) gives a precise upper bound on the sizes of coefficients: $|a_\ell|\leq 2\sqrt{\ell}$ 
for all primes $\ell$ (including $\ell\mid N$). Since a newform $f$ of level $N$ has an Euler product
\[L(s, f) = \prod_{p \nmid N} \left(1 - a_p p^{-s} + p^{k-1-2s}\right)^{-1} \prod_{p \mid N} \left(1 - a_p p^{-s}\right)^{-1},
\]
it follows that this bound can be generalized to all coefficients.
\begin{lemma}[Deligne bound]\label{lem:deligne bound}
    If $f(q)=\sum a_nq^n$ is a newform of weight $2$ $($and any level$)$, then
    \[
    |a_n|\leq \sigma_0(n)\sqrt{n},
    \]
    where $\sigma_0(n)$ is the number of positive divisors of $n$.
\end{lemma}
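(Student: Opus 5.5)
The plan is to use multiplicativity of the coefficients together with the prime-power bound quoted just before the lemma. A newform $f=\sum a_nq^n$ of weight $2$ and level $N$ is an eigenvector for the full Hecke algebra $\T$ and is normalized. Its coefficients are therefore multiplicative, $a_{mn}=a_ma_n$ for $\gcd(m,n)=1$; this is equivalent to the Euler product for $L(s,f)$ displayed above. Since both $\sigma_0(n)$ and $\sqrt{n}$ are multiplicative, it suffices to prove
\[
|a_{\ell^j}|\leq (j+1)\ell^{j/2}
\]
for every prime $\ell$ and every $j\geq 0$. The full bound then follows by factoring $n$ into prime powers.

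First I handle primes $\ell\nmid N$. The local Euler factor $(1-a_\ell \ell^{-s}+\ell^{1-2s})^{-1}$ gives the recursion $a_{\ell^{j+1}}=a_\ell a_{\ell^j}-\ell a_{\ell^{j-1}}$, which matches the Hecke recursion recalled in \cref{subsec:modular forms} with $k=2$. Factor $1-a_\ell X+\ell X^2=(1-\alpha X)(1-\beta X)$ with $\alpha\beta=\ell$ and $\alpha+\beta=a_\ell$. The Eichler--Shimura bound $|a_\ell|\leq 2\sqrt{\ell}$ forces the discriminant $a_\ell^2-4\ell\leq 0$. Hence $\alpha,\beta$ are complex conjugates (or equal), and $|\alpha|=|\beta|=\sqrt{\ell}$. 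Expanding the geometric series gives $a_{\ell^j}=\sum_{i=0}^{j}\alpha^i\beta^{j-i}$, a sum of $j+1$ terms each of absolute value $\ell^{j/2}$. The triangle inequality then gives the claim.

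Next I handle primes $\ell\mid N$. Here the Euler factor is $(1-a_\ell\ell^{-s})^{-1}$, so $a_{\ell^j}=a_\ell^j$. The Deligne bound $|a_\ell|\leq 2\sqrt{\ell}$ is too weak to use directly, since $(2\sqrt\ell)^j$ exceeds $(j+1)\ell^{j/2}$. Instead I would invoke the finer Atkin--Lehner fact for newforms on $\Gamma_0(N)$ with trivial character, \cite[Theorem 3]{AtkinLehner}. It says that if $\ell\mid N$ then $a_\ell\in\{0,\pm1\}$ when $\ell^2\mid N$, and $a_\ell=\pm 1$ when $\ell\,\|\,N$; in every case $|a_\ell|\leq 1\leq\sqrt{\ell}$. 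This yields $|a_{\ell^j}|\leq \ell^{j/2}\leq (j+1)\ell^{j/2}$.

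Finally, write $n=\prod \ell_i^{j_i}$ and combine: $|a_n|=\prod|a_{\ell_i^{j_i}}|\leq \prod (j_i+1)\ell_i^{j_i/2}=\sigma_0(n)\sqrt n$. The main obstacle is the ramified case $\ell\mid N$. The text above states only $|a_\ell|\leq 2\sqrt\ell$ there, so the sharper Atkin--Lehner input $|a_\ell|\leq\sqrt{\ell}$ at $\ell\mid N$ must be cited to make the bound hold.
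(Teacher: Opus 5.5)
Your proof is correct and follows the paper's route: multiplicativity of the coefficients (the Euler product) reduces the bound to prime powers. At primes $\ell\nmid N$, the factorization $1-a_\ell X+\ell X^2=(1-\alpha X)(1-\beta X)$ with $|\alpha|=|\beta|=\sqrt{\ell}$ then gives $|a_{\ell^j}|\le (j+1)\ell^{j/2}$. Your discriminant step implicitly uses $a_\ell\in\mathbb{R}$, which the paper recalls for newforms.

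You go beyond the paper at primes dividing the level. The paper's only justification is that $|a_\ell|\le 2\sqrt{\ell}$ holds for all $\ell$, including $\ell\mid N$, and that the Euler product then extends the bound to all $n$. As you observe, $a_{\ell^j}=a_\ell^j$ at such primes, so that input alone gives only $|a_{\ell^j}|\le 2^j\ell^{j/2}$, which exceeds $(j+1)\ell^{j/2}$ once $j\ge 2$.

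Your appeal to \cite[Theorem 3]{AtkinLehner} is exactly the input needed to make the lemma true for every $n$. In weight $2$ with trivial character it gives $a_\ell=0$ when $\ell^2\mid N$ (exactly $0$, not merely $a_\ell\in\{0,\pm1\}$ as you wrote) and $a_\ell=\pm1$ when $\ell\,\|\,N$. The paper itself only applies the lemma at primes not dividing the level of the newform, so its later arguments do not depend on this case.
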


At certain points in the paper we study cusp forms that are $\Tan$-eigenvectors, but which do not necessarily lie in the new subspace. We can use the following result to decompose these forms.

\begin{lemma}\label{lem:decompose almost eigenform}
    Let $g\in\mc{S}_2(\Gamma_0(N))$ be a $\Tan$-eigenvector. %
    Then for some $M\mid N$, %
    there exists a single newform $f\in\mc{S}_2^{\mathrm{new}}(\Gamma_0(M))$ and coefficients $c_d\in\CC$ such that
    \[g(q)=\sum_{d \mid \frac{N}{M}} c_df(q^d).\]
\end{lemma}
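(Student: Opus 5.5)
We will use the Atkin--Lehner decomposition of $\mc{S}_2(\Gamma_0(N))$ into old and new pieces together with strong multiplicity one. Concretely, by Atkin--Lehner theory \cite{AtkinLehner} we have a direct sum decomposition
\[\mc{S}_2(\Gamma_0(N))=\bigoplus_{M\mid N}\ \bigoplus_{f}\ \Span_{\CC}\{f(q^d): d\mid N/M\},\]
where for each $M$ the inner sum runs over the newforms $f\in\mc{S}_2^{\mathrm{new}}(\Gamma_0(M))$. Each summand $V_f:=\Span\{f(q^d)\}$ is stable under $\Tan_N$. Moreover, $\Tan_N$ acts on $V_f$ by the scalar character $T_n\mapsto a_n(f)$ for $\gcd(n,N)=1$. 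This holds because $T_n$ with $n$ coprime to $N$ commutes with $f(q)\mapsto f(q^d)$ when $d\mid N$, which one checks directly from \cref{eq:Hecke action on q expansion}, and $f$ itself is a $T_n$-eigenvector.

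Next, write the given $\Tan$-eigenvector as $g=\sum_f g_f$ with $g_f\in V_f$. Since each $V_f$ is $\Tan$-stable and the sum is direct, applying $T_n-\lambda_n(g)$ shows that every nonzero component $g_f$ is itself a $\Tan$-eigenvector with the same eigenvalues $\lambda_n(g)$. On $V_f$ these eigenvalues are $a_n(f)$. So if two components $g_f,g_{f'}$ were nonzero, the newforms $f$ (of level $M$) and $f'$ (of level $M'$) would satisfy $a_n(f)=a_n(f')$ for all $n$ coprime to $N$, hence for all primes outside a finite set.

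The key step, and the main obstacle, is to exclude this. We invoke strong multiplicity one (Atkin--Lehner / Miyake): two newforms, possibly of different levels, whose Hecke eigenvalues agree at almost all primes are equal, and in particular have the same level. Hence $f=f'$, so exactly one component is nonzero (the case $g=0$ is trivial). Therefore $g\in V_f$ for a single newform $f$ of some level $M\mid N$, which gives $g(q)=\sum_{d\mid N/M}c_df(q^d)$ as claimed.

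We expect the only delicate points to be citing the directness of the old/new decomposition, namely that the forms $f(q^d)$ over all $(M,f,d)$ are linearly independent, and the cross-level form of multiplicity one. Both are standard consequences of \cite{AtkinLehner} (see also \cite[\S5.7--5.8]{DiamondShurman}), so the proof should reduce to citing them.
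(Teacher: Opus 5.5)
Your proposal is correct and follows essentially the paper's argument. Both expand $g$ in the Atkin--Lehner basis $\{f(q^d)\}$ and use the multiplicity-one statement from \cite[Theorem 5]{AtkinLehner} to show that only one newform can occur; you additionally spell out a step the paper leaves implicit, namely that $T_n$ with $\gcd(n,N)=1$ commutes with $f(q)\mapsto f(q^d)$, so that each span $V_f$ is $\Tan$-stable and $\Tan$ acts on it through the eigenvalues $a_n(f)$.
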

\begin{proof}
    By \cite[Theorem 5]{AtkinLehner}, $\mc{S}_2(\Gamma_0(N))$ has a basis consisting of cusp forms $f(q^d)$, where $M$ varies over divisors of $N$, $f$ varies over newforms in $\mc{S}_2^{\text{new}}(\Gamma_0(M))$, and $d$ varies over divisors of $\frac{N}{M}$. We use this to write
    \[g(q)=\sum_{i=1}^k c_i f_i(q^{d_i})\]
    for some $k\geq 1$, $c_i\in\CC\setminus\{0\}$, $M_i\mid N$, $f_i\in \mc{S}_2^{\text{new}}(\Gamma_0(M_i))$, and $d_i\mid\frac{N}{M_i}$. If for some $1\leq i,j\leq k$ we have $f_i\neq f_j$, then again by \cite[Theorem 5]{AtkinLehner}, there exist infinitely many primes $\ell$ (and in particular, at least one prime $\ell$ coprime to $N$) for which $f_i$ and $f_j$ have distinct eigenvalues under $T_\ell$. But $g$ is an eigenvector under $T_\ell$, so all $f_i$ must have the same eigenvalue under $T_\ell$, a contradiction. Hence all $f_i$ are equal to a single newform~$f$.
\end{proof}

\subsection{Theta functions}\label{subsec:theta functions}

Let $\Lambda$ be an even integral lattice with respect to a bilinear form $\langle \cdot\,,\cdot\rangle$ (see \cref{subsec:lattices}). %
The \textbf{theta function} 
$\theta_{\Lambda}(q)$ of $\Lambda$ is the formal $q$-series 
\[
\theta_{\Lambda}(q) = \sum_{x \in \Lambda} q^{\frac12\langle x,x\rangle} = \sum_{n=0}^\infty \repnum{n}{\Lambda} q^n, \; \text{ where } \; \repnum{n}{\Lambda} = | \{ x \in \Lambda \; : \; \tfrac12\langle x,x\rangle = n \}|.
\]
It is well known that the theta function of $\Lambda$ is a modular form of weight $\tfrac{\rank(\Lambda)}{2}$ %
and level equal to the level of the lattice (\cref{def:dual_lattice}), with a certain character; see e.g.\ \cite[Theorem 20]{Ogg}.

We will be specifically focused on the case where $\Lambda = \O$ is an order in a definite quaternion algebra $\mc{A}$ equipped with the standard bilinear form (trace form) on $\mc{A}$, so that $a_n(\O)$ is the number of elements $x\in\O$ with $\nrd(x)=n$. In this case, the theta function of $\O$ is a modular form of weight $2$ and level %
$N = \nrd(\O^\sharp)^{-1}$ by \cref{lem:level_alternative}.  %

Recall from \cref{subsec:brandt} that $\mf{M}(\O)$ denotes the Brandt module of $\O$. The following two results are well known throughout the literature (see for instance \cite[\S 3.3]{Kohel}, or \cite[Theorem 2.14]{PizerComputing} and \cite[Proposition 2.23]{PizerComputing}), %
though they are typically stated for a more restrictive class of orders. Since we are working in a more general setting, we include proofs for completeness, though these proofs largely follow those appearing in the literature.

Fix the standard basis $\{v_1, \ldots, v_h\}$ for $\mf{M}(\O)$ corresponding to the enumeration $\{ [I_1]=[\O], [I_2], \ldots, [I_h] \}$ of the right ideal classes of $\O$ as in \cref{subsec:brandt}.

\begin{lemma}\label{lem:Theta pairing}
    Let $\O$ be an order in a definite rational quaternion algebra, and suppose  $N:=\nrd(\O^\sharp)^{-1}$. There exists a symmetric bilinear pairing
        \begin{align*}
            \Theta:\mathfrak{M}(\mathcal{O})\times\mathfrak{M}(\mathcal{O})&\to \mc{M}_2(\Gamma_0(N))\\
            \Theta(v,w)&:= \sum_{n=0}^\infty \langle v,B_nw\rangle q^n.
        \end{align*}
        In particular,
        $2\Theta(v_i, v_j)$ is the theta function of the lattice $I_jI_i^{-1}$ with normalized trace form \emph{(\cref{def:normalized trace form})}, and $2\Theta(v_1, v_1) = \theta_{\O}$. %
\end{lemma}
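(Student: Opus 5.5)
The plan is to compute $\Theta(v_i,v_j)$ directly from the definitions and recognize it as half a theta series; modularity and the other properties then follow from facts about theta functions of lattices. By \cref{eq:brandt inner product} we have $\langle v_i,B_nv_j\rangle=w_iB_n(i,j)$. By \cref{eq:Bnij def}, for every $n\geq 0$ (including $n=0$, where the only such $\alpha$ is $0$ and $B_0(i,j)=\frac{1}{2w_i}$) this equals
\[\tfrac12\left|\left\{\alpha\in I_jI_i^{-1}:\tfrac12\langle\alpha,\alpha\rangle_{I_jI_i^{-1}}=n\right\}\right|.\]
Hence $2\Theta(v_i,v_j)=\theta_{I_jI_i^{-1}}$ with respect to the normalized trace form. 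Taking $i=j=1$ and $I_1=\O$ gives $I_1I_1^{-1}=\O$ with $\nrd(\O)=1$, so $2\Theta(v_1,v_1)=\theta_\O$. Bilinearity holds by construction, since $\Theta$ is defined through the bilinear inner product and linear operators $B_n$. Symmetry follows from \cref{lem:Bn self-adjoint}: $\langle v,B_nw\rangle=\langle B_nv,w\rangle=\langle w,B_nv\rangle$. It therefore remains to show that each $\theta_{I_jI_i^{-1}}$ lies in $\mc{M}_2(\Gamma_0(N))$.

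For modularity, we would view $L:=I_jI_i^{-1}$ as an even integral lattice of rank $4$ under $\langle\cdot\,,\cdot\rangle_L$ and invoke the classical result \cite[Theorem 20]{Ogg}. This says $\theta_L$ is a modular form of weight $2$ on $\Gamma_0(N_L)$, where $N_L$ is the level of $L$, with character given by the quadratic character attached to the determinant $\det(\langle b_i,b_j\rangle_L)$. By \cref{lem:disc square} this determinant is a perfect square, so the character is trivial, and we get $\theta_L\in\mc{M}_2(\Gamma_0(N_L))$. It then suffices to show $N_L\mid N$; combined with $\mc{M}_2(\Gamma_0(N_L))\subseteq\mc{M}_2(\Gamma_0(N))$, this gives the result.

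The main obstacle is the level comparison $N_L=N$, or at least $N_L\mid N$. We would argue locally, since the level is determined by completions (\cref{subsec:lattices}). At each prime $\ell$, $I_i$ and $I_j$ are locally principal: $(I_i)_\ell=\alpha\O_\ell$ and $(I_j)_\ell=\beta\O_\ell$. Then $L_\ell=\beta\O_\ell\alpha^{-1}$. The map $x\mapsto\beta x\alpha^{-1}$ scales the trace form by $\nrd(\beta)\nrd(\alpha)^{-1}$, and this factor is exactly $\nrd(L_\ell)$ up to a unit, because $\nrd(L_\ell)=\nrd(\beta)\nrd(\alpha)^{-1}\nrd(\O_\ell)$. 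It follows that $L_\ell$ with its normalized form is isometric to $\O_\ell$ with its trace form, up to scaling by a unit of $\ZZ_\ell$. Scaling by a unit changes neither the dual lattice nor the level exponent at $\ell$. Therefore $N_{L_\ell}=N_{\O_\ell}$ for all $\ell$, so $N_L$ equals the level of $\O$, which is $\nrd(\O^\sharp)^{-1}=N$ by \cref{lem:level_alternative}. The only care needed is the bookkeeping of the unit scaling and the check that the conventions for level in \cref{def:dual_lattice} match those in \cite{Ogg}; both follow from the definitions.
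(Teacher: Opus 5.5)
Your proposal is correct and follows essentially the same route as the paper's proof. You identify $2\Theta(v_i,v_j)$ with $\theta_{I_jI_i^{-1}}$ via $\langle v_i,B_nv_j\rangle=w_iB_n(i,j)$, get symmetry from self-adjointness, and use the local isometry $x\mapsto\beta x\alpha^{-1}$, up to a unit scaling, to show that $I_jI_i^{-1}$ has the same level as $\O$, which is $N$ by \cref{lem:level_alternative}; you then conclude with Ogg's theorem, the character being trivial by \cref{lem:disc square}.
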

\begin{proof}    
   We begin by considering $\Theta$ only as a map valued in the formal power series ring $\RR\llbracket q\rrbracket$. The inner product on $\mf{M}(\O)$ is symmetric and bilinear, and $B_n$ is self-adjoint (\cref{lem:Bn self-adjoint}), so $\Theta$ is symmetric and bilinear. To prove that the image lies in $\mc{M}_2(\Gamma_0(N))$, it suffices to prove that $\Theta(v_i,v_j)\in \mc{M}_2(\Gamma_0(N))$ for all $1\leq i,j\leq h$.

    For arbitrary $i,j$, we have $\langle v_i,B_n v_j\rangle=w_iB_n(i,j)$, so that by \cref{eq:Bnij def}, $2\Theta(v_i,v_j)$ is the theta function of the lattice $I_jI_i^{-1}$ with normalized trace form; this proves the final sentence in the statement of the lemma, since we took $I_1=\O$.    
    Observe that the level of an integral quadratic form can be computed locally (for instance, because the dual lattice can be defined locally, and integrality of a quadratic form is a local property). Locally at each prime $\ell$, we can write $(I_i)_\ell=\alpha\O_\ell$ and $(I_j)_\ell=\beta\O_\ell$ for some $\alpha,\beta\in\mathcal{A}_\ell^\times$, so that $(I_jI_i^{-1})_\ell=\beta\O_\ell\alpha^{-1}$. Now let $u:=\frac{\nrd(\alpha)}{\nrd(I_i)}\frac{\nrd(I_j)}{\nrd(\beta)}$, which is in $\ZZ_\ell^\times$. Then the map $x\mapsto \beta x\alpha^{-1}$ %
    is an isometry from $\O_\ell$ to the lattice $(I_jI_i^{-1})_\ell$ equipped with the bilinear form $(x,y)\mapsto u\langle x,y\rangle_{I_jI_i^{-1}}$. Multiplying the bilinear form of a $\ZZ_\ell$-lattice by a unit does not affect the level. So taking all $\ell$ together, this proves that $I_jI_i^{-1}$ with the normalized trace form has the same level as $\O$.
    
    Since the level of $\O$ is equal to $N=\nrd(\O^\sharp)^{-1}$ by \cref{lem:level_alternative}, we have $\Theta(v_i,v_j)\in \mc{M}_2(\Gamma_1(N))$ with character $d\mapsto \left(\frac{\Delta}{d}\right)$ by \cite[Theorems 20, 20+]{Ogg} (see also \cite[40.4.5]{Voight}), where $\Delta=\det(\langle b_i,b_j\rangle_{I_jI_i^{-1}})_{1\leq i,j\leq 4}$ for a basis $(b_1,b_2,b_3,b_4)$ of $I_jI_i^{-1}$. Since $\Delta$ is a perfect square by \cref{lem:disc square}, the character is trivial and $\Theta(v_i,v_j)\in \mc{M}_2(\Gamma_0(N))$. 
\end{proof}

We next prove a compatibility between Brandt operators $B_n$ and Hecke operators $T_n$ for $n$ coprime to $N$. %
This compatibility does not hold more generally: even if $u$ is an eigenvector for all Brandt matrices in $\mf{B}(\O)$, $\Theta(u,u)$ may fail to be an eigenvector for Hecke operators $T_\ell$ with $\ell\mid N$. See \cref{lem:index r eigenvalues} for a counterexample.

\begin{lemma}
\label{lem:Brandt_Hecke_equal_spectra}
For all $v,w\in \mf{M}(\O)$ and all $n\geq 1$ with $\gcd(n,N)=1$, %
we have \[T_n \Theta(v,w) = \Theta(v, B_n w).\] 
\end{lemma}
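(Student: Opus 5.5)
By bilinearity it suffices to prove the identity coefficientwise for standard basis vectors, i.e.\ that for all $i,j$, all $m\geq 0$, and all $n\geq1$ coprime to $N$,
\[\text{coeff}_{q^m}\bigl(T_n\Theta(v_i,v_j)\bigr)=w_i\,(B_mB_nv_j)(i)\cdot\tfrac{1}{w_i}\cdot w_i=\langle v_i,B_mB_nv_j\rangle .\]
The right-hand side is $\langle v_i,B_m B_n v_j\rangle$, the $q^m$-coefficient of $\Theta(v_i,B_nw)$ with $w=v_j$. The left-hand side, by \cref{eq:Hecke action on q expansion} with $k=2$, equals $\sum_{d\mid\gcd(m,n)} d\,\langle v_i,B_{mn/d^2}v_j\rangle$. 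So the lemma reduces to the operator identity
\[B_mB_n=\sum_{d\mid\gcd(m,n)} d\,B_{mn/d^2}\qquad(\gcd(n,N)=1,\ m\geq 0),\]
using commutativity (\cref{thm:Brandt algebra commutative}) to order the factors freely. For $m=0$ this reads $B_0B_n=B_0\cdot(\text{const})$ after comparing constant terms; more precisely the constant coefficient of $T_n f$ is $\sigma_1(n)a_0$, so I need $B_0B_n=\sigma_1(n)B_0$. This follows from \cref{lem: Eisenstein coefficient ideal count'}: the column sums of $B_n$ equal the number of invertible right $\O$-ideals of relative norm $n$ in a given ideal, which is a local count equal to $\prod_{\ell^a\| n}(1+\ell+\dots+\ell^a)=\sigma_1(n)$, because for $\ell\nmid N$ (hence $\ell\nmid\disc(\O)$) $\O_\ell\cong M_2(\ZZ_\ell)$. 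Indeed $(B_0B_n)(i,j)=\frac{1}{2w_i}\sum_k B_n(k,j)$.

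For $m\geq1$, factor $m=m'm''$ with $m''$ coprime to $n$ and $m'$ supported on primes dividing $n$. By \cref{lem:coprime commute}, $B_mB_n=B_{m''}B_{m'}B_n$ and $B_{mn/d^2}=B_{m''}B_{m'n/d^2}$ for $d\mid\gcd(m',n)$, so it suffices to treat $m'$, and then by multiplicativity again (splitting $m'$ and $n$ into prime powers, all coprime to $N$) to prove for each prime $\ell\nmid N$ and $a,b\geq 0$:
\[B_{\ell^a}B_{\ell^b}=\sum_{t=0}^{\min(a,b)}\ell^tB_{\ell^{a+b-2t}}.\]
This is the standard consequence of the recurrence $B_{\ell^{k+1}}=B_\ell B_{\ell^k}-\ell B_{\ell^{k-1}}$ of \cref{lem:classical Brandt recurrence} (applicable since $\ell\nmid N$ implies $\ell\nmid\disc(\O)$, as $\disc(\O)\mid N$ up to the relation between level and discriminant). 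I would prove it by induction on $\min(a,b)$: the case $b=1$ is the recurrence, and the inductive step multiplies the identity for $b$ by $B_\ell$ and subtracts $\ell$ times the identity for $b-1$, exactly as for Hecke operators.

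The main obstacle is justifying that $\ell\nmid N$ implies $\O_\ell$ maximal and split, needed both for \cref{lem:classical Brandt recurrence} and for the $\sigma_1(n)$ count. I would argue via \cref{lem:level_alternative}: if $\ell\nmid N$ then $\O_\ell^\sharp$ has unit norm, and since $\O_\ell\subseteq\O_\ell^\sharp$ with $[\O^\sharp_\ell:\O_\ell]=\disc(\O)_\ell^2$ and level divisible by every prime of the discriminant (the dual lattice of a lattice of discriminant divisible by $\ell$ cannot be $\ell$-integral after no scaling), we get $\ell\nmid\disc(\O)$, so $\O_\ell\cong M_2(\ZZ_\ell)$.
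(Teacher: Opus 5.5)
Your argument is correct, but it takes a different route from the paper's.

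\textbf{The paper's route.} It picks a normalized simultaneous eigenbasis of all of $\mf{B}(\O)$. It uses $\Theta(u_i,u_j)=0$ for $i\neq j$ (\cref{eq:Theta ui uj zero}) to reduce to $T_nf_u=\lambda_n(u)f_u$. It then verifies this on primes via multiplicativity of eigenvalues and \cref{lem:classical Brandt recurrence}, and on prime powers via the Hecke recursion.

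\textbf{Your route.} You prove the matrix form of the Hecke multiplication law, $B_mB_n=\sum_{d\mid\gcd(m,n)}d\,B_{mn/d^2}$ for all $m\geq 0$ and $\gcd(n,N)=1$. You then compare $q^m$-coefficients directly with \cref{eq:Hecke action on q expansion}.

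\textbf{What your route buys.}
\begin{itemize}
\item The paper's orthogonality step needs $B_mu_j\in\RR u_j$ for every $m$, including $m$ sharing primes with $N$, so it genuinely rests on \cref{thm:Brandt algebra commutative}. Your argument uses only two kinds of commutation: $B_x,B_y$ for coprime $x,y$ (\cref{lem:coprime commute}), and the $B_{\ell^k}$ among themselves for a single prime $\ell\nmid N$, which are polynomials in $B_\ell$ by the recurrence. So your appeal to \cref{thm:Brandt algebra commutative} can simply be dropped.
\item You treat the constant term explicitly via $B_0B_n=\sigma_1(n)B_0$. The paper's write-up only examines $q^m$ with $m\geq 1$; the omitted case needs $(1+\ell)\lambda_0(u)=\lambda_\ell(u)\lambda_0(u)$. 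This holds because $\lambda_0(u)\neq 0$ forces $u\in\RR e$, and $\lambda_\ell(e)=\ell+1$.
\item You justify $\ell\nmid N\Rightarrow \O_\ell\cong M_2(\ZZ_\ell)$, which the paper uses tacitly.
\end{itemize}

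\textbf{One wrong aside.} The claim that $\disc(\O)\mid N$ is false in general. For $\O=\ZZ+f\mf{O}$ with $f>1$, one has $\disc(\O)=f^3\disc(\mc{A})$. But every $x\in\O^\sharp$ satisfies $fx\in\mf{O}^\sharp$, so $N\mid f^2\disc(\mc{A})$. This is harmless, because you only need $\ell\nmid N\Rightarrow\ell\nmid\disc(\O)$. Your last paragraph proves exactly that: if all norms of elements of $\O_\ell^\sharp$ lie in $\ZZ_\ell$, then $\O_\ell^\sharp$ is integral for the trace form. Hence $\O_\ell^\sharp\subseteq(\O_\ell^\sharp)^\sharp=\O_\ell$, so $[\O_\ell^\sharp:\O_\ell]=1$.
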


Before we begin the proof we set up some notation.
Since $\mf{B}(\O)$ is a commutative algebra (\cref{thm:Brandt algebra commutative}) of operators that are self-adjoint with respect to the inner product on $\mf{M}(\O)$, there exists a simultaneous eigenbasis $u_1,\ldots,u_h\in \mf{M}(\O)$ consisting of vectors that are pairwise orthogonal with respect to the inner product. We can normalize this basis so that $\langle u_i,u_i\rangle=1$ for all $i$; we will refer to this basis as a \textbf{normalized Brandt eigenbasis}, and each element of the normalized Brandt eigenbasis will be called a \textbf{Brandt eigenvector}.
In the case of a submaximal $\O$ in a maximal $\mf{O}$ with $[\mf{O}:\O]=r$, we will further impose the condition that each $u_i$ in a normalized Brandt eigenbasis is an eigenvector of the operator $W_{\mf{r}}$ defined by \cref{eq:Wr}; this can be ensured by \cref{lem:W commutes}.
Observe that for any $i\neq j$ we have
\begin{equation}\label{eq:Theta ui uj zero}
    \Theta(u_i,u_j)=\sum_{n=0}^\infty \langle u_i,B_nu_j\rangle q^n=0,
\end{equation}
because for all $n\geq 0$, $B_nu_j$ is a scalar multiple of $u_j$ and hence orthogonal to $u_i$.
\begin{proof}[Proof of \emph{\cref{lem:Brandt_Hecke_equal_spectra}}]
    Let $u_1,\ldots,u_h$ be a Brandt eigenbasis for $\mf{M}(\O)$. For all $n\geq 0$ define the linear functional $\lambda_n$ by $B_nu_i=\lambda_n(u_i)u_i$ for each $i$. Let $v=a_1u_1+\cdots +a_hu_h$ and $w=b_1u_1+\cdots+b_hu_h$. By bilinearity together with \cref{eq:Theta ui uj zero}, we have
    \[\Theta(v,w)=\sum_{i=1}^h a_ib_i\Theta(u_i,u_i),\qquad \Theta(v,B_n w)=\sum_{i=1}^h a_ib_i\lambda_n(u_i)\Theta(u_i,u_i).\]
    So it suffices to prove the following: for any Brandt eigenvector $u\in\mf{M}(\O)$, and any $n$ coprime to $N$, if we set 
    $$
    f_u:=\frac{\Theta(u,u)}{\langle u,u\rangle}=\sum_{n=0}^\infty \lambda_n(u)q^n,$$
    then $T_nf_u=\lambda_n(u)f_u$. Indeed, since $\langle u_i,u_i\rangle=1$ for each $i$, we have that $\Theta(u_i,u_i)=f_{u_i}$ and hence by linearity, 
    $$
    T_n\Theta(v,w)=
    \sum_{i=1}^h a_ib_iT_n f_{u_i}=\sum_{i=1}^h a_ib_i\lambda_n(u_i) f_{u_i}
     = \Theta(v, B_n w).  $$

    First consider the case $n=\ell$ for some prime $\ell\nmid N$. As seen above,  $f_u\in \mathcal{M}_2(\Gamma_0(N))$, so by %
    \cref{eq:Hecke action on q expansion}, $T_\ell$ acts on $f_u$ as
    \begin{equation}\label{tlfu}
    T_\ell f_u = \sum_{m = 0}^\infty \left(\lambda_{\ell m}(u)+\ell\lambda_{\tfrac{m}{\ell}}(u)\right) q^m.
    \end{equation}
    Here for convenience we set $\lambda_c(u)=0$ for any $c$ that is not a non-negative integer: that is, if $\ell\nmid m$ then the coefficient of $q^m$ in \cref{tlfu} above is $\lambda_{\ell m}(u)$. Now for each $m\geq 1$ write $m=\ell^km'$ with $m'$ coprime to $\ell$. Since $B_{\ell^k m'} = B_{\ell^k} B_{m'}$ by \cref{lem:coprime commute} %
    we have the equality $\lambda_{\ell^k m'}(u)=\lambda_{\ell^k}(u)\lambda_{m'}(u)$ of Brandt eigenvalues (and a similar decomposition for $\lambda_{\frac m\ell}(u)$ if $\ell\mid m$), so the coefficient of $q^m$ in \cref{tlfu} equals
    \[\lambda_{m'}(u)\left(\lambda_{\ell^{k+1}}(u)+\ell\lambda_{\ell^{k-1}}(u)\right).\]
    Now by an immediate simplification if $k=0$, and by \cref{lem:classical Brandt recurrence} if $k\geq 1$, %
    this in turn equals
    \[\lambda_{m'}(u)\lambda_{\ell^k}(u)\lambda_{\ell}(u)=\lambda_\ell(u)\lambda_{m}(u).\]
    Hence $T_\ell f_u=\lambda_\ell(u)f_u$.

    Now for the Hecke operator $T_{\ell^k}$ with $\ell \nmid N$ and $k \ge 1$,  we induct on $k$. Using the relation $T_\ell T_{\ell^k} = T_{\ell^{k+1}} + \ell T_{\ell^{k-1}}$ and the inductive assumption, we obtain 
    \begin{align*}
    T_{\ell^{k+1}} f_u & = (T_\ell T_{\ell^k}-\ell T_{\ell^{k-1}}) f_u \\
    & = \left( \lambda_\ell(u) \lambda_{\ell^k}(u) - \ell \lambda_{\ell^{k-1}}(u) \right) f_u.
    \end{align*}
    By the Brandt matrix relation $B_\ell B_{\ell^k} = B_{\ell^{k+1}} + \ell B_{\ell^{k-1}}$, we can rewrite this as $\lambda_{\ell^{k+1}}(u)f_u$, completing the induction. Finally, for the general case for $n=\prod_{i} \ell_i^{k_i}$ with $\gcd(n, N) = 1$, we use multiplicativity of Hecke operators and of Brandt matrices.
\end{proof}

\subsection{The cuspidal subspace}\label{subsec:cusp subspace}

Recall from \cref{sec:genus subspace} that the Brandt module of an order $\O\subseteq\mc{A}$ contains a \emph{genus subspace} $\mf{M}_{\mathrm{G}}(\O)$.
Let $\mf{M}_{\mathrm{G}}(\O)^\perp$ denote the subspace of $\mf{M}(\O)$ of vectors orthogonal to $\mf{M}_{\mathrm{G}}(\O)$ with respect to the Brandt module inner product. We call $\mf{M}_{\mathrm{G}}(\O)^\perp$ the \textbf{cuspidal subspace} of $\mf{M}(\O)$, and refer to %
elements of $\mf{M}_{\mathrm{G}}(\O)^\perp$ as being cuspidal, motivated by the following lemma. 
As with \cref{lem:Brandt_Hecke_equal_spectra,lem:Theta pairing}, this lemma also occurs in the literature for a more restricted class of orders (cf.~\cite[Theorem 2.21]{PizerComputing}, \cite[Proposition 5.27]{Pizerp2M}, and \cite[\S 3.3]{Kohel}), and we provide a proof of the more general case for completeness, following the proofs in the literature.
\begin{lemma}\label{lem:cusp subspace}
    The map $\Theta$ from \emph{\cref{lem:Theta pairing}} sends $\mf{M}(\O)\times \mf{M}_{\mathrm{G}}(\O)^\perp$ and $\mf{M}_{\mathrm{G}}(\O)^\perp\times \mf{M}(\O)$ into the space $\mc{S}_2(\Gamma_0(N))$ of cusp forms in $\mc{M}_2(\Gamma_0(N))$.
\end{lemma}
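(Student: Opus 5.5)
By symmetry of $\Theta$ (\cref{lem:Theta pairing}) it suffices to treat $\mf{M}(\O)\times\mf{M}_{\mathrm{G}}(\O)^\perp$, and by bilinearity it suffices to show that $\Theta(v_i,w)$ is a cusp form for each standard basis vector $v_i$ and each $w\in\mf{M}_{\mathrm{G}}(\O)^\perp$.

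The plan is to show that $\Theta(v_i,w)$ vanishes at every cusp of $\Gamma_0(N)$. Write $w=\sum_j b_jv_j$. Then $\Theta(v_i,w)=\sum_j b_j\Theta(v_i,v_j)$, and each $2\Theta(v_i,v_j)$ is the theta series of the lattice $L_{ij}:=I_jI_i^{-1}$ with its normalized trace form.

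\emph{Constant term at a cusp.} Let the cusp be $a/c$, moved to $\infty$ by some $\gamma\in\SL_2(\ZZ)$. Apply the standard transformation formula for theta series of even integral quaternary lattices (Poisson summation, as in \cite{Ogg} or \cite[\S 40]{Voight}). It expresses the constant term of $\theta_L|\gamma$ as an explicit factor, involving $\det(L)$ and $c$, times a product of local Gauss-sum densities $\prod_{\ell\mid c}\beta_\ell(L_\ell,c)$. Each factor $\beta_\ell(L_\ell,c)$ depends only on the isometry class of $L_\ell\otimes\ZZ_\ell$. So the constant term of $\Theta(v_i,v_j)$ at the cusp is a function $\kappa(L_{ij})$ of the genus of $L_{ij}$ alone; this is a Siegel-type statement.

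\emph{Reducing to the genus of $I_j$.} Next I would show that the genus of $L_{ij}$ depends only on $i$ and on the genus of $I_j$. If $I_j\in\mathrm{Gen}(I_{j'})$, take local isometries at each prime. Using the local principal generators of the ideals, in the manner of the proofs of \cref{lem:1dim genus} and \cref{lem:Theta pairing}, these give local isometries $(L_{ij})_\ell\simeq(L_{ij'})_\ell$. I expect this step to be the main obstacle, since one must transport isometries of $I_j$ to isometries of $I_jI_i^{-1}$. To do this I would write $(I_j)_\ell=\beta\O_\ell$ and $(I_{j'})_\ell=\beta'\O_\ell$. The isometry $x\mapsto\beta x\alpha^{-1}$ from \cref{lem:Theta pairing} identifies $(L_{ij})_\ell$ with $\O_\ell$ scaled by a unit $u_{ij}$ depending on $\nrd(\beta)/\nrd(I_j)$. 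Hence $(L_{ij})_\ell$ and $(L_{ij'})_\ell$ are both unit rescalings of $\O_\ell$. The genus condition on $I_j,I_{j'}$ says exactly that the corresponding rescalings of $\O_\ell$ by $\nrd(\beta)/\nrd(I_j)$ and $\nrd(\beta')/\nrd(I_{j'})$ are isometric, and this transfers to the $L$'s after the common rescaling by $\nrd(\alpha)/\nrd(I_i)$.

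\emph{Conclusion.} Given the previous step, the constant term of $\Theta(v_i,w)$ at the cusp equals
\[\sum_{G}\kappa_G\sum_{j:\,I_j\in G}b_j,\]
where $G$ runs over genera and $\kappa_G$ is the common value of $\kappa(L_{ij})$ for $I_j\in G$. Finally, $w\perp\gamma_G$ means $\sum_{I_j\in G}\frac{1}{2w_j}\langle v_j,w\rangle=\sum_{I_j\in G}\tfrac12 b_j=0$, because $\langle v_j,w\rangle=w_jb_j$. Every inner sum therefore vanishes, so $\Theta(v_i,w)$ vanishes at each cusp and lies in $\mc{S}_2(\Gamma_0(N))$.
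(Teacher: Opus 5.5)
Your proposal is correct and follows essentially the paper's argument. The paper also shows that $I_iI_k^{-1}$ and $I_jI_k^{-1}$ lie in the same genus whenever $I_j\in\mathrm{Gen}(I_i)$, and then cites Siegel's theorem that theta series of lattices in one genus differ by a cusp form, which is the genus-invariance of cusp constant terms you sketch via local Gauss sums; the remaining differences are cosmetic. The paper uses the spanning set $\{v_i-v_j\}$ of $\mf{M}_{\mathrm{G}}(\O)^\perp$ where you use the condition $\sum_{I_j\in G}b_j=0$, and it transports an isometry via $x\mapsto\phi(x\gamma)\gamma^{-1}$ where you compare both lattices to unit rescalings of $\O_\ell$.
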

\begin{proof}
    By symmetry of $\Theta$ it suffices to prove $\mf{M}(\O)\times \mf{M}_{\mathrm{G}}(\O)^\perp$ is sent to cusp forms. A straightforward computation shows that 
    $\mf{M}_{\mathrm{G}}(\O)^\perp$ is spanned by vectors of the form $v_i-v_j$ for $1\leq i,j\leq h$ satisfying $I_j\in\mathrm{Gen}(I_i)$ (each of these vectors is orthogonal to every element of $\mf{M}_{\mathrm{G}}(\O)$ by construction, and a counting argument shows that together with $\mf{M}_{\mathrm{G}}(\O)$ they span the full Brandt module). So by bilinearity of $\Theta$, it suffices to prove that $\Theta(v_k,v_i)-\Theta(v_k,v_j)$ is a cusp form for all $1\leq i,j,k\leq h$ with $I_j\in\mathrm{Gen}(I_i)$.

    Since $I_i$ and $I_j$ (with their associated normalized trace forms) are in the same genus, $I_iI_k^{-1}$ and $I_jI_k^{-1}$ are also in the same genus. To see this, let $\ell$ be an arbitrary prime, and let $\phi:(I_i)_\ell\to (I_j)_\ell$ be an isometry. Write $(I_k)_\ell=\gamma\O_\ell$ for some $\gamma\in\mc{A}_\ell^\times$. Then for any $x,y\in (I_iI_k^{-1})_\ell$, we have $x\gamma,y\gamma\in (I_i)_\ell$, so
    \begin{align*}
        \frac{\trd((\phi(x\gamma)\gamma^{-1})\overline{(\phi(y\gamma)\gamma^{-1})})}{\nrd(I_jI_k^{-1})}&=\frac{\nrd(\gamma)^{-1}}{\nrd(I_k)^{-1}}\frac{\trd(\phi(x\gamma)\overline{\phi(y\gamma)})}{\nrd(I_j)}\\
        &=\frac{\nrd(\gamma)^{-1}}{\nrd(I_k)^{-1}}\frac{\trd((x\gamma)\overline{(y\gamma)})}{\nrd(I_i)}\\
        &=\frac{\trd(x\overline{y})}{\nrd(I_iI_k^{-1})}.
    \end{align*}
    Thus $x\mapsto \phi(x\gamma)\gamma^{-1}$ defines an isometry $(I_iI_k^{-1})_\ell\to (I_jI_k^{-1})_\ell$, with bilinear forms respectively induced by the normalized trace forms on $I_iI_k^{-1}$ and $I_jI_k^{-1}$.
    It is a classical result that if two lattices are in the same genus then the difference of their theta functions is a cusp form (see e.g.~\cite[p.~577]{Siegel}). So we can conclude that $\Theta(v_k,v_i)-\Theta(v_k,v_j)$, and hence every modular form in the image of $\mf{M}(\O)\times \mf{M}_{\mathrm{G}}(\O)^\perp$ under $\Theta$, is a cusp form.
\end{proof}

The space of genus vectors for a submaximal order $\O$ was determined in \cref{sec:genus subspace}: we found that $\mf{M}_{\mathrm{G}}(\O)$ is typically $1$-dimensional (spanned by the vector $e$), but in certain special cases it is $2$-dimensional (spanned by $e$ and $e'$). As a consequence of \cref{lem:cusp subspace}, we have that $\Theta(u,u)\in\mc{S}_2(\Gamma_0(N))$ for any Brandt eigenvector $u$ orthogonal to $e$ (and to $e'$ if defined).

\section{Decomposing theta functions of quaternion orders}\label{sec:map to M2}

For the remainder of the paper, $\mc{A}$ is a \emph{definite} quaternion algebra over $\QQ$. As in the discussion immediately following \cref{lem:Brandt_Hecke_equal_spectra}, we define a \textbf{normalized Brandt eigenbasis} of $\mf{M}(\O)$ to be a basis $u_1,\ldots,u_h$ which is orthonormal with respect to the standard inner product on $\mf{M}(\O)$, and such that each $u_i$ is an eigenvector for every operator in $\mf{B}(\O)$; such a basis exists by \cref{thm:Brandt algebra commutative}.
If $u$ is a Brandt eigenvector, we write $\lambda_n(u)$ to denote the eigenvalue of $u$ under the action of the Brandt matrix $B_n$.

\subsection{Theta function decomposition}

Since \cref{thm:Brandt algebra commutative} guarantees the existence of a normalized Brandt eigenbasis, the following result is a more precise version of \cref{thm:intro eigenform_coeffs}. We will use this result to decompose $\theta_\O$ into individual modular forms that are easier to analyze, from which we can obtain explicit bounds on the number of elements of each norm in a quaternion order.

\begin{theorem}\label{thm:eigenform_coeffs}
    Let $\O$ be an order in a definite quaternion algebra $\mathcal{A}$ over $\QQ$, $N:=\nrd(\O^\sharp)^{-1}$, and let $h$ be the class number of $\O$. %
    Let $u_1,\ldots,u_h$ be a normalized Brandt eigenbasis of $\mf{M}(\O)$, and set $f_i:=\Theta(u_i,u_i)$ for each $i=1,\ldots,h$. Then each $f_i$ is a normalized $\Tan$-eigenvector, and
    \[
    \theta_\O(q)=c_1f_1(q)+\cdots+c_hf_h(q)
    \]
    for some non-negative real numbers $c_1,\ldots,c_h$ with $c_1+\cdots+c_h=|\mathcal{O}^\times|$.
\end{theorem}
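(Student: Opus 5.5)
We expand $v_1$ in the normalized Brandt eigenbasis and use the bilinearity of $\Theta$ together with \eqref{eq:Theta ui uj zero}. Since $u_1,\ldots,u_h$ is orthonormal, we can write $v_1=\sum_i a_iu_i$ with $a_i=\langle v_1,u_i\rangle$. Then \cref{lem:Theta pairing} gives
\[
\theta_\O=2\Theta(v_1,v_1)=2\sum_{i,j}a_ia_j\Theta(u_i,u_j)=\sum_{i=1}^h 2a_i^2\,\Theta(u_i,u_i),
\]
so we set $c_i:=2a_i^2=2\langle v_1,u_i\rangle^2$. These coefficients are visibly non-negative real numbers, which is the crux of the theorem. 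The eigenbasis may be taken real: the $B_n$ are real matrices, self-adjoint for a real inner product, and commuting by \cref{thm:Brandt algebra commutative}, so they are simultaneously orthogonally diagonalizable over $\RR$.

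Next we identify $f_i=\Theta(u_i,u_i)$. Since $B_nu_i=\lambda_n(u_i)u_i$ and $\langle u_i,u_i\rangle=1$, we get $f_i=\sum_{n\ge0}\lambda_n(u_i)q^n$. Because $B_1$ is the identity, the coefficient of $q$ is $\lambda_1(u_i)=1$, so $f_i$ is normalized. By \cref{lem:Theta pairing}, $f_i\in\mc{M}_2(\Gamma_0(N))$. By \cref{lem:Brandt_Hecke_equal_spectra}, $T_nf_i=\Theta(u_i,B_nu_i)=\lambda_n(u_i)f_i$ for every $n$ coprime to $N$, and these $T_n$ generate $\Tan$, so each $f_i$ is a $\Tan$-eigenvector.

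For the sum of the coefficients we compare $q^0$ coefficients. The constant term of $\theta_\O$ is $a_0(\O)=1$, and the constant term of $f_i$ is $\lambda_0(u_i)$. The image of $B_0$ is spanned by $e$, so an eigenvector $u_i$ has $\lambda_0(u_i)\neq0$ only when $u_i$ is parallel to $e$. This gives $1=\sum_i c_i\lambda_0(u_i)$, which is the wrong kind of identity; a cleaner route is to use $\sum_i c_i=2\sum_i a_i^2=2\langle v_1,v_1\rangle=2w_1$ by Parseval. Here $w_1=|\O^\times/\{\pm1\}|$, since $\O_{\mathrm L}(I_1)=\O$, and so $2w_1=|\O^\times|$, as $-1\in\O^\times$. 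That completes the argument.

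The only step needing care is this last identity, specifically that orthonormality is taken with respect to the weighted inner product \eqref{eq:brandt inner product}, so that Parseval yields $\langle v_1,v_1\rangle=w_1$ rather than $1$. The reality of the eigenbasis also needs the brief justification given above.
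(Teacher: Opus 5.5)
Your proof is correct and matches the paper's argument. Both expand $v_1$ in the orthonormal eigenbasis, kill the cross terms via \eqref{eq:Theta ui uj zero}, read off $c_i=2a_i^2\geq 0$, and get normalization and the $\Tan$-eigenvector property from $B_1=1$ and \cref{lem:Brandt_Hecke_equal_spectra}. The only difference is the last step: the paper compares coefficients of $q$ (using $a_1(\O)=|\O^\times|$), while you use Parseval and $2w_1=|\O^\times|$, which is the same identity since the $q$-coefficient of $2\Theta(v_1,v_1)$ is exactly $2\langle v_1,v_1\rangle$.
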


\begin{proof}
    By \cref{lem:Brandt_Hecke_equal_spectra}, we can conclude that $f_i:=\Theta(u_i,u_i)$ is a $\Tan$-eigenvector for all $i=1,\ldots,h$. The coefficient of $q$ in $f_i$ is $\langle u_i,u_i\rangle=1$ because $B_1$ is the identity matrix, so $f_i$ is normalized. We are left to show that we can write %
    \[\theta_\O=c_1f_1+\cdots+c_hf_h\]
    for non-negative real coefficients $c_1,\ldots,c_h$ satisfying $c_1+\cdots+c_h=|\O^\times|$.

    Let $v_1,\ldots,v_h$ be the standard basis of $\mathfrak{M}(\O)$, ordered so that $I_1=\O$. By \cref{lem:Theta pairing}, 
    we have $2\Theta(v_1,v_1)=\theta_\O$.
    Let $(c_{ij})_{i,j}\in \GL_h(\RR)$ be a change of basis matrix so that
    \[v_i=c_{i1}u_1+\cdots+c_{ih}u_h\]
    for all $i=1,\ldots,h$. By \cref{eq:Theta ui uj zero} we have $\Theta(u_i,u_j)=0$ for all $i\neq j$, so together with bilinearity and symmetry
    of $\Theta$ we have
    \begin{equation} \label{eq: theta_decom_in_proof}
        \theta_\O=2\Theta(v_1,v_1) =2\sum_{i=1}^hc_{1i}^2\Theta(u_i,u_i)=2\sum_{i=1}^hc_{1i}^2f_i,
    \end{equation}
    so $c_i:=2c_{1i}^2\geq 0$. We find $|\mathcal{O}^\times|=\sum_i c_i$ by comparing the coefficient of $q$ on each side.
\end{proof}

\begin{remark}
    Even though the conclusion of \cref{thm:eigenform_coeffs} only involves Hecke operators $T_n$ with~$n$ coprime to $N$, the full commutativity of $\mf{B}(\O)$ is still essential to the proof. If $u_1,\ldots,u_h$ forms an orthogonal basis of eigenvectors of $B_n$ for all $n$ coprime to $N$, but are not eigenvectors for all $B_n$, then \cref{eq:Theta ui uj zero} may not necessarily hold.
\end{remark}

We will apply \cref{thm:eigenform_coeffs} to maximal and submaximal orders $\O$. For this, it will be helpful to have a more refined statement, obtained by separating the Brandt eigenvectors into two types: those in the genus subspace $\mf{M}_{\mathrm{G}}(\O)$, and those orthogonal to the genus subspace.

\begin{corollary}\label{cor:max submax eigenform_coeffs}
    Let $\O$ be a maximal or submaximal order in a definite quaternion algebra $\mathcal{A}$ over $\QQ$, and $N:=\disc(\O)$. There exist $g\in\mc{M}_2(\Gamma_0(N))$ %
    and $\mf{B}(\O)$-eigenvectors $u_1,\ldots,u_m\in\mf{M}(\O)$ for some $m\geq 0$, %
    satisfying the following properties.
    \begin{enumerate}[$(a)$]
        \item The modular form $g$ satisfies
         \[
        g(q) =
        \begin{cases}
        d_1\frac{\Theta(e,e)}{\langle e,e\rangle}+d_2\frac{\Theta(e',e')}{\langle e',e'\rangle}
        &
        \text{if }
        [\mathfrak O:\mathcal O]=p\neq 2\text{ and }p\mid\disc(\mc{A}),
        \\[1em]
        d_1 \frac{\Theta(e,e)}{\langle e,e\rangle}
        &
        \text{otherwise,}
        \end{cases}
        \]
        for some $d_1,d_2\geq 0$, where $e,e'$ are as defined in \emph{\cref{eqn:e def'n}} and \emph{\cref{eqn:e' def'n}}. %
        \item For $1\leq i\leq m$, $f_i:=\Theta(u_i,u_i)$ is a normalized $\Tan$-eigenvector in $\mc{S}_2(\Gamma_0(N))$. %
        \item There exist $c_1,\ldots,c_m\geq 0$ with $c_1+\cdots+c_m\leq |\mathcal{O}^\times|$ such that
        \begin{align}\label{eq:decomp with genus part}
            \theta_\O(q)-g(q)=c_1f_1(q)+\cdots+c_mf_m(q).
        \end{align}
    \end{enumerate}
\end{corollary}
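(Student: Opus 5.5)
The plan is to specialize the proof of \cref{thm:eigenform_coeffs}, choosing the normalized Brandt eigenbasis compatibly with the genus subspace. By \cref{lem:e eigen} and \cref{lem:e' eigen}, $e$ (and $e'$ in the odd ramified index-$p$ case) are $\mf{B}(\O)$-eigenvectors. By \cref{lem:1dim genus} and \cref{lem:2dim genus}, they span $\mf{M}_{\mathrm{G}}(\O)$, and $e\perp e'$. Since every $B_n$ is self-adjoint (\cref{lem:Bn self-adjoint}) and preserves $\mf{M}_{\mathrm{G}}(\O)$, it also preserves $\mf{M}_{\mathrm{G}}(\O)^\perp$. Commutativity (\cref{thm:Brandt algebra commutative}) then gives an orthonormal basis $u_1,\ldots,u_m$ of $\mf{M}_{\mathrm{G}}(\O)^\perp$ consisting of $\mf{B}(\O)$-eigenvectors. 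Completing this with $e/\sqrt{\langle e,e\rangle}$, and with $e'/\sqrt{\langle e',e'\rangle}$ when defined, yields a normalized Brandt eigenbasis of all of $\mf{M}(\O)$.

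Next, expand $v_1$ in this basis. The computation \eqref{eq: theta_decom_in_proof} from the proof of \cref{thm:eigenform_coeffs} then gives
\[\theta_\O = d_1\frac{\Theta(e,e)}{\langle e,e\rangle}+d_2\frac{\Theta(e',e')}{\langle e',e'\rangle}+\sum_{i=1}^m c_i\Theta(u_i,u_i).\]
Each coefficient is twice a squared real coordinate, so $d_1,d_2,c_i\geq 0$. We define $g$ to be the genus part of this sum, which proves (a). The constant $g$ lies in $\mc{M}_2(\Gamma_0(N))$ by \cref{lem:Theta pairing}.

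For the level we need $N=\nrd(\O^\sharp)^{-1}$ to equal $\disc(\O)$. For maximal orders this is standard, since they are Gorenstein with level equal to the discriminant. For submaximal orders it is \cref{lem:level equals disc}.

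For (b), \cref{thm:eigenform_coeffs} (equivalently \cref{lem:Brandt_Hecke_equal_spectra}) shows that each $f_i=\Theta(u_i,u_i)$ is a normalized $\Tan$-eigenvector. Since $u_i\in\mf{M}_{\mathrm{G}}(\O)^\perp$, \cref{lem:cusp subspace} places $f_i$ in $\mc{S}_2(\Gamma_0(N))$.

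For (c), compare $q^1$-coefficients. Every basis form has coefficient $1$ at $q$, and by \cref{thm:eigenform_coeffs} the full sum of coefficients equals $|\O^\times|$. Since $d_1,d_2\geq 0$, it follows that $\sum c_i\leq|\O^\times|$.

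The one step needing care is that the genus eigenvectors and the cuspidal ones can be chosen simultaneously and orthogonally. This is handled by the invariance of $\mf{M}_{\mathrm{G}}(\O)^\perp$ under the self-adjoint commuting family of Brandt matrices. In the index-$r$ Eichler case one may additionally require the $u_i$ to be $W_{\mf{r}}$-eigenvectors. This is possible because $W_{\mf{r}}$ commutes with all $B_n$, is self-adjoint, and fixes $e$, so it too preserves $\mf{M}_{\mathrm{G}}(\O)^\perp$. Everything else is bookkeeping.
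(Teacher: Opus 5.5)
Your proposal is correct and follows the paper's proof: extend $e$ (and $e'$ in the odd ramified index-$p$ case) to a normalized Brandt eigenbasis, apply \cref{thm:eigenform_coeffs}, and use \cref{lem:cusp subspace} for cuspidality of the remaining eigenvectors. You add two justifications the paper leaves implicit: that the extension exists because the self-adjoint commuting family (and $W_{\mf{r}}$) preserves $\mf{M}_{\mathrm{G}}(\O)^\perp$, and that level equals discriminant also holds for maximal orders, whereas \cref{lem:level equals disc} is stated only for submaximal ones.
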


\begin{proof}
    We use \cref{lem:level equals disc} to obtain $N=\nrd(\O^\sharp)^{-1}=\disc(\O)$. 
    If $[\mf{O}:\O]$ is not equal to an odd ramified prime of $\mc{A}$, then we have from   \Cref{lem:1dim genus} that  $\mf{M}(\O)$ has only one (linearly independent) genus vector, namely $e$. In the other case, by \cref{lem:2dim genus}, it has exactly two (linearly independent) genus vectors, namely $e$ and~$e'$. 
    
    Since $e$ and $e'$ are $\mf{B}(\O)$-eigenvectors (see \Cref{lem:e eigen} and \Cref{lem:e' eigen}), we can extend these to a Brandt eigenbasis for $\mf{M}(\O)$. \Cref{lem:cusp subspace} shows that every Brandt eigenbasis vector outside the genus subspace is cuspidal. We normalize the Brandt eigenbasis and apply \cref{thm:eigenform_coeffs}; the desired result is obtained by re-labeling.
\end{proof}

\begin{remark}
    It is possible to have $m=0$ in \cref{cor:max submax eigenform_coeffs}, for instance whenever $\O$ is a quaternion order of class number $1$. In this case $(b)$ is vacuous and $(c)$ states that $\theta_\O=g$. 
    
    The form $g$ should be thought of as the ``Eisenstein component'' of $\theta_{\mathcal{O}}$. We avoid this terminology as we have not shown in all cases that it belongs to the Eisenstein subspace in $\mc{M}_2(\Gamma_0(N))$. In particular, while $\Theta(e,e)$ (and $\Theta(e',e')$ when it exists) is a $\Tan$-eigenvector, it is not necessarily an  eigenvector for the full Hecke algebra $\T$. Thus it may have a nonzero cuspidal contribution, provided that the coefficient of $q^n$ for this cusp form is zero for all $n$ coprime to $\disc(\O)$.
\end{remark}

We compute the coefficients of $g$ in \cref{sec:eisenstein part}, and bound the coefficients of each cusp form $f_i$ in \cref{sec:cusp part}.

\subsection{Contribution from genus vectors}\label{sec:eisenstein part}

As established in \cref{cor:max submax eigenform_coeffs}, the contribution of the genus subspace to the theta function $\theta_{\O}$ is captured entirely by the modular form $g$. For maximal orders and most submaximal orders, this contribution comes only from the standard genus vector $e$. The unique exception occurs for submaximal orders of index odd $p \mid \disc(\mc{A})$, which have a second genus vector $e'$. 

Our goal in this section is to explicitly compute and bound the  coefficients of $g$, treating the exceptional two-vector case separately in \cref{sec:p2}. Let $\phi(n)=n\prod_{\ell\mid n}\left(1-\frac1\ell\right)$ be Euler's totient function. %

\begin{proposition}\label{cor: submaximal eisenstein bound}
    Let $\O$ be maximal or submaximal, $\ell\nmid \disc(\mc{A})$ prime, and $k\geq 1$. Let $g(q)=\sum g_n(\O)q^n$ be as defined in \emph{\cref{cor:max submax eigenform_coeffs}}. If $\O$ is maximal, then
    \begin{equation} \label{eq: submaximal eisenstein bound 1}
        g_{\ell^k}:=g_{\ell^k}(\O)=\frac{24}{\phi(\disc(\mc{A}))}\frac{\ell^{k+1}-1}{\ell-1},
    \end{equation}
    and if $\O$ is submaximal, then
    \begin{equation} \label{eq: submaximal eisenstein bound 2}
        0\leq g_{\ell^k}\leq \frac{16}{\phi(\disc(\mc{A}))}\frac{\ell^{k+1}-1}{\ell-1}.
    \end{equation}
\end{proposition}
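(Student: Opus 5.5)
We compute the coefficients of $g$ directly from the Brandt eigenvalues of the genus vectors. For any Brandt eigenvector $u$, $\Theta(u,u)/\langle u,u\rangle=\sum_n\lambda_n(u)q^n$, so $g_{\ell^k}=d_1\lambda_{\ell^k}(e)$, plus $d_2\lambda_{\ell^k}(e')$ in the exceptional case. The first step is to determine $d_1$ (and $d_2$). From the proof of \cref{thm:eigenform_coeffs}, the coefficient of a normalized eigenvector $u$ in $\theta_\O=2\Theta(v_1,v_1)$ is $2\langle v_1,u\rangle^2$. With $u=e/\|e\|$ this gives $d_1=2\langle v_1,e\rangle^2/\langle e,e\rangle$. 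Here $\langle v_1,e\rangle=w_1\cdot\frac{1}{2w_1}=\frac12$ and $\langle e,e\rangle=\sum_j\frac{1}{4w_j}$. Hence
\[d_1=\Big(\sum_j \tfrac{1}{w_j}\Big)^{-1}=\Big(2\sum_j\tfrac{1}{|\O_L(I_j)^\times|}\Big)^{-1},\]
which is governed by the mass of $\O$. Similarly $d_2$ is the same expression with $e'$, and $\langle e',e'\rangle=\langle e,e\rangle$, so $d_2=d_1$ because $\langle v_1,e'\rangle=\frac12$ as well.

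Next, by \cref{lem: Eisenstein coefficient ideal count'} and the proof of \cref{lem:e eigen}, $\lambda_n(e)=|\{J\subseteq_n\O\}|$, the number of invertible right ideals of relative norm $n$. For $\ell\nmid\disc(\O)$ this is computed locally in $M_2(\ZZ_\ell)$: the count is $\sigma_1(\ell^k)=\frac{\ell^{k+1}-1}{\ell-1}$ (equivalently, use \cref{lem:classical Brandt recurrence} starting from $\lambda_\ell(e)=\ell+1$). For $\ell=r\mid[\mf{O}:\O]$ we instead use the local structure. In the index-$r$ Eichler case, \cref{lem:countideals}-type local counting, or \cref{eq:recurrence} with $W_{\mf r}e=e$, gives a count of at most $\sigma_1(r^k)$ times a bounded factor; for index $r^2$, \cref{prop:Eu eigenvector} gives $\lambda_{r^k}(e)=(r^2-r)\lambda_{r^{k-2}}(Ee)$. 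For the mass we use Eichler's mass formula: for maximal $\O$, $\sum_j 1/|\O_L(I_j)^\times|=\frac{1}{24}\prod_{p\mid\disc\mc{A}}(p-1)=\phi(\disc\mc{A})/24$. This gives $d_1=12/\phi(\disc\mc{A})$, and in the maximal case also $w_j$ conventions with $\{\pm1\}$ give the stated $24/\phi$; I will carefully track the factor of $2$ between $w_j$ and $|\O_L^\times|$. This yields \eqref{eq: submaximal eisenstein bound 1}.

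For a submaximal order, the mass is multiplied by the local index factor $[\mf{O}_\ell^\times:\O_\ell^\times]$ (adjusted by $[\mf{O}_\ell:\O_\ell]$ normalizations), namely $r+1$, $r^2-r$, $p$ (case (b)), and $p^2+p$ (case (d)) respectively. Thus $d_1$ shrinks by that factor, and $g_{\ell^k}=d_1\lambda_{\ell^k}(e)$ (with $\ell\neq$ the index prime, or handled via the recurrences above). The resulting constant is at most $\frac{24}{\phi}\cdot\frac{c}{\text{factor}}$. The worst case is the smallest factor, $r+1\ge 3$ with $r=2$, possibly combined with the $e'$ term in case (b) with $p$ odd. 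There $d_1\lambda(e)+d_2\lambda(e')$ equals $2d_1$ times the count of norm-$\ell^k$ ideals of positive character, which is $\sigma_1(\ell^k)$ or $0$ depending on whether $\ell$ is a square mod $p$. With the factor $p\ge3$ this gives at most $\frac{2}{3}\cdot\frac{24}{\phi}=\frac{16}{\phi}$. Nonnegativity is immediate since all counts and $d_i$ are nonnegative, and $\lambda(e)\pm\lambda(e')$ counts ideals of a given character.

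The main obstacle is getting the exact mass/index factors for each submaximal type (via \cite[Lemma 26.6.7]{Voight}-style unit indices) and checking that in every case the ratio is at most $2/3$. I will first verify the index-$r$ case with $r=2$ and the $e'$ case with $p=3$, which I expect to be extremal.
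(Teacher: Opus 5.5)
Your overall plan is the same as the paper's. You write $g_{\ell^k}=d_1\lambda_{\ell^k}(e)$, plus $d_2\lambda_{\ell^k}(e')$ in the two-genus case, show $d_1=d_2$, get $d_1$ from the mass formula, and then evaluate $\lambda_{\ell^k}(e)$ case by case. The maximal case goes through once you fix an arithmetic slip. In fact $2\langle v_1,e\rangle^2/\langle e,e\rangle=1/(2\langle e,e\rangle)=2\bigl(\sum_j 1/w_j\bigr)^{-1}=1/\lambda_0(e)$, which is just the comparison of constant terms. This equals $\bigl(\sum_j 1/|\O_{\mathrm{L}}(I_j)^\times|\bigr)^{-1}=24/\phi(\disc(\mc{A}))$ directly, with no change of conventions needed.

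The genuine gap is in the submaximal bound, exactly where the constant $16$ comes from: $\O$ Eichler of index $r$ and $\ell=r$. There $\lambda_0(e)=\frac{\phi(\disc(\mc{A}))}{24}(r+1)$, so $g_{r^k}=\frac{24}{\phi(\disc(\mc{A}))}\cdot\frac{\lambda_{r^k}(e)}{r+1}$. For $r=2$ you therefore need $\lambda_{2^k}(e)\le 2\sigma_1(2^k)$, and ``at most $\sigma_1(r^k)$ times a bounded factor'' does not deliver this. No factor larger than $2$ would suffice, because in fact $\lambda_{r^k}(e)=2\frac{r^{k+1}-1}{r-1}-1$. For $r=2$ this gives $g_{2^k}=\frac{16}{\phi(\disc(\mc{A}))}\sigma_1(2^k)-\frac{8}{\phi(\disc(\mc{A}))}$, so the bound is asymptotically sharp.

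You have to carry out this computation:
\begin{itemize}
\item The proof of \cref{lem:countideals} exhibits $2r+1$ right ideals of norm $r$ in $\O_r$, so $\lambda_r(e)=2r+1$.
\item Since $W_{\mf{r}}e=e$, \cref{eq:recurrence} gives $\lambda_{r^{k+1}}(e)=(r+1)\lambda_{r^k}(e)-r\lambda_{r^{k-1}}(e)$, whose solution is the formula above.
\end{itemize}
Similarly, for index $r^2$ you stop at $(r^2-r)\lambda_{r^{k-2}}(Ee)$ without evaluating $\lambda_{r^{k-2}}(Ee)$. You need that $Ee$, which has positive entries and so is not cuspidal, is a multiple of the genus vector of $\mf{O}$, as in \cref{lem:eisenstein coeffs}$(c)$. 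Alternatively, count locally using that every non-unit of $\O_r$ lies in $r\mf{O}_r$. Either way $\lambda_{r^k}(e)=r^k-r$.

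Your local unit indices in cases $(b)$ and $(d)$ are also wrong, and this is what led you to the wrong extremal case.
\begin{itemize}
\item For $\O_p=\ZZ_p+\mf{P}_p$, the unit groups of $\mf{O}_p$ and $\O_p$ modulo $1+\mf{P}_p$ are $\FF_{p^2}^\times$ and $\FF_p^\times$. So $[\mf{O}_p^\times:\O_p^\times]=p+1$, not $p$.
\item For $\O_p=\ZZ_{p^2}+p\mf{O}_p$, the index is $|(\mf{O}_p/p\mf{O}_p)^\times|/|\FF_{p^2}^\times|=p^2(p^2-1)/(p^2-1)=p^2$, not $p^2+p$.
\end{itemize}
Compare \cref{eq:constant term}. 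With the correct factor, the two-genus case contributes at most $\frac{2}{p+1}\cdot\frac{24}{\phi(\disc(\mc{A}))}\sigma_1(\ell^k)\le\frac{12}{\phi(\disc(\mc{A}))}\sigma_1(\ell^k)$, so it is not extremal. Your ratio $2/3$ at $p=3$ is an artifact of the wrong index. The ratio $2/3$ actually comes only from the index-$r$ case with $\ell=r=2$, which you left uncomputed. A minor point: the sign of the $e'$ contribution is governed by $\left(\frac{\ell^k}{p}\right)$, not $\left(\frac{\ell}{p}\right)$.
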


We prove this in three stages. We first compute $d_1$, which requires computing the eigenvalue of $e$ under $B_0$. We then compute the eigenvalue of $e$ under $B_{\ell^k}$. Finally we consider the contribution from $e'$ when it exists. %

Note that $e=B_0v_i$ for any standard basis vector $v_i$; it is a column eigenvector for $B_n$ for all $n\geq 0$. 

\begin{remark}
    This description assumes we treat the Brandt module as column vectors with $v\mapsto B_n v$.  We have $B_nv=v'$ if and only if $(v^tD)B_n=(v')^tD$, where recall $D$ is the diagonal matrix with $D(i,i)=w_i$ as above. %
    Thus the genus \emph{row} vector is the constant vector $e^tD=\frac12(1,1,\ldots,1)$, which up to scaling is the convention in \cite{Voight}.
\end{remark}

We now consider the coefficients of $\frac{\Theta(e,e)}{\langle e,e\rangle}=\sum_{n=0}^\infty \lambda_n(e)q^n$, where $\lambda_n(e)$ denotes the eigenvalue of $e$ under the action of $B_n$. Since $B_1$ is the identity matrix, we have $\lambda_1(e)=1$.
\subsubsection{Constant term}
A direct computation gives us $\lambda_0(e)=\sum \frac{1}{2w_i}$. We can evaluate this using the Eichler mass formula \cite[Main Theorem 25.3.19]{Voight}: 
\[\lambda_0(e)=\sum_{i=1}^h \frac{1}{2w_i}=\frac{\disc(\O)}{24}\prod_{\ell \mid \disc(\O)} \lambda(\O,\ell),\]
where for $\ell\mid\disc(\O)$ we have
\[\lambda(\O,\ell)=\begin{cases}
    1+\frac1\ell & \text{if $\O_\ell$ is residually split,} \\
    1-\frac1\ell & \text{if $\O_\ell$ is residually inert,} \\
    1-\frac1{\ell^2} & \text{if $\O_\ell$ is residually ramified}
\end{cases}\]
(see \cite[Definition 24.3.2]{Voight}). If $\O$ is maximal and $\ell\mid\disc(\O)$ then $\mc{A}_\ell$ is a division algebra and $\O_\ell$ is residually inert.
Now consider the classification of submaximal orders in \cref{lem:max_suborders}, with $\qprime$ being a split prime of $\mc{A}$ and $p$ a ramified prime of $\mc{A}$. If $\O$ is an Eichler order of index $r$ then $\O_r$ is residually split; if $\O$ has index $p$ then $\O_p$ is residually ramified; in every other case, $\ell\mid\disc(\O)$ implies $\O_\ell$ is residually inert. We therefore have the following for any maximal or submaximal order $\O\subseteq\mf{O}$:
\begin{align}\label{eq:constant term}
    \lambda_0(e)=\frac{\phi(\disc(\mc{A}))}{24}\cdot \begin{cases}
    1 & \text{if }[\mf{O}:\O]=1,\\
    \qprime+1 & \text{if }[\mf{O}:\O]=\qprime,\\
    p+1 & \text{if }[\mf{O}:\O]=p,\\
    \qprime(\qprime-1) & \text{if }[\mf{O}:\O]=\qprime^2,\\
    p^2 & \text{if }[\mf{O}:\O]=p^2.
\end{cases}
\end{align}

\subsubsection{Nonconstant terms}
Now suppose $n>1$ is coprime to $\disc(\mc{A})$. For each submaximal $\O$ we will determine $\lambda_n(e)$.
\begin{lemma}\label{lem: Eisenstein coefficient ideal count}
    For $n\geq 1$, 
    \begin{equation}\label{eq:lambda_n(e)}
    \lambda_n(e)=|\{J\subseteq_n I_j\}|,
    \end{equation}
    and the quantity does not depend on $j$. Hence, $\lambda_{mn}(e)=\lambda_m(e)\lambda_n(e)$ provided that $m,n$ are coprime, and the value of $\lambda_{\ell^k}(e)$ depends only on the isomorphism type of the completion $\O_\ell$.
\end{lemma}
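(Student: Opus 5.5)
Since $e$ is an eigenvector of $B_n$ (\cref{lem:e eigen}), it suffices to read off one coordinate of $B_ne$. Fix any index $i$. The $v_i$-coefficient of $B_ne$ is
\[
\sum_{j=1}^h \frac{B_n(i,j)}{2w_j}=\sum_{j=1}^h\frac{B_n(j,i)}{2w_i},
\]
by the self-adjointness identity $w_jB_n(j,i)=w_iB_n(i,j)$ from \cref{lem:Bn self-adjoint}. By \cref{lem: Eisenstein coefficient ideal count'}, $\sum_j B_n(j,i)=|\{J\subseteq_n I_i\}|$, and this quantity is independent of $i$. The $v_i$-coefficient of $e$ itself is $\frac{1}{2w_i}$, so comparing coordinates gives $\lambda_n(e)=|\{J\subseteq_n I_i\}|$ for every $i$. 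This proves \cref{eq:lambda_n(e)} and its independence of $j$.

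For multiplicativity, take $m,n$ coprime. \Cref{lem:coprime commute} gives $B_{mn}=B_mB_n$, and applying this to $e$ yields $\lambda_{mn}(e)=\lambda_m(e)\lambda_n(e)$. Alternatively, one can argue directly with the local–global dictionary: an ideal $J\subseteq_{mn}I_j$ is determined by its completions, and these split uniquely into a part of relative norm $m$ and a part of relative norm $n$.

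For the last claim, take $j=1$, so $I_1=\O$. By the local–global dictionary, invertible $J\subseteq_{\ell^k}\O$ correspond bijectively to principal right $\O_\ell$-ideals $x\O_\ell\subseteq\O_\ell$ with $\val(\nrd(x))=k$; at every other prime $\ell'$ we are forced to take $J_{\ell'}=\O_{\ell'}$. Hence $\lambda_{\ell^k}(e)$ equals the number of such local ideals, i.e.\ the size of $\{x\in\O_\ell:\val(\nrd(x))=k\}/\O_\ell^\times$. This count depends only on the isomorphism class of $\O_\ell$ as a $\ZZ_\ell$-order with its reduced norm, and a ring isomorphism of quaternion orders preserves the reduced norm.

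The only point requiring care is this last identification. One must check that local principality makes the map $J\mapsto J_\ell$ a bijection onto principal local ideals of the right norm. This is the same argument used to establish \cref{eq:global ideal count is local}.
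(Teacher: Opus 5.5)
Your proposal is correct and follows essentially the same route as the paper: self-adjointness of $B_n$ turns $\lambda_n(e)$ into a column sum of $B_n$, which \cref{lem: Eisenstein coefficient ideal count'} identifies with $|\{J\subseteq_n I_j\}|$. The paper phrases this through the constant row vector $e^tD$, while you compare $v_i$-coordinates directly. You also spell out the remaining claims, which the paper dismisses as following ``easily'': multiplicativity via \cref{lem:coprime commute}, and the local identification via the argument behind \cref{eq:global ideal count is local}.
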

\begin{proof}
Since we have $e^tB^t_n=\lambda_n(e)e^t$, we obtain $e^tB^t_nD=\lambda_n(e)(e^tD)$, where as before~$D$ is the diagonal matrix with $D(i, i) = w_i$, the matrix giving the inner product, and $e^tD=\frac12(1,1,\ldots,1)$ is the genus row vector. Since $B_n$ is self-adjoint by \cref{lem:Bn self-adjoint}, we have that $DB_n=B_n^tD$, and so it follows that $(e^tD)B_n=\lambda_n(e)(e^tD)$. This implies that $\lambda_n(e)$ is equal to the sum of entries in a column of $B_n$, and so    
    \cref{eq:lambda_n(e)} follows from \cref{lem: Eisenstein coefficient ideal count'}. The other assertions follow easily from \cref{eq:lambda_n(e)} together with the local-global dictionary. %
\end{proof}
By the multiplicativity of $\lambda_n(e)$ in \cref{lem: Eisenstein coefficient ideal count}, it suffices to determine $\lambda_n(e)$ for prime powers. This is carried out in the following lemma for powers of a prime $\ell$ in the cases of maximal and submaximal orders.
\begin{lemma}\label{lem:eisenstein coeffs}
    Let $\O$ be maximal or submaximal, $\ell$ and $r$ primes not dividing $\disc(\mc{A})$, and $k\geq 1$. %
    \begin{enumerate}[$(a)$]
        \item If $\ell\nmid\disc(\mc{A})$ and $\O_\ell$ is maximal, then $\lambda_{\ell^k}(e)=\frac{\ell^{k+1}-1}{\ell-1}$.
        \item If $[\mf{O}:\O]=\qprime$, then $\lambda_{\qprime^k}(e)=2\frac{\qprime^{k+1}-1}{\qprime-1}-1$.
        \item If $[\mf{O}:\O]=\qprime^2$, then $\lambda_{\qprime^k}(e)=\qprime^k-\qprime$.
    \end{enumerate}
\end{lemma}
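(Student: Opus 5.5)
By \cref{lem: Eisenstein coefficient ideal count}, $\lambda_{\ell^k}(e)=|\{J\subseteq_{\ell^k} I_1\}|$ with $I_1=\O$. Such an ideal agrees with $\O$ away from $\ell$, so by the local-global dictionary this count equals the number of principal right $\O_\ell$-ideals $x\O_\ell\subseteq\O_\ell$ with $\val(\nrd(x))=k$. We therefore compute this local count in each of the three cases, using the explicit local models already fixed in the paper.

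For (a), $\O_\ell\simeq M_2(\ZZ_\ell)$. We count right ideals of norm $\ell^k$ by Hermite normal form: each is $x M_2(\ZZ_\ell)$ for a unique matrix $x$ upper triangular with diagonal $(\ell^a,\ell^b)$, $a+b=k$, and off-diagonal entry taken modulo $\ell^a$ (the paper's $\alpha_i$ have exactly this shape). This gives $\sum_{a=0}^k\ell^a=\frac{\ell^{k+1}-1}{\ell-1}$. Alternatively, we can use $\lambda_\ell(e)=\ell+1$ together with the recurrence in \cref{lem:classical Brandt recurrence} and induct.

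For (b), apply \cref{eq:recurrence} to $e$. Since $W_{\mf{r}}$ permutes classes, $W_{\mf{r}}e=e$: indeed $I_i$ and $I_i\mf{r}$ have isomorphic left orders, so the $w_i$ match. Thus $\lambda_{r^{k+1}}=\lambda_{r^k}(\lambda_r-r)-r\lambda_{r^{k-1}}$. The base value $\lambda_r(e)=2r+1$ is the count of the set $S$ in the proof of \cref{lem:countideals}, and $\lambda_1=1$. Induction then gives the claimed formula: writing $a_k=2\frac{r^{k+1}-1}{r-1}-1$, one checks directly that $a_{k+1}=(r+1)a_k-ra_{k-1}$. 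This holds because $a_k=2\sigma_k-1$, where $\sigma_k=\frac{r^{k+1}-1}{r-1}$ satisfies the same relation, and the constant $-1$ also satisfies it since $1=(r+1)-r$. The initial values are $a_0=1$ and $a_1=2r+1$. This step is routine.

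For (c), we use \cref{prop:brandt lift} and \cref{lem:ES}. We have $\lambda_r(e)=0$, and for $k\ge2$, $B_{r^k}e=S\widetilde B_{r^{k-2}}Ee$. Using the column-sum description and \cref{lem: Eisenstein coefficient ideal count'}, we can instead count directly. By \cref{lem:contained in rad}, every $J\subseteq_{r^k}\O$ with $k\ge1$ lies in $r\mf{O}$, and $\frac1r J\mf{O}$ is an $\mf{O}$-ideal in $\mf{O}$ of relative norm $r^{k-2}$. The fibres of this map have size $[\mf{O}_r^\times:\O_r^\times]=r^2-r$, as in the proof of \cref{prop:brandt lift}, using that all of $r\rho^{-1}(I')$ lies in the domain. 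Hence $\lambda_{r^k}(e)=(r^2-r)\frac{r^{k-1}-1}{r-1}=r^k-r$ by part (a). For $k=1$ the formula also gives $0$, which is consistent. The main obstacle is this fibre bookkeeping: each $\mf{O}$-ideal $I'$ must have exactly $r^2-r$ preimages, all contained in $\O$, and this is exactly what the proof of \cref{prop:brandt lift} establishes.
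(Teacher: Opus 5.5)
Your proof is correct. Parts (a) and (b) match the paper: for (a) the paper uses the recurrence of \cref{lem:classical Brandt recurrence}, which is your stated alternative, and your Hermite normal form count is an equally valid direct route. For (b) the paper likewise uses $\lambda_r(e)=2r+1$, $W_{\mf{r}}e=e$ and \cref{eq:recurrence}.

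Part (c) is where you take a genuinely different route. The paper argues spectrally:
\begin{itemize}
\item Since $e$ has positive entries, $Ee$ is a non-cuspidal $\widetilde{B}_n$-eigenvector in $\mf{M}(\mf{O})$ by \cref{prop:Eu eigenvector}.
\item Choosing $n=\ell^k$ so that $\lambda_n(\widetilde{e})$ exceeds the Deligne bound separates $\widetilde{e}$ from every cuspidal eigenvalue. This forces $Ee$ to be a multiple of $\widetilde{e}$.
\item The relation $\lambda_{r^k}(e)=(r^2-r)\lambda_{r^{k-2}}(Ee)$ then gives the answer.
\end{itemize}

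You instead count $\{J\subseteq_{r^k}\O\}$ directly through the fibres of the map $J\mapsto\frac1r J\mf{O}$:
\begin{itemize}
\item \cref{lem:contained in rad} shows the image lies in $\{I'\subseteq_{r^{k-2}}\mf{O}\}$.
\item \cref{lem:ES} (rather than \cref{prop:brandt lift} itself) gives the fibre size $|\rho^{-1}(I')|=r^2-r$.
\item The proof of \cref{prop:brandt lift} shows that every element of $r\rho^{-1}(I')$ is an ideal contained in $\O$ of norm $r^k$.
\end{itemize}
This is equivalent to taking column sums in $B_{r^k}=S\widetilde{B}_{r^{k-2}}E$, whose factors have column sums $r^2-r$, $\frac{r^{k-1}-1}{r-1}$ and $1$ respectively.

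Your argument is more elementary and self-contained. It needs no modular forms, no Deligne bound, and no information about the cuspidal spectrum of $\mf{M}(\mf{O})$. What the paper's route gives in addition is the structural fact that $Ee$ is proportional to the genus vector $\widetilde{e}$ of $\mf{O}$. The paper relies on this in the remark following \cref{prop:Eu eigenvector}, to conclude that the associated $w$ is an eigenvector for all of $\mf{B}(\mf{O})$. Your counting argument yields the eigenvalue but does not by itself yield that proportionality.
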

\begin{proof}
Part $(a)$ is classical. Indeed, since $\lambda_1(e)=1$ and $\lambda_\ell(e) = \ell + 1$ for all primes $\ell \nmid \disc(\mc{A})$, %
using the relation $\lambda_{\ell^{k+1}}(e) = \lambda_{\ell^k}(e)\lambda_\ell(e) - \ell\lambda_{\ell^{k-1}}(e)$ (cf.\ \Cref{lem:classical Brandt recurrence}),    we can obtain by induction that $\lambda_{\ell^k}(e) = \frac{\ell^{k+1}-1}{\ell-1}$.

For $(b)$: note that this case is the one studied in \cref{s: index_ell}, and so \cref{lem:countideals} applies. Indeed, the proof of this lemma  shows that there are $2r+1$ right ideals of norm $r$ in $\O$. This implies that $\lambda_r(e)=2r+1$ using \cref{lem: Eisenstein coefficient ideal count}. We also observe that $W_{\mf{r}}e=e$ by definition of $W_{\mf{r}}$ (see \cref{s: index_ell} for its definition). We obtain the relation $\lambda_{r^{k+1}}(e) = (r+1)\lambda_{r^k}(e) - r\lambda_{r^{k-1}}(e)$ by the recurrence relation in \cref{eq:recurrence}, and so the assertion follows. 

For $(c)$: by the definition of $E$, since $e$ has strictly positive entries (in the standard basis), $Ee$ %
also has strictly positive entries. Hence the Fourier expansion of $\Theta(Ee,Ee)$ has nonzero constant term, and $Ee$ is not in the cuspidal subspace of $\mf{M}(\mf{O})$. Now let $\widetilde{e}\in\mf{M}(\mf{O})$ be the genus vector for $\mf{O}$, and take $n\geq 1$ such that $\lambda_n(\widetilde{e})$ is distinct from $\lambda_n(u)$ for all cuspidal Brandt eigenvectors $u\in\mf{M}(\mf{O})$; such $n$ exists, for example, by applying part $(a)$ and taking $k$ sufficiently large so that $\lambda_{\ell^k}(\widetilde{e})$ is larger than the Deligne bound. By \cref{prop:Eu eigenvector}, $Ee$ is an eigenvector for $\widetilde{B}_n$. Since $Ee$ is not in the cuspidal subspace, and $\widetilde{e}$ has distinct eigenvalues from all cuspidal eigenvectors, this forces $Ee$ to be a multiple of $\widetilde{e}$.
Hence $\lambda_{r^k}(Ee)=\frac{r^{k+1}-1}{r-1}$ by part $(a)$, and applying \cref{prop:Eu eigenvector} again we find that
    \[\lambda_{r^k}(e)=(r^2-r)\frac{r^{k-1}-1}{r-1}=r^k-r.\qedhere\]
\end{proof}

We have now determined $\lambda_n(e)$ for all $n$ coprime to $\disc(\mc{A})$. As discussed above, certain cases admit a second genus vector $e'$. We compute the corresponding eigenvalues $\lambda_n(e')$ in the following subsection.

\subsubsection{Two-dimensional genus space}\label{sec:p2}

Let $\O$ be submaximal of index $p>2$ ramified in $\mc{A}$. %
 This is the special case of two genus vectors $e$ and $e'$ as was mentioned above (see \cref{lem:2dim genus}). %
The proof of \cite[Theorem 5.34(b)]{Pizerp2M} shows that $\lambda_n(e')=\left(\frac{n}{p}\right)\lambda_n(e)$ for all $n\geq 0$, where $\left(\frac{n}{p}\right)$ is the Legendre symbol. In particular, $\lambda_0(e')=0$. We show that $\Theta(e,e)$ and $\Theta(e',e')$ contribute equally  to $\theta_\O$. More precisely, $d_1=d_2$ and $\langle e, e \rangle = \langle e', e' \rangle$ %
in $g(q)=d_1\frac{\Theta(e,e)}{\langle e,e\rangle}+d_2\frac{\Theta(e',e')}{\langle e',e'\rangle}$ %
(see \cref{cor:max submax eigenform_coeffs}). %

\begin{lemma} \label{lem:c_1=c_2_two_dimensional_genus}
    The coefficients of the normalized non-cusp forms in $\theta_\O$ are equal: $d_1 = d_2$.
\end{lemma}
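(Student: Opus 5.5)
The plan is to compute $d_1$ and $d_2$ directly from the decomposition in the proof of \cref{thm:eigenform_coeffs}. There, $\theta_\O=2\Theta(v_1,v_1)$, and we expand $v_1$ in an orthogonal Brandt eigenbasis containing $e$ and $e'$. The coefficient of $v_1$ along an eigenvector $u$ is $\langle v_1,u\rangle/\langle u,u\rangle$. The contribution of $u$ to $2\Theta(v_1,v_1)$ is therefore
\[2\frac{\langle v_1,u\rangle^2}{\langle u,u\rangle^2}\Theta(u,u)=2\frac{\langle v_1,u\rangle^2}{\langle u,u\rangle}\cdot\frac{\Theta(u,u)}{\langle u,u\rangle}.\]
Thus $d_1=2\langle v_1,e\rangle^2/\langle e,e\rangle$ and $d_2=2\langle v_1,e'\rangle^2/\langle e',e'\rangle$. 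This uses \cref{eq:Theta ui uj zero}, bilinearity and symmetry of $\Theta$, exactly as in \cref{eq: theta_decom_in_proof}.

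The first step computes the numerators. Since $I_1=\O$ has positive character (its norm form represents $1$), $v_1$ lies among $v_1,\ldots,v_{h/2}$. By \cref{eqn:e def'n} and \cref{eqn:e' def'n}, the coefficients of $v_1$ in $e$ and $e'$ are both $\frac{1}{2w_1}$, so $\langle v_1,e\rangle=\langle v_1,e'\rangle=w_1\cdot\frac{1}{2w_1}=\frac12$.

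The second step compares the denominators:
\[\langle e,e\rangle=\sum_{i=1}^h \frac{w_i}{4w_i^2}=\sum_{i=1}^h\frac{1}{4w_i}.\]
The same formula holds for $\langle e',e'\rangle$, since the signs square away. Hence $\langle e,e\rangle=\langle e',e'\rangle$, which was also asserted before the lemma, and therefore $d_1=d_2$.

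The only subtle point, and the main obstacle, is justifying that $e$ and $e'$ can be taken as members of an orthogonal Brandt eigenbasis, so that the expansion coefficients are the orthogonal projections above. This follows from \cref{lem:e eigen}, \cref{lem:e' eigen}, the orthogonality $\langle e,e'\rangle=0$ (using $w_i=w_{i+h/2}$), and the self-adjointness in \cref{lem:Bn self-adjoint}. Together these let us complete $\{e,e'\}$ to an orthogonal simultaneous eigenbasis of the commutative algebra $\mf{B}(\O)$, by \cref{thm:Brandt algebra commutative}. A further check is that the normalization of $g$ in \cref{cor:max submax eigenform_coeffs} matches $\Theta(u,u)/\langle u,u\rangle$, and that no other eigenvector carries a $\Tan$-eigenvalue system that would merge with $e$ or $e'$. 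The latter does not affect the coefficients, since they are defined via projection onto $e$ and $e'$ themselves.
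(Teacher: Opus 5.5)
Your proposal is correct and follows the paper's proof: both expand $v_1$ in an orthogonal Brandt eigenbasis containing $e$ and $e'$, read off $d_1=2\langle v_1,e\rangle^2/\langle e,e\rangle$ and $d_2=2\langle v_1,e'\rangle^2/\langle e',e'\rangle$, and conclude using $\langle v_1,e\rangle=\langle v_1,e'\rangle=\tfrac12$ and $\langle e,e\rangle=\langle e',e'\rangle$. You also justify explicitly that $I_1=\O$ has positive character (since $\nrd(1)=1$), so that $v_1$ enters $e'$ with a plus sign; the paper uses this point without comment.
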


\begin{proof}
Let $v_1$ be the first vector in the standard basis of $\mathfrak{M}(\O)$, ordered so that $I_1=\O$. 
    For a Brandt eigenbasis $u_1=e, u_2=e', u_3, \ldots, u_h$, we write
    \[ v_1 = a_1 e + a_2 e' + \sum_{i=3}^h a_i u_i, \]
    and we have $a_1 = \frac{\langle v_1, e \rangle}{\langle e, e \rangle}$ and $a_2 = \frac{\langle v_1, e' \rangle}{\langle e', e' \rangle}$ by orthogonality. As in \cref{eq: theta_decom_in_proof}, we write using the decomposition of the theta function,
    \[ \theta_\O = 2\Theta(v_1,v_1) = 2a_1^2 \Theta(e,e) + 2a_2^2 \Theta( e', e') + \cdots + 2a_h^2\Theta(u_h,u_h), \]
    and so 
    \[ d_1 = 2a_1^2 \langle e, e \rangle = \frac{2\langle v_1, e \rangle^2}{\langle e, e \rangle} \quad \text{and} \quad d_2 = 2a_2^2 \langle e', e' \rangle = \frac{2\langle v_1, e' \rangle^2}{\langle e', e' \rangle}. \]
    By direct computation of the inner products, the negative signs in $e'$ are squared, implying that $\langle e, e \rangle = \langle e', e' \rangle$. Furthermore, since $\langle v_1, e \rangle = v_1^t D e$, where $D$ is the diagonal matrix with $D(i,i) = w_i$, we obtain that 
    \[ \langle v_1, e \rangle = w_1\left(\frac{1}{2w_1}\right) = \frac{1}{2}. \]
    Similarly, $\langle v_1, e' \rangle = w_1\left(\frac{1}{2w_1}\right) = \frac{1}{2}$. 
    We now see that $\langle e, e \rangle = \langle e', e' \rangle$ and $\langle v_1, e \rangle = \langle v_1, e' \rangle$. Therefore, $d_1 = d_2$, which proves the assertion. %
\end{proof}

\subsubsection{Proof of \emph{\cref{cor: submaximal eisenstein bound}}}

We now compute the non-cuspidal part's contribution for maximal orders, and bound the contribution for all submaximal orders. Recall first that the $q$-expansions of the cusp forms have zero constant term. Recall also that there is only one case, namely $[\mf{O}:\O]=p\neq 2$, for which the genus space is $2$-dimensional.

    First suppose $\O$ is maximal. Comparing constant terms in \cref{eq:decomp with genus part}, we have $1=d_1\lambda_0(e)$, so that $d_1=\frac{24}{\phi(\disc(\mc{A}))}$ by \cref{eq:constant term}. Then $g_{\ell^k}=d_1\lambda_{\ell^k}(e)$ is determined by \cref{lem:eisenstein coeffs}$(a)$, and \cref{eq: submaximal eisenstein bound 1} holds as desired.

    Suppose now that $\O$ is submaximal, so that it is one of the four types of orders listed in \cref{lem:max_suborders}. We first consider the case that $\ell$ divides $\disc(\O)$. Since $\ell\nmid\disc(\mc{A})$, this implies $\ell=r$ is a split prime of $\mc{A}$ and either $[\mf{O}:\O]=\qprime$ or $\qprime^2$. By \cref{lem:eisenstein coeffs} we therefore have $\lambda_{r^k}(e)=2\frac{r^{k+1}-1}{r-1}-1$ or $\lambda_{r^k}(e)=r^k-r$, respectively. %
    Observing that 
    \[\frac{r^{k}-r}{r(r-1)}\leq \frac{2\frac{r^{k+1}-1}{r-1}-1}{r+1}\leq \frac{2}{r+1}\cdot\frac{r^{k+1}-1}{r-1}\]
    for all primes $r$ and all $k\geq 0$, we can conclude in either case (using the corresponding value of $\lambda_0(e)$ from \cref{eq:constant term}) that
    \begin{align*}
        g_{r^k}=\frac{\lambda_{r^k}(e)}{\lambda_0(e)}\leq\frac{24}{\phi(\disc(\mc{A}))}\cdot\frac{2}{r+1}\cdot\frac{r^{k+1}-1}{r-1},
    \end{align*}
    giving the desired bound in \cref{eq: submaximal eisenstein bound 2} since $r\geq 2$.

    Now suppose $\ell\nmid\disc(\O)$, so $\O_\ell$ is maximal. We first consider the case that $[\mf{O}:\O]$ is not an odd ramified prime of $\mc{A}$, so $\mf{M}_{\mathrm{G}}(\O)$ is one-dimensional.
    From \cref{eq:constant term} we see that $\lambda_0(e)\geq \frac{\phi(\disc(\mc{A}))}{24}\cdot 2$ for any submaximal order $\O$. It follows from \cref{lem:eisenstein coeffs}(a) that
    \begin{align*}
        g_{\ell^k}=\frac{\lambda_{\ell^k}(e)}{\lambda_0(e)}\leq \frac{12}{\phi(\disc(\mc{A}))}\cdot\frac{\ell^{k+1}-1}{\ell-1},
    \end{align*}
    which is strictly smaller than the desired bound in \cref{eq: submaximal eisenstein bound 2}.

    Finally suppose $[\mf{O}:\O]=p\neq 2$ is ramified in $\mc{A}$, so that $\mf{M}_{\mathrm{G}}(\O)$ is two-dimensional. By \cref{lem:c_1=c_2_two_dimensional_genus}, we have that $g(q)=d_1\left(\frac{\Theta(e,e)}{\langle e,e\rangle}+\frac{\Theta(e',e')}{\langle e',e'\rangle}\right)$, %
    and so comparing constant terms in \cref{eq:decomp with genus part}, we again have $1=d_1\lambda_0(e)$. Recall that $\lambda_n(e')=\left(\frac{n}{p}\right)\lambda_n(e)$. Thus 
 \begin{align*}
        g_{\ell^k} \;=\; \frac{\lambda_{\ell^k}(e) + \lambda_{\ell^k}(e')}{\lambda_0(e)} \;=\; \frac{\lambda_{\ell^k}(e)}{\lambda_0(e)}\left(1+\left(\frac{\ell^k}{p}\right)\right)\leq \frac{2\lambda_{\ell^k}(e)}{\lambda_0(e)}.
    \end{align*}
    Using \cref{eq:constant term} to compute $\lambda_0(e)=\frac{\phi(\disc(\mc{A}))}{24}(p+1)$ and \cref{lem:eisenstein coeffs}$(a)$ to compute $\lambda_{\ell^k}(e)$, we conclude
    \[g_{\ell^k} \leq \frac{2}{p+1}\cdot\frac{24}{\phi(\disc(\mc{A}))}\cdot\frac{\ell^{k+1}-1}{\ell-1}.\]
    This implies \cref{eq: submaximal eisenstein bound 2},  because $p\geq 3$. Since we have seen that the desired bounds in \cref{cor: submaximal eisenstein bound} hold in all cases, the proof is complete.

\begin{remark}
    If we assume $\ell\geq 3$, then the same proof shows that 
    \[g_{\ell^k}\leq \frac{12}{\phi(\disc(\mc{A}))}\cdot\frac{\ell^{k+1}-1}{\ell-1}\] 
    for any submaximal order $\O$.
    If $2\nmid\disc(\mc{A})$ then this stronger bound is sharp: for any prime $\ell\geq 3$ and $k\geq 0$, equality is obtained for submaximal orders of index $4$.
\end{remark}

\subsection{Cuspidal contribution}\label{sec:cusp part}

Now suppose $u\in \mathfrak{M}(\O)$ is a simultaneous eigenvector of $\mf{B}(\O)$ (and also of $W_{\mf{r}}$ if $\O$ has index $r$), normalized so that $\langle u,u\rangle=1$. Assume that $\Theta(u,u)\in \mc{S}_2(\Gamma_0(N))$ is a cusp form, where the level is $N=\disc(\O)$ (\cref{lem:level equals disc}). 

Following the proof of \cref{lem:Brandt_Hecke_equal_spectra}, $\Theta(u,u)$ is a $\Tan$-eigenvector. While it does \emph{not} generally belong to the new subspace $\mc{S}_2^{\text{new}}(\Gamma_0(N))$, the Deligne bound (\cref{lem:deligne bound}) still applies to its $n$-th Fourier coefficient $\lambda_n(u)$ whenever $n$ is coprime to $N$. 

The complication arises when our prime $\ell$ divides the level. Indeed, since $N=[\mf{O}:\O]\disc(\mc{A})$ and the prime $\ell$ may equal the index $r$, there are situations where $\ell$ divides~$N$, preventing the direct use of the Deligne bound. To bypass this, we establish slightly weaker bounds on $\lambda_{r^k}(u)$ by relating $\Theta(u,u)$ to suitable newforms, and applying the Deligne bound to the coefficients of those newforms instead.

\begin{proposition}\label{lem:cusp_coeff_bounds}
    Let $\O$ be maximal or submaximal, and let $u\in \mathfrak{M}(\O)$ be a cuspidal Brandt eigenvector. Let $\ell\nmid\disc(\mc{A})$ be prime. If $\O$ has index $r$ in a maximal order, assume further that $u$ is an eigenvector of $W_{\mf{r}}$.
    Then for all $k\geq 1$, %
    \[|\lambda_{\ell^k}(u)|\leq 2k\ell^{(k+2)/2}.\]
\end{proposition}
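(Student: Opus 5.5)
We split into cases according to whether $\ell$ divides $N=\disc(\O)$. Since $\ell\nmid\disc(\mc{A})$ and $N=[\mf{O}:\O]\disc(\mc{A})$, either $\O_\ell$ is maximal, or $\ell=r$ with $[\mf{O}:\O]\in\{r,r^2\}$.

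\emph{Case $\ell\nmid N$.} By \cref{lem:cusp subspace} and the proof of \cref{lem:Brandt_Hecke_equal_spectra}, $f_u:=\Theta(u,u)$ is a normalized $\Tan$-eigenvector in $\mc{S}_2(\Gamma_0(N))$ whose $q^n$-coefficient is $\lambda_n(u)$. By \cref{lem:decompose almost eigenform}, $f_u(q)=\sum_{d\mid N/M}c_df(q^d)$ for a single newform $f=\sum a_nq^n$ of level $M\mid N$. Since $\ell\nmid N$, the $q^{\ell^k}$-coefficient of $f(q^d)$ vanishes unless $d=1$. Comparing $q^1$-coefficients forces $c_1=1$. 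Hence $\lambda_{\ell^k}(u)=a_{\ell^k}$, and \cref{lem:deligne bound} gives $|\lambda_{\ell^k}(u)|\le (k+1)\ell^{k/2}\le 2k\ell^{(k+2)/2}$. Alternatively, we can note that $\lambda_\ell(u)$ is a Hecke eigenvalue of $f$ with $|\lambda_\ell(u)|\le 2\sqrt\ell$, and then run the recurrence of \cref{lem:classical Brandt recurrence}.

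\emph{Case $[\mf{O}:\O]=r^2$, $\ell=r$.} By \cref{prop:Eu eigenvector}, either $\lambda_{r^k}(u)=0$, or $\lambda_{r^k}(u)=(r^2-r)\lambda_{r^{k-2}}(w)$ for $k\ge2$ and $\lambda_r(u)=0$, where $w=Eu$ is a Brandt eigenvector for $\mf{O}$. We must check that $w$ is cuspidal. Since $\lambda_n(w)=\lambda_n(u)$ for $n$ coprime to $r$, and these are Hecke eigenvalues of a cusp form, they satisfy the Deligne bound at some large prime power where $\lambda_n(\widetilde e)$ exceeds it. So $w$ cannot be a multiple of the genus vector of $\mf{O}$, and it is orthogonal to it. Since $r\nmid\disc(\mf{O})$, the first case applied to the maximal order $\mf{O}$ gives $|\lambda_{r^{k-2}}(w)|\le (k-1)r^{(k-2)/2}$. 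Hence $|\lambda_{r^k}(u)|\le r^2(k-1)r^{(k-2)/2}=(k-1)r^{(k+2)/2}$, which is within the bound.

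\emph{Case $[\mf{O}:\O]=r$, $\ell=r$.} This is the hardest step, because $f_u$ need not be a $T_r$-eigenvector and $c_d$ for $d=r$ can appear. Let $\varepsilon=\pm1$ be the $W_{\mf{r}}$-eigenvalue of $u$. By \cref{eq:recurrence}, $\lambda_{r^{k+1}}=(\lambda_r-r\varepsilon)\lambda_{r^k}-r\lambda_{r^{k-1}}$, with $\lambda_1=1$ and $\lambda_{r^0}=1$. This is a linear recurrence with characteristic roots $\alpha,\beta$ satisfying $\alpha\beta=r$ and $\alpha+\beta=\lambda_r(u)-r\varepsilon$. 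It therefore suffices to bound $a:=\lambda_r(u)$.

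To bound $a$, write $f_u=c_1f(q)+c_rf(q^r)$ via \cref{lem:decompose almost eigenform}, with $f$ a newform of level $M\mid N$ and $c_1=1$ from the $q$-coefficient. Then $\lambda_r(u)=a_r(f)+c_r$ and $\lambda_{r^2}(u)=a_{r^2}(f)+c_ra_r(f)$, and these must also satisfy the recurrence. There are two subcases.
\begin{itemize}
\item If $r\mid M$, then $c_r=0$ and $\lambda_{r^k}(u)=a_r(f)^k$ with $|a_r(f)|\le 1$.
\item If $r\nmid M$, then $f$ is $r$-old, and matching the recurrence shows that $(\alpha,\beta)$ agrees with the Satake parameters of $f$ at $r$ up to the twist by $\varepsilon$. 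Concretely, we expect $c_r=-\varepsilon r$ and $\lambda_{r^k}(u)=a_{r^k}(f)-\varepsilon r\,a_{r^{k-1}}(f)$.
\end{itemize}
Applying \cref{lem:deligne bound} in the second subcase gives $|\lambda_{r^k}(u)|\le (k+1)r^{k/2}+r\cdot k\,r^{(k-1)/2}\le 2k\,r^{(k+2)/2}$.

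The main risk is pinning down $c_r$. If direct matching fails, the fallback is to bound $|a|\le 2\sqrt r+r$ and solve the recurrence explicitly, which gives $|\lambda_{r^k}|\le\sum_{j}|\alpha|^j|\beta|^{k-j}$ with $|\alpha|,|\beta|\lesssim \sqrt r$ or $\le r$. This yields a bound of the shape $(k+1)r^{(k+2)/2}$ after using $\alpha\beta=r$.
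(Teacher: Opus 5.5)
\textbf{The index-$r$ case has a genuine gap.}

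Your cases $\O_\ell$ maximal and $[\mf{O}:\O]=r^2$ are correct and follow the paper. In the latter, your argument that $Eu$ is cuspidal is a valid substitute for the paper's observation that $E$ preserves the sum of entries (so $\widetilde B_0(Eu)=0$). Your argument runs: the eigenvalues of $Eu$ at $n$ coprime to $r$ obey the Deligne bound, so they differ from those of $\widetilde e$, and self-adjointness of $\widetilde B_n$ then forces $\langle Eu,\widetilde e\rangle=0$.

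The gap is in the index-$r$ case with $r\nmid M$, which is where all the content lies. You never determine $c_r$. Moreover, the value you ``expect'', $c_r=-\varepsilon r$, has the wrong sign. The missing step is short. Write $b_n$ for the coefficients of $f$, and set $b_{r^{-1}}:=0$. Substitute $\lambda_{r^k}(u)=b_{r^k}+c_rb_{r^{k-1}}$ into \cref{eq:recurrence}, and use the Hecke recurrence $b_{r^k}=b_rb_{r^{k-1}}-rb_{r^{k-2}}$, which is valid because $r\nmid M$. Everything cancels except
\[
(c_r-r\varepsilon)\bigl(b_{r^{k-1}}+c_rb_{r^{k-2}}\bigr)=(c_r-r\varepsilon)\,\lambda_{r^{k-1}}(u)=0\qquad(k\geq 2).
\]
If $c_r\neq r\varepsilon$, then $\lambda_r(u)=\lambda_{r^2}(u)=0$. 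That means $c_r=-b_r$ and $b_{r^2}=b_r^2$, contradicting $b_{r^2}=b_r^2-r$. Hence $c_r=r\varepsilon$ and $\lambda_{r^k}(u)=b_{r^k}+r\varepsilon\, b_{r^{k-1}}$. Only at this point does your Deligne estimate apply. The sign is irrelevant to the bound, but $|c_r|=r$ is exactly what the estimate uses.

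\textbf{Your fallback does not bypass this.}
\begin{itemize}
\item The estimate $|a|\leq 2\sqrt r+r$ can only come from $a=b_r+c_r$ together with $|c_r|\leq r$. That is the unproved claim, so the fallback is circular.
\item Even granting it, the recurrence alone is too weak. Write $x_k=\lambda_{r^k}(u)$ and $U_k=\sum_{j=0}^k\alpha^j\beta^{k-j}$; then $x_k=U_k+r\varepsilon\, U_{k-1}$. Take $\varepsilon=-1$ and $a=r+2\sqrt r$, so $\alpha+\beta=2r+2\sqrt r$. Then one characteristic root exceeds $2r$, and $x_k$ grows like its $k$-th power.
\item The relation $\alpha\beta=r$ does not prevent this. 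Even $|\alpha|,|\beta|\leq r$ would only give a bound of order $kr^k$.
\end{itemize}
What the argument actually needs is $|\lambda_r(u)-r\varepsilon|\leq 2\sqrt r$, so that both roots have modulus $\sqrt r$. That is precisely the statement $\lambda_r(u)-r\varepsilon=b_r$, i.e.\ $c_r=r\varepsilon$.
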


As in \cref{sec:brandt algebra relations}, we break into cases depending on the power of $\ell$ dividing $\disc(\O)$. That is, either $\O_\ell$ is maximal, $\O$ is submaximal of index $r$ and $\ell=r$, or $\O$ is submaximal of index $r^2$ and $\ell=r$. In each case we relate the coefficients of $\Theta(u,u)$ to the coefficients of a newform $f$ of appropriate level.

\begin{lemma}\label{lem:maximal eigenvalues}
    Let $\O$ be a maximal or submaximal order, $\ell\nmid\disc(\O)$ a prime, and let $u\in\mf{M}(\O)$ be a cuspidal Brandt eigenvector. Then there exists a newform 
    \[f(q)=\sum b_nq^n\in\mc{S}_2(\Gamma_0(M))\]
    for some $M\mid \disc(\O)$ such that $\lambda_{\ell^k}(u)=b_{\ell^k}$ for all $k\geq 0$.
\end{lemma}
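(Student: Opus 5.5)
We plan to pass from the cusp form $\Theta(u,u)$ to a single newform via \cref{lem:decompose almost eigenform}, and then compare coefficients at powers of $\ell$ using Hecke theory at the good prime $\ell$. Normalize $\langle u,u\rangle=1$ and set $F:=\Theta(u,u)=\sum_n \lambda_n(u)q^n$. By \cref{lem:cusp subspace} and \cref{lem:level equals disc}, $F\in\mc{S}_2(\Gamma_0(N))$ with $N=\disc(\O)$. By \cref{lem:Brandt_Hecke_equal_spectra}, $F$ is a $\Tan_N$-eigenvector with eigenvalue $\lambda_n(u)$ under $T_n$ for $\gcd(n,N)=1$. \Cref{lem:decompose almost eigenform} then gives some $M\mid N$, a newform $f=\sum b_nq^n\in\mc{S}_2^{\mathrm{new}}(\Gamma_0(M))$, and constants $c_d$ with $F(q)=\sum_{d\mid N/M}c_df(q^d)$.

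Next we identify the eigenvalues. Each $f(q^d)$ is an eigenvector of $T_n$ for $\gcd(n,N)=1$ with eigenvalue $b_n$, since $T_n$ commutes with $q\mapsto q^d$ when $\gcd(n,d)=1$. Because $F\neq 0$, comparing the action of $T_n$ on both sides gives $\lambda_n(u)=b_n$ for all $n$ coprime to $N$. Since $\ell\nmid\disc(\O)=N$, we may take $n=\ell^k$, which yields $\lambda_{\ell^k}(u)=b_{\ell^k}$ for all $k\geq 0$. The case $k=0$ is just the convention $\lambda_1$ on both sides equals $1$, with $b_1=1$.

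An alternative, more elementary route is available if one wants to avoid the Hecke action on oldforms. Here $\lambda_{\ell^k}(u)$ satisfies the recurrence of \cref{lem:classical Brandt recurrence} with $\lambda_1(u)=1$, and $b_{\ell^k}$ satisfies the same recurrence because $\ell\nmid M$. So it suffices to match $\lambda_\ell(u)=b_\ell$, which is the $n=\ell$ case above.

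The main subtle point is the claim that the $T_\ell$-eigenvalue of $F$ equals that of $f$. This needs only that the $f(q^d)$ share the eigenvalue $b_\ell$, which holds because $d\mid N$ is coprime to $\ell$. We also need that $F$ is genuinely nonzero, which holds because its $q$-coefficient is $\langle u,u\rangle=1$.
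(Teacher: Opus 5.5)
Your proof is correct, and it follows the paper's outline: apply \cref{lem:decompose almost eigenform} to $\Theta(u,u)$. The difference is in the final step.

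The paper never uses the Hecke action on the oldforms $f(q^d)$; it only compares coefficients. Comparing coefficients of $q$ gives $c_1=1$. Every $d\mid N/M$ is coprime to $\ell$, so $f(q^d)$ has no $q^{\ell^k}$ term unless $d=1$. Hence the $q^{\ell^k}$-coefficient of $\Theta(u,u)$ is exactly $b_{\ell^k}$.

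You instead compare eigenvalues. Each $f(q^d)$ is a $T_n$-eigenvector with eigenvalue $b_n$ for $\gcd(n,N)=1$, because $T_n$ commutes with $q\mapsto q^d$ when $\gcd(n,d)=1$. Since $F\neq 0$, this forces $\lambda_n(u)=b_n$.

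What each buys: your route needs that commutation fact but never computes $c_1$. The paper's route is a bare coefficient comparison, and the same comparison gives $\lambda_n(u)=b_n$ for every $n$ coprime to $N/M$, slightly more than your "$n$ coprime to $N$".

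Your ``alternative, more elementary route'' does not actually avoid the Hecke action on oldforms. It reduces to the case $n=\ell$ of your main argument, which uses that action. The direct coefficient comparison is what avoids it, and it handles all $k$ at once, so the recurrence is not needed.
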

\begin{proof}
    The condition $\ell\nmid\disc(\O)$ implies $\mc{A}$ is split at $\ell$ and $\O_\ell$ is maximal.
    By \cref{lem:decompose almost eigenform}, 
    there is a newform $f$ of some level $M\mid N$, $N:=\disc(\O)$, with 
    \[\Theta(u,u)(q)=\sum_{d\mid \frac{N}{M}} c_d f(q^d)\]
    for some coefficients $c_d\in\CC$,
    with $c_1=1$ by comparing the coefficient of $q$ on each side.
    Since $\ell\nmid\disc(\O)$ by assumption, every nontrivial divisor of $\frac{N}{M}$ has a prime factor distinct from $\ell$. So only the $d=1$ term contributes to the coefficient of $q^{\ell^k}$.
\end{proof}
\begin{remark}
    By a result of Ponomarev, we can strengthen \cref{lem:maximal eigenvalues} if $\O$ is %
    maximal: for any cuspidal Brandt eigenvector $u\in \mf{M}(\O)$, $\Theta(u,u)$ is a newform in $\mc{S}_2(\Gamma_0(\disc(\mc{A})))$ \cite[Theorem 4]{Ponomarev}. %
\end{remark}

\begin{lemma}\label{lem:index r eigenvalues}
    Let $\O$ be submaximal of index $r$, and $u\in\mf{M}(\O)$ a cuspidal Brandt eigenvector which is also an eigenvector for $W_{\mf{r}}$. Then either $|\lambda_{r^k}(u)|=1$ for all $k\geq 0$, or there exists a newform 
        \[f(q)=\sum b_nq^n\in\mc{S}_2(\Gamma_0(\disc(\mc{A})))\]
        and $\delta\in\{-1,1\}$ such that 
        \[\Theta(u,u)(q)=f(q)+r\delta f(q^r).\]
\end{lemma}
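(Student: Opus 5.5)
The plan is to combine the recurrence of \cref{eq:recurrence} with the Atkin--Lehner decomposition of \cref{lem:decompose almost eigenform}. Write $\epsilon\in\{\pm1\}$ for the eigenvalue of $u$ under the involution $W_{\mf{r}}$; this is possible by \cref{lem:W commutes}. Applying \cref{eq:recurrence} to $u$ gives
\[
\lambda_{r^{n+1}}(u)=\lambda_{r^n}(u)\bigl(\lambda_r(u)-r\epsilon\bigr)-r\lambda_{r^{n-1}}(u)
\]
for all $n\geq 1$. Together with $\lambda_1(u)=1$, the whole sequence $\lambda_{r^k}(u)$ is therefore determined by $a:=\lambda_r(u)$ and $\epsilon$. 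Its generating series is
\[
\sum_k \lambda_{r^k}(u)X^k=\frac{1+r\epsilon X}{1-(a-r\epsilon)X+rX^2}.
\]
This is the shape of the Euler factor of an oldform $f(q)+r\delta f(q^r)$ coming from a newform $f$ of level prime to $r$. The proof should match these two descriptions.

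Next, let $N=\disc(\O)=r\disc(\mc{A})$. By \cref{lem:Brandt_Hecke_equal_spectra} and \cref{lem:cusp subspace}, $\Theta(u,u)$ is a normalized cuspidal $\Tan$-eigenvector, so \cref{lem:decompose almost eigenform} writes it as $\sum_{d\mid N/M}c_d f(q^d)$ for a single newform $f=\sum b_nq^n$ of level $M\mid N$. Since $N$ is squarefree at $r$, there are two cases.

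In the first case, $r\mid M$. Then $N/M$ is coprime to $r$, so the coefficients at $r^k$ are $b_{r^k}=b_r^k$ with $b_r\in\{\pm1\}$ (weight $2$, $r\,\|\,M$). This gives $|\lambda_{r^k}(u)|=1$. To use this I must know that $\Theta(u,u)$ has $q^{r^k}$-coefficient $\lambda_{r^k}(u)$; this holds by definition, since $\langle u,u\rangle=1$. It also needs the standard fact that $a_r=\pm1$ for a newform of level exactly divisible by $r$.

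In the second case, $r\nmid M$, so $M\mid\disc(\mc{A})$. Then $\Theta(u,u)=\sum_{d}c_d f(q^d)$ with $d$ ranging over divisors of $N/M=r\cdot\disc(\mc{A})/M$. I need to show that only $d\in\{1,r\}$ occur, so that $M=\disc(\mc{A})$ and $c_1=1$. For this I would invoke Jacquet--Langlands / Eichler: a newform of level $M$ attached to the definite algebra must have every prime of $\disc(\mc{A})$ dividing its level. This can be argued via the $\ell\mid\disc(\mc{A})$ Brandt relation $B_p^2=1$ (\cref{lem:classical ramified Brandt recurrence}), which pins down the $p$-part of the Euler factor; oldforms from lower level at $p$ would contradict this. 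This step is the most delicate, and I expect to lean on a cited result (e.g.\ Pizer/Ponomarev-style) rather than a full argument.

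Granting that, we have $\Theta(u,u)=f(q)+c_rf(q^r)$. Comparing the $r^k$-coefficients with the recurrence should finish the proof. Since $r\nmid M$, the sequence $b_{r^k}$ satisfies $b_{r^{k+1}}=b_rb_{r^k}-rb_{r^{k-1}}$. The $r^k$-coefficients are $\lambda_{r^k}=b_{r^k}+c_rb_{r^{k-1}}$, so their generating series is $(1+c_rX)/(1-b_rX+rX^2)$. Matching this with the series above, the denominators and numerators must agree, or else there is cancellation. Equating coefficients at $X$ and $X^2$ yields $a=b_r+c_r$ and a quadratic relation. Eliminating gives $c_r^2=r^2$, so $c_r=r\delta$ with $\delta=\epsilon$ (and $a-r\epsilon=b_r$). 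The main obstacle is excluding degenerate cancellation in this matching. I would handle it by comparing the first three coefficients directly rather than through rational functions.
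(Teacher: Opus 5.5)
Your plan follows the paper's proof: decompose $\Theta(u,u)$ with \cref{lem:decompose almost eigenform}, split on whether $r$ divides the level $M$ of the newform $f$, and when $r\nmid M$ match the recurrence of \cref{eq:recurrence} against the Hecke recurrence for $b_{r^k}$.

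For $r\mid M$ you quote the Atkin--Lehner fact $b_r=\pm1$ for $r\,\|\,M$ in weight $2$. The paper instead first shows $M=N$ and then gets $\delta a_r=-1$ from the recurrence at $k=2$. Both work, and yours does not need $\disc(\mc{A})\mid M$ in that case.

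The step you leave to an unspecified citation, $\disc(\mc{A})\mid M$ when $r\nmid M$, has a short direct proof (the paper's). It is exactly the precise form of your remark about $B_p^2=1$. Suppose $p\mid\disc(\mc{A})$ and $p\nmid M$. Then $\O_p$ is maximal, so \cref{lem:classical ramified Brandt recurrence} gives $a_p:=\lambda_p(u)=\pm1$ and $a_{p^2}=1$. Since $N$ is squarefree, $a_p=b_p+c_p$ and $a_{p^2}=b_{p^2}+c_pb_p=b_p^2-p+c_pb_p$. Eliminating $c_p$ yields $a_pb_p=p+1$, i.e.\ $|b_p|=p+1>2\sqrt p$, contradicting \cref{lem:deligne bound}. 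A Jacquet--Langlands-type citation plus strong multiplicity one would also do this, but is much heavier. This step is only needed to kill the terms $c_df(q^d)$ with $d\notin\{1,r\}$, which never touch the $q^{r^k}$-coefficients.

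The elimination step, however, is wrong as stated. The $X$- and $X^2$-coefficients of the cross-multiplied identity give only $a=b_r+c_r$ and $(c_r-r\epsilon)(b_r+c_r)=0$, not $c_r^2=r^2$, so the branch $a=\lambda_r(u)=0$ remains open.

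There is no cancellation issue to worry about. Both denominators have constant term $1$, so they are units in $\CC[[X]]$, and
\[(1+r\epsilon X)(1-b_rX+rX^2)=(1+c_rX)\bigl(1-(a-r\epsilon)X+rX^2\bigr)\]
holds exactly. Its $X^3$-coefficient reads $r^2\epsilon=rc_r$, giving $c_r=r\epsilon$ at once, and then $b_r=a-r\epsilon$.

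This is equivalent to the paper's observation that $c\neq r\delta$ would force $b_{r^{k-1}}+cb_{r^{k-2}}=0$ for $k=2,3$, contradicting $b_{r^2}=b_r^2-r$. Your fallback of comparing coefficients directly also works, provided you go up to $\lambda_{r^3}(u)$.
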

\begin{proof}
    Let $N=\disc(\O)$. We write $a_n:=\lambda_n(u)$ for convenience. By \cref{lem:Brandt_Hecke_equal_spectra} and \cref{lem:decompose almost eigenform}, there is a newform $f\in\mc{S}_2(\Gamma_0(M))$ for some $M\mid N$ such that \[\Theta(u,u)(q)=\sum_{d\mid \frac{N}{M}}c_df(q^d)\]
    for some coefficients $c_d\in\CC$.
Write $f(q)=\sum_{n=1}^\infty b_n q^n$ with $b_1=1$. Comparing coefficients on both sides, 
    \[
    a_n=\sum_{d\mid \gcd{ \left( n,\frac NM  \right)}} c_d b_{n/d}.
\]
Observe that $N$ is squarefree. We first prove that $\disc(\mc{A})$ must divide $M$. If not, suppose~$p$ divides $\disc(\mc{A})$ but not $M$. Then since the level of $f$ is coprime to $p$, its coefficients obey the standard Hecke relations $b_{p^{k+1}}=b_{p^k}b_p-pb_{p^{k-1}}$. On the other hand we have $a_p=\pm 1$ since 
$B_p^2=1$ (\cref{lem:classical ramified Brandt recurrence}). 
Solving the equations $a_p=b_p+c_p$ and 
\[1=a_{p^2}=b_{p^2}+c_pb_p=b_p^2-p+c_pb_p,\]
we find $|b_p|=p+1$, which contradicts the Deligne bound. Hence we have either $N=M$ or $N=rM$.

First consider the case $N=M$, so $\Theta(u,u)$ is a newform of level $\disc(\O)$. Then $a_{r^k}=b_{r^k}$ for all $k\geq 0$. Since $r$ divides the level of the newform $f$, the coefficients satisfy $a_{r^k}=a_r^k$ for all $k\geq 0$ \cite[Theorem 3(ii)$'$]{AtkinLehner}. %
Applying the recurrence relation of \cref{eq:recurrence} to $a_{r^k}$ we find
\[a_r^k=a_r^{k-1}(a_r-r\delta)-ra_r^{k-2},\]
where $\delta$ is the eigenvalue of $u$ under the involution $W_{\mf{r}}$.
Taking $k=2$ we find $\delta a_r=-1$, so that $|a_r|=1$. Hence $|a_{r^k}|=1$ for all $k$.

Now consider the case $N=rM$, so $\Theta(u,u)(q)=f(q)+cf(q^\qprime)$ %
for some $f\in\mathcal{S}_2(\Gamma_0(M))$ and $c\in\CC$. This results in the equation
    \[a_{\qprime^k}=b_{\qprime^k}+cb_{\qprime^{k-1}}\]
    for all $k\geq 0$ (using the convention that $b_x=0$ if $x$ is not a positive integer). Now since $\qprime$ is coprime to the level of $f$, the standard Hecke operator recurrence relation applies to the coefficients $b_{\qprime^k}$, while the $a_{\qprime^k}$ coefficients are instead subject to the recurrence relation of  \cref{eq:recurrence}. 
    
    We took our eigenbasis so that $u$ is an eigenvector for $W_{\mf{r}}$ as well, so as a consequence, letting $\delta=\pm 1$ be the eigenvalue of $u$ under $W_{\mf{r}}$, we have for all $k\geq 2$,
    \begin{align*}
        b_{\qprime^k}+cb_{\qprime^{k-1}}&=(b_{\qprime^{k-1}}b_{\qprime}-\qprime b_{\qprime^{k-2}})+c(b_{\qprime^{k-2}}b_{\qprime}-\qprime b_{\qprime^{k-3}}),\\
        a_{\qprime^k}&=a_{\qprime^{k-1}}(a_\qprime-\qprime\delta)-\qprime a_{\qprime^{k-2}}\\
        &=(b_{\qprime^{k-1}}+cb_{\qprime^{k-2}})(b_\qprime+c-\qprime\delta)-\qprime(b_{\qprime^{k-2}}+cb_{\qprime^{k-3}}),
    \end{align*}
    and the equality of these expressions simplifies to
    \begin{align*}
        (c-\qprime\delta)(b_{\qprime^{k-1}}+cb_{\qprime^{k-2}})=0.
    \end{align*}
    If $b_{\qprime^{k-1}}+cb_{\qprime^{k-2}}=0$ for all $k\geq 2$ then $b_{\qprime}=-c$ and $b_{\qprime^2}=c^2$, contradicting $b_{\qprime^2}=b_{\qprime}^2-\qprime$. Thus $c=\qprime\delta$, so we can write %
    \[
        \Theta(u,u)(q)=f(q)+ \qprime\delta f(q^{\qprime}).\qedhere\]
\end{proof}

\begin{lemma}\label{lem:index r2 eigenvalues}
    Let $\O$ be submaximal of index $r^2$, and let $u\in\mf{M}(\O)$ be a cuspidal Brandt eigenvector. Then either $\lambda_{r^k}(u)=0$ for all $k\geq 1$, or there exists a newform 
    \[f(q)=\sum b_nq^n\in\mc{S}_2(\Gamma_0(\disc(\mc{A})))\]
    such that $\lambda_{r}(u)=0$ and $\lambda_{r^k}(u)=(r^2-r)b_{r^{k-2}}$ for all $k\geq 2$.
\end{lemma}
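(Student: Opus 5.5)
The plan is to reduce to the maximal order via \cref{prop:Eu eigenvector} and then identify the resulting eigenvector of $\mf{B}(\mf{O})$ with a newform of level $\disc(\mc{A})$. Let $u$ be a cuspidal Brandt eigenvector for $\O$. By \cref{prop:Eu eigenvector} we already know $\lambda_r(u)=0$, and either $\lambda_{r^k}(u)=0$ for all $k\geq 2$ (the first alternative), or $w:=Eu\neq 0$ is an eigenvector of $\widetilde{B}_n$ for all $n\geq 1$. In the second case we also have $\lambda_{r^k}(u)=(r^2-r)\lambda_{r^{k-2}}(w)$ for $k\geq 2$, and $\lambda_n(u)=\lambda_n(w)$ for $n$ coprime to $r$. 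So it remains to treat the case $w\neq 0$ and to produce the newform $f$ with $b_{r^j}=\lambda_{r^j}(w)$.

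The first step is to show that $w$ is cuspidal in $\mf{M}(\mf{O})$, i.e.\ orthogonal to the genus vector $\widetilde{e}$. For this I would use the eigenvalue-separation argument from the proof of \cref{lem:eisenstein coeffs}(c). Suppose $w$ had a nonzero component along $\widetilde{e}$. Since $\mf{M}(\mf{O})$ decomposes into $\widetilde{e}$ and cuspidal eigenvectors, and $w$ is an eigenvector of $\widetilde{B}_n$ for some $n=\ell^k$ (with $\ell\nmid r\disc(\mc{A})$) that separates $\widetilde{e}$ from all cuspidal eigenvalues, $w$ would have to be a multiple of $\widetilde{e}$. Then $\lambda_{\ell^k}(u)=\lambda_{\ell^k}(w)=\frac{\ell^{k+1}-1}{\ell-1}$ for large $k$. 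But $u$ is cuspidal, so $\Theta(u,u)$ is a $\Tan$-eigen cusp form and, via \cref{lem:maximal eigenvalues} and \cref{lem:deligne bound}, $|\lambda_{\ell^k}(u)|\leq (k+1)\ell^{k/2}$. This is a contradiction for large $k$, so $w$ is cuspidal.

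Next I would normalize $w$ and apply Ponomarev's theorem (the remark after \cref{lem:maximal eigenvalues}). Because $\mf{B}(\mf{O})$ is generated by the $\widetilde{B}_n$ with $n\geq 1$ together with $\widetilde{B}_0$, and $\widetilde{B}_0$ kills cuspidal vectors, $w$ is an eigenvector of all of $\mf{B}(\mf{O})$. Hence $f:=\Theta(w,w)/\langle w,w\rangle$ is a newform in $\mc{S}_2(\Gamma_0(\disc(\mc{A})))$ with $b_n=\lambda_n(w)$ for all $n$. If one prefers to avoid Ponomarev, the same conclusion follows from \cref{lem:maximal eigenvalues} applied to $\mf{O}$ and the prime $r\nmid\disc(\mf{O})$: this gives a newform $f$ of level $M\mid\disc(\mc{A})$ with $b_{r^j}=\lambda_{r^j}(w)$. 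That suffices for the statement, since a form of level $M$ also lies in $\mc{S}_2(\Gamma_0(\disc(\mc{A})))$, although "newform in" then has to be read loosely. Combining with \cref{prop:Eu eigenvector}(b) gives $\lambda_{r^k}(u)=(r^2-r)b_{r^{k-2}}$ for $k\geq 2$, as required.

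The main obstacle is the cuspidality of $Eu$. It needs both the existence of a Hecke-separating prime power and the Deligne-type bound on $\lambda_{\ell^k}(u)$ for primes $\ell$ not dividing $\disc(\O)$. That bound comes from \cref{lem:maximal eigenvalues}, which is available because $\ell\nmid r\disc(\mc{A})$.
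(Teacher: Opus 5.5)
Your proof is correct. It differs from the paper mainly in how you show that $w=Eu$ is cuspidal.

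\textbf{The paper's argument for cuspidality.} It is purely combinatorial. Write $s(u)$ for the sum of the coordinates of $u$ in the standard basis. Then $B_0u=s(u)\,e$ and $\langle u,e\rangle=\tfrac12 s(u)$. So a vector is cuspidal exactly when its coordinate sum vanishes, both in $\mf{M}(\O)$ and in $\mf{M}(\mf{O})$, since in both cases the genus subspace is the line through $e$ (resp.\ $\widetilde{e}$). The map $E$ sends each standard basis vector of $\mf{M}(\O)$ to a standard basis vector of $\mf{M}(\mf{O})$, so it preserves coordinate sums. Hence $\widetilde{B}_0(Eu)=0$, and $Eu$ is a cuspidal eigenvector for all of $\mf{B}(\mf{O})$ in one line. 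This needs neither a separating prime power nor the Deligne bound.

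\textbf{Your argument.} You argue that a nonzero $\widetilde{e}$-component would force $\lambda_{\ell^k}(u)=\frac{\ell^{k+1}-1}{\ell-1}$ for some $\ell\nmid r\disc(\mc{A})$. This contradicts the bound coming from \cref{lem:maximal eigenvalues} and \cref{lem:deligne bound}. The argument is valid. Your separation step is unnecessary, though: if that component is nonzero, comparing $\widetilde{e}$-components in $\widetilde{B}_n w=\lambda_n(u)w$ already gives $\lambda_n(u)=\lambda_n(\widetilde{e})$ for every $n$ coprime to $r$. So what you flagged as the main obstacle is actually the easy part.

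\textbf{The final step.} The paper uses exactly your fallback: \cref{lem:maximal eigenvalues} applied to $\mf{O}$ at the prime $r$. This only produces a newform of some level $M\mid\disc(\mc{A})$, so the paper's statement has the same looseness about ``newform in $\mc{S}_2(\Gamma_0(\disc(\mc{A})))$'' that you noticed. Your primary route through Ponomarev's theorem gives a newform of exact level $\disc(\mc{A})$. It matches the statement literally, at the cost of an external citation.
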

\begin{proof}
    If $\lambda_{r^k}(u)=0$ for all $k\geq 1$ then we are done, so suppose this is not the case. Then by \cref{prop:Eu eigenvector}, we have $\lambda_r(u)=0$, and there exists $w\in\mf{M}(\mf{O})$, an eigenvector for $\widetilde{B}_n\in\mf{B}(\mf{O})$ for all $n\geq 1$, satisfying $\lambda_{r^k}(u)=(r^2-r)\lambda_{r^{k-2}}(w)$ for $k\geq 2$. Now since $u$ is cuspidal we have $B_0u=0$, so the sum of entries of $u$ is $0$. By definition of $E$, the sum of entries of $Eu$ equals the sum of entries of $u$, so we can conclude $\widetilde{B}_0(Eu)=0$. Thus $Eu$ is in fact a cuspidal eigenvector for the full Brant algebra $\mf{B}(\mf{O})$.
    Therefore there is a newform $f(q)=\sum b_nq^n$ such that $\lambda_{r^k}(w)=b_{r^k}$ by \cref{lem:maximal eigenvalues}. Hence $\lambda_{r^k}(u)=(r^2-r)b_{r^{k-2}}$ for all $k\geq 2$ as desired.
\end{proof}

\begin{proof}[Proof of \emph{\cref{lem:cusp_coeff_bounds}}]
    If $\O_\ell$ is maximal then there is a newform $f(q)=\sum b_nq^n$ such that $\lambda_{\ell^k}(u)=b_{\ell^k}$ by \cref{lem:maximal eigenvalues}, so $|\lambda_{\ell^k}(u)|\leq (k+1)\ell^{k/2}$ by the Deligne bound.

    If $\O$ is submaximal of index $r$ and $\ell=r$, then either $|\lambda_{r^k}(u)|=1$ for all $k$, or there is a newform $f(q)=\sum b_nq^n$ such that $\lambda_{r^k}(u)=b_{r^k}\pm r b_{r^{k-1}}$ %
    by \cref{lem:index r eigenvalues}, so 
    \begin{align*}
        |\lambda_{\qprime^k}(u)|&\leq |b_{\qprime^k}|+\qprime |b_{\qprime^{k-1}}|\\
        &\leq (k+1)\qprime^{k/2}+\qprime k\qprime^{(k-1)/2}\\
        &= (k+1+k\sqrt{\qprime})\qprime^{k/2}.
    \end{align*}

    If $\O$ is submaximal of index $r^2$ and $\ell=r$, then either $\lambda_{r^k}(u)=0$ for all $k$, or there is a newform $f(q)=\sum b_nq^n$ such that $\lambda_{r}(u)=0$ and $\lambda_{r^k}(u)=(r^2-r)b_{r^{k-2}}$ by 
    \cref{lem:index r2 eigenvalues}. In the latter case, with the Deligne bound $|b_{\qprime^{k-2}}|\leq (k-1)\qprime^{(k-2)/2}$, we therefore have 
    \[|\lambda_{\qprime^k}(u)|\leq (\qprime^2-\qprime)(k-1)\qprime^{(k-2)/2}\leq k\qprime^{(k+2)/2}\]
    for $k\geq 2$ (and the bound for $k=1$ is immediate).
\end{proof}

\section{Counting elements of prescribed norm}\label{sec:final counts}

Let $\mc{A}$ be a definite quaternion algebra over $\QQ$. For an order $\O\sus\mc{A}$, 
we let $\repnum{n}{\O}$ denote the number of elements of norm $n$ in $\O$, so that $\theta_{\O}(q)=\sum_{n=0}^\infty \repnum{n}{\O} q^n$.

\begin{proposition}\label{prop: max submax upper lower}
    Let $\O\sus\mc{A}$ be an order, $\ell\nmid\disc(\mc{A})$ a prime, and $k\geq 1$ be an integer. If $\O$ is maximal then
    \[\repnum{\ell^k}{\O}\geq \frac{24}{\phi(\disc(\mc{A}))}\frac{\ell^{k+1}-1}{\ell-1}-12k\ell^{(k+2)/2},\]
    and if $\O$ is not maximal then 
    \[\repnum{\ell^k}{\O}\leq \frac{16}{\phi(\disc(\mc{A}))}\frac{\ell^{k+1}-1}{\ell-1}+12k\ell^{(k+2)/2}.\]
\end{proposition}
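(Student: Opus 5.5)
The plan is to combine \cref{cor:max submax eigenform_coeffs}, \cref{cor: submaximal eisenstein bound} and \cref{lem:cusp_coeff_bounds}. Throughout, the non-maximal case will be reduced to the submaximal one.

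\textbf{Reduction to submaximal orders.} Let $\O$ be non-maximal, and choose a maximal order $\mf{O}\supseteq\O$. The suborders of $\mf{O}$ strictly containing $\O$ form a finite set, since their indices are bounded. So there is an order $\O'$ with $\O\subseteq\O'\subsetneq\mf{O}$ that is maximal among proper suborders, i.e.\ submaximal. Since $\O\subseteq\O'$, we have $\repnum{\ell^k}{\O}\le\repnum{\ell^k}{\O'}$. It therefore suffices to prove the upper bound for submaximal $\O$.

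\textbf{Decomposition of the theta function.} For $\O$ maximal or submaximal, \cref{cor:max submax eigenform_coeffs} gives
\[\repnum{\ell^k}{\O}=g_{\ell^k}(\O)+\sum_{i=1}^m c_i\lambda_{\ell^k}(u_i),\]
with $c_i\ge0$ and $\sum c_i\le|\O^\times|$. Here the coefficient of $q^{\ell^k}$ in $f_i=\Theta(u_i,u_i)$ is $\lambda_{\ell^k}(u_i)$, since $\langle u_i,u_i\rangle=1$. The $u_i$ are cuspidal Brandt eigenvectors. When $\O$ has index $r$ they are also $W_{\mf r}$-eigenvectors, by the choice of normalized eigenbasis made after \cref{lem:Brandt_Hecke_equal_spectra}.

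\textbf{Bounding each piece.} The term $g_{\ell^k}$ is handled by \cref{cor: submaximal eisenstein bound}: it equals $\frac{24}{\phi(\disc\mc A)}\frac{\ell^{k+1}-1}{\ell-1}$ in the maximal case and is at most $\frac{16}{\phi(\disc\mc A)}\frac{\ell^{k+1}-1}{\ell-1}$ in the submaximal case. \Cref{lem:cusp_coeff_bounds} gives $|\lambda_{\ell^k}(u_i)|\le 2k\ell^{(k+2)/2}$. Combining these, the error term is bounded in absolute value by
\[|\O^\times|\cdot 2k\ell^{(k+2)/2}.\]

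\textbf{Bounding the unit group.} The remaining step, and the only genuinely new input, is $|\O^\times|\le 6$. I expect this to be the main obstacle, though it is classical. In a definite rational quaternion algebra, $\O^\times$ is a finite subgroup of the norm-one group of a definite quaternion algebra over $\mathbb{Q}$. Such groups are cyclic of order $\le6$, or $Q_8$, $\mathrm{SL}_2(\mathbb{F}_3)$, or the dicyclic group of order $12$, so $|\O^\times|\le24$ in general. This is not enough on its face: $6\cdot2k=12k$, so the stated constant $12k$ requires $|\O^\times|\le 6$.

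I would therefore revisit the proof of \cref{lem:cusp_coeff_bounds}, which actually gives sharper bounds. In the case where $\O_\ell$ is maximal, it gives $(k+1)\ell^{k/2}$. In the index-$r$ case it gives $(k+1+k\sqrt r)r^{k/2}$, which is at most $(2k+1)\ell^{(k+1)/2}$. In the index-$r^2$ case it gives $(r^2-r)(k-1)r^{(k-2)/2}$, which is at most $k\ell^{(k+2)/2}$. I would aim to show that $\max|\lambda|\cdot|\O^\times|\le 12k\ell^{(k+2)/2}$ in each case, using $|\O^\times|\le24$ (attained only for $\disc\mc A=2$), together with $\ell^{(k+2)/2}\ge 2\ell^{k/2}$ and $2\cdot 24(k+1)\ell^{k/2}\le 12k\ell^{(k+2)/2}$ once $\ell$ or $k$ is not tiny.

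If the small cases fail, I would instead use $\sum c_i\le|\O^\times|-d_1\lambda_1(e)$, i.e.\ subtract the Eisenstein mass from the coefficient of $q$, to absorb the constant. As a last resort, I would check the finitely many remaining cases directly.
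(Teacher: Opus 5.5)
Your reduction to submaximal orders, your use of the decomposition from \cref{cor:max submax eigenform_coeffs}, and your appeal to \cref{cor: submaximal eisenstein bound} and \cref{lem:cusp_coeff_bounds} all match the paper. The gap is exactly the step you flag: you never prove $\sum_i c_i|\lambda_{\ell^k}(u_i)|\le 12k\ell^{(k+2)/2}$.

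Your main plan cannot close this step using only $|\O^\times|\le 24$ (with $24$ occurring only for $\disc\mc{A}=2$). Consider first the case $\O_\ell$ maximal. The sharper bound there needs $|\O^\times|(k+1)\le 12k\ell$, which for the Hurwitz order fails at $\ell=3$, $k=1$. In the index-$r$ case you would need $|\O^\times|(k+1+k\sqrt r)\le 12kr$. With $|\O^\times|=12$ and $r=2$ this fails for every $k$. So the bad pairs $(\ell,k)$ are not a finite list. ``Checking the remaining cases directly'' only makes sense once you know which orders can have more than $6$ units.

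Your fallback has the same problem. It is in fact an identity, $\sum_i c_i=|\O^\times|-g_1(\O)$, obtained by comparing coefficients of $q$. But $|\O^\times|-1/\lambda_0(e)$ is not bounded by $6$ in general unless you control the mass of orders with large unit groups.

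The missing input, which is how the paper finishes, is the classification of such orders \cite[Theorem 11.5.14]{Voight}. Up to isomorphism, exactly three orders have $|\O^\times|>6$:
\begin{itemize}
\item the Hurwitz order, of discriminant $2$, with $24$ units;
\item the maximal order of discriminant $3$, with $12$ units;
\item the Lipschitz order, of discriminant $4$, with $8$ units; it is submaximal of index $2$ in the Hurwitz order.
\end{itemize}
None of these has a cuspidal part: each has class number one, or equivalently $\mc{S}_2(\Gamma_0(N))=0$ for $N\le 4$. Their theta series are $1+|\O^\times|\sum_m\bigl(\sum_{N\nmid d\mid m}d\bigr)q^m$. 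Hence $\repnum{\ell^k}{\O}$ equals $\frac{24}{\phi(\disc\mc{A})}\frac{\ell^{k+1}-1}{\ell-1}$ for the two maximal orders and $\frac{8}{\phi(\disc\mc{A})}\frac{\ell^{k+1}-1}{\ell-1}$ for the Lipschitz order, and both stated inequalities hold. In your language, $g_1=24,12,8$ respectively, so $\sum_i c_i=0$ for these three orders.

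Every other order has $|\O^\times|\le 6$. Then $\sum_i c_i\le 6$, and the uniform bound $2k\ell^{(k+2)/2}$ from \cref{lem:cusp_coeff_bounds} gives the constant $12k$ directly. No case-by-case sharpening of the cusp bounds is needed.
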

\begin{proof}
    Once we prove the upper bound on $a_{\ell^k}(\O)$ for submaximal orders, the same upper bound for all non-maximal orders will follow, because any non-maximal order $\O$ is contained in a submaximal order $\O'$ and hence $a_{\ell^k}(\O)\leq a_{\ell^k}(\O')$.    
    We therefore assume, without loss of generality, that $\O$ is either maximal or submaximal. 

    We first consider the case that $|\O^\times|>6$. Up to isomorphism, there are precisely three such quaternion orders \cite[Theorem 11.5.14]{Voight}: these are the quaternion orders with discriminants $2$ (the Hurwitz order), $3$, and $4$ (the Lipschitz order). The lattice structures of these orders are well studied (they are $D_4$, the orthogonal direct sum $A_2 \oplus A_2$, and $\ZZ^4$, respectively), and in particular there are closed formulas for their theta functions: for any quaternion order of discriminant $N$ with $(N-1)\mid 12$, one has
    \[|\O^\times|=\frac{24}{N-1}\qquad \text{and} \qquad \theta_\O(q)=1+|\O^\times|\sum_{m=1}^\infty \left(\sum_{N\nmid d\mid m}d\right)q^m\]
    (for instance, using \cite[Chapter 26]{Voight}), %
    so in particular we have 
    \[a_{\ell^k}(\O)=\begin{cases}
        \displaystyle \frac{24}{\phi(\disc(\mc{A}))}\frac{\ell^{k+1}-1}{\ell-1}, & \disc\O=2\text{ or }3\text{ (maximal)}\\
        \displaystyle \frac{8}{\phi(\disc(\mc{A}))}\frac{\ell^{k+1}-1}{\ell-1}, & \disc\O=4\text{ (submaximal)}.
    \end{cases}\] 
    In all these cases the desired inequalities hold.
    
    From now on we assume $|\O^\times|\leq 6$. Recall that in the case of a submaximal $\O$ of index~$r$ in a maximal order, we assume that each $u_i$ is an eigenvector of the operator $W_{\mf{r}}$ defined by \cref{eq:Wr} (this is possible by \cref{lem:W commutes}). First we use \cref{eq:decomp with genus part} to write
    \[\theta_{\O}(q)-g(q)=c_1f_1(q)+\cdots+c_mf_m(q)\]
    with $g(q)$ the non-cuspidal part, $c_1,\ldots,c_m$ non-negative real numbers, $c_1+\cdots+c_m\leq |\O^\times|\leq 6$, and $f_1,\ldots,f_m$ normalized cusp forms that are $\Tan$-eigenvectors. Write $f_i(q)=\sum a_{i,n}q^n$ for each~$i$, and $g(q)=\sum g_n(\O)q^n$. By \cref{lem:cusp_coeff_bounds}, we have $|a_{i,\ell^k}|\leq 2k\ell^{(k+2)/2}$ for all $i$ and all $k\geq 1$. So comparing the coefficient of $q^{\ell^k}$ on both sides, we obtain
    \begin{align*}
        \left|\repnum{\ell^k}{\O}-g_{\ell^k}(\O)\right|&\leq c_1|a_{1,\ell^k}|+\cdots+c_m|a_{m,\ell^k}|\\
        &\leq (c_1+\cdots+c_m)2k\ell^{(k+2)/2}\\
        &\leq 12k\ell^{(k+2)/2},
    \end{align*}
    and therefore
    \[g_{\ell^k}(\O)-12k\ell^{(k+2)/2}\leq \repnum{\ell^k}{\O}\leq g_{\ell^k}(\O)+12k\ell^{(k+2)/2}.\]
    We now apply \cref{cor: submaximal eisenstein bound} to obtain the exact value of $g_{\ell^k}(\O)$ if $\O$ is maximal, and an upper bound on $g_{\ell^k}(\O)$ if $\O$ is submaximal.
\end{proof}

\begin{theorem}\label{thm:generation bound}
    Let $\mf{O}$ be a maximal order in a definite quaternion algebra $\mc{A}$ over $\QQ$, and let $\ell\nmid\disc(\mc{A})$ be a prime. For any $k\geq 1$ satisfying
    \begin{align}\label{eq:generation bound}
        \ell^{(k-2)/2}>3\phi(\disc(\mc{A}))k,
    \end{align}
    the elements of norm $\ell^k$ in $\mf{O}$ generate $\mf{O}$ as a $\ZZ$-algebra.
\end{theorem}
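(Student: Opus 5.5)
Let $C\subseteq\mf{O}$ be the set of elements of norm $\ell^k$ and $\O:=\ZZ[C]$. The plan is to show $\O$ cannot be a proper suborder of $\mf{O}$. Every element of $C$ lies in $\O$, so $a_{\ell^k}(\O)=a_{\ell^k}(\mf{O})$. We argue by contradiction and compare the two bounds of \cref{prop: max submax upper lower}.

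First, $\O$ is an order. Since $\mc{A}$ is definite, $C$ is finite, and $\O\subseteq\mf{O}$ is a finitely generated $\ZZ$-submodule containing $1$ and closed under multiplication. We must still check that $\O$ has full rank $4$. If it did not, $\O$ would be contained in a quadratic subring. That case is also harmless, because such a subring lies in some submaximal order anyway; more cleanly, $\O\subseteq\O'$ for some proper suborder $\O'$ of full rank, e.g.\ $\O'=\O+\ell\mf{O}$, which is proper if $\O\otimes\FF_\ell\neq\mf{O}/\ell\mf{O}$, or else reduce via \cref{lem:lattice quotient l}. In every case, if $\O\neq\mf{O}$ then all of $C$ lies in some non-maximal order $\O'\subsetneq\mf{O}$. \Cref{prop: max submax upper lower} then gives
\[a_{\ell^k}(\mf{O})=a_{\ell^k}(\O')\leq \frac{16}{\phi(\disc(\mc{A}))}\frac{\ell^{k+1}-1}{\ell-1}+12k\ell^{(k+2)/2},\]
while the lower bound for the maximal order $\mf{O}$ gives
\[a_{\ell^k}(\mf{O})\geq \frac{24}{\phi(\disc(\mc{A}))}\frac{\ell^{k+1}-1}{\ell-1}-12k\ell^{(k+2)/2}.\]

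Combining the two bounds forces
\[\frac{8}{\phi(\disc(\mc{A}))}\frac{\ell^{k+1}-1}{\ell-1}\leq 24k\ell^{(k+2)/2}.\]
Using $\frac{\ell^{k+1}-1}{\ell-1}\geq \ell^k$, this yields $\ell^{k}\leq 3\phi(\disc(\mc{A}))k\ell^{(k+2)/2}$, i.e.\ $\ell^{(k-2)/2}\leq 3\phi(\disc(\mc{A}))k$, contradicting \eqref{eq:generation bound}. Hence $\O=\mf{O}$.

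The main obstacle is the rank issue: ensuring that a non-generating $C$ really lies in a proper \emph{full-rank} suborder, so that \cref{prop: max submax upper lower} applies. For this we use that $\Span_\ZZ(\O)+\ell^m\mf{O}$ is an order for any $m$. If $\O$ has rank less than $4$, then for large $m$ the order $\O+\ell^m\mf{O}$ is still proper, since its index in $\mf{O}$ grows, and it contains $C$. The arithmetic at the end is routine.
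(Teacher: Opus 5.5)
Your proposal is correct and follows the paper's proof: it compares the lower bound on $a_{\ell^k}(\mf{O})$ with the upper bound for proper suborders from \cref{prop: max submax upper lower}, then uses $\frac{\ell^{k+1}-1}{\ell-1}\geq\ell^k$ to contradict \eqref{eq:generation bound}. Your extra care about $\ZZ[C]$ possibly having rank less than $4$ is a detail the paper leaves implicit. You handle it by passing to the proper full-rank order $\ZZ[C]+\ell\mf{O}$, and $m=1$ already suffices there, since a rank-deficient module cannot surject onto $\mf{O}/\ell\mf{O}$.
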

\begin{proof}
    Let $\O$ be any proper suborder of $\mf{O}$, and apply \cref{prop: max submax upper lower}:
    \begin{align*}
        \repnum{\ell^k}{\mf{O}}-\repnum{\ell^k}{\O}&\geq \frac{8}{\phi(\disc(\mc{A}))}\frac{\ell^{k+1}-1}{\ell-1}-24k\ell^{(k+2)/2}\\
        &\geq \frac{8}{\phi(\disc(\mc{A}))}\ell^k-24k\ell^{(k+2)/2},
    \end{align*}
    which is strictly positive by \cref{eq:generation bound}. This proves that the elements of norm $\ell^k$ in $\mf{O}$ do not lie in any proper suborder of $\mf{O}$, so the $\ZZ$-algebra they generate must be the entirety of $\mf{O}$. %
\end{proof}

If we take $\mc{A}=\mc{A}_p$ to be ramified at exactly $p$ and $\infty$, then we have $\phi(\disc(\mc{A}))=p-1$, so \cref{thm: explicit p and k} follows.

\section{Bounding the number of generators}\label{sec: generating set size}

Suppose we wish to find a set of generators for a maximal order $\mf{O}$ with norms constrained to lie in some set $S$ of positive integers. As a running example we may take $S=\{\ell^k:k\geq 0\}$ for some prime $\ell\nmid\disc(\mc{A})$. \Cref{thm:generation bound} tells us that we can replace the set of allowable norms $S$ with the singleton %
$\{\ell^k\}$ for any sufficiently large value of $k$. On the other hand, the corresponding set of generators for $\mf{O}$ may a priori be quite large.

\begin{proposition}\label{prop:weak number bound}
Let $\mf{O}$ be a maximal order in a definite quaternion algebra $\mc{A}$ over $\QQ$.
    Suppose that $\mf{O}$ is generated as a $\ZZ$-algebra by a finite set $C_0$, and let $n$ be the maximum norm attained by %
    any element of $C_0$. Then $\mf{O}=\ZZ[C]$ for a subset $C\subseteq C_0$ with \[|C|\leq 2\log_2(n)-\log_2(\disc(\mc{A}))+4.\]
\end{proposition}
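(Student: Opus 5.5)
Build $C$ greedily: each new generator should at least double the index in $\mf{O}$ of the order generated so far, and this index is bounded by the norms of the generators.

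Start with $C=\emptyset$, so $\ZZ[C]=\ZZ$. While $\ZZ[C]\neq\mf{O}$, some $x\in C_0$ lies outside $\ZZ[C]$, since $C_0$ generates $\mf{O}$. Add such an $x$. Two kinds of step occur:
\begin{enumerate}
\item While $\ZZ[C]$ has rank $1$ or $2$.
\item Once $\ZZ[C]$ is an order.
\end{enumerate}
One element $x\notin\ZZ$ gives the quadratic order $\ZZ[x]$ of rank $2$. A second element $y\notin\QQ(x)$ gives $\ZZ[x,y]\supseteq\Span_\ZZ(1,x,y,xy)$, which has rank $4$ because $\mc{A}$ is generated by $x,y$ as an algebra. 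If $y\in\QQ(x)$, then $\ZZ[x,y]$ is still a rank-$2$ order properly containing $\ZZ[x]$, so its index in $\ZZ[x]$'s fraction-field closure grows; in that case treat it as an index-doubling step inside the quadratic field. This bookkeeping is a bit fiddly, and it can be avoided by choosing the second element from $C_0\setminus\QQ(x)$ directly, which exists since $C_0$ generates $\mf{O}$. So after two well-chosen elements of norm $\le n$ we have an order $\O_0=\ZZ[x,y]$.

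Bound $[\mf{O}:\O_0]$ via discriminants: $\disc(\O_0)=[\mf{O}:\O_0]\disc(\mc{A})$. Also $\disc(\O_0)$ is at most the discriminant of the lattice $\Span_\ZZ(1,x,y,xy)$, since $\O_0$ contains that lattice. For elements of norm at most $n$, compute the Gram determinant of $1,x,y,xy$ under the trace form, or use Hadamard's inequality on the positive definite form $\nrd$. This gives $\det(\langle b_i,b_j\rangle)\le\prod\langle b_i,b_i\rangle=\prod 2\nrd(b_i)\le 2\cdot 2n\cdot 2n\cdot 2n^2=16n^4$. Hence $\disc(\O_0)\le 4n^2$, and so $[\mf{O}:\O_0]\le 4n^2/\disc(\mc{A})$. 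The constants here are exactly what produce $2\log_2 n-\log_2\disc(\mc{A})+2$.

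After $\O_0$, each further $z\in C_0\setminus\ZZ[C]$ gives an order $\ZZ[C\cup\{z\}]$ strictly containing $\ZZ[C]$. Its index in $\ZZ[C]$ is therefore at least $2$, so the index in $\mf{O}$ at least halves. The number of additional steps is thus at most $\log_2[\mf{O}:\O_0]\le 2\log_2 n-\log_2\disc(\mc{A})+2$. Adding the two initial elements gives $|C|\le 2\log_2(n)-\log_2(\disc(\mc{A}))+4$.

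The main obstacle is the second step. It requires getting a rank-$4$ order from just two elements, which means choosing $x,y$ from $C_0$ appropriately. It also requires the sharp discriminant estimate with exactly the right constant. If the Hadamard bound with this basis proves off by a constant, I would instead use $\disc(\ZZ+\ZZx+\ZZy+\ZZxy)=|\disc$ of the ternary trace-zero part$|$. Here the formula is $\disc=\tfrac14|\disc(x)\disc(y)-\ldots|$, and it can be bounded using $|\trd|\le2\sqrt{\nrd}$.
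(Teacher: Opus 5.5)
Your proposal is correct and is essentially the paper's proof: choose $x\in C_0\setminus\ZZ$ and $y\in C_0\setminus\QQ(x)$, use Hadamard's inequality to bound the discriminant of $\Span_\ZZ(1,x,y,xy)$ by $4n^2$, which gives $[\mf{O}:\ZZ[x,y]]\le 4n^2/\disc(\mc{A})$, and then add elements of $C_0$ greedily, each of which at least halves the index. The Hadamard estimate already gives the exact constant, so you need neither the rank-$2$ bookkeeping nor the ternary-form fallback.
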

\begin{proof}
    Since $\mf{O}=\ZZ[C_0]$, there must exist non-rational $\alpha,\beta\in C_0$ that do not lie in the same subfield of $\mc{A}$.
    Let $\Lambda_0=\ZZ[\alpha,\beta]$. Since $\alpha,\beta\in\mf{O}$, one can check that $\Lambda_0=\Span_\ZZ(1,\alpha,\beta,\alpha\beta)$, a full rank lattice.
    Given $\{b_1,b_2,b_3,b_4\}=\{1,\alpha,\beta,\alpha\beta\}$, a $\ZZ$-basis of $\Lambda_0$, 
    recall that the reduced discriminant of $\Lambda_0$ is the square root
of the determinant of the Gram matrix $\langle b_i,b_j\rangle=\trd(b_i\bar{b_j})$. Since the trace form is positive
definite, Hadamard's inequality gives
\[
\det\bigl(\langle b_i,b_j\rangle\bigr)
\le
\prod_{i=1}^4 \langle b_i,b_i\rangle,
\]
where
\[
\prod_{i=1}^4\langle b_i,b_i\rangle
=
\prod_{i=1}^4\trd(b_i\bar b_i)
=
\prod_{i=1}^42\nrd(b_i)=
2^4\nrd^2(\alpha)\nrd^2(\beta)\le (2n)^4.
\]
It follows that
\[
\disc(\Lambda_0)=\sqrt{\det\bigl(\langle b_i,b_j\rangle\bigr)}\le 
4n^2.
\]
Therefore,
\[
[\mf O:\Lambda_0]
=
\frac{\disc(\Lambda_0)}{\disc(\mf O)}
=
\frac{\disc(\Lambda_0)}{\disc(\mc A)}
\le
\frac{4n^2}{\disc(\mc A)}.
\]
    Now for $i\geq 0$, if $\Lambda_i\neq\mf{O}$, it is possible to pick $\gamma_{i+1}\in C_0\setminus \Lambda_i$ and let $\Lambda_{i+1}:=\ZZ[\alpha,\beta,\gamma_1,\ldots,\gamma_{i+1}]$. Then $\Lambda_{i}$ is strictly contained in $\Lambda_{i+1}$. %
    Once we reach a value of~$m$ with $\Lambda_m=\mf{O}$, we take $C=\{\alpha,\beta,\gamma_1,\ldots,\gamma_m\}$. Since 
    \[ [\mf{O}:\Lambda_{i+1}]\leq \frac12[\mf{O}:\Lambda_i]\qquad\text{for $i=0,\ldots,m-1$ and}\qquad [\mf{O}:\Lambda_m]=1,\]
    we must have \[m\leq \log_2([\mf{O}:\Lambda_0])\leq \log_2(4n^2/\disc(\mc{A})).\]
\end{proof}
As an example, let $p,\ell$ be primes with $p>\frac13\ell^{30}+1$. If  $k=\lfloor\frac 52\log_\ell(3p-3)\rfloor+1$, so we are in the conditions of \cref{thm: explicit p and k}, then $\mf{O}$ is generated by the set of elements of norm $\ell^k$. So we may conclude that there is a set of at most
\[5\log_\ell(3p-3)\log_2(\ell)+2\log_2(\ell)-\log_2(p)+4\leq \log_2(p^4\ell^2)+12\]
elements of norm $\ell^k$ that generates $\mf{O}$. While the number of elements of this generating set is polynomial in the bit length of $p$, it should be possible to find an even smaller generating set.
The following result tells us that we can indeed find a generating set with a very small number of elements, provided that we remove the constraint that the norms must be small.

\begin{theorem}\label{prop:number of generators}
    Let $\mf{O}$ be a maximal order in a definite quaternion algebra $\mc{A}$ over $\QQ$. Let~$S$ be any infinite set consisting of positive integers coprime to $\disc(\mf{O})$. Then $\mf{O}=\ZZ[C]$ for a subset $C\subseteq\mf{O}$ with $|C|\leq 3$ and $\nrd(x)\in S$ for all $x\in C$.
\end{theorem}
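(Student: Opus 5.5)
Build $C=\{\alpha,\beta,\gamma\}$ one element at a time, working locally. The tools are the local-global principle for lattice cosets \cite[Theorem 6.6.1]{Goren_Love_SSEC_Survey} and \cref{lem:extend basis}. The criterion for success is \cref{lem:lattice quotient l} applied to the lattice $\Lambda=\Span_\ZZ(1,\alpha,\beta,\gamma,\alpha\gamma,\beta\gamma,\dots)\subseteq\ZZ[C]$: if this lattice surjects onto $\mf{O}/\ell\mf{O}$ for every prime $\ell$, then $\ZZ[C]_\ell=\mf{O}_\ell$ for all $\ell$, and hence $\ZZ[C]=\mf{O}$.

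\textbf{Step 1: choose $\alpha$ and $\beta$.} Choose $\alpha\in\mf{O}$ with $\nrd(\alpha)\in S$ such that $1,\alpha$ extends to a $\ZZ$-basis of $\mf{O}$, i.e.\ $\alpha$ is primitive in $\mf{O}/\ZZ$. Such $\alpha$ exists by the local-global principle for lattice cosets: since $S$ is infinite and coprime to $\disc(\mf{O})$, we can take $\nrd(\alpha)$ large in $S$ and impose the local condition $\alpha\notin\ZZ+\ell\mf{O}$ at the finitely many relevant primes. Then $\ZZ[\alpha]$ is a quadratic order of discriminant $d_\alpha=\trd(\alpha)^2-4\nrd(\alpha)$. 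Next choose $\beta$, again primitive, with $\nrd(\beta)\in S$, such that:
\begin{itemize}
\item the discriminant $d_\beta$ of $\ZZ[\beta]$ is coprime to $d_\alpha$;
\item $\beta\notin\QQ(\alpha)$.
\end{itemize}
The intended mechanism is as follows. At each prime $\ell\mid d_\alpha$, impose a congruence condition on $\beta$ modulo $\ell$ (or $8$ at $\ell=2$) forcing $\ell\nmid d_\beta$, e.g.\ that $\beta$ reduces to an element with separable minimal polynomial, split or inert as available. There are finitely many such primes, so this is a coset condition. It must be compatible with $\nrd(\beta)$ lying in $S$, which is why we use that $S$ is infinite and pass to a subsequence of $S$ in a fixed residue class modulo $\prod\ell^{\,\cdot}$. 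The local-global principle then produces $\beta$ once $\nrd(\beta)$ is large enough in that class.

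\textbf{Step 2: choose $\gamma$ prime by prime.} For each prime $\ell$, at least one of $\ZZ[\alpha]_\ell$ and $\ZZ[\beta]_\ell$ is étale, say $x\in\{\alpha,\beta\}$. Then $1,x$ extends to a basis of $\mf{O}_\ell$, and by \cref{lem:extend basis} there is $y_\ell\in\mf{O}_\ell$ with $\mf{O}_\ell=\Span_{\ZZ_\ell}(1,x,y_\ell,xy_\ell)$, and with $\nrd(y_\ell)$ prescribed, up to the ``almost'' caveat in that lemma, to match a chosen target $s\in S$ locally. Since this basis property only depends on $y_\ell$ modulo $\ell\mf{O}_\ell$ (by \cref{lem:lattice quotient l}), it is an open condition. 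For primes $\ell\nmid[\mf{O}:\ZZ[\alpha,\beta]]$ no condition is needed; since $\ZZ[\alpha,\beta]$ has finite index, only finitely many primes matter. Now apply the local-global principle for lattice cosets to find a global $\gamma\in\mf{O}$ with $\nrd(\gamma)=s\in S$ and $\gamma\equiv y_\ell\pmod{\ell\mf{O}_\ell}$ at each bad $\ell$. This requires $s$ to be sufficiently large, to lie in the correct residue classes, and to be represented locally everywhere (in particular at primes dividing $\disc(\mc{A})$, where coprimality of $S$ to the discriminant helps). Then $\ZZ[\alpha,\beta,\gamma]_\ell=\mf{O}_\ell$ for all $\ell$.

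\textbf{Main obstacle.} The hardest step is making the local norm requirements compatible with $S$, which is arbitrary apart from being infinite and coprime to $\disc(\mf{O})$. The local conditions force $\nrd(y_\ell)$ into certain classes modulo $\ell$, but the elements of $S$ may all lie in a single class. This is exactly where the ``almost any norm'' freedom in \cref{lem:extend basis} is needed. The plan is to first pick an infinite subsequence of $S$ that is constant modulo $M=8\prod_{\ell\in T}\ell$, where $T$ is the finite set of bad primes; this is possible by pigeonhole. Then verify, case by case (split, inert, ramified residue algebra at $\ell$, with $\ell=2$ treated separately), that \cref{lem:extend basis} and the analogous choice for $\beta$ allow any unit norm class, and also the nonunit classes that can occur. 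The same residue-class argument must be carried out in Step 1 when choosing $\beta$ with $d_\beta$ coprime to $d_\alpha$. I expect $\ell=2$ to require separate, explicit handling.
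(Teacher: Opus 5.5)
Your skeleton is the paper's. You choose $\alpha$, then $\beta$ with $\disc(\beta)$ coprime to $\disc(\alpha)$ via coset conditions at the primes dividing $\disc(\alpha)$. You then choose $\gamma$ locally by \cref{lem:extend basis} at the primes dividing $[\mf{O}:\Span_\ZZ(1,\alpha,\beta,\alpha\beta)]$, glue with the local-global principle, and finish with \cref{lem:lattice quotient l}.

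\textbf{The gap.} It is the step you flag as the main obstacle: you leave it open, and your diagnosis of it is off.
\begin{itemize}
\item There is no residue-class restriction coming from \cref{lem:extend basis} or \cref{lem:nrd n disc unit}. Both give local elements of norm \emph{exactly} $n$ for every $n\in\ZZ_\ell$. The ``almost'' refers only to requiring $n\in\ZZ_\ell^\times$ when $\mc{A}_\ell$ is a division algebra, and every $s\in S$ satisfies this because $S$ is coprime to $\disc(\mf{O})$.
\item What must actually be arranged is a \emph{single} global coset $t+d\mf{O}$, with $d$ squarefree, that has the required reductions at the bad primes and also locally represents infinitely many $s\in S$.
\item In your version you fix the class of $s$ modulo $M$ and then fix one $y_\ell$ with norm in that class. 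You then still owe a proof that the fixed coset $y_\ell+\ell\mf{O}_\ell$ contains an element of norm $s$ for every $s$ in the subsequence. That is a Hensel argument you do not give. It does work, and only $s \bmod \ell$ matters even at $\ell=2$, because $z\mapsto\trd(y_\ell\bar z)$ maps $\mf{O}_\ell$ onto $\ZZ_\ell$.
\end{itemize}

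\textbf{How the paper closes it.} \Cref{cor:local global} avoids all of this by pigeonholing cosets rather than norm classes:
\begin{itemize}
\item For each $s\in S$ separately, choose $x_{\ell,s}$ of norm exactly $s$ with the desired reduction, and lift to $t_s\in\mf{O}$.
\item Pigeonhole over the finite set $\mf{O}/m'\mf{O}$ to get an infinite $S'\subseteq S$ on which the cosets $t_s+m'\mf{O}$ coincide.
\item The common coset contains $x_{\ell,s}$ locally for each $s\in S'$, so \cref{thm:local global inhomogeneous} applies for large $s\in S'$.
\end{itemize}
No case analysis is needed, and $\ell=2$ needs no special treatment.

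\textbf{Smaller points.}
\begin{itemize}
\item Requiring $\alpha$ (and $\beta$) to be primitive in $\mf{O}/\ZZ$ means avoiding $\ZZ+\ell\mf{O}$ for \emph{every} prime $\ell$. Finitely many coset conditions cannot force that.
\item It is also unnecessary. Step 2 only needs $\widetilde{x}\notin\FF_\ell$ for some $x\in\{\alpha,\beta\}$, and this follows from $\ell\nmid\disc(x)$, since $\disc(a+\ell y)=\ell^2\disc(y)$. So, as in the paper, it suffices to take $\alpha$ with $\ell_0\nmid\disc(\alpha)$ at a single prime $\ell_0$.
\item The condition $\beta\notin\QQ(\alpha)$ is not a coset condition, but it is automatic. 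Both discriminants are negative, and a common quadratic field would make its fundamental discriminant (of absolute value at least $3$) divide both.
\item You need the coset local-global principle with a composite modulus. \cite[Theorem 6.6.1]{Goren_Love_SSEC_Survey} is stated for a prime modulus, and \cref{thm:local global inhomogeneous} extends it to squarefree $d$.
\item Your congruences modulo $8$ would require a non-squarefree version. They are also pointless, since $2\nmid\disc(\beta)$ if and only if $\trd(\beta)$ is odd, which is a condition modulo $2\mf{O}$.
\end{itemize}
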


More precisely, we will show that there exist elements $\alpha,\beta,\gamma\in\mf{O}$, each with norm in~$S$, such that $\mf{O}=\Span_\ZZ(1,\alpha,\beta,\gamma,\alpha\beta,\alpha\gamma,\beta\gamma)$.

\begin{remark}\label{rmk:number gens optimal}
    Here are a few remarks on the optimality of \cref{prop:number of generators}. First note that this result does not hold for arbitrary quaternion orders, even if we exclude the bound on $|C|$. For instance, if $\O$ is an Eichler order of index $2$ in a maximal order, then $\O$ is not generated by any number of elements of odd norm \cite[2.3.4]{Goren_Love_Elts_Prescribed_Norm}.

    We also note that \Cref{prop:number of generators} does not hold under the constraint $|C|\leq 2$, even if we remove all constraints on the norms of elements of $C$. Indeed, if $\mf{O}$ is generated as a $\ZZ$-algebra by two elements $a_1,a_2\in\mf{O}$, then by \cite[Definition 15.4.4]{Voight} we have
        \[\disc(\mc{A})=\disc(\mf{O})=|\trd((a_1a_2-a_2a_1)\overline{a_1a_2})|=|\nrd(a_1a_2-a_2a_1)|,\]
    so that $\mf{O}$ contains an element $a_1a_2-a_2a_1$ of trace $0$ and norm $\disc(\mc{A})$. %
    Not every maximal order contains such an element: for instance, if $\disc(\mc{A})=p$, then $\mf{O}$ has an element of trace $0$ and norm $p$ if and only if it is isomorphic to the endomorphism ring of an elliptic curve over $\FF_p$. One can use \cite[Proposition 3.1]{he2025grosslatticessupersingularelliptic} to show a partial converse: if $\mf{O}$ is isomorphic to the endomorphism ring of an elliptic curve over $\FF_p$, then it is generated as a $\ZZ$-algebra by two elements.
\end{remark}

The key ingredient in the proof of \cref{prop:number of generators} is the notion of theta functions of lattice cosets: given a lattice $\Lambda\subseteq \mc{A}$, an element $t\in\Lambda$, and a positive integer $m$, we may define the theta function of $t+m\Lambda$. If $t\notin m\Lambda$ then this is an \emph{inhomogeneous theta function}. We use the following local-global principle for representability of integers by inhomogeneous theta functions, which is a variant of \cite[Theorem 6.6.1]{Goren_Love_SSEC_Survey}. 
\begin{theorem}\label{thm:local global inhomogeneous}
    Let $\mc{A}$ be a definite quaternion algebra over $\QQ$ and $\O\subseteq \mc{A}$ an order. Let $d\geq 2$ be a squarefree positive integer, and $t\in\O$ with $t\notin\ell\O$ for all primes $\ell\mid d$. 
    For any sufficiently large positive integer $\mu$ coprime to $\disc(\O)$, if there exists $x_\ell\in t + (d\O\otimes \ZZ_\ell)$ with $\nrd(x_\ell)=\mu$ for all primes $\ell$, then there exists $x\in t + d\O$ with $\nrd(x)=\mu$.
\end{theorem}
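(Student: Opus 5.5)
The plan is to prove \cref{thm:local global inhomogeneous} with the same modular-forms argument used for homogeneous theta functions: decompose the coset theta series into an Eisenstein part and a cuspidal part, bound the cuspidal coefficients by $O(\mu^{1/2+\epsilon})$, and show that the local hypothesis forces the Eisenstein coefficient to be $\gg \mu^{1-\epsilon}$.

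First I set up the modular form. Write $\Lambda=d\O$ and consider the coset $t+\Lambda$. I will use the inhomogeneous theta function
\[\theta_{t+d\O}(q)=\sum_{x\in t+d\O}q^{\nrd(x)}.\]
This is a modular form of weight $2$ for $\Gamma_1(N')$, where $N'$ is a multiple of $d^2$ times the level $\nrd(\O^\sharp)^{-1}$ of $\O$ (by \cref{lem:level_alternative}). Possibly it also carries a character; this follows from the standard transformation theory of theta series with characteristic \cite{Ogg}. I then decompose it as $E+C$, with $E$ in the Eisenstein space and $C$ a cusp form.

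Second, I identify the Eisenstein part, via Siegel's mass formula for cosets (Siegel--Weil for the genus of the coset). The coefficient of $q^\mu$ in $E$ is a product of local densities:
\[\prod_\ell \beta_\ell(\mu;\,t+d\O_\ell)\quad\text{times the archimedean factor, which is}\asymp\mu.\]
For primes $\ell\nmid dN'\mu$ the local densities are $1+O(\ell^{-2})$. For $\ell\mid\mu$ with $\ell\nmid dN'$, the densities are bounded below, because the order is maximal-ish at $\ell$ and unimodular there; this uses that $\mu$ is coprime to $\disc(\O)$. For the finitely many primes $\ell\mid dN'$, the hypothesis that $x_\ell$ exists makes the local density positive.

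Third, I make that positivity uniform. This step needs care: I must show the density at primes $\ell\mid d$ is bounded below independently of $\mu$. I would use Hensel's lemma. The condition $t\notin\ell\O$ means the coset consists of elements that are primitive mod $\ell$, so the local solution $x_\ell$ can be chosen with nonvanishing differential of $\nrd$ modulo $\ell$. Concretely, I need $\langle x_\ell,\cdot\rangle$ not identically $0$ on $d\O_\ell$ modulo a bounded power of $\ell$. Then the density depends only on $\mu$ modulo a fixed power of $\ell$, and it takes finitely many positive values. Combining the three steps, the Eisenstein coefficient is $\gg \mu\prod_{\ell\mid\mu}(1-\ell^{-1})^{O(1)}\gg\mu^{1-\epsilon}$.

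The main obstacle is the cuspidal part. I expand $C$ in a basis of newforms and their oldform translates, possibly with character, for $\Gamma_1(N')$. The Deligne bound (\cref{lem:deligne bound}, extended to nebentypus) gives coefficients $O(\sigma_0(\mu)\sqrt{\mu})=O(\mu^{1/2+\epsilon})$, with a constant depending only on $\O,t,d$. Since $\mu^{1-\epsilon}$ dominates $\mu^{1/2+\epsilon}$, the $\mu$-th coefficient of $\theta_{t+d\O}$ is positive for all sufficiently large admissible $\mu$, which gives the required $x$.

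This is exactly the route of \cite[Theorem 6.6.1]{Goren_Love_SSEC_Survey}. I would follow that proof, checking only the two adaptations. The first is that its primitivity hypothesis is replaced here by $t\notin\ell\O$ for $\ell\mid d$. The second is that at primes dividing $\disc(\O)$ but not $d$, coprimality of $\mu$ with $\disc(\O)$ keeps the local density bounded below.
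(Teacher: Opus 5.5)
Your outline is the argument the paper relies on. The paper quotes \cite[Theorem 6.6.1]{Goren_Love_SSEC_Survey}, which plays a genus (Eisenstein) main term against a Deligne-bounded cuspidal part, and whose prime case $d=\ell$ is literally this statement. For composite squarefree $d$, the paper observes that only one step needs a new argument: a lower bound, uniform in $\mu$, on the local factors $c_{\mu,\ell}(E_M;0)/c_{\mu,\ell}(E_{\O};0)$ at primes $\ell\mid d$, where $M=t+d\O$.

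The paper gets this bound by noting that squarefreeness gives $M_\ell=(t+\ell\O)_\ell$, so the prime-case estimate of \cite[Lemma 6.5.6]{Goren_Love_SSEC_Survey} applies one prime at a time. Your Step~3 proves the bound directly instead. Every element $x$ of $t+d\O_\ell$ lies outside $\ell\O_\ell$, so the gradient $d\langle x,\cdot\rangle$ of the norm on the coset has uniformly bounded valuation. By Hensel, the density is then determined by counting solutions modulo a fixed power of $\ell$. It therefore takes finitely many values, all positive when $x_\ell$ exists. This is valid, and it never uses squarefreeness, in line with the paper's remark that this hypothesis can be dropped. The cost is that you redo by hand what the paper imports.

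Two corrections to your bookkeeping:
\begin{enumerate}
\item The two adaptations you list are not where the cited proof changes. The hypotheses $t\notin\ell\O$ and $\gcd(\mu,\disc(\O))=1$ already appear in the prime case, which the paper calls a special case of the survey theorem. The genuine change is passing from prime to composite $d$, and your Step~3, not the survey, has to carry that.
\item Coprimality of $\mu$ with $\disc(\O)$ plays no role at primes $\ell\nmid dN'$. There $\O_\ell$ is unimodular and the density is bounded below for every $\mu$. It matters instead at $\ell\mid\disc(\O)$, where it forces $\ell\nmid\mu$. Via $\langle x,x\rangle=2\mu$, this gives the bounded gradient valuation your Hensel argument needs.
\end{enumerate}
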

In fact \cref{thm:local global inhomogeneous} still holds even if we drop the squarefree constraint on $d$, but the necessary modifications to the proof take some extra work which is not needed for our applications here.
\begin{proof}
    If $d$ is prime then this is a special case of \cite[Theorem 6.6.1]{Goren_Love_SSEC_Survey}, with $L=\QQ$, $n=4$, $\Lambda=\O$, $\mf{p}=d\ZZ$, and $M=t+d\O$. We indicate here how the proof changes for arbitrary squarefree $d$. A rough sketch of the proof is to decompose the theta function of~$M$ into a cuspidal part and a non-cuspidal part, and, as in \cref{cor: submaximal eisenstein bound}, to show that the non-cuspidal part grows sufficiently rapidly as $\mu$ increases %
    (faster than the cuspidal part). The coefficient of $q^\mu$ in the non-cuspidal part of the theta function of $M$ is labeled $c_\mu(\theta_{\mf{g}(M)})$ in \cite{Goren_Love_SSEC_Survey}. The only part of the proof of \cite[Theorem 6.6.1]{Goren_Love_SSEC_Survey} that does not go through verbatim is the claim that the ratio of $c_\mu(\theta_{\mf{g}(M)})$ to $c_\mu(\theta_{\mf{g}(\Lambda)})$ is bounded below by a positive constant that does not depend on $\mu$; for this, we must modify the proof of \cite[Lemma 6.5.6]{Goren_Love_SSEC_Survey}.  %

    Applying (6.21) and (6.22) of loc.~cit.~to both $M$ and $\Lambda$ we obtain
    \[c_\mu(\theta_{\mf{g}(M)})\prod_\ell c_{\mu,\ell}(E_\Lambda;0)=\frac{m(\mf{g}(\Lambda))}{m(\mf{g}(M))}c_\mu(\theta_{\mf{g}(\Lambda)})\prod_\ell c_{\mu,\ell}(E_M;0),\]
    where $c_{\mu,\ell}(E_M;0)$ and $c_{\mu,\ell}(E_\Lambda;0)$ are certain local densities associated to Eisenstein series, defined in \cite[p. 71]{Shimura_repnum}, and $m(\mf{g}(\Lambda))$ and $m(\mf{g}(M))$ are masses of the genera of $\Lambda$ and~$M$ respectively. %
    Since $M_\ell=\Lambda_\ell$ for all primes $\ell\nmid d$, we may restrict the product on each side to $\ell\mid d$. The proof of \cite[Lemma 6.5.6]{Goren_Love_SSEC_Survey} shows that $c_{\mu,\ell}(E_\Lambda;0)\neq 0$. So it suffices to prove, for each prime $\ell\mid d$, that $c_{\mu,\ell}(E_M;0)/c_{\mu,\ell}(E_\Lambda;0)$ is bounded below by a positive constant that does not depend on $\mu$. 
    
    Now let $M'=t+\ell\Lambda$.  The proof of \cite[Lemma 6.5.6]{Goren_Love_SSEC_Survey} shows that the ratio $c_{\mu,\ell}(E_{M'};0)/c_{\mu,\ell}(E_\Lambda;0)$ is bounded below by a positive constant that does not depend on $\mu$. Since $d$ is squarefree we have $M_\ell=M'_\ell$ and therefore $c_{\mu,\ell}(E_{M};0)=c_{\mu,\ell}(E_{M'};0)$, proving the desired result.
\end{proof}

We will apply this local-global principle in the following way.
\begin{corollary}\label{cor:local global}
    Let $\mf{O}$ be a maximal order in a definite quaternion algebra over $\QQ$. Let~$S$ be an infinite set of positive integers coprime to $\disc(\mf{O})$, and $m\geq 2$ a positive integer. For all $\ell\mid m$ and $s\in S$, let $x_{\ell,s}\in \mf{O}_\ell\setminus\ell\mf{O}_\ell$ be an element satisfying $\nrd(x_{\ell,s})=s$. Then there exists $x\in \mf{O}$ and $s\in S$ such that $\nrd(x)=s$ and $x\equiv x_{\ell,s}\bmod{\ell\mf{O}_\ell}$ for all $\ell\mid m$.
\end{corollary}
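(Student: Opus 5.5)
We will deduce this from \cref{thm:local global inhomogeneous}, applied with $\O=\mf{O}$ and $d$ equal to the squarefree radical of $m$. The only primes that matter are the primes $\ell\mid m$, so we first reduce to this squarefree modulus. The congruence $x\equiv x_{\ell,s}\bmod \ell\mf{O}_\ell$ depends only on $d=\prod_{\ell\mid m}\ell$, since each condition is modulo $\ell$ alone.

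The first step is to assemble a global target coset from the local data. Fix $s\in S$. By the Chinese remainder theorem, $\mf{O}/d\mf{O}\simeq\prod_{\ell\mid d}\mf{O}_\ell/\ell\mf{O}_\ell$. We therefore choose $t_s\in\mf{O}$ with $t_s\equiv x_{\ell,s}\bmod \ell\mf{O}_\ell$ for every $\ell\mid d$. Since $x_{\ell,s}\notin\ell\mf{O}_\ell$, we get $t_s\notin\ell\mf{O}$ for all $\ell\mid d$, which is exactly the hypothesis on $t$ in \cref{thm:local global inhomogeneous}. The goal is then to find $x\in t_s+d\mf{O}$ with $\nrd(x)=s$.

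The second step is to check local representability at every prime $\ell$.
\begin{itemize}
\item For $\ell\mid d$ we have $(t_s+d\mf{O})\otimes\ZZ_\ell=t_s+\ell\mf{O}_\ell=x_{\ell,s}+\ell\mf{O}_\ell$, which contains $x_{\ell,s}$, of norm $s$.
\item For $\ell\nmid d$ the local coset is all of $\mf{O}_\ell$. We need some element of $\mf{O}_\ell$ of norm exactly $s$, and we argue by cases on $\ell$.
\begin{itemize}
\item If $\ell\nmid\disc(\mc{A})$, then $\mf{O}_\ell\simeq M_2(\ZZ_\ell)$, and $\mathrm{diag}(s,1)$ has norm $s$.
\item If $\ell\mid\disc(\mc{A})$, then $s$ is coprime to $\ell$, because $S$ is coprime to $\disc(\mf{O})$. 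So $s\in\ZZ_\ell^\times$, and $\mf{O}_\ell$ contains $\ZZ_{\ell^2}$, whose norm map surjects onto $\ZZ_\ell^\times$.
\end{itemize}
\end{itemize}

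There is one subtlety: the threshold ``sufficiently large $\mu$'' in \cref{thm:local global inhomogeneous} depends on the coset, that is, on $t$. The coset $t_s+d\mf{O}$ ranges over only finitely many cosets of $d\mf{O}$ as $s$ varies. We therefore take the maximum of the finitely many thresholds, one for each possible class $t\bmod d\mf{O}$ with $t\notin\ell\mf{O}$ for all $\ell\mid d$. Because $S$ is infinite, it contains some $s$ exceeding this uniform bound, and $s$ is coprime to $\disc(\mf{O})$ by hypothesis. Applying \cref{thm:local global inhomogeneous} to this $s$ and $t=t_s$ produces $x\in t_s+d\mf{O}$ with $\nrd(x)=s$. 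Then $x\equiv t_s\equiv x_{\ell,s}\bmod \ell\mf{O}_\ell$ for every $\ell\mid m$, as required.

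I expect the main care point to be this uniformity in $t$. The theorem fixes $t$ before asking for $\mu$ large, so the finiteness of $\mf{O}/d\mf{O}$ is what makes the argument work. This also explains why the corollary only asserts the existence of some $s\in S$, rather than the conclusion for every $s\in S$.
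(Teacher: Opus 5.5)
Your proposal is correct and follows essentially the same route as the paper: pass to the squarefree radical of $m$, build $t_s$ by the Chinese remainder theorem, check local representability at every prime, and apply \cref{thm:local global inhomogeneous}. The only difference is how you get uniformity in $t$. The paper pigeonholes to an infinite subset $S'\subseteq S$ on which the coset $t_s+m'\mf{O}$ is constant, then takes $s\in S'$ large. You instead take the maximum of the thresholds over the finitely many admissible cosets of $d\mf{O}$. That is legitimate, since the hypothesis and conclusion of the theorem depend only on the coset $t+d\mf{O}$.
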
 
\begin{proof}
    Let $m'$ be the squarefree product of primes dividing $m$. For each $s\in S$, there exists $t_s\in \mf{O}$ such that $t_s\equiv x_{\ell,s}\bmod{\ell\mf{O}_\ell}$ for all $\ell\mid m$. Since $S$ is infinite but $\mf{O}/m'\mf{O}$ is finite, by the pigeonhole principle there exists an infinite subset $S'\subseteq S$ such that the cosets $t_s+m'\mf{O}$ are equal for all $s\in S'$; pick $t\in\mf{O}$ with $t\equiv t_s\bmod{m'\mf{O}}$ for all $s\in S'$.

    Now consider any $s\in S'$. If $\ell\mid m$ then
    \begin{equation}\label{eq:coset}
        t+m'\mf{O}_\ell=t_s+m'\mf{O}_\ell=t_s+\ell\mf{O}_\ell=x_{\ell,s}+\ell\mf{O}_\ell,
    \end{equation}
    which contains an element $x_{\ell,s}$ of norm $s$. Since $x_{\ell,s}\notin\ell\mf{O}_\ell$ we have $t\notin \ell\mf{O}$. If $\ell\nmid m$ then $t+m'\mf{O}_\ell=\mf{O}_\ell$, which contains an element of norm $s$ because $\nrd:\mf{O}_\ell\to\ZZ_\ell$ is surjective for maximal orders. So we may apply \cref{thm:local global inhomogeneous} to conclude that for sufficiently large $s\in S'$, there exists $x\in t+m'\mf{O}$ with $\nrd(x)=s$; for such $x$ we have $x\equiv x_{\ell,s}\bmod{\ell\mf{O}_\ell}$ by \cref{eq:coset}.
\end{proof}

Throughout the argument below, if $A$ is a $\ZZ$-algebra and $x\in A$, and a prime $\ell\in\ZZ$ is given, we write $\widetilde{x}$ for the image of $x$ in $A/\ell A$.

\begin{lemma}\label{lem:nrd n in Zl2}
    Let $\ell$ be prime and $n\in\ZZ_\ell^\times$. There exists $a\in\ZZ_{\ell^2}$ with $N_{\QQ_{\ell^2}/\QQ_\ell}(a)=n$ and $\widetilde{a}\in \FF_{\ell^2}\setminus\FF_\ell$.
\end{lemma}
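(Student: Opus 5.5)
The plan is to reduce to the residue field and then lift by Hensel's lemma. The norm map on residue fields, $N_{\FF_{\ell^2}/\FF_\ell}\colon \FF_{\ell^2}^\times\to\FF_\ell^\times$, $x\mapsto x^{\ell+1}$, is a surjective homomorphism whose kernel has size $\ell+1$. Each fibre therefore has $\ell+1$ elements. At most two of them lie in $\FF_\ell$, since $x\in\FF_\ell$ gives $x^{\ell+1}=x^2$, and $x^2=\widetilde n$ has at most two solutions. For $\ell\geq 2$ we have $\ell+1\geq 3>2$, so the fibre over $\widetilde n$ contains some $\widetilde a_0\in\FF_{\ell^2}\setminus\FF_\ell$. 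Lift it to any $a_0\in\ZZ_{\ell^2}^\times$.

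Next I correct the norm of $a_0$ without changing its residue. Let $u:=n/N(a_0)$. Since $N(a_0)\equiv n\pmod{\ell}$, we have $u\in 1+\ell\ZZ_\ell$. It suffices to find $b\in 1+\ell\ZZ_{\ell^2}$ with $N(b)=u$, and then set $a:=a_0b$. This gives $N(a)=n$ and $\widetilde a=\widetilde a_0\notin\FF_\ell$.

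The only real step is surjectivity of $N\colon 1+\ell\ZZ_{\ell^2}\to 1+\ell\ZZ_\ell$. This is the standard fact that unramified extensions have surjective norm on principal units. One proof filters by $1+\ell^i$ and uses surjectivity of the trace $\FF_{\ell^2}\to\FF_\ell$ on the graded pieces. Concretely, $N(1+\ell^i y)\equiv 1+\ell^i\trd(y)\pmod{\ell^{i+1}}$, except for $\ell=2$, $i=1$, where a squared term appears. By successive approximation and completeness one builds $b$ as a convergent product.

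For $\ell=2$ I will avoid the filtration and simply cite the fact already used in the proof of \cref{lem:1dim genus}, namely \cite[\S I.7 Proposition 3]{CasselsFrohlich}: $\nrd\colon 1+2\ZZ_4\to 1+2\ZZ_2$ is surjective. That result in fact gives $N(\ZZ_{\ell^2}^\times)=\ZZ_\ell^\times$ for all $\ell$. Because the reduction argument needs the refinement with $b$ congruent to $1$, the $\ell=2$, $i=1$ step is the only place needing care, and citing the principal-unit statement settles it.
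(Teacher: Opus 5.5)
Your proof is correct. The first step, finding $\widetilde a_0\in\FF_{\ell^2}\setminus\FF_\ell$ in the fibre over $\widetilde n$ by counting, is exactly the paper's.

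The second step takes a different route. You lift $\widetilde a_0$ arbitrarily and then correct the norm with a principal unit. The paper instead lifts the minimal polynomial $X^2-\widetilde t X+\widetilde n$ of $\widetilde a_0$ to $X^2-tX+n\in\ZZ_\ell[X]$, keeping the constant term exactly $n$. Since $\widetilde a_0\neq \widetilde a_0^{\ell}$, the reduction has distinct roots. Hensel's lemma then gives a root $a\in\ZZ_{\ell^2}$ lifting $\widetilde a_0$. Its Galois conjugate is the other root, so $N_{\QQ_{\ell^2}/\QQ_\ell}(a)=n$ automatically.

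The paper's approach builds the norm condition into the object being lifted. One Hensel step suffices, uniformly in $\ell$, with no filtration and no citation. Your approach rests on the structural fact that the norm $1+\ell\ZZ_{\ell^2}\to 1+\ell\ZZ_\ell$ of an unramified extension is surjective. That is heavier, but reusable, and it shows that any lift $a_0$ can be corrected without changing its residue.

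One correction: there is no exceptional case at $\ell=2$, $i=1$. For every $\ell$ and $i\geq 1$,
\[
N(1+\ell^i y)=1+\ell^i\bigl(y+\sigma(y)\bigr)+\ell^{2i}N(y),
\]
and $\ell^{i+1}\mid\ell^{2i}$ once $i\geq 1$. So $N(1+2y)\equiv 1+2\bigl(y+\sigma(y)\bigr)\pmod 4$. Combined with surjectivity of the trace $\FF_4\to\FF_2$, your successive-approximation argument already covers $\ell=2$, and the appeal to Cassels--Fr\"ohlich is unnecessary, though harmless. The genuine $\ell=2$, $i=1$ obstruction you may have in mind belongs to the $2$-adic logarithm and exponential, which is not needed here.
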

\begin{proof}
    Since the norm map $\FF_{\ell^2}^\times\to\FF_\ell^\times$ is surjective with fibres of size $\ell+1$, with at most two elements of each fibre in $\FF_\ell^\times$, there exists $\widetilde{a}\in \FF_{\ell^2}^\times\setminus\FF_\ell^\times$ with $N_{\FF_{\ell^2}/\FF_\ell}(\widetilde{a})=n$ in $\FF_\ell$. The minimal polynomial of $\widetilde{a}$ then has two distinct roots over $\FF_{\ell^2}$, so we can Hensel lift to $a\in \ZZ_{\ell^2}^\times$ with $N_{\QQ_{\ell^2}/\QQ_\ell}(a)=n$.
\end{proof}

\begin{lemma}\label{lem:nrd n disc unit}
    Let $\ell$ be a prime, $\mc{A}_\ell$ a quaternion algebra over $\QQ_\ell$, and $\mf{O}_\ell\subseteq \mc{A}_\ell$ a maximal order. 
    Let $n\in\ZZ_\ell$, and if $\mc{A}_\ell$ is a division algebra suppose further that $n\in\ZZ_\ell^\times$. There exists $x\in\mf{O}_\ell$ with $\nrd(x)=n$ and $\disc(x)\in\ZZ_\ell^\times$.
\end{lemma}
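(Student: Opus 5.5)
I will split according to whether $\mc{A}_\ell$ is split or a division algebra, and in each case write down an explicit local model of $\mf{O}_\ell$ in which an element with prescribed norm and unit discriminant is visible. Recall that $\disc(x)=\trd(x)^2-4\nrd(x)$, so it suffices to produce $x$ with $\nrd(x)=n$ and $\trd(x)^2-4n\in\ZZ_\ell^\times$. Equivalently, the reduction of the characteristic polynomial $X^2-\trd(x)X+n$ modulo $\ell$ should be separable.

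\emph{Split case.} Up to conjugation $\mf{O}_\ell=M_2(\ZZ_\ell)$ (every maximal order of $M_2(\QQ_\ell)$ is conjugate to this, and conjugation preserves trace and norm). For any $t\in\ZZ_\ell$ the companion matrix
\[x=\begin{pmatrix}0&-n\\1&t\end{pmatrix}\in M_2(\ZZ_\ell)\]
has $\nrd(x)=\det x=n$ and $\trd(x)=t$. It therefore suffices to choose $t\in\ZZ_\ell$ with $t^2-4n\in\ZZ_\ell^\times$.
\begin{itemize}
\item If $\ell$ is odd and $\ell\mid n$, take $t=1$.
\item If $\ell$ is odd and $\ell\nmid n$, we need $t^2\not\equiv 4n\pmod \ell$. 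The squares mod $\ell$ take only $(\ell+1)/2<\ell$ values, so some residue $t$ avoids this, including when $4n$ is itself a square mod $\ell$.
\item If $\ell=2$, then $t^2-4n\equiv t^2\pmod 2$, so any odd $t$ works.
\end{itemize}
No hypothesis on $n$ is needed here, which matches the statement allowing arbitrary $n\in\ZZ_\ell$ in the split case.

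\emph{Division case.} Now $n\in\ZZ_\ell^\times$ and $\mf{O}_\ell$ is the valuation ring of $\mc{A}_\ell$. It contains an embedded copy of $\ZZ_{\ell^2}$, the ring of integers of the unramified quadratic extension, and on it $\nrd$ restricts to $N_{\QQ_{\ell^2}/\QQ_\ell}$ and $\trd$ to the field trace. By \cref{lem:nrd n in Zl2} there is $a\in\ZZ_{\ell^2}$ with $N(a)=n$ and $\widetilde a\in\FF_{\ell^2}\setminus\FF_\ell$. Take $x=a$. Then $\disc(x)=(a-\bar a)^2$, whose reduction $(\widetilde a-\widetilde a^{\,\ell})^2$ is nonzero because $\widetilde a\notin\FF_\ell$. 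Hence $\disc(x)\in\ZZ_\ell^\times$.

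The only step needing care is the small bookkeeping in the split case: checking that a suitable $t$ exists for every residue of $n$ and for $\ell=2$, as done above. The division case is immediate from \cref{lem:nrd n in Zl2}.
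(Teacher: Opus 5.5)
Your proof is correct. It uses the same two ingredients as the paper: the companion matrix $\begin{psmallmatrix}0&-n\\1&t\end{psmallmatrix}$ and an embedded copy of $\ZZ_{\ell^2}$ via \cref{lem:nrd n in Zl2}. The difference is how the cases are organized.

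The paper splits according to whether $n$ is a unit, not according to the type of $\mc{A}_\ell$:
\begin{itemize}
\item For $n\in\ZZ_\ell^\times$ it uses the $\ZZ_{\ell^2}$-embedding. Such an embedding exists in every maximal order, split or division, so one argument covers both algebras.
\item For $n\in\ell\ZZ_\ell$, which only occurs in the split case, it takes your companion matrix with $t=1$. Then $\disc(x)=1-4n$ is automatically a unit, so no bookkeeping is needed.
\end{itemize}

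You instead split by the type of $\mc{A}_\ell$ and handle every $n$ in the split case with companion matrices. This avoids the Hensel lift in the split case, at the cost of a small residue analysis.

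That analysis can be shortened. For $\ell$ odd and $\ell\nmid n$, simply take $t=0$, since $-4n\in\ZZ_\ell^\times$.

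Your justification in that subcase is also slightly off, though the conclusion is right. The fact that squares take only $(\ell+1)/2<\ell$ values shows some residue is a non-square, not that some $t$ has $t^2\not\equiv 4n$. What you actually need is that at most two residues $t$ satisfy $t^2\equiv 4n$, and $\ell\geq 3$.
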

\begin{proof}
    If $n\in\ZZ_\ell^\times$, take $a$ as in \cref{lem:nrd n in Zl2} and $x$ to be the image of $a$ under an embedding $\ZZ_{\ell^2}\to\mf{O}_\ell$. Letting $\sigma$ be the nontrivial automorphism of $\QQ_{\ell^2}/\QQ_\ell$, we have $\disc(x)=(a-\sigma(a))^2$, which is not divisible by $\ell$ because $\widetilde{a}\neq\widetilde{\sigma(a)}$ in $\FF_{\ell^2}$.
    
    If $n\in\ell\ZZ_\ell$ (in which case we assume $\mc{A}_\ell$ is split), 
    $x=\begin{psmallmatrix}
        0&-n \\ 1&1
    \end{psmallmatrix}$
    has $\nrd(x)=n$ and $\disc(x)=1-4n\in \ZZ_\ell^\times$.
\end{proof}

\begin{proposition}\label{lem:extend basis}
    Let $\ell$ be a prime, $\mc{A}_\ell$ a quaternion algebra over $\QQ_\ell$, and $\mf{O}_\ell\subseteq \mc{A}_\ell$ a maximal order. 
    Let $n\in\ZZ_\ell$, and if $\mc{A}_\ell$ is a division algebra suppose further that $n\in\ZZ_\ell^\times$. Let $x\in\mf{O}_\ell$ and suppose the reduction $\widetilde{x}\in \mf{O}_\ell/\ell\mf{O}_\ell$ is not in $\FF_\ell$. Then there exists $y\in \mf{O}_\ell\setminus \ell\mf{O}_\ell$ %
    with $\nrd(y)=n$ such that $1,x,y,xy$ form a $\ZZ_\ell$-basis for $\mf{O}_\ell$. 
\end{proposition}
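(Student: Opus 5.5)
The plan is to reduce everything to the residue algebra $Q:=\mf{O}_\ell/\ell\mf{O}_\ell$ and then lift. By \cref{lem:lattice quotient l} (applied to $\Lambda=\mf{O}_\ell$ and $\Lambda'=\Span_{\ZZ_\ell}(1,x,y,xy)$), it suffices to find $y\in\mf{O}_\ell$ with $\nrd(y)=n$ such that $1,\widetilde{x},\widetilde{y},\widetilde{x}\widetilde{y}$ span $Q$ over $\FF_\ell$. The proof therefore splits into two steps.
\begin{enumerate}
\item[(i)] A residual step: find $\widetilde{y}\in Q$ with $\nrd(\widetilde{y})=\widetilde{n}$ satisfying the spanning condition.
\item[(ii)] A lifting step: show that any such $\widetilde{y}$ (subject to a mild nondegeneracy condition) lifts to $y\in\mf{O}_\ell$ with $\nrd(y)=n$ exactly.
\end{enumerate}

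For the lifting step, fix any lift $y_0$ of $\widetilde{y}$ and look for $y=y_0+\ell z$. We have $\nrd(y_0+\ell z)=\nrd(y_0)+\ell\trd(y_0\overline{z})+\ell^2\nrd(z)$, with $\nrd(y_0)\equiv n\bmod \ell$. Restrict $z$ to a line $z=tw$ with $\trd(y_0\overline{w})\in\ZZ_\ell^\times$. Then Hensel's lemma in the single variable $t$ gives a root, because the derivative at $t=0$ is $\ell\trd(y_0\overline{w})$ while the value is divisible by $\ell$. (Rescaling by $\ell$, this becomes a genuinely simple root.) Such a $w$ exists as long as the functional $\trd(y_0\,\overline{\cdot}\,)$ is nonzero mod $\ell$ on $\mf{O}_\ell$.
\begin{itemize}
\item In the split case, $\mf{O}_\ell\simeq M_2(\ZZ_\ell)$ is self-dual for the trace form, so this only needs $\widetilde{y}\neq 0$. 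Note that $\widetilde{y}\neq 0$ is automatic from the spanning condition, and it also gives $y\notin\ell\mf{O}_\ell$.
\item In the ramified case, the trace form mod $\ell$ has radical equal to the maximal ideal $\mf{P}/\ell\mf{O}_\ell$. So we need $\widetilde{y}$ to be a unit, which holds since $n\in\ZZ_\ell^\times$.
\end{itemize}

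For the residual step, set $A:=\FF_\ell[\widetilde{x}]$, a $2$-dimensional subalgebra of $Q$ since $\widetilde{x}\notin\FF_\ell$. The spanning condition is exactly that $Q=A\oplus A\widetilde{y}$, i.e.\ $a\widetilde{y}\notin A$ for all $a\in A\setminus\{0\}$. I would split into cases according to the isomorphism type of $A$ and use explicit normal forms.
\begin{itemize}
\item $\mc{A}_\ell$ split, $Q=M_2(\FF_\ell)$. After conjugation, $\widetilde{x}$ is a companion matrix, or diagonal $\mathrm{diag}(\alpha,\beta)$ with $\alpha\ne\beta$, or $\begin{psmallmatrix}\alpha&1\\0&\alpha\end{psmallmatrix}$; only $A$ matters, so in these cases $A$ is $\FF_{\ell^2}$, the diagonal matrices, or $\FF_\ell[\epsilon]$ with $\epsilon=\begin{psmallmatrix}0&1\\0&0\end{psmallmatrix}$.
\begin{itemize}
\item For $A$ diagonal, take $\widetilde{y}=\begin{psmallmatrix}a&b\\c&d\end{psmallmatrix}$ with $b,c\neq0$. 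Then $A\widetilde{y}$ meets $A$ trivially iff $b,c\neq 0$, and the norm $ad-bc=\widetilde{n}$ is clearly attainable, e.g.\ $b=1$, $c=-\widetilde{n}$ if $\widetilde{n}\ne0$, or $a=d=0$, $b=c=1$ scaled as needed.
\item For $A=\FF_\ell[\epsilon]$, take $\widetilde{y}$ with nonzero lower-left entry, again with free norm.
\item For $A\simeq\FF_{\ell^2}$, write $Q=A\oplus Aj$ with $jaj^{-1}=\bar a$ and $j^2\in\FF_\ell^\times$. Then any $\widetilde{y}=a+bj$ with $b\neq0$ works, since $A\widetilde{y}\cap A\neq0$ forces $b=0$. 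The norm $N(a)-j^2N(b)$ takes every value as $a,b$ range with $b\ne0$, using surjectivity of the norm $\FF_{\ell^2}^\times\to\FF_\ell^\times$.
\end{itemize}
\item $\mc{A}_\ell$ ramified, $Q=\FF_{\ell^2}[\epsilon]$. Here $A$ is either $\FF_{\ell^2}$, or $\FF_\ell[\widetilde{x}]$ with $\widetilde{x}\in\FF_\ell+\FF_{\ell^2}\epsilon$, so $A=\FF_\ell+\FF_\ell b\epsilon$ for some $b\ne0$.
\begin{itemize}
\item For $A=\FF_{\ell^2}$, take $\widetilde{y}=c+d\epsilon$ with $c$ of norm $\widetilde{n}$ and $d\neq0$.
\item For the second type, take $\widetilde{y}=c+d\epsilon$ with $c\in\FF_{\ell^2}\setminus\FF_\ell$ of norm $\widetilde{n}$ (available by \cref{lem:nrd n in Zl2}) and $d$ generic, then check directly that $1,\widetilde{x},\widetilde{y},\widetilde{x}\widetilde{y}$ are independent.
\end{itemize}
\end{itemize}

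The main obstacle I expect is this residual case analysis. One has to make sure the norm constraint $\nrd(\widetilde{y})=\widetilde{n}$ (including $\widetilde{n}=0$ in the split case) is compatible with the spanning condition for every type of $A$, including small $\ell$ such as $\ell=2$. One also has to confirm the spanning criterion $A\widetilde{y}\cap A=0$ in the non-semisimple cases. If an explicit normal form turns out awkward, the fallback is a counting argument: the bad $\widetilde{y}$ lie in a union of about $\ell+1$ subspaces $a^{-1}A$ (which has order $\ell^3$ elements), whereas the norm fibre in $M_2(\FF_\ell)$ has about $\ell^3$ elements. I would use this count only as a check, since it is too tight for small $\ell$.
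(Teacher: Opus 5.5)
Your argument is correct, but it is organized differently from the paper's.

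\textbf{The paper's route.} The paper never lifts from the residue algebra. It writes down $y\in\mf{O}_\ell$ with $\nrd(y)=n$ exactly:
\begin{itemize}
\item when $\mc{A}_\ell$ is split and $n$ is a unit, $\begin{psmallmatrix}n&1\\0&1\end{psmallmatrix}$ or an embedded element of $\ZZ_{\ell^2}$ from \cref{lem:nrd n in Zl2};
\item in the ramified case, explicit elements of the $\ZZ_{p^2}$-model;
\item when $\ell\mid n$, a $\GL_2(\ZZ_\ell)$-conjugate of $\begin{psmallmatrix}-1&n+1\\-1&1\end{psmallmatrix}$.
\end{itemize}
It then proves $\FF_\ell[\widetilde{x},\widetilde{y}]=\mf{O}_\ell/\ell\mf{O}_\ell$ structurally. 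This uses Racine's classification of proper subalgebras of $M_2(\FF_\ell)$, or a dimension count over an embedded $\FF_{\ell^2}$. Finally it observes that $\ZZ_\ell[x,y]=\Span_{\ZZ_\ell}(1,x,y,xy)$.

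\textbf{Your route.} You split the problem into two parts. The first is a residual linear-algebra problem with the sharper, directly checkable criterion $Q=A\oplus A\widetilde{y}$. The second is a uniform Hensel lift that needs only nondegeneracy of the trace pairing modulo $\ell$: unimodularity of $M_2(\ZZ_\ell)$ in the split case, and radical equal to the maximal ideal modulo $\ell$ in the ramified case.

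\textbf{Comparison.} Your route avoids Racine's theorem and treats $\ell\mid n$ exactly like $n\in\ZZ_\ell^\times$. It also proves something stronger: every admissible residual solution lifts to an element of norm exactly $n$. The paper's route produces closed-form $y$ and never needs the dual lattice of $\mf{O}_\ell$.

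A few details need tightening; none is serious.
\begin{itemize}
\item In the diagonal case with $\widetilde{n}=0$, your choice $a=d=0$, $b=c=1$ has norm $-1$. Take all four entries equal to $1$ instead.
\item In the ramified case, $A\simeq\FF_{\ell^2}$ need not be the standard subfield of $\FF_{\ell^2}[\epsilon]$. For $A=(1+u\epsilon)\FF_{\ell^2}(1+u\epsilon)^{-1}$, a direct computation shows the bad value of $d$ is $u(\overline{c}-c)$, not $0$. So you should first conjugate by a unit of $Q$, as you did in the split case; this preserves both the norm and the spanning condition.
\item In the Hensel step, set $G(t)=\ell^{-1}\bigl(\nrd(y_0+\ell tw)-n\bigr)$. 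This is linear modulo $\ell$ with unit slope, but $t=0$ need not be a root mod $\ell$. First take the root $t_0$ of that linear congruence, then apply Hensel's lemma at $t_0$.
\end{itemize}
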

\begin{proof}
    It suffices to show that there exists $y\in \mf{O}_\ell$ with $\nrd(y)=n$ and with $\FF_\ell[\widetilde{x},\widetilde{y}]=\mf{O}_\ell/\ell\mf{O}_\ell$. For in this case we necessarily have $\ZZ_\ell[x,y]=\mf{O}_\ell$ by \cref{lem:lattice quotient l}, and a computation shows that any element of $\ZZ_\ell[x,y]$ is contained in $\Span_{\ZZ_\ell}(1,x,y,xy)$. This condition also forces $\widetilde{y}\neq 0$ so $y\notin\ell\mf{O}_\ell$.

    First suppose $\mc{A}_\ell$ is split and $n\in\ZZ_\ell^\times$. Let $\widetilde{x}$ be the reduction of $x$ in $\mf{O}_\ell/\ell\mf{O}_\ell$. If the minimal polynomial of $\widetilde{x}$ over $\FF_\ell$ is irreducible, set $y:=\begin{psmallmatrix}
        n&1\\0&1
    \end{psmallmatrix}\in\mf{O}_\ell$. 
    If instead the minimal polynomial of $\widetilde{x}$ over $\FF_\ell$ is reducible, consider the element $a$ from \cref{lem:nrd n in Zl2} and let $y$ be the image of $a$ under an embedding $\ZZ_{\ell^2}\to \mf{O}_\ell$, so that $\widetilde{y}$ has irreducible minimal polynomial. In either case, $\FF_\ell[\widetilde{x},\widetilde{y}]$ contains both a copy of $\FF_{\ell^2}$ and an element with reducible minimal polynomial that is not in $\FF_\ell$, and hence not in $\FF_{\ell^2}$. 
    This implies that $\FF_\ell[\widetilde{x},\widetilde{y}]= \mf{O}_\ell/\ell\mf{O}_\ell$ because $\FF_\ell[\widetilde{x},\widetilde{y}]$ and $\mf{O}_\ell/\ell\mf{O}_\ell$ are both vector spaces over $\FF_{\ell^2}$, the former of dimension greater than $1$ and the latter of dimension $2$.

    Now let $\ell=p$ with $\mc{A}_p$ a division algebra, so $n\in\ZZ_p^\times$. In this case $\mf{O}_p$ has the form
    \[
    \mf{O}_{p}
    =
    \left\{
    \begin{pmatrix}
    u & p v \\
    \sigma(v) & \sigma(u)
    \end{pmatrix}
    : u,v\in \ZZ_{p^2}
    \right\},
    \]
    where $\ZZ_{p^2}$ is the ring of integers of the  unramified 
    quadratic extension $\QQ_{p^2}/\QQ_p$ and $\sigma$ is the non-trivial element of $\operatorname{Gal}(\QQ_{p^2}/\QQ_{p})$. If the reduction $\widetilde{x}$ is not in the diagonal subring of $\mf{O}_p/p\mf{O}_p$, %
    let $y=\begin{psmallmatrix}
        a & 0 \\ 0 & \sigma(a)
    \end{psmallmatrix}$ for $a$ as in \cref{lem:nrd n in Zl2}. If $\widetilde{x}$ is in the diagonal subring (and hence $\FF_\ell[\widetilde{x}]\simeq\FF_{\ell^2}$ because $\widetilde{x}\notin\FF_\ell$), we can apply \cref{lem:nrd n in Zl2} to get an element $b\in\ZZ_{p^2}$ with $N_{\QQ_{p^2}/\QQ_p}(b)=n+p$, and take $y=\begin{psmallmatrix}
        b & p \\ 1 & \sigma(b)
    \end{psmallmatrix}$. Again $\FF_\ell[\widetilde{x},\widetilde{y}]$ contains both an embedding of $\FF_{\ell^2}$ and an element that is not in the image of this embedding, so it must equal $\mf{O}_\ell/\ell\mf{O}_\ell$.

    Finally let $\mc{A}_\ell$ be split and $n\in\ell\ZZ_\ell$. The element $y_0=\begin{psmallmatrix}
        -1 & n+1 \\ -1 & 1
    \end{psmallmatrix}\in M_2(\ZZ_\ell)$ has $\nrd(y_0)=n$ and characteristic polynomial $x^2+n$; its reduction $\widetilde{y}_0\in M_2(\FF_\ell)$ has a unique $1$-dimensional eigenspace, the span of the column vector $(1,1)^t$. 
    Since $\widetilde{x}\in M_2(\FF_\ell)$ is not a scalar matrix, there exists a nonzero $v\in\FF_\ell^2$ which is not an eigenvector for~$\widetilde{x}$. Let $\widetilde{w}\in \GL_2(\FF_\ell)$ satisfy $\widetilde{w}(1,1)^t=v$, let $w\in\GL_2(\ZZ_\ell)$ be a lift of $\widetilde{w}$, and let $y=wy_0w^{-1}\in M_2(\ZZ_\ell)$. Then every eigenvector of $\widetilde{y}$ is in the span of $v$, so $\widetilde{x}$ and $\widetilde{y}$ have no eigenvectors in common. Note also that $\widetilde{y}$ is a nonzero nilpotent element. By \cite[Theorem 1]{Racine}, every proper subalgebra of $M_2(\FF_\ell)$ is either contained in the image of an embedding of $\FF_{\ell^2}$ or has a common eigenvector. As neither of these are true of $\FF_\ell[\widetilde{x},\widetilde{y}]$, we can conclude that $\FF_\ell[\widetilde{x},\widetilde{y}]=\mf{O}_\ell/\ell\mf{O}_\ell$.
\end{proof}

\begin{proof}[Proof of \emph{\cref{prop:number of generators}}]
    We begin by taking any $\alpha\in\mf{O}\setminus\ZZ$ with $\nrd(\alpha)\in S$. Here is one way to construct such an $\alpha$. Fix any prime $\ell$. Then for any $s\in S$ there exists $x_{\ell,s}\in \mf{O}_\ell$ with $\nrd(x_{\ell,s})=s$ and $\ell\nmid \disc(x_{\ell,s})$ by \cref{lem:nrd n disc unit}. The discriminant condition forces $x_{\ell,s}\notin\ell\mf{O}_\ell$, so we may apply \cref{cor:local global} to obtain $\alpha\in\mf{O}$ and $s_1\in S$ with $\nrd(\alpha)=s_1$ and $\alpha\equiv x_{\ell,s_1}\bmod{\ell\mf{O}_\ell}$. Since $\disc(\alpha)\equiv\disc(x_{\ell,s_1})\not\equiv 0\bmod{\ell\ZZ_\ell}$ we have $\disc(\alpha)\neq 0$ and hence $\alpha\notin\ZZ$.

    Now for all $\ell\mid \disc(\alpha)$ and $s\in S$, use \cref{lem:nrd n disc unit} to pick $y_{\ell,s}\in\mf{O}_{\ell}$ with $\nrd(y_{\ell,s})=s$ and $\disc(y_{\ell,s})\in\ZZ_\ell^\times$. Applying \cref{cor:local global} again, we obtain $\beta\in \mf{O}\setminus\ZZ$ and $s_2\in S$ with $\nrd(\beta)=s_2$ and $\beta\equiv y_{\ell,s_2}\bmod{\ell\mf{O}_\ell}$ for all $\ell\mid\disc(\alpha)$. Then $\disc(\beta)\equiv\disc(y_{\ell,s_2})\not\equiv 0\bmod{\ell\ZZ_\ell}$ for all $\ell\mid \disc(\alpha)$, so $\disc(\beta)$ is coprime to $\disc(\alpha)$. 
    
    Finally, take $\Lambda:=\Span_\ZZ( 1,\alpha,\beta,\alpha\beta)$. %
Since $\ZZ[\alpha]$ and $\ZZ[\beta]$ are quadratic orders with coprime discriminants, $\Lambda$ is a finite-index sublattice of $\mf{O}$, and we set $m:=[\mf{O}:\Lambda]$. If $m=1$ then we are done, so assume $m\geq 2$. Let $\ell$ be a prime factor of $m$ and $s\in S$. Suppose first that $\ell \nmid \disc(\alpha)$. Then $\widetilde{\alpha}\notin \FF_\ell$, so by \cref{lem:extend basis}, there exists $z_{\ell,s}\in \mf{O}_\ell\setminus \ell\mf{O}_\ell$ with $\nrd(z_{\ell,s})=s$ and such that $\Span_{\ZZ_\ell}(1,\alpha,z_{\ell,s},\alpha z_{\ell,s})=\mf{O}_\ell$. If instead $\ell \mid \disc(\alpha)$, then $\ell\nmid\disc(\beta)$ by construction of $\beta$, so by \cref{lem:extend basis}, there exists $z_{\ell,s}\in \mf{O}_\ell\setminus \ell\mf{O}_\ell$ with $\nrd(z_{\ell,s})=s$ and such that $\Span_{\ZZ_\ell}( 1,\beta,z_{\ell,s},\beta z_{\ell,s})=\mf{O}_\ell$.
    Once again we apply \cref{cor:local global} to obtain $\gamma\in\mf{O}$ and $s_3\in S$ with $\nrd(\gamma)=s_3$, $\gamma\equiv z_{\ell,s_3}\bmod{\ell\mf{O}_\ell}$ for all $\ell\mid m$. Applying \cref{lem:lattice quotient l}, we have the following generating sets for $\mf{O}_\ell$.
    \begin{itemize}
        \item If $\ell\nmid m$ then $\mf{O}_\ell=\Span_{\ZZ_\ell}(1,\alpha,\beta,\alpha\beta)$.
        \item If $\ell\mid m$ and $\ell\nmid\disc(\alpha)$ then $\mf{O}_\ell=\Span_{\ZZ_\ell}(1,\alpha,\gamma,\alpha\gamma)$.
        \item If $\ell\mid m$ and $\ell\mid\disc(\alpha)$ then $\mf{O}_\ell=\Span_{\ZZ_\ell}(1,\beta,\gamma,\beta\gamma)$.
    \end{itemize}
    Thus the index of $\Span_\ZZ(1,\alpha,\beta,\gamma,\alpha\beta,\alpha\gamma,\beta\gamma)$ in $\mf{O}$ cannot be divisible by any prime, and hence it equals $1$.
\end{proof}

\begin{remark}
    In \cref{thm: explicit p and k} we bound the norms of elements in a generating set, but with only very weak bounds on the number of elements in the generating set (\cref{prop:weak number bound}). By contrast, in \cref{prop:number of generators}, we bound the number of elements in a generating set, but have no bound on the norms of the elements. 
    One might ask for a result that balances the two: is it possible to find a generating set consisting of a very small number of elements, each of small norm? 
    
    The two results are similar in that they both rely on decomposing a theta function into cuspidal and non-cuspidal contributions, and showing that for suitable $n$, the coefficient of $q^n$ is dominated by the non-cuspidal contribution. To combine the two approaches, we would need a result analogous to \cref{thm:eigenform_coeffs}, giving an explicit bound on the coefficients of the cusp form components for \emph{cosets} of integral ideals within a maximal order, rather than for suborders. It is as yet unclear whether there is a generalized notion of Brandt module that can produce these inhomogeneous theta functions. 
\end{remark}

\section{Experimental results}\label{sec:experiment}

In this section we make computational observations demonstrating our main theorems and estimating the tightness of some results. All code and computations were done in \texttt{Magma V29.9} \cite{Magma}. A GitHub repository containing the code used for generating figures and running examples is available at \url{https://GitHub.com/wmahaney/generating_sets_for_maximal_quaternion_orders.git}.

\subsection{Estimating minimal exponent for prime power generation}

\cref{thm: explicit p and k} implies that the endomorphism ring of a supersingular elliptic curve $E/\overline{\FF}_p$ can be generated by elements of degree $\ell^k$ for relatively small values of $k$. In this section we test this explicitly and see if, in practice, the lower bound on allowable $k$ can be reduced.

First,
if $E_0$ is a uniformly randomly selected supersingular elliptic curve over $\Fpbar$ with~$p$ sufficiently large relative to $\ell$, and $k \gtrsim 2\log_{\ell}(p)$, we find that with high probability there is a finite index suborder of $\End(E_0)$ generated as a lattice by some elements of degree $\ell^{k_0}$ with $0 \le k_0 \le k$. A heuristic argument for this may be given using the bound $d \le 1 + 2 \log_{\ell} \left(  \frac{p-1}{\sqrt{6}} \right)$ on the diameter of $G(p, \ell)$ \cite[Corollary 32]{boscardin_spindler_diameter}, a strengthening of a classical bound due to Pizer \cite{PizerBAMS}, together with the expansion properties of $G(p,\ell)$.

Experimentally, we find something stronger for every example we tested:
\begin{experiment}\label{ex:experimental-bound}
    Let $k = 1+2 \lceil \log_\ell \left(   \frac{p-1}{\sqrt{6}}  \right)  \rceil $. For any maximal order~$\mf{O}$ in $\mathcal{A}_{p}$, the elements of norm $\ell^k$ generate $\mf{O}$ as a $\ZZ$-algebra.
\end{experiment}
\noindent The interested reader can check the GitHub repository to see certificates from some random tests and other output data\footnote{\tiny\url{https://github.com/wmahaney/generating_sets_for_maximal_quaternion_orders/blob/main/data}} and run tests themselves with the functions.\footnote{\tiny\url{https://github.com/wmahaney/generating_sets_for_maximal_quaternion_orders/blob/main/does_generate.magma}}

Given fixed $p$ and $\ell$, we can use \cref{thm: explicit p and k} to compute $k=k(p, \ell)$ such that a maximal order~$\mf{O} \subset \A_{p}$ is guaranteed to be generated as a $\ZZ$-algebra by the set of all elements in~$\mf{O}$ of norm $\ell^k$; the experimental results are consistently lower than this theoretical bound.
For instance, if $p \approx \ell^{30}$, then \cref{thm: explicit p and k} gives a lower bound of $k(p, \ell) \approx 75$ while 
\cref{ex:experimental-bound} gives a lower bound of $1+2 \log_{\ell}\left( \frac{p-1}{\sqrt{6}}  \right) \approx 61$.

\subsection{Estimating \texorpdfstring{$|a_{\ell^k}(\O) - g_{\ell^k}(\O)|$}{|a_l^k(O)-g_l^k(O)|}} 

Following the proof of \cref{prop: max submax upper lower} one can show that all maximal and submaximal orders $\O \subset \mathcal{A}_p$ with $p \ge 5$ satisfy
\[
|a_{\ell^k}(\O)-g_{\ell^k}(\O)| \le 2k\ell^{\tfrac{k+2}{2}}|\O^\times|,
\]
where $a_{\ell^k}(\O)$ is the $\ell^k$-coefficient of the theta series $\theta_{\O}$, and $g_{\ell^k}(\O)$ is the $\ell^k$-coefficient of the modular form $g$ defined in \cref{cor:max submax eigenform_coeffs}$(a)$. Recall that $g$ is the ``genus contribution,'' which in some sense measures the expected size of $a_{\ell^k}(\O)$ across all $\O$ in its genus. For $p\geq 5$ we have $|\O^\times|\leq 6$, so this is a stronger bound than \cref{prop: max submax upper lower}. 

In this section we compute data to estimate how tight the above bound is by calculating the ratio
\[
\mathscr{R}_{\ell^k}(\O) = \frac{|a_{\ell^k}(\O) - g_{\ell^k}(\O)|}{2k\ell^{\tfrac{k+2}{2}} |\O^\times|}.
\]
We will only present plots and discussion in the case $\ell = 2$, as the plots for $\ell = 3$ look similar; the interested reader can refer to the GitHub to see additional figures or generate their own. 

For each prime $p \in [7, 50000]$ and reduced discriminant $N \in \{ p, p^2, p^3, pr, pr^2 \}$ (we use $r=5$ here) corresponding to a maximal/submaximal order classification we do the following for each $k \in [1, 15]$:
\begin{enumerate}
    \item Randomly sample five orders $\mathcal{O}_i$ of the given classification and compute the ratios $\mathscr{R}_{2^k}(\O_i)$. 
    \item Compute the maximum $C_{2^k}(N)$ of 
    $\mathscr{R}_{2^k}(\O_1),\ldots, \mathscr{R}_{2^k}(\O_5)$.
    \item Plot $(k, p, C_{2^k}(N))$ (\cref{max-bound data plots}).
\end{enumerate}

 To randomize the order chosen in each trial we generated a quaternion order in Magma and then took a simple random walk of length $ \lceil \log_2(p) \rceil$ from the generated order. We ran five trials for each $p$ and discriminant classification,  using $k_{{\rm min}}=1, k_{{\rm max}} = 15$, and a value of $r=5$ when necessary. 
The full plots are presented below. %

Observationally, the ratio $\mathscr{R}_{2^k}(\O)$ never exceeded 0.35 except for very small values of $p$, and appears to tend to zero if (relatively large) $p$ is fixed and $k$ tends to infinity (and $r=5$ when $N$ is not a power of $p$), indicating that there may be a stronger bound than \cref{prop: max submax upper lower}.

\bibliographystyle{alphaurl}
\bibliography{bib.bib}

\newlength{\plotH}\setlength{\plotH}{0.26\textheight}

\begin{figure}[p]

\captionsetup[subfigure]{skip=3pt}

\hypertarget{max-bound_generation_plots}{\;}

\centering

\begin{subfigure}[t]{0.49\textwidth}\centering
  \includegraphics[width=\linewidth,height=\plotH,keepaspectratio]{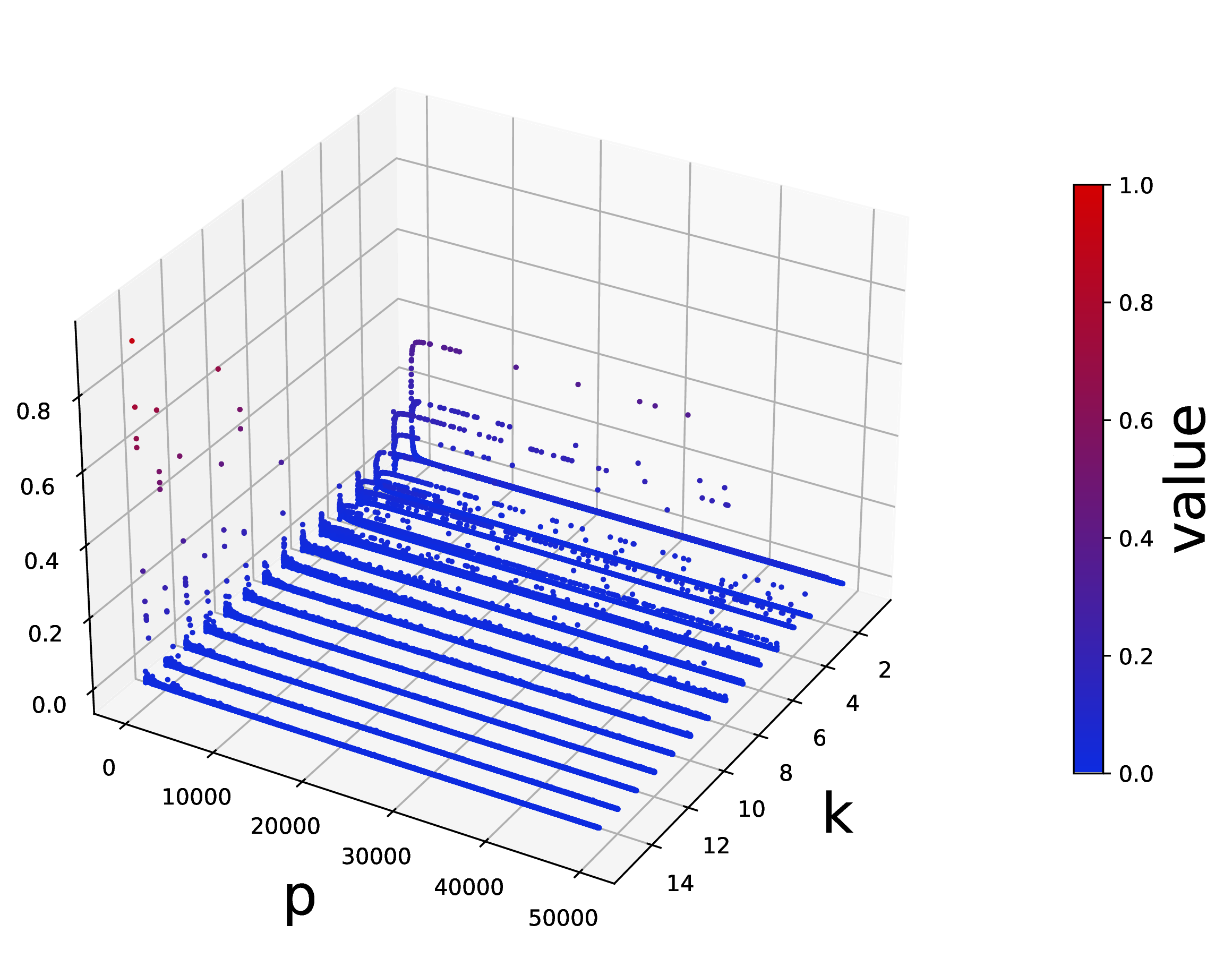}
\end{subfigure}

\vspace{\floatsep}
\begin{subfigure}[t]{0.49\textwidth}\centering
  \includegraphics[width=\linewidth,height=\plotH,keepaspectratio]{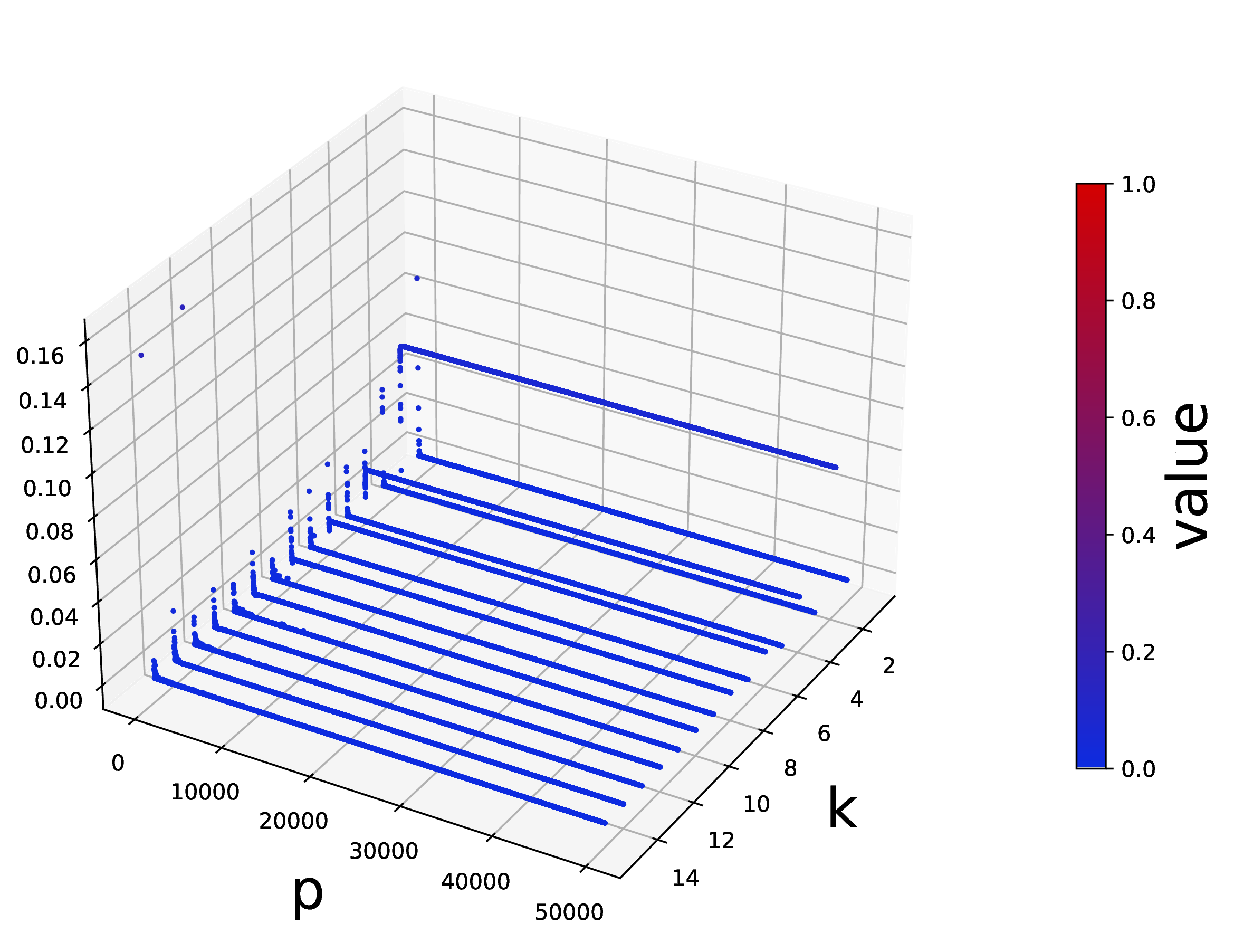}
\end{subfigure}\hfill
\begin{subfigure}[t]{0.49\textwidth}\centering
  \includegraphics[width=\linewidth,height=\plotH,keepaspectratio]{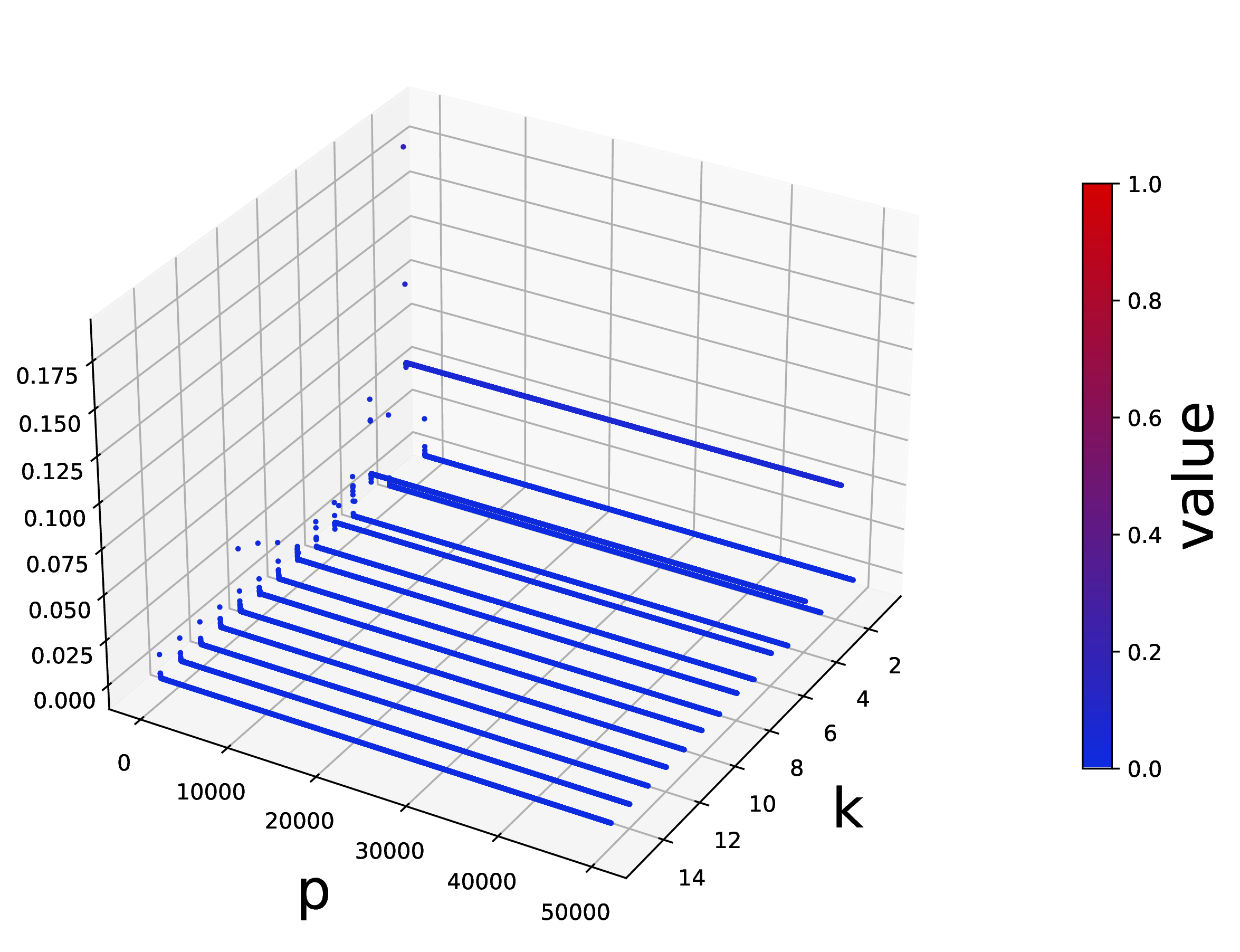}
\end{subfigure}
\vspace{\floatsep}
\begin{subfigure}[t]{0.49\textwidth}\centering
  \includegraphics[width=\linewidth,height=\plotH,keepaspectratio]{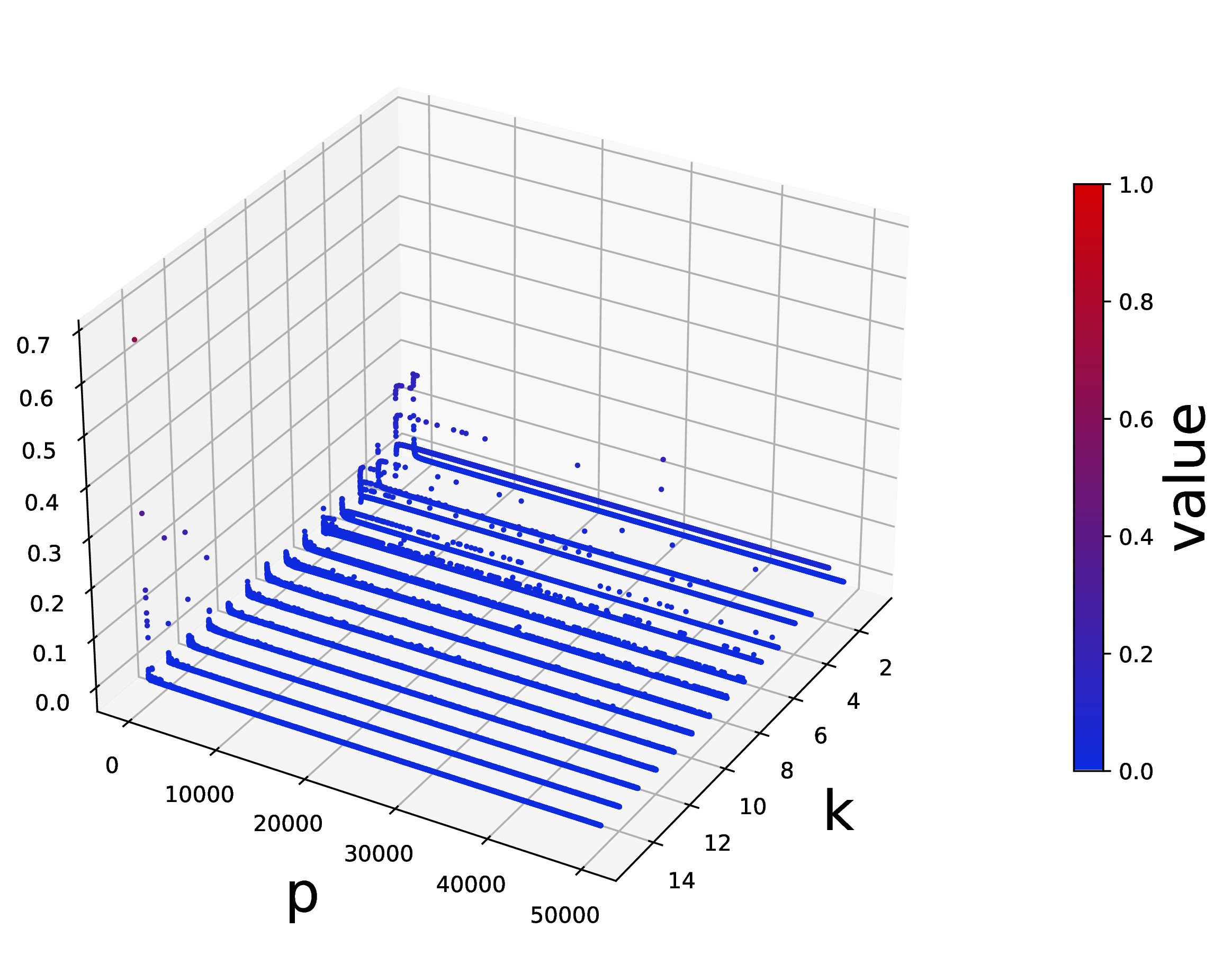}
\end{subfigure}\hfill
\begin{subfigure}[t]{0.49\textwidth}\centering
  \includegraphics[width=\linewidth,height=\plotH,keepaspectratio]{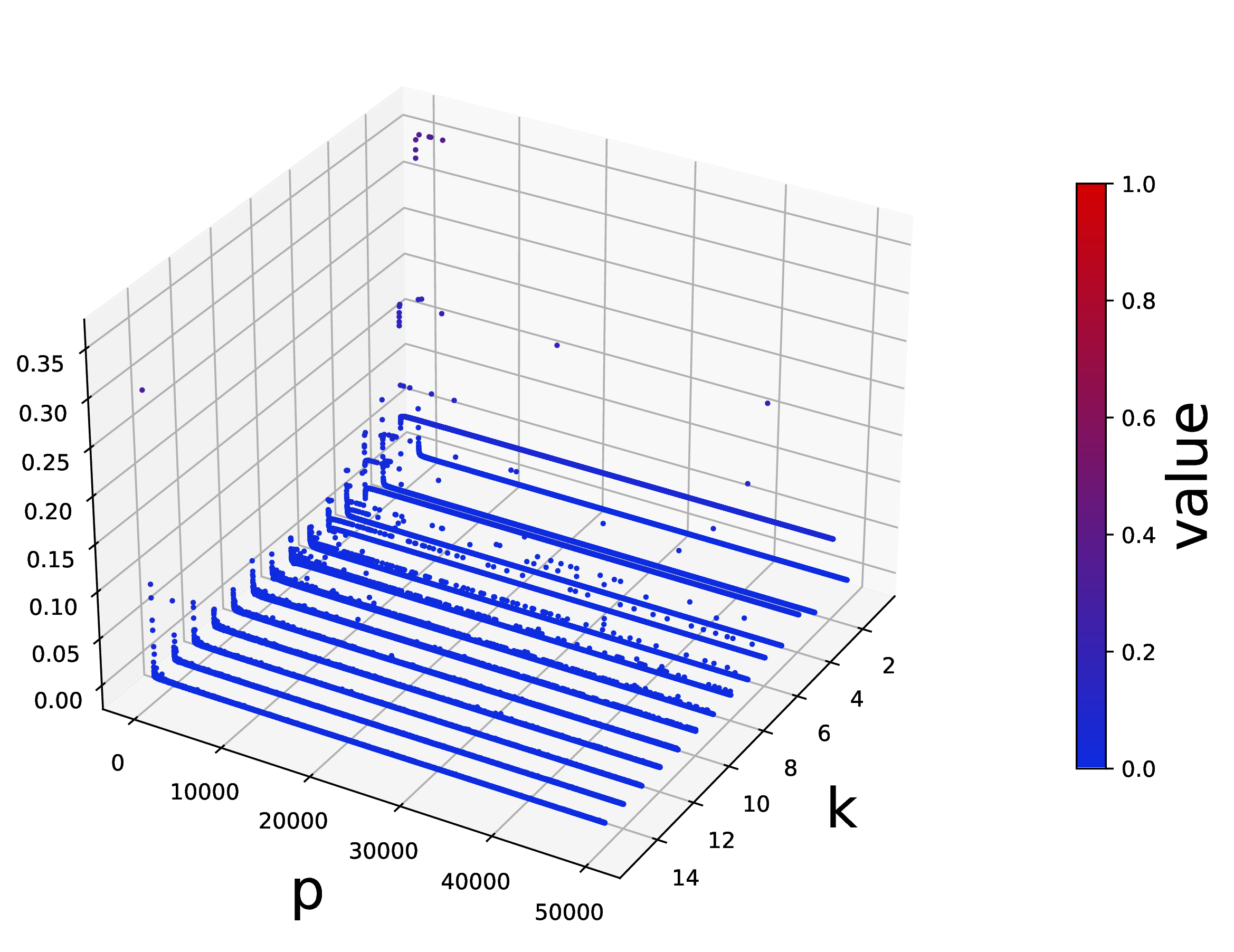}
\end{subfigure}
\caption{Maximal ratio of $|a_{\ell^k}(\O) - g_{\ell^k}(\O)|$ to $2k \ell^{\frac{k+2}{2}}| \O^\times|$ over $5$ trials, for $\ell = 2$ and $k=1, \ldots, 15$. Primes $p$ range over $[7, 50000]$. From top-to-bottom and left-to-right the order classification is maximal, submaximal of index $p$, submaximal of index $p^2$, submaximal of index $r$, and submaximal of index $r^2$.}
\label{max-bound data plots}

\end{figure}

\end{document}